\newcommand{\blockcomment}[1]{\State \(\triangleright\) \textit{#1}}
  \DeclareMathOperator{\Pol}{Pol}
  \DeclareMathOperator{\mi}{mi}
  \DeclareMathOperator{\mx}{mx}
  \DeclareMathOperator{\pp}{pp}
  \DeclareMathOperator{\lex}{lex}
    \DeclareMathOperator{\lele}{ll}
  \DeclareMathOperator{\End}{End}
  \DeclareMathOperator{\Aut}{Aut}
  \DeclareMathOperator{\Sym}{Sym}
  \DeclareMathOperator{\id}{id}
  \DeclareMathOperator{\Csp}{CSP}
  \DeclareMathOperator{\ve}{ve}
  \DeclareMathOperator{\cm}{cm}
  \DeclareMathOperator{\Cr}{cr}
  \newcommand{\Q}{\mathbb Q}
  \newcommand{\bA}{\mathfrak A}
  \newcommand{\bB}{\mathfrak B}
  \newcommand{\bC}{\mathfrak C}
  \newcommand{\bD}{\mathfrak D}
  \newcommand{\bL}{\mathfrak L}
  \newcommand{\bI}{\mathfrak I}
  \newcommand{\bU}{\mathfrak U}
  \newcommand{\bX}{\mathfrak X}
  \def\pa{\hbox{\sf p}}
  \def\pu{\hbox{\sf p${}^{\smallsmile}$}}
  \def\ma{\hbox{\sf m}}
  \def\allen-mi{\hbox{\sf m${}^{\smallsmile}$}}
  \def\oa{\hbox{\sf o}}
  \def\oi{\hbox{\sf o${}^{\smallsmile}$}}
  \def\da{\hbox{\sf d}}
  \def\di{\hbox{\sf d${}^{\smallsmile}$}}
  \def\sa{\hbox{\sf s}}
  \def\si{\hbox{\sf s${}^{\smallsmile}$}}
  \def\fa{\hbox{\sf f}}
  \def\fu{\hbox{\sf f${}^{\smallsmile}$}}
  \newcommand{\xp}{\ensuremath{X^+}}
  \newcommand{\xm}{\ensuremath{X^-}}
  \newcommand{\yp}{\ensuremath{Y^+}}
  \newcommand{\ym}{\ensuremath{Y^-}}
  \newcommand{\spii}{\ \ }
  \newcommand{\spiii}{\ \ \ }
  \newcommand{\spiv}{\ \ \ \ }
  \newcommand{\spv}{\ \ \ \ \ }
\title{Complexity Classification Transfer for CSPs\\ via Algebraic Products
\thanks{Manuel Bodirsky and \v{Z}aneta Semani\v{s}inov\'{a} have received funding from the ERC (Grant Agreement no. 101071674, POCOCOP) and from the DFG (Project FinHom, Grant 467967530). Views and opinions expressed are however
those of the authors only and do not necessarily reflect those of the European Union or the European Research
Council Executive Agency.
Peter Jonsson is partially supported by the Swedish Research Council (VR) under grants 2017-04112 and 2021-04371. Barnaby Martin is supported by EPSRC grant EP/X03190X/1.}}
\author{
Manuel Bodirsky\thanks{TU Dresden, Germany, (\email{manuel.bodirsky@tu-dresden.de}, \email{zaneta.semanisinova@tu-dresden.de})} \and
Peter Jonsson\thanks{Link\"{o}ping University, Sweden, (\email{peter.jonsson@liu.se})} \and
Barnaby Martin\thanks{Durham University, UK, (\email{barnaby.d.martin@durham.ac.uk})} \and
Antoine Mottet\thanks{Hamburg University of Technology, Research Group for Theoretical Computer Science, Germany, (\email{antoine.mottet@tuhh.de})} \and
\v{Z}aneta Semani\v{s}inov\'a\footnotemark[2]
}
\begin{document}
\maketitle

\begin{abstract}
We study the complexity of infinite-domain constraint satisfaction problems:
our basic setting is that a complexity classification for the CSPs of first-order expansions of a structure $\bA$ 
can be transferred to a classification of the CSPs of first-order expansions of another structure $\bB$. 
 We exploit a product of structures (the {\em algebraic product}) that corresponds to the product of the 
 respective polymorphism clones and present a complete complexity classification of the CSPs for first-order expansions of
the $n$-fold algebraic power of $({\mathbb Q};<)$.
This is proved by various algebraic and logical methods in combination with knowledge of the polymorphisms of the tractable first-order expansions of $({\mathbb Q};<)$ and explicit descriptions of the expressible relations in terms of syntactically restricted first-order formulas.
By combining our classification result with general classification transfer techniques, we obtain surprisingly strong new classification results for highly relevant formalisms such as 
Allen’s Interval Algebra, the $n$-dimensional Block Algebra, and the Cardinal Direction Calculus, even if higher-arity 
relations are allowed. 
Our results confirm the infinite-domain tractability conjecture for classes of structures that 
have been difficult to analyse with older methods.
For the special case of structures with binary signatures, the results 
can be substantially
strengthened and tightly connected to Ord-Horn formulas; this solves several longstanding open problems from the AI literature.
\end{abstract}

\begin{keywords}
constraint satisfaction, temporal reasoning, computational complexity, polymorphisms, universal algebra, polynomial-time tractability
\end{keywords}

\begin{MSCcodes}
06A05, 68Q25, 08A70
\end{MSCcodes}

\section{Introduction}

This introductory section is divided into three parts where we describe the background, present our contributions, and provide
an outline of the article, respectively.

\subsection*{Background}

Constraint satisfaction problems (CSPs) are computational problems that appear in many areas of computer science, for example in temporal and spatial reasoning in artificial intelligence~\cite{Qualitative-Survey} or in database theory~\cite{KolaitisVardi,OBDA}.
The computational complexity of CSPs is of central interest in these areas, and a general research goal is to obtain systematic complexity classification results, in particular about CSPs that are in P and CSPs that are NP-hard. 
CSPs can be described elegantly by fixing a structure with a finite relational signature, the \emph{template}; the computational task is to determine whether a given finite input structure has a homomorphism to the template. 
A breakthrough result was obtained independently by Bulatov~\cite{BulatovFVConjecture} and by Zhuk~\cite{ZhukFVConjecture,Zhuk20}, which confirmed the famous Feder-Vardi conjecture~\cite{FederVardi}: every CSP over a finite template (i.e., a structure with a finite domain) is in P, or it is NP-complete.
Moreover, given the template it is possible to decide algorithmically whether its CSP is in P or whether it is NP-complete. 

Most of the CSPs in temporal and spatial reasoning can \emph{not} be formulated as CSPs with a finite template. The same is true for many of the CSPs that appear in database theory (e.g., most of the CSPs in the logic MMSNP, which is a fragment of existential second-order logic introduced by Feder and Vardi~\cite{FederVardi}, and which is important for database theory~\cite{OBDA}, cannot be formulated as CSPs with a finite template~\cite{MadelaineStewartSicomp}). For CSPs with infinite templates we may not hope for general classification results~\cite{BodirskyGrohe}; however, 
we may hope for general classification results if we restrict our attention to classes of templates that are model-theoretically well behaved.
An example of such a class is the class of all structures with domain ${\mathbb Q}$ where all relations are definable with a first-order formula over the structure $({\mathbb Q};<)$. This class of structures is of fundamental interest in model theory, and,  by a result of Cameron~\cite{Cameron5}, also in the theory of infinite permutation groups (it is precisely the class of all countable structures with a highly set-transitive automorphism group).
The CSPs for such structures have been called 
  \emph{temporal CSPs} because they include many CSPs that are of relevance in temporal reasoning, such as the Betweenness problem~\cite{Opatrny}, the And/Or scheduling problem~\cite{and-or-scheduling}, or the satisfiability problem for Ord-Horn constraints~\cite{Nebel}. The complexity of temporal CSPs has been classified by Bodirsky and Kára~\cite{tcsps-journal}, and a temporal CSP is either in P or it is NP-complete. 

Over the past 10 years, many classes of infinite structures have been classified with respect to the complexity of their CSP. We may divide these results into \emph{first-} and \emph{second-generation} classifications. First-generation classifications, such as the classification of temporal CSPs mentioned above, typically use concepts from universal algebra and Ramsey theory, and essentially proceed by a combinatorial case distinction~\cite{BodPin-Schaefer-both,BMPP16,Kompatscher:VanPham:flap2018,Phylo-Complexity}. 
Second-generation classifications also use universal algebra and Ramsey theory, but they eliminate large parts of the combinatorial analysis by using arguments for finite structures and the Bulatov-Zhuk theorem (the first example following this approach is~\cite{BodMot-Unary}; other examples include~\cite{MMSNP-Journal,MottetPinskerSmooth,BodirskyBodorUIP,Collapses,BodirskyKnaeAAAI}). 
So the idea of second-generation classifications is to \emph{transfer} the finite-domain classification to certain tame classes of infinite structures. 

Complexity transfer is also the topic of the present article; however, we transfer classification results not from finite structures to classes of infinite structures, but between classes of infinite  structures. The key to systematically relating  many classes of infinite structures are various \emph{product constructions}, and \emph{logical interpretations}. Examples are \emph{Allen's Interval Algebra}~\cite{Allen} from temporal reasoning, which has a first-order interpretation in $({\mathbb Q};<)$, or the \emph{rectangle algebra}~\cite{Guesgen,MukerjeeJ90} and the $r$-dimensional \emph{block algebra}~\cite{BlockAlgebra}.
These links extend to links between fragments of the respective formalisms. In order to also establish links between the complexity of the respective CSPs, the logical interpretations must use \emph{primitive positive} formulas, rather than full first-order logic. 
There are various notions of products of
constraint formalisms that have been studied in the literature; see~\cite{WestphalWoelfl,BulatovAmalgams}. 
In this article we use a product of structures known as the \emph{algebraic product}; it corresponds to the product of the respective polymorphism clones, which is essential for the universal algebraic approach.

\subsection*{Contributions}
This article contains both theoretical results and applications of these results to well-studied formalisms and open problems in the area. 
Our first main
contribution is a complete complexity classification for the CSPs of first-order expansions of
$({\mathbb Q};<) \boxtimes ({\mathbb Q};<)$, i.e., 
the algebraic 
product of $({\mathbb Q};<)$ with itself. We then generalise this result to
first-order expansions of 
finite algebraic powers of $(\Q; <)$, denoted by $({\mathbb Q};<)^{(n)}$. In the proof we use known results about first-order expansions of $({\mathbb Q};<)$
combined with a mix of algebraic and of logical arguments. On the algebraic side, we use the fact that the
first-order expansions of $({\mathbb Q};<)$ with a tractable CSP have certain polymorphisms. 
On the logic side, we use highly informative descriptions of the relations of the templates using syntactically restricted forms of first-order logic, so-called (weakly) $i$-{\em determined clauses}. These syntactic forms partially allow us to separate the relations coming from different factors of the algebraic product. The combination of algebraic and syntactic methods turned out to be very powerful in our setting and we believe that it will be fruitful for analysing first-order expansions of products of other structures. 

Together with a general classification transfer result from~\cite{Book}, we then obtain a sequence 
of new complexity classification results for classes of CSPs that have been studied in temporal and spatial reasoning. We derive our applications in two steps: we first derive classification results for structures with relations of arbitrary arity. With little extra effort, we then obtain stronger results for the special case that all relations are binary.
The restriction to binary relations is very common in the AI and qualitative reasoning
literature: the influential survey by Dylla et al.~\cite{TemporalSpatialSurvey} lists 50+ formalisms based on binary relations but just a handful
of formalisms that use relations with higher arity.
One reason for this is that binary relations in the infinite-domain regime are very powerful and have attracted much attention. Another reason is pointed out in~\cite[Section 3]{TemporalSpatialSurvey}: generalizations to higher-arity relations are indeed useful in practice, but progress has been hampered by a lack of algebraic understanding and algorithmic methods. The results in this article provide a step towards a better understanding of AI-relevant formalisms and their generalisation to non-binary relations.

\medskip

\noindent
{\bf Templates with relations of unrestricted arity.}
We determine the complexity of the CSP for first-order expansions of the basic relations in
three influential formalisms for spatio-temporal reasoning: Allen's Interval Algebra~\cite{Allen}, the Block Algebra (BA)~\cite{BlockAlgebra}, and
the Cardinal Direction Calculus (CDC)~\cite{LigozatCDC}.
Allen's interval algebra is a fundamental formalism within AI and qualitative reasoning that has a myriad of applications, e.g. in automated
planning~\cite{Allen:Koomen:ijcai83,Mudrova:Hawes:icra2015,Pelavin:Allen:aaai87}, natural language processing~\cite{Denis:Muller:ijcai2011,Song:Cohen:aaai88} and molecular biology~\cite{GolumbicShamir}. Both BA and CDC can be viewed as variations of Allen's algebra originally aimed at expanding the range of applicability. For instance, the BA can handle directions in a spatial reasoning setting (something that is
difficult in standard formalisms such as RCC~\cite{RandellCuiCohn}) with diverse applications such as computer vision~\cite{Cohn:etal:kr2012}, architecture~\cite{Regateiro:etal:aei2012},
and physics simulation in computer games~\cite{Zhang:Renz:kr2014}.
CDC have found applications in geographic information systems and image interpretation: see \cite{Chang:Jungert:image,Frank:ijgis96,LigozatCDC} and the references therein.
These three formalisms have been very important in the evolution of 
calculi for qualitative reasoning, 
for the development of methods
for complexity classifications, and for the algebraic theory of infinite-domain CSPs.
For instance, two milestones in complexity classification of infinite-domain CSPs
are concerned with Allen's algebra: Nebel and Bürckert's~\cite{Nebel} result for
subsets of binary Allen relations containing all basic relations,
and Krokhin et al's~\cite{KrokhinAllen} generalization to arbitrary subsets of binary Allen relations. BA and CDC, on the other hand, have presented significant challenges and resisted full complexity classifications for at least
twenty years.
We conclude that Allen's Interval Algebra, BA and CDC were important test cases for CSP complexity classification projects
long before any complexity classification conjectures for infinite-domain CSPs had been formulated.

In these particular cases, our results show that
the so-called \emph{infinite-domain tractability conjecture},
which is formulated for all reducts of finitely bounded homogeneous structures~\cite{BPP-projective-homomorphisms}, holds.
The conjecture states that 
such a structure has a polynomial-time tractable CSP unless the structure admits a primitive positive interpretation of a structure which is homomorphically equivalent to $K_3$, the clique with three vertices (note that the CSP for the template $K_3$ is the 3-colourability problem, which is a well-known NP-complete problem). 
This hardness condition is known to be equivalent to the structure admitting a primitive positive construction of $K_3$ \cite{wonderland}.
All the classes of infinite structures discussed so far 
are first-order interpretable over $({\mathbb Q};<)$ and it can be shown that they
fall into the scope of this conjecture (see, for instance, Theorem 4 from \cite{MottetPinskerCores}, Lemma~3.5.4 and Proposition~4.2.19 from~\cite{Book}, and Lemma 3.8~from \cite{wonderland}).
Interestingly, the structures we treat here are notoriously difficult for the methods underpinning second-generation classification results: e.g., the unique interpolation property usually fails in this 
context~\cite{BodirskyBodorUIP}. 

To make progress with proving the  infinite-domain tractability conjecture, one strategy is to verify it on larger and larger classes of structures. Highly useful restrictions on classes of interesting structures come from model theory. The concept of \emph{stability} and, more generally, \emph{NIP} (i.e., not having Shelah's {\em independence property}~\cite{Shelah:aml71}) are central concepts in model theory (see, e.g.,~\cite{Simon,Chernikov}). While the consequences of these concepts for the complexity of constraint satisfaction are unclear, stability and NIP are still relevant here, because in combination with homogeneity in a finite relational signature they have strong consequences and allow for model-theoretic classification results, which in turn can be the basis for CSP classification results.

We would like to stress the particular role of structures with a first-order interpretation over $({\mathbb Q};<)$ in this context. All of these structures are NIP. Moreover, it is known that every homogeneous structure with a finite relational signature which is stable has a first-order interpretation in $({\mathbb Q};<)$~\cite{Lachlan-Tree-Decomp}. 
These structures are not only important in model theory, but also significant algorithmically, because the cases with a polynomial-time tractable CSP usually cannot be characterised by canonical polymorphisms, and new algorithms rather than polynomial-time reductions to tractable finite-domain CSPs are needed~\cite{BodirskyKara,RydvalDescr,MottetPinskerSmooth}. 
Therefore, a complexity classification for CSPs of structures with a first-order interpretation in $({\mathbb Q};<)$ would be an important milestone for resolving the tractability conjecture. 
Since our syntactic approach to complexity classification overcomes the mentioned challenges in important cases, it represents a step towards this goal.

\medskip

\noindent
{\bf Templates with binary relations.}
Our results concerning first-order expansions of $({\mathbb Q};<)^{(n)}$
can be specialised to the case when only binary relations are allowed.
If $\mathfrak{D}$ is such a structure, then our results imply
that CSP$(\mathfrak{D})$ is in P if and only if every relation in $\mathfrak{D}$ can be defined by
an Ord-Horn formula~\cite{Nebel}.
This allows us to answer
several open questions from the AI literature.
In particular, we solve 
an open problem from 2002 about the $n$-dimensional cardinal direction calculus~\cite{Balbiani:Condotta:appint2002} (Section~\ref{sec:cdc}),
an open problem from 1999 about fragments of the rectangle algebra~\cite{Balbiani:etal:ijcai99} (Theorem~\ref{thm:proof-of-conjecture}) 
and an open problem from 2002 about the $n$-dimensional block algebra~\cite{BlockAlgebra} (Corollary~\ref{cor:block-binary}). We can also answer another question in \cite{BlockAlgebra} about integration of the tractable cases into tractable formalisms that can also handle metric constraints; see the discussion at the end of Section~\ref{sect:rect}. 
Finally, we obtain short new proofs of known results about reducts of Allen's Interval Algebra (Section~\ref{sect:allen}). 
Our results typically answer more general questions than those asked in the publications above, in particular, they yield results also in the case when the relations are of arity higher than two.

\subsection*{Outline}
The structure of the article is as follows. Section~\ref{sect:csp}
contains the basic concepts that are needed for a formal definition of the CSPs, and some facts about constraint satisfaction problems and their computational complexity. Section~\ref{sect:algprod} contains the definition of the algebraic product together with
some related results.
In Section~\ref{sect:products}, we study $({\mathbb Q};<)^{(n)}$, and this ultimately provides us with
a complexity classification of the CSP for first-order expansions of $({\mathbb Q};<)^{(n)}$.
We additionally study the restriction to binary signatures in this section, i.e., signatures where all relations have arity at most two.
The next section is devoted to a condensed introduction to complexity classification transfer.
Thereafter, we combine the complexity results for $({\mathbb Q};<)^{(n)}$ with complexity classification
transfer in order to analyse various spatio-temporal formalisms
in Section~\ref{sect:applications}.
We conclude the article with a brief discussion of the results together with some
possible future research directions (Section~\ref{sect:conc}).

Some of the results in this article have been announced in a conference paper~\cite{ClassificationTransfer}.
However, one of the central proofs there 
(Lemma 2) is not correct. The proof in the present article avoids proving Lemma 2 from \cite{ClassificationTransfer} and
is entirely new; in particular, the syntactic approach to analysing first-order expansions of products in Section~\ref{sect:decomp} did not appear in the old approach.

\section{Constraint Satisfaction Problems} \label{sect:csp}

In this section we introduce basic concepts that are needed for a formal definition of the class of constraint satisfaction problems (CSP) together with some basic facts about CSPs and their computational complexity.

\subsection{Basic Definitions}

Let $\tau$ be a \emph{relational signature}, i.e., a set of \emph{relation symbols} $R$, each equipped with an \emph{arity} $k \in {\mathbb N}$. A \emph{$\tau$-structure} $\bA$ consists of a set $A$, called the \emph{domain} of $\bA$, 
and a relation $R^{\bA} \subseteq A^k$ for each relation symbol $R \in \tau$ of arity $k$.
A structure is called \emph{finite} if its domain is finite. Relational structures are often written like $(A;R_1^{\bA},R_2^{\bA},\dots)$, with the obvious interpretation; for example, $({\mathbb Q};<)$ denotes the structure whose domain is the set of rational numbers ${\mathbb Q}$ and which carries a single binary relation $<$ which denotes the usual strict order of the rationals. Sometimes, we do not distinguish between the symbol $R$ for a relation and the relation $R^{\bA}$ itself. 
Let $\bA$ be a $\tau$-structure and let $\bA'$ be a $\tau'$-structure with
$\tau \subseteq \tau'$. If $\bA$ and $\bA'$ have the same domain and $R^{\bA} = R^{\bA'}$
for all $R \in \tau$, then $\bA$ is
called a $\tau$-\emph{reduct} (or simply \emph{reduct}) of $\bA'$, and $\bA'$ is called a $\tau'$-\emph{expansion} (or simply
\emph{expansion}) of $\bA$. If $R$ is a relation over the domain of $\bB$, then we let $(\bA;R)$
denote the expansion of $\bA$ by $R$.

We continue by introducing some logical terminology and machinery.
We refer the reader
to~\cite{Hodges} for an introduction to first-order logic. 
An \emph{atomic $\tau$-formula} is a formula of the form $x=y$, $R(x_1,\dots,x_n)$, or the form $\bot$, where $x_1,\dots,x_n,x,y$ are variables, $R$ is a symbol from $\tau$, and $\bot$ is a symbol that stands for `false'.
Let $\bB$ denote a $\tau$-structure.
If $\psi$ is a sentence (i.e. a first-order formula without free variables), then we write $\bB \models \psi$ 
to denote that $\bB$ is a model of (or satisfies) $\psi$.
One can use first-order formulas over the signature $\tau$ to define relations over $\bB$: if $\phi(x_1,\dots,x_n)$ is a first-order $\tau$-formula with free variables $x_1,\dots,x_n$, 
then the relation \emph{defined} by $\phi$ over $\bB$ is the relation $\{ (b_1,\dots,b_n) \in B^n \mid \bB \models \phi(b_1,\dots,b_n) \}$.
We say that $\tau$-formulas $\phi(x_1, \dots, x_n)$ and $\psi(x_1, \dots, x_n)$ are equivalent over $\bB$, if $\bB \models \forall x_1, \dots, x_n (\phi \Leftrightarrow \psi)$. We often omit the specification of the structure if it is clear from the context.
We say that a structure $\bB$ has \emph{quantifier elimination} if every first-order formula is equivalent to a quantifier-free formula over $\bB$. 
Every quantifier-free formula can be written in \emph{conjunctive normal form} (CNF), i.e., as a conjunction of disjunctions of
{\em literals}, i.e., atomic formulas or their negations. A disjunction of literals is also called a {\em clause}. 

If $\bA$ and $\bB$ are $\tau$-structures,
then a \emph{homomorphism} from $\bA$ to $\bB$ is a function $h \colon A \to B$ that \emph{preserves} all the relations, that is,
if $(a_1,\dots,a_k) \in R^{\bA}$, then 
$(h(a_1),\dots,h(a_k)) \in R^{\bB}$. 
The structures $\bA$ and $\bB$ are called \emph{homomorphically equivalent}
if there exists a homomorphism from $\bA$ to $\bB$ and a homomorphism from $\bB$ to $\bA$.
A first-order $\tau$-formula is {\em preserved} by a map between two $\tau$-structures $\bA$  and $\bB$ if it preserves the relation defined by the formula in these structures. 

A \emph{first-order expansion} of $\bA$ is a structure $\bA'$ augmented by relations
that are first-order definable in $\bA$. 
A \emph{first-order reduct} of $\bA$ is a reduct of a first-order expansion of $\bB$.
Relational structures might have an infinite signature; however, to avoid representational issues and for simplicity we restrict ourselves to finite signatures in the following definition.

\begin{definition}[CSPs]
Let $\tau$ be a finite relational signature 
and let $\bB$ be a $\tau$-structure. 
The \emph{constraint satisfaction problem} for $\bB$, denoted by $\Csp(\bB)$,  is the computational problem of deciding for a given finite $\tau$-structure $\bA$ whether $\bA$ has a homomorphism to $\bB$ or not. 
\end{definition}

Note that this definition of constraint satisfaction problems can be used even if $\bB$ is an infinite structure over a finite relational signature. Also note that homomorphically equivalent structures have the same CSP. 

\begin{ex}
The structure $(\{0,1,2\};\neq)$ is denoted by $K_3$. The problem $\Csp(K_3)$ is the three-colorability problem for graphs. The input is a structure with a single binary relation, representing edges in a graph (ignoring the orientation); homomorphisms from this graph to $K_3$ correspond precisely to the proper 3-colorings of the graph. 
\end{ex}

\subsection{Primitive Positive Constructions}

Three central concepts in the complexity analysis of CSPs are \emph{primitive positive definitions}, \emph{primitive positive interpretations}, and \emph{primitive positive constructions}. The three concepts are increasingly powerful. Their definitions build on each other and will be recalled here for the convenience of the reader. 

A \emph{primitive positive $\tau$-formula}
is a formula 
$\phi(x_1,\dots,x_n)$ with free variables $x_1,\dots,x_n$ of the form
$$ \exists y_1,\dots,y_l (\psi_1 \wedge \cdots \wedge  \psi_m)$$
where $\psi_1,\dots,\psi_k$ are atomic $\tau$-formulas over the variables $x_1,\dots,x_n,y_1,\dots,y_l$. 
Two relational structures $\bA$ and $\bB$ are called 
\begin{itemize}
    \item \emph{(primitively positive) interdefinable} if they have the same domain $A=B$, and if every relation of $\bA$ is
(primitively positively) definable in $\bB$ and vice versa. 
\item \emph{(primitively positively) bi-definable} if $\bB$ is isomorphic to 
a structure that is (primitively positively) interdefinable with $\bA$. 
\end{itemize}

We will now turn our attention towards methods for complexity analysis. 

\begin{lemma}[\cite{Jeavons}] \label{lem:pp-red}
Let $\bA$ and $\bB$ be structures with finite relational signatures and the same domain. 
If every relation of $\bA$ has a primitive positive definition in $\bB$, then there is a polynomial-time reduction from $\Csp(\bA)$ to $\Csp(\bB)$.
\end{lemma}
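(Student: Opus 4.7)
The plan is to give the standard ``gadget'' reduction: for each input $\tau_A$-structure $\bI$ of $\Csp(\bA)$ construct in polynomial time a $\tau_B$-structure $\bI'$ such that $\bI \to \bA$ iff $\bI' \to \bB$. Since $\tau_A$ is finite, I can fix once and for all, for every $R \in \tau_A$ of arity $k$, a primitive positive $\tau_B$-formula
\[
\phi_R(x_1,\dots,x_k) \;=\; \exists y_1,\dots,y_{l_R}\bigl(\psi_{R,1} \wedge \cdots \wedge \psi_{R,m_R}\bigr)
\]
that defines $R^{\bA}$ over $\bB$. The sizes $l_R$ and $m_R$ are constants independent of the input, which is what will make the reduction run in polynomial time.

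To build $\bI'$, I start with the domain of $\bI$ and, for each relation symbol $R \in \tau_A$ and each tuple $(a_1,\dots,a_k) \in R^{\bI}$, introduce $l_R$ fresh domain elements $y_1^{(R,a)},\dots,y_{l_R}^{(R,a)}$ acting as the existentially quantified witnesses. I then form the $\tau_B$-atoms $\psi_{R,1},\dots,\psi_{R,m_R}$ with the variables $x_i$ replaced by $a_i$ and the $y_j$ replaced by the corresponding fresh elements, and put the resulting tuples into the appropriate relations of $\bI'$. Equality atoms are handled either by identifying the corresponding elements or, if one prefers to keep variables distinct, by translating $x = y$ into an explicit equality constraint; either way the total size of $\bI'$ is linear in the size of $\bI$ (the constant hidden depends on the fixed formulas $\phi_R$), so the construction is polynomial time.

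For correctness I need the equivalence $\bI \to \bA \iff \bI' \to \bB$. For the forward direction, given a homomorphism $h \colon \bI \to \bA$, for every constraint $R(a_1,\dots,a_k)$ in $\bI$ the tuple $(h(a_1),\dots,h(a_k))$ lies in $R^{\bA}$, so by the definition of $\phi_R$ there exist witnesses $b_1,\dots,b_{l_R} \in B$ satisfying every conjunct $\psi_{R,j}$ in $\bB$; extending $h$ by sending the fresh element $y_j^{(R,a)}$ to $b_j$ yields a homomorphism $h' \colon \bI' \to \bB$. For the converse, given a homomorphism $h' \colon \bI' \to \bB$, its restriction to the domain of $\bI$ is a map $h \colon I \to B = A$ such that for every constraint $R(a_1,\dots,a_k)$ of $\bI$ the values assigned by $h'$ to the $y_j^{(R,a)}$ witness $\phi_R(h(a_1),\dots,h(a_k))$ in $\bB$, and thus $(h(a_1),\dots,h(a_k)) \in R^{\bA}$; so $h$ is a homomorphism $\bI \to \bA$.

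There is no real obstacle here: the only points one must be a little careful about are (i) treating the atomic case $\bot$ and the equality atoms in the primitive positive definitions correctly, so that the reduction remains sound, and (ii) observing that finiteness of $\tau_A$ is what makes the sizes $l_R, m_R$ bounded by a constant, which is the reason the blow-up is polynomial (in fact linear). The proof is otherwise just the bookkeeping indicated above.
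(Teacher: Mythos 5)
The paper does not supply a proof of this lemma; it cites \cite{Jeavons}, and your gadget construction is exactly the standard argument that the cited work uses. Your proof is correct. Two small remarks on the points you flagged yourself. First, for equality atoms, the \emph{identification} route is the one that works in general: translating $x=y$ into an ``explicit equality constraint'' presupposes that $\tau_B$ contains an equality symbol (or that $=$ is pp-definable in $\bB$), which the hypotheses do not guarantee; after contracting, the ``restriction to the domain of $\bI$'' should really be read as the composition of $h'$ with the quotient map $I \to I'$, but the argument goes through unchanged. Second, for the atomic formula $\bot$ (i.e.\ when some $R^{\bA}$ is empty): if the input mentions such an $R$ the reduction should directly output a fixed unsatisfiable instance of $\Csp(\bB)$, and if $\Csp(\bB)$ has no unsatisfiable instance at all then it is trivial and so is $\Csp(\bA)$, so a constant reduction suffices. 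With these points spelled out the proof is complete and matches the cited one.
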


Primitive positive definability can be generalised as follows.

\begin{definition}[Interpretations]
A \emph{(primitive positive) interpretation} of a structure $\bC$ in a structure $\bB$ is a partial surjection $I$ from $B^d$ to $C$, for some finite $d \in {\mathbb N}$ called the \emph{dimension} of the interpretation, such that the preimage 
of a relation of arity $k$ defined by an atomic formula in $\bC$, considered as a relation of arity $dk$ over $B$, is (primitively positively) definable in $\bB$; in this case, we say that $\bC$ is (primitively positively) interpretable in $\bB$.

Two structures $\bB$ and $\bC$ such that $\bB$ has a primitive positive interpretation in $\bC$ and $\bC$ has a primitive positive interpretation in $\bB$ are called \emph{mutually primitively positively interpretable}.
\end{definition}

Note that in particular $x=x$ and $x=y$ are atomic formulas and hence the domain and the kernel of $I$ are primitively positively definable. Primitive positive interpretations preserve the complexity of CSPs in the following way.

\begin{proposition}[see, e.g., Theorem 3.1.4 in~\cite{Book}]
\label{prop:pp-int-reduce}
Let $\bB$ and $\bC$ be structures with finite relational signatures. 
If $\bC$ has a primitive positive interpretation in $\bB$, then there is a polynomial-time reduction from $\Csp(\bC)$ to $\Csp(\bB)$. 
\end{proposition}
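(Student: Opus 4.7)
\medskip

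My plan is to prove Proposition~\ref{prop:pp-int-reduce} by an explicit many-one reduction that replaces each variable of the $\Csp(\bC)$-instance by a $d$-tuple of variables in the $\Csp(\bB)$-instance, using the primitive positive formulas that define the preimages of the relations.

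\textbf{Setup.} Let $I \colon B^d \to C$ be a $d$-dimensional primitive positive interpretation of $\bC$ in $\bB$. Since $x = x$ and $x = y$ are atomic, there is a primitive positive $\tau_\bB$-formula $\delta(y_1,\dots,y_d)$ defining the domain of $I$, and a primitive positive $\tau_\bB$-formula $\varepsilon(y_1,\dots,y_d,z_1,\dots,z_d)$ defining the kernel $\{(\bar u, \bar v) \in \operatorname{dom}(I)^2 \mid I(\bar u) = I(\bar v)\}$. For each relation symbol $R$ of $\bC$ of arity $k$, let $\rho_R(y_1^1,\dots,y_d^1,\dots,y_1^k,\dots,y_d^k)$ be the primitive positive $\tau_\bB$-formula defining the preimage of $R^\bC$ under $I$. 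All these formulas are fixed (independent of the input).

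\textbf{Reduction.} Given an input $\tau_\bC$-structure $\bA$ with domain $\{a_1,\dots,a_n\}$, I construct a $\tau_\bB$-structure $\bA'$ as follows. For each $a_i$, I introduce $d$ fresh variables $a_i^{1},\dots,a_i^{d}$; these will stand for a tuple in $\operatorname{dom}(I)$ projecting to the image of $a_i$. For each $i$, I add to $\bA'$ the atomic constraints obtained by instantiating $\delta(a_i^1,\dots,a_i^d)$ (with fresh existentially quantified variables realized as further fresh variables in $\bA'$). For each constraint $R^\bA(a_{i_1},\dots,a_{i_k})$ of $\bA$, I add the atomic constraints obtained from $\rho_R(a_{i_1}^1,\dots,a_{i_k}^d)$ in the same way. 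Since each primitive positive formula is a fixed conjunction of atomic formulas prefixed by existential quantifiers, the whole construction is carried out in polynomial time in $|\bA|$.

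\textbf{Correctness.} I argue the two directions of equivalence. If $h \colon \bA \to \bC$ is a homomorphism, then for each $i$ pick a preimage $(b_i^1,\dots,b_i^d) \in I^{-1}(h(a_i))$ and set $h'(a_i^j) := b_i^j$; existentially quantified variables of the formulas $\delta$ and $\rho_R$ are witnessed in $\bB$ precisely because $(b_i^1,\dots,b_i^d) \in \operatorname{dom}(I)$ and because $I$ maps the tuple $(b_{i_1}^1,\dots,b_{i_k}^d)$ into $R^\bC$. Extending $h'$ to these witnesses gives a homomorphism $\bA' \to \bB$. Conversely, given a homomorphism $h' \colon \bA' \to \bB$, the tuples $(h'(a_i^1),\dots,h'(a_i^d))$ lie in $\operatorname{dom}(I)$ (as $\delta$ is satisfied), so $h(a_i) := I(h'(a_i^1),\dots,h'(a_i^d))$ is well-defined, and the satisfaction of $\rho_R$ forces $h$ to respect every constraint of $\bA$.

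\textbf{Main difficulty.} There is no deep obstacle; the routine care is in keeping track of the three roles of primitive positivity (defining the domain of $I$, defining the preimages of the relations of $\bC$, and the ability to witness existential quantifiers inside $\bB$) and in observing that the kernel formula $\varepsilon$ is not explicitly needed in the reduction, since distinct variables of $\bA$ are allowed to map to the same element of $C$ — the same flexibility is automatically present for the $d$-tuples encoding them in $\bA'$.
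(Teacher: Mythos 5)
Your reduction is the standard one and it is correct: you replace each $\Csp(\bC)$-variable by a $d$-tuple of $\Csp(\bB)$-variables, pull back the domain and the constraint relations through the fixed primitive positive formulas, materialize the existential quantifiers as fresh variables, and check the two directions of the homomorphism equivalence. The paper itself cites Theorem~3.1.4 of~\cite{Book} rather than proving the proposition, and that reference proves it by exactly this construction; your observation that the kernel formula is not needed (since distinct $\bA$-variables may collapse in $C$, and the $\rho_R$ already enforce the domain constraint on the tuples involved) is also correct.
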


\begin{ex}\label{ex:I}
Let ${\mathbb I}$ be the set of all pairs $(x,y) \in {\mathbb Q}^2$ such that $x<y$; i.e., ${\mathbb I}$ might be viewed as the set of all closed intervals $[a,b]$ of 
rational numbers. Let $\sf m$ be the binary relation over ${\mathbb I}$ that contains all pairs $((a_1,a_2),(b_1,b_2))$ such that $a_2 = b_1$. 
Then the structure $({\mathbb I};{\sf m})$
has a primitive positive interpretation (of dimension 2) in $({\mathbb Q};<)$:
\begin{itemize}
    \item The interpretation map is the identity map on $\mathbb{I} \subseteq \Q^2$.
    \item The preimage of the relation defined 
    by the atomic formula $a=b$ is defined by the formula $a_1 = b_1 \wedge a_2 = b_2 \wedge a_1 < a_2$.  
    \item The preimage of the relation defined by the atomic formula ${\sf m}(a,b)$ is 
    defined by the formula $a_1 < a_2 \wedge a_2 = b_1 \wedge b_1 < b_2$;
\end{itemize}
It is straightforward to adapt the construction above to atomic formulas that are obtained by variable identification by
using the equality relation.
Proposition~\ref{prop:pp-int-reduce} implies that CSP$({\mathbb I};{\sf m})$ is in P since
CSP$({\mathbb Q};<)$ is in P.
\end{ex}

\begin{ex}\label{ex:allen}
The {\em interval algebra}~\cite{Allen} is a formalism that is 
both well-known and well-studied in AI. 
It can be viewed as a relational structure
with the domain ${\mathbb I}$ introduced in Example~\ref{ex:I} and a binary relation
symbol for each binary relation $R \subseteq {\mathbb I}^2$ such that the relation $\{(a_1,a_2,b_1,b_2) \mid ((a_1,a_2),(b_1,b_2)) \in R\}$
is first-order definable in $({\mathbb Q};<)$. We let $\mathfrak{IA}$ denote this structure
and we let $\top$ denote the relation which holds for all pairs of intervals.
Clearly, Allen's Interval Algebra has a 2-dimensional interpretation in $({\mathbb Q};<)$, but not a primitive positive interpretation.

The \emph{basic} relations of Allen's Interval Algebra are
the 13 relations defined in Table~\ref{tb:allen-basic-defs}: we let $\mathfrak{IA}^b$ be the corresponding structure.
If $I=[a,b] \in \mathbb I$, then we write $I^-$ for $a$ and $I^+$ for $b$. 
It is well-known that all the basic relations of Allen's Interval Algebra have a primitive positive definition over $({\mathbb I}; {\sf m})$~\cite{Allen:Hayes:ijcai85}. We conclude that $\Csp(\mathfrak{IA}^b)$ is in P since $\Csp({\mathbb I}; {\sf m})$ is in P by
the previous example.
\end{ex}

\begin{table}[t]
\caption{Basic relations in the interval algebra.} \label{tb:allen-basic-defs}
\begin{center}
     \begin{tabular}{|ll|l|l|}\hline
       Basic relation \hspace*{3mm} & \hspace*{8mm} & Example & Endpoints \hspace*{2mm}\\ 
\hline\hline
      $X$ precedes       $Y$ & $\pa$  & \texttt{XXX\spv}    & $\xp < \ym$ 
\\ \cline{1-2}
      $Y$ preceded by         $X$ & $\pu$  & \texttt{\spv YYY}   & \\ 
\hline
      $X$ meets $Y$ & $\ma$   & \texttt{XXXX\spiv}  & $\xp=\ym$ 
\\ \cline{1-2}
      $Y$ is met by        $X$ & $\allen-mi$  & \texttt{\spiv YYYY} & \\ 
\hline
      $X$ overlaps      $Y$ & $\oa$   & \texttt{XXXX\spii} & $\xm<\ym \: \wedge$\\ 
& & \texttt{\spii YYYY} & $\ym <\xp \: \wedge$ \\ \cline{1-2}
      $Y$ overlapped by $X$ & $\oi$  & & $\xp <\yp \: \wedge$
 \\ \hline
      $X$ during        $Y$ & $\da$   & \texttt{\spiii XX \spii} & $\xm>\ym \: \wedge$ \\ \cline{1-2}
      $Y$ includes      $X$ & $\di$  & \texttt{YYYYYY}   & $\xp<\yp$ \\ 
\hline
      $X$ starts        $Y$ & $\sa$   & \texttt{XXX\spiv}  & $\xm=\ym \: \wedge$
\\ \cline{1-2}
      $Y$ started by $X$ & $\si$  & \texttt{YYYYYY}   & $\xp<\yp$ \\ 
\hline
      $X$ finishes      $Y$ & $\fa$   & \texttt{\spiii XXX} & $\xp=\yp \: \wedge$
\\ \cline{1-2}
      $Y$ finished by $X$ & $\fu$  & \texttt{YYYYYY}   & $\xm>\ym$ \\ 
\hline
      $X$ equals        $Y$ & $\equiv$ & \texttt{XXXX}      & $\xm=\ym \: \wedge$ \\ 
                                   &          & \texttt{YYYY}      & $\xp=\yp$ \\ 
\hline
    \end{tabular}
\end{center}
\end{table}

We finally define primitive positive constructions. We will not use such constructions in our proofs but it
is used in the statement of Theorem~\ref{thm:wonderland} and, thus, in the formulation of the infinite-domain tractability conjecture.

\begin{definition}[Primitive positive constructions]
A structure $\bC$ has a \emph{primitive positive construction} in $\bB$ if $\bC$ is homomorphically equivalent to a structure $\bC'$ with a primitive positive interpretation in $\bB$. \end{definition}

\begin{lemma}
\label{lem:pp-construct-reduce}
Let $\bB$ and $\bC$ be structures with finite relational signature. If $\bC$ has a primitive positive construction in $\bB$, then there is a polynomial-time reduction from $\Csp(\bC)$ to $\Csp(\bB)$. 
\end{lemma}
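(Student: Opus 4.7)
The plan is to chain two easy steps: first pass from $\bC$ to the auxiliary structure $\bC'$ (which has the same signature as $\bC$) using homomorphic equivalence, and then apply Proposition~\ref{prop:pp-int-reduce} to the primitive positive interpretation of $\bC'$ in $\bB$.

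For the first step, I would unfold the definition of a primitive positive construction: there is a $\tau$-structure $\bC'$ together with homomorphisms $f \colon \bC \to \bC'$ and $g \colon \bC'\to \bC$, and $\bC'$ has a primitive positive interpretation in $\bB$. I would then argue that $\Csp(\bC)$ and $\Csp(\bC')$ are literally the same decision problem: for any finite $\tau$-structure $\bA$, a homomorphism $h \colon \bA \to \bC$ yields $f \circ h \colon \bA \to \bC'$, and conversely any $h' \colon \bA \to \bC'$ yields $g \circ h' \colon \bA \to \bC$. Thus the identity map on inputs is a trivial (polynomial-time) reduction from $\Csp(\bC)$ to $\Csp(\bC')$. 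Here it is important that $\bC'$ shares the finite signature $\tau$ with $\bC$, so that $\Csp(\bC')$ is well defined in the sense of the paper.

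For the second step, since $\bC'$ has a primitive positive interpretation in $\bB$, Proposition~\ref{prop:pp-int-reduce} directly supplies a polynomial-time reduction from $\Csp(\bC')$ to $\Csp(\bB)$. Composing the two reductions gives the desired polynomial-time reduction from $\Csp(\bC)$ to $\Csp(\bB)$.

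There is no real obstacle here: the only thing one needs to be slightly careful about is that the structure $\bC'$ witnessing the primitive positive construction has a finite relational signature, which is automatic since by definition it has the same signature as $\bC$, and that homomorphic equivalence preserves the CSP, which is the standard observation recalled just after the definition of CSPs in Section~\ref{sect:csp}.
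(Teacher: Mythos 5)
Your proposal is correct and follows exactly the same route as the paper: it reduces the claim to Proposition~\ref{prop:pp-int-reduce} via the observation that homomorphically equivalent structures have the same CSP. Your version just spells out the homomorphism-composition argument for that observation and the bookkeeping about $\bC'$ inheriting the finite signature of $\bC$, details the paper leaves implicit.
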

\begin{proof}
An immediate consequence of Proposition~\ref{prop:pp-int-reduce} since homomorphically equivalent structures have the same CSP. 
\end{proof}

\subsection{Model Theory and Algebra}
\label{sect:cat}

This section collects some basic terminology and facts from model theory and algebra.
The set of all first-order $\tau$-sentences that are true in a given $\tau$-structure $\bA$ is
called the {\em first-order theory} of $\bA$.
A countable structure $\bA$ is  \emph{$\omega$-categorical} if all countable models of the first-order theory of $\bA$ are isomorphic.
The structure $({\mathbb Q};<)$, and all structures with an interpretation in $({\mathbb Q};<)$, are $\omega$-categorical: for the structure $({\mathbb Q};<)$, this was shown by Cantor who proved that all countable dense and unbounded linear orders are isomorphic. 
All structures that we encounter in the later parts of
this article are $\omega$-categorical.

An \emph{automorphism} of a structure $\bA$ is a permutation $\alpha$ of $A$ such that both $\alpha$ and its inverse are homomorphisms. The set of all automorphisms of a structure $\bA$ is denoted by $\Aut(\bA)$, and forms a group with respect to composition;
the neutral element of the group is the identity map which is denoted by $\id_A$.
The set of all permutations of a set $A$ is called the \emph{symmetric group} and denoted by $\Sym(A)$. Clearly, $\Sym(A)$ equals $\Aut(A;=)$.

The \emph{orbit} of $(a_1,\dots,a_n) \in A^n$ in $\Aut(\bA)$ is the set $\{(\alpha(a_1),\dots,\alpha(a_n)) \mid \alpha \in \Aut(\bA) \}$. 
It was proved independently by Engeler, Ryll-Nardzewski, and Svenonius that a countable structure $\bA$ is $\omega$-categorical if and only if $\Aut(\bA)$ is \emph{oligomorphic}, i.e., has only finitely many orbits of $n$-tuples, for all $n \geq 1$ (see, e.g., \cite[Theorem 6.3.1]{Hodges}). 
This implies that structures with a first-order interpretation in an $\omega$-categorical structure are $\omega$-categorical~\cite[Theorem 6.3.6]{Hodges}. In particular, first-order reducts of $\omega$-categorical structures are again $\omega$-categorical. It also follows that first-order expansions of $\omega$-categorical structures $\bB$ are $\omega$-categorical themselves since such relations are preserved by all automorphisms of $\bB$. 
In an $\omega$-categorical structure $\bB$, a relation
is preserved by all automorphisms of $\bB$ if and only if it is first-order definable in $\bB$ (see, e.g.,~\cite[Theorem 4.2.9]{Book}).

Another important property of $({\mathbb Q};<)$ is called \emph{homogeneity}.
An \emph{embedding} from $\bA$ to $\bB$ is an injective homomorphism $e$ from $\bA$ to $\bB$ which also preserves the complement of each relation, i.e., $(a_1,\dots,a_k) \in R^{\bA}$ if and only if 
$(f(a_1),\dots,f(a_k)) \in R^{\bB}$. 
A $\tau$-structure $\bA$ is called a \emph{substructure} of $\bB$ if $A \subseteq B$ and the identity map $\id_A$ is an embedding from $\bA$ to $\bB$.
Now, a relational structure is called \emph{homogeneous} (or sometimes \emph{ultrahomogeneous}) if every isomorphism 
between finite substructures
can be extended to an automorphism of the structure~\cite[p. 160]{Hodges}.  
An $\omega$-categorical structure $\bB$ is homogeneous if and only if $\bB$ has quantifier elimination~\cite[Theorem 6.4.1]{Hodges}. 

As mentioned above, a relation $R$ is first-order
definable in
an $\omega$-categorical structure $\bB$
if and only if it
is preserved by all automorphisms of $\bB$.
Similarly, the question whether a given relation is primitively positively definable in $\bB$ can be studied using \emph{polymorphisms}. 
A polymorphism of a structure $\bB$ is a homomorphism from $\bB^k$ to $\bB$. Here, the structure $\bB^k$ denotes the $k$-fold direct product structure $\bB \times \cdots \times \bB$; 
more generally, if $\bB_1,\dots,\bB_k$ are $\tau$-structures, then $\bC = \bB_1 \times \cdots \times 
\bB_k$ is defined to be the $\tau$-structure 
with domain $B_1 \times \cdots \times B_k$
and for every $R \in \tau$ of arity $m$ we have
\begin{align*}
R^{\bC} = \{ & ((a_{1,1},\dots,a_{1,k}),\dots,(a_{m,1},\dots,a_{m,k})) \in C^m \mid 
\\ & (a_{1,1},\dots,a_{m,1}) \in R^{\bB_1},\dots,(a_{1,k},\dots,a_{m,k}) \in R^{\bB_k} \} .
\end{align*}
The set of all polymorphisms of a structure $\bB$ is denoted by $\Pol(\bB)$.
\emph{Endomorphisms} are a special case  of polymorphisms with $k=1$: an endomorphism
of a structure $\bB$ is thus a homomorphism from
$\bB$ to itself. The set of all endomorphisms
of $\bB$ is denoted by $\End(\bB)$.
For every $i \leq n$, the 
$i$-th projection of arity $n$ 
is the operation $\pi^n_i$ 
defined by $\pi^n_i(x_1,\dots,x_n) \coloneqq x_i$. 
The set of all polymorphisms of a structure $\bB$ forms an \emph{(operation) clone}: it is closed under composition and contains all  projections. 
Moreover, an operation clone $\mathscr C$ on a set $B$ is a polymorphism clone of a relational structure if and only if the operation clone is \emph{closed}, i.e., for each $k \geq 1$ the set of $k$-ary operations in $\mathscr C$ is closed with respect to the product topology on $B^{B^k}$ where $B$ is taken to be discrete (see, e.g., Corollary 6.1.6 in~\cite{Book}). 

An operation clone ${\mathscr C}$ is called \emph{oligomorphic} if the permutation group
${\mathscr G}$ of invertible unary maps in ${\mathscr C}$ is oligomorphic. 
A relation $R \subseteq B^n$ is preserved by all polymorphisms of an $\omega$-categorical structure $\bB$ if and only if $R$ has a primitive positive definition in $\bB$~\cite{BodirskyNesetrilJLC}.  
It follows that two $\omega$-categorical relational structures with the same domain have the same polymorphisms if and only if they are primitively positively interdefinable. If ${\mathscr F} \subseteq {\mathscr C}$, we write 
\begin{itemize}
\item $\langle {\mathscr F} \rangle$ for the smallest subclone of 
${\mathscr C}$ which contains ${\mathscr F}$, and
\item $\overline{{\mathscr F}}$ for the smallest \emph{(locally, or topologically) closed} 
subset of ${\mathscr C}$ that contains ${\mathscr F}$: that is, $\overline{{\mathscr F}}$ consists of all operations $f$ such that for every finite subset $S$ of the domain there exists an operation $g \in {\mathscr F}$ which agrees with $f$ on $S$. 
\end{itemize}

If $\sigma \colon \{1,\dots,n\} \to \{1,\dots,k\}$ and $f \in {\mathscr C}$ has arity $n$, then we write
$f^{\sigma}$ for the operation
$f(\pi^{k}_{\sigma(1)},\dots,\pi^{k}_{\sigma(n)})$ which maps $(x_1,\ldots,x_{k})$ to $f(x_{\sigma(1)},\dots,x_{\sigma(n)})$.
Let ${\mathscr C}_1$ and ${\mathscr C}_2$ be two clones and let $\xi$ be a function from ${\mathscr C}_1$ to ${\mathscr C}_2$ that preserves the arities of the operations. 
Then
$\xi$ is 
\begin{itemize}
    \item a \emph{clone homomorphism} if for all $f \in {\mathscr C}_1$ of arity $n$ and $g_1,\dots,g_n \in {\mathscr C}_1$ of arity $k$ we have 
$$ \xi(f(g_1,\dots,g_n)) = \xi(f)(\xi(g_1),\dots,\xi(g_n))$$
and $\xi(\pi^n_i) = \pi^n_i$ for all $i \leq n$. 
    \item \emph{minor-preserving} if for all $f \in {\mathscr C}_1$ of arity $n$ and $\sigma \colon \{1,\dots,n\} \to \{1,\dots,k\}$ 
$$ \xi(f^{\sigma}) = \xi(f)^{\sigma}.$$
\item  \emph{uniformly continuous} if for every finite subset $F$ of the domain of ${\mathscr C}_2$  there exists a finite subset $G$ of the domain of ${\mathscr C}_1$ such that 
and for every $n \geq 1$ and all $f,g \in {\mathscr C}_1$ of arity $n$, if $f|_{G^n} = g|_{G^n}$, then $\xi(f)|_{F^n} = \xi(g)|_{F^n}$.
\end{itemize}

Note that trivially, every clone homomorphism is minor-preserving.
Uniformly continuous clone homomorphims naturally arise from interpretations.

\begin{lemma}[\cite{Topo-Birk}]\label{lem:clone-homo}
If $\bC$ has a primitive positive interpretation in $\bB$, then $\Pol(\bB)$ has a uniformly continuous clone homomorphism to $\Pol(\bC)$.  
\end{lemma}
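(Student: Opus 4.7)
The plan is to construct the clone homomorphism $\xi \colon \Pol(\bB) \to \Pol(\bC)$ directly from the given primitive positive interpretation $I \colon B^d \to C$, and then verify that it has all three properties (clone homomorphism, arity-preserving, uniformly continuous).

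First I would fix, for every $c \in C$, a preimage $\bar{c} = (c^{(1)},\dots,c^{(d)}) \in B^d$ with $I(\bar{c}) = c$; this is possible since $I$ is a surjection. Given $f \in \Pol(\bB)$ of arity $n$, define $\xi(f) \colon C^n \to C$ by
\[
\xi(f)(c_1,\dots,c_n) \coloneqq I\bigl(f(c_1^{(1)},\dots,c_n^{(1)}),\,\dots,\,f(c_1^{(d)},\dots,c_n^{(d)})\bigr),
\]
where $f$ is applied componentwise in the $d$ coordinates. Two things must be checked for this to make sense. (i) The tuple to which $I$ is applied lies in the domain of $I$: since $I$ is a pp-interpretation, its domain is pp-definable in $\bB$ (as the preimage of $x=x$) and is therefore preserved by $f \in \Pol(\bB)$. (ii) The output does not depend on the choice of representatives: again by definition of pp-interpretation, the kernel relation $\{(\bar{b},\bar{b}') \in B^{2d} \mid I(\bar{b})=I(\bar{b}')\}$ is pp-definable in $\bB$ and hence preserved by $f$. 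So any other system of preimages yields a tuple in the same $I$-fibre and the same value in $C$.

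Next I would show that $\xi(f) \in \Pol(\bC)$. Let $R$ be a relation of $\bC$ of arity $k$ and suppose $(t_1,\dots,t_k) \in R^{\bC^n}$, i.e.\ each column is in $R^{\bC}$. For each coordinate $j \leq k$ and each argument $i \leq n$, pick preimages $\bar{t_{j,i}} \in B^d$; then the preimage of $R$ under $I$ (a $dk$-ary relation on $B$) is pp-definable, hence preserved by $f$. Applying $f$ componentwise across the $n$ input tuples yields a $dk$-tuple in this preimage, whose $I$-image is exactly $(\xi(f)(t_1),\dots,\xi(f)(t_k))$, which therefore lies in $R^{\bC}$. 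That $\xi$ preserves arities is immediate from the construction, that it sends each projection $\pi^n_i$ in $\Pol(\bB)$ to $\pi^n_i$ in $\Pol(\bC)$ follows from $I(\bar{c_i}) = c_i$, and preservation of composition follows by a straightforward unfolding of the definitions combined with the well-definedness argument from step~(ii).

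Finally, for uniform continuity, given a finite $F \subseteq C$ let
\[
G \coloneqq \{c^{(j)} \mid c \in F,\ 1 \leq j \leq d\} \subseteq B,
\]
which is finite. If $f,g \in \Pol(\bB)$ have the same arity $n$ and agree on $G^n$, then in the defining formula for $\xi(f)(c_1,\dots,c_n)$ and $\xi(g)(c_1,\dots,c_n)$ with $c_1,\dots,c_n \in F$ all arguments to $f$ and $g$ lie in $G$, so the two values coincide; thus $\xi(f)$ and $\xi(g)$ agree on $F^n$.

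The main obstacles are conceptual rather than technical: one must correctly identify which pp-definable auxiliary relations to invoke (the domain of $I$ and the kernel of $I$) to ensure well-definedness, and one must keep track of the distinction between componentwise application of $f$ in $B^d$ (which produces the tuple fed to $I$) and application of $\xi(f)$ in $C$. Once the preimage-system $c \mapsto \bar{c}$ is fixed, all required identities reduce to preservation of pp-definable relations by polymorphisms.
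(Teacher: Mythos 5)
Your construction is the standard one from the cited reference (Bodirsky--Pinsker, \emph{Topological Birkhoff}); the paper itself only cites the lemma and gives no proof, so there is nothing to compare against in this document. Your argument is correct: the definition of $\xi(f)$ via a fixed preimage system, well-definedness via preservation of the pp-definable domain and kernel of $I$, the verification that $\xi(f)\in\Pol(\bC)$ via preservation of the pp-definable preimages of the relations, and the choice $G=\{c^{(j)}\mid c\in F,\, 1\le j\le d\}$ for uniform continuity all go through exactly as stated.
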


The relevance of minor-preserving maps for the complexity of constraint satisfaction is witnessed by the following theorem. 

\begin{theorem}[\cite{wonderland}]
\label{thm:wonderland}
If $\bB$ is an $\omega$-categorical structure and $\bC$ is a finite structure,
then $\bC$ has a primitive positive construction in $\bB$ if and only if $\Pol(\bB)$ has a uniformly continuous minor-preserving map to $\Pol(\bC)$. 
\end{theorem}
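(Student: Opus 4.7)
The theorem is an equivalence, so I would prove the two directions separately; the forward direction follows essentially from what is already in place, while the backward direction carries the main content.

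For the forward direction, assume $\bC$ has a pp-construction in $\bB$, so by definition there is a structure $\bC'$ with a primitive positive interpretation in $\bB$ and homomorphisms $h\colon\bC'\to\bC$ and $g\colon\bC\to\bC'$. Lemma~\ref{lem:clone-homo} yields a uniformly continuous clone homomorphism $\eta\colon\Pol(\bB)\to\Pol(\bC')$, and every clone homomorphism is minor-preserving as already noted. Next, define $\zeta\colon\Pol(\bC')\to\Pol(\bC)$ by $\zeta(f)(x_1,\dots,x_n)\coloneqq h(f(g(x_1),\dots,g(x_n)))$. A direct calculation shows that $\zeta(f^\sigma)=\zeta(f)^\sigma$ for any $\sigma\colon\{1,\dots,n\}\to\{1,\dots,k\}$, so $\zeta$ is minor-preserving, and uniform continuity is automatic because $\bC$ is finite. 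The composition $\zeta\circ\eta$ is the required uniformly continuous minor-preserving map from $\Pol(\bB)$ to $\Pol(\bC)$.

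For the backward direction, given a uniformly continuous minor-preserving map $\xi\colon\Pol(\bB)\to\Pol(\bC)$, the plan is to build a finite structure $\bC^*$ that has a primitive positive interpretation in $\bB$ and is homomorphically equivalent to $\bC$. Enumerate $C=\{c_1,\dots,c_d\}$ and take the domain of $\bC^*$ to be a pp-definable subset $O\subseteq B^d$, intuitively the orbit of a fixed tuple $\vec a\in B^d$ under the coordinatewise action of $\Pol(\bB)$. For each $k$-ary relation $R$ of $\bC$, define $R^*$ on $O$ to consist of those $(\vec b^{(1)},\dots,\vec b^{(k)})$ that arise as $(f(\vec a^{*}_{i_1}),\dots,f(\vec a^{*}_{i_k}))$ for some polymorphism $f\in\Pol(\bB)$ and some $(c_{i_1},\dots,c_{i_k})\in R$, where the $\vec a^{*}_i$ are fixed canonical lifts of the $c_i$ to $B^d$. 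The interpretation map $O\to C$ sends $\vec b=f(\vec a)$ to the element of $C$ picked out by evaluating $\xi(f)$ on a fixed tuple of distinct points of $C$; minor-preservation of $\xi$ is exactly what makes this assignment coordinatewise well-defined.

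The main obstacle is the backward direction, specifically verifying that each $R^*$ is primitively positively definable in $\bB$ and that the resulting $\bC^*$ is homomorphically equivalent to $\bC$. Uniform continuity of $\xi$ is essential here: it permits replacing the infinite family of witnessing polymorphisms by finitely many data, after which $\omega$-categoricity of $\bB$ together with the characterisation of primitive positive definability as preservation under $\Pol(\bB)$ (Section~\ref{sect:cat}) upgrades the invariance of $R^*$ under $\Pol(\bB)$ to an actual primitive positive definition. The interpretation map itself serves as the homomorphism $\bC^*\to\bC$ by construction, and the converse homomorphism $\bC\to\bC^*$ sends $c_i$ to $\vec a^{*}_i$; minor-preservation ensures that this assignment respects each $R^*$, completing the homomorphic equivalence.
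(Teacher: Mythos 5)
The theorem is cited in the paper from~\cite{wonderland} without proof, so there is no in-paper proof to compare against; I can only evaluate your proposal on its own merits. Your forward direction is correct and standard: Lemma~\ref{lem:clone-homo} gives a uniformly continuous clone homomorphism $\Pol(\bB)\to\Pol(\bC')$, and the conjugation $\zeta(f)(x_1,\dots,x_n)=h(f(g(x_1),\dots,g(x_n)))$ is indeed minor-preserving (the identity $\zeta(f^\sigma)=\zeta(f)^\sigma$ is immediate), lands in $\Pol(\bC)$ because $g,f,h$ each preserve the relevant relations, and is uniformly continuous because $\zeta(f)$ depends on $f$ only through its restriction to the finite set $g(C)$.

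The backward direction, however, has genuine gaps. First, the arity bookkeeping does not cohere: writing $\vec b=f(\vec a)$ with $\vec a,\vec b\in B^d$ forces $f$ to be unary applied coordinatewise (so $O$ is an $\End(\bB)$-orbit), yet the description of $R^*$ quantifies over polymorphisms of arbitrary arity, for which $f(\vec a^*_{i_j})$ is an element of $B$, not of $B^d$. Second, even with the unary reading, the set $\{(f(\vec a^*_{i_1}),\dots,f(\vec a^*_{i_k}))\mid f\in\End(\bB),\,(c_{i_1},\dots,c_{i_k})\in R\}$ is not clearly closed under $\Pol(\bB)$: applying a binary polymorphism to two tuples arising from different pairs $(f,\bar c)$ and $(g,\bar c')$ need not produce a tuple of this form, so the appeal to the preservation characterisation of pp-definability does not yet apply. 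Third, well-definedness of the interpretation map is not what minor-preservation gives you; it is uniform continuity, and it only works if the finite witnessing set $S\subseteq B$ is contained among the coordinates of $\vec a$ — a choice you never make, and which would in general force the power to be something other than $d=|C|$. Finally, the role of uniform continuity — "replacing the infinite family of witnessing polymorphisms by finitely many data" — is the actual technical heart of the implication and is only gestured at. The known argument builds the free structure of the minion $\Pol(\bB)$ over $\bC$ and uses uniform continuity to cut it down to something pp-interpretable in $\bB$; that construction and its homomorphic equivalence with $\bC$ are substantially more delicate than what is sketched here.
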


Note that Theorem~\ref{thm:wonderland} in combination with Lemma~\ref{lem:pp-construct-reduce}
implies the following.

\begin{corollary}\label{cor:hard}
Let $\bB$ be an $\omega$-categorical structure and suppose that $\Pol(\bB)$ has a uniformly continuous minor-preserving map to $\Pol(K_3)$.  Then
$\bB$ has a finite-signature reduct whose $\Csp$ is NP-hard.
\end{corollary}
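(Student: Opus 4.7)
The plan is to apply the two cited results in sequence, and then shrink the signature to finite using the fact that a primitive positive construction only involves finitely many formulas, each mentioning only finitely many relation symbols.

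First, I would invoke Theorem~\ref{thm:wonderland} in the direction from polymorphism-clone maps to structural statements: the assumption that $\Pol(\bB)$ has a uniformly continuous minor-preserving map to $\Pol(K_3)$ yields, together with the $\omega$-categoricity of $\bB$ (and the finiteness of $K_3$), that $K_3$ has a primitive positive construction in $\bB$. By the definition of primitive positive construction, this means there is a structure $\bC$ that is homomorphically equivalent to $K_3$ and admits a primitive positive interpretation $I$ in $\bB$.

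Next, I would pass to a finite-signature reduct of $\bB$. The interpretation $I$ is determined by finitely many primitive positive $\tau$-formulas over the signature $\tau$ of $\bB$ (the formula defining the domain, the formula defining the kernel, and for each relation symbol in the finite signature of $\bC$ a formula defining its preimage). Each of these finitely many formulas mentions only finitely many symbols from $\tau$; let $\tau_0 \subseteq \tau$ be the (finite) set of all such symbols and let $\bB_0$ be the $\tau_0$-reduct of $\bB$. Then $I$ is still a primitive positive interpretation of $\bC$ in $\bB_0$, so $K_3$ still has a primitive positive construction in $\bB_0$.

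Finally, Lemma~\ref{lem:pp-construct-reduce} applied to $\bB_0$ gives a polynomial-time reduction from $\Csp(K_3)$ to $\Csp(\bB_0)$. Since $\Csp(K_3)$ is 3-colourability and hence NP-hard, $\Csp(\bB_0)$ is NP-hard, and $\bB_0$ is a finite-signature reduct of $\bB$, as required. There is no real obstacle here; the only point that deserves care is the signature-shrinking step, which relies on the purely syntactic observation that a primitive positive interpretation is a finite piece of data and so lives over a finite sub-signature of $\tau$.
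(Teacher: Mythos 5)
Your proof is correct and is essentially the same argument the paper intends, just with the details spelled out: the paper presents the corollary as an immediate combination of Theorem~\ref{thm:wonderland} and Lemma~\ref{lem:pp-construct-reduce}, and you correctly supply the (necessary but unstated) signature-shrinking step that Lemma~\ref{lem:pp-construct-reduce} requires, since that lemma is stated only for structures with finite relational signature.
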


The \emph{infinite-domain tractability conjecture}
implies that for reducts of finitely bounded homogeneous structures, and if P $\neq$ NP, then the condition given in Corollary~\ref{cor:hard} is not only sufficient, but also necessary for NP-hardness~\cite{BPP-projective-homomorphisms}.  
Note that $\Csp$s of reducts of finitely bounded homogeneous structures are always in NP (see, e.g., Proposition 2.3.15 in \cite{Book}).
The conjecture has also  interesting algebraic interpretations in line with the dichotomy for $\Csp$s of finite structures~\cite{BartoPinskerDichotomy,BKOPP-equations,wonderland}.
An operation $f \colon B^k \to B$ is called a \emph{weak near unanimity operation} if for all $x,y \in B$ the operation $f$ satisfies 
$$f(y,x,\dots,x) = f(x,y,\dots,x) = \cdots = f(x,\dots,x,y).$$
If $\mathscr C$ is a clone on a finite domain $B$ without a minor-preserving map to $\Pol(K_3)$, then $\mathscr C$ contains a weak near unanimity operation~\cite{MarotiMcKenzie}. A potential generalisation of this fact to polymorphism clones of $\omega$-categorical structures $\bB$ involves the concept of a  \emph{pseudo weak near unanimity (pwnu) polymorphism}, i.e., a polymorphism $f$ of arity at least two such that there are endomorphisms $e_1,\dots,e_k$ of $\bB$ such that for all $x,y \in B$ 
\begin{align}
    e_1(f(y,x,\dots,x)) = e_2(f(x,y,\dots,x)) = \cdots = e_k(f(x,\dots,x,y)). \label{eq:pwnu}
\end{align}

We note that all of the polynomial-time tractability conditions that we prove in this article can be phrased in terms of the existence of pwnu polymorphisms.
 It is not known whether every polymorphism clone of
 a reduct of a finitely bounded homogeneous structure that does not have a uniformly continuous minor-preserving map to $\Pol(K_3)$ contains a pwnu (see Question 21 in~\cite{Book}); however, it is known to be false for polymorphism clones of $\omega$-categorical structures in general~\cite{BBKMP}.
 Note that clone homomorphisms preserve identities such as \eqref{eq:pwnu}, and
 it follows from Lemma~\ref{lem:clone-homo} that primitive positive interpretations 
 preserve the existence of pwnu polymorphisms. The same is not true for minor-preserving maps instead of clone homomorphisms.
 However, we have the following; it uses the assumption that $\overline{\Aut(\bB)} = \End(\bB)$
 which is equivalent to $\bB$ being a \emph{model-complete core} (see, e.g.,~\cite[Section 4.5]{Book}). 
 
 \begin{lemma}\label{lem:excl}
 Let $\bC$ be a homogeneous structure with finite relational signature  and let 
$\bB$ be a first-order reduct of $\bC$ with a pwnu polymorphism. If
 $\overline{\Aut(\bB)} = \End(\bB)$, 
 then $\Pol(\bB)$ does not have a uniformly continuous minor-preserving map to $\Pol(K_3)$.
 \end{lemma}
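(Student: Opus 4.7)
The plan is to argue by contradiction: assume $\xi \colon \Pol(\bB) \to \Pol(K_3)$ is a uniformly continuous minor-preserving map, and derive a contradiction from the existence of the pwnu polymorphism $f$ of $\bB$ with witnesses $e_1,\ldots,e_k$. The overall strategy is to transfer the pwnu identity all the way to $K_3$ itself, show that on $K_3$ it collapses to an ordinary weak near-unanimity identity, and then invoke the fact that every polymorphism of $K_3$ is a \emph{quasi-projection} $\sigma \circ \pi^n_i$ with $\sigma \in \Sym(\{0,1,2\})$ --- no such quasi-projection of arity at least two can satisfy the wnu identities.

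To make the transfer possible, I would first upgrade $\xi$ to a clone homomorphism. By Theorem~\ref{thm:wonderland}, the existence of $\xi$ is equivalent to $\bB$ primitively positively constructing $K_3$, so there is a structure $\bB'$ homomorphically equivalent to $K_3$ that has a primitive positive interpretation in $\bB$, and Lemma~\ref{lem:clone-homo} then supplies a uniformly continuous clone homomorphism $\eta \colon \Pol(\bB) \to \Pol(\bB')$. Because clone homomorphisms preserve all composition identities, $\eta(f)$ is a pwnu polymorphism of $\bB'$ with endomorphism witnesses $\eta(e_1),\ldots,\eta(e_k)$. Now I push this structure down to the core $K_3$ of $\bB'$: pick homomorphisms $s \colon K_3 \to \bB'$ and $r \colon \bB' \to K_3$ witnessing the hom-equivalence, normalise so that $r \circ s = \id_{K_3}$, and define $\tilde f := r \circ \eta(f) \circ (s,\ldots,s) \in \Pol(K_3)$ and $\tilde e_i := r \circ \eta(e_i) \circ s \in \End(K_3) = \Aut(K_3)$ (the last equality since $K_3$ is a finite core).

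The hardest step is verifying that $\tilde f$ together with the $\tilde e_i$'s really satisfies the pwnu identity on $K_3$. The identity $\eta(e_i) \circ \eta(f_i) = \eta(e_j) \circ \eta(f_j)$ in $\bB'$, restricted to inputs from $s(K_3)^2$ and post-composed with $r$, produces a common function $K_3^2 \to K_3$; the difficulty is that this common function differs from $\tilde e_i \circ \tilde f_i$ by an extra application of the idempotent retraction $\rho := s \circ r$ inserted between the two $\eta$-images. Removing this spurious $\rho$ is the crux, and is where the hypothesis $\overline{\Aut(\bB)} = \End(\bB)$ enters: it forces $\End(\bB)$ (and thence the fragment of $\End(\bB')$ relevant to our computation) to be approximable by automorphisms and hence rigid enough for the needed absorption of $\rho$ to go through.

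Once the pwnu identity is established for $\tilde f$ on $K_3$, the contradiction is immediate. Setting $x = y$ in $\tilde e_i(\tilde f(x,\ldots,y,\ldots,x)) = \tilde e_j(\tilde f(x,\ldots,y,\ldots,x))$ gives $\tilde e_i(u(x)) = \tilde e_j(u(x))$ for all $x \in \{0,1,2\}$, where $u(x) := \tilde f(x,\ldots,x)$ is a unary polymorphism of $K_3$, hence a permutation; this forces the $\tilde e_i$ to coincide as permutations of $\{0,1,2\}$. Writing $\tilde e$ for this common value and cancelling it in $\tilde e \circ \tilde f_i = \tilde e \circ \tilde f_j$ gives $\tilde f_i = \tilde f_j$ for all $i, j$, making $\tilde f$ a wnu polymorphism of $K_3$ and contradicting the quasi-projection description of $\Pol(K_3)$.
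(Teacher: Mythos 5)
Your opening and closing steps are correct and match the paper in spirit: invoking Theorem~\ref{thm:wonderland}/Lemma~\ref{lem:clone-homo} to replace the minor-preserving map by a clone homomorphism, pushing the pwnu identity through a clone homomorphism, and deriving a contradiction on $K_3$ by observing that $\End(K_3)=\Aut(K_3)$ lets one cancel the pseudo-endomorphisms and that $K_3$ has no weak near-unanimity polymorphism (all polymorphisms being quasi-projections). However, there is a genuine gap in the middle, which you flag yourself as ``the hardest step'' but do not actually close, and your proposed mechanism for closing it does not work.

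The problem is the descent from $\bB'$ to its core $K_3$. You want to conjugate $\eta(f)$ and the pseudo-endomorphisms $\eta(e_i)$ by the section/retraction pair $(s,r)$, but then $\tilde e_i\circ\tilde f_i$ acquires an extra idempotent endomorphism $\rho=s\circ r$ of $\bB'$ sitting between $\eta(e_i)$ and $\eta(f)$, and there is no reason this $\rho$ disappears. Crucially, your proposed remedy --- that $\overline{\Aut(\bB)}=\End(\bB)$ ``forces the fragment of $\End(\bB')$ relevant to our computation to be approximable by automorphisms'' --- misplaces where that hypothesis applies: it constrains the endomorphism monoid of $\bB$, not of $\bB'$. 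The structure $\bB'$ is by construction homomorphically equivalent to the finite structure $K_3$, hence is essentially never a model-complete core itself; $\rho$ is a non-injective endomorphism of $\bB'$ that lies nowhere near $\overline{\Aut(\bB')}$, and the hypothesis on $\bB$ gives no leverage over it. Even if you instead define $h_i:=r\circ\eta(e_i)\circ\eta(f)\circ(s,\dots,s)$ so that the identity $h_1(y,x,\dots,x)=\dots=h_k(x,\dots,x,y)$ literally transfers, this is an identity between $k$ different quasi-projections of $K_3$, not a pwnu identity for a single operation, and it is easily satisfiable (e.g.\ $h_i=\pi^k_i$); so no contradiction is reached.

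The paper closes this gap by a different mechanism. It appeals to Corollary 3.6 of~\cite{BKOPP-equations}, which (under the present hypotheses on $\bB$, in particular the model-complete-core condition --- this is the actual role of $\overline{\Aut(\bB)}=\End(\bB)$) shows that a uniformly continuous minor-preserving map to $\Pol(K_3)$ exists if and only if, for some finitely many constants $c_1,\dots,c_n\in B$, the clone $\Pol(\bB,\{c_1\},\dots,\{c_n\})$ has a \emph{continuous clone homomorphism} to $\Pol(K_3)$. This removes the intermediate structure $\bB'$ and the troublesome retraction entirely. Combining this with Proposition~10.1.13 of~\cite{Book} (adding finitely many constants preserves the existence of a pwnu polymorphism --- itself a non-trivial statement, since the original pseudo-endomorphisms need not fix the constants) one transports the pwnu directly along a genuine clone homomorphism into $\Pol(K_3)$, where your final cancellation argument is sound. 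You should replace your ``absorb $\rho$'' step with an appeal to these two cited results; as written, the crux of your argument is unjustified.
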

 \begin{proof}
  The assumptions imply that we may apply Corollary 3.6 in~\cite{BKOPP-equations}. Hence,
  $\bB$ has a uniformly continuous minor-preserving map to $\Pol(K_3)$ if and only if there exist $n \in {\mathbb N}$ and $c_1,\dots,c_n \in B$ such that the  clone $\Pol(\bB,\{c_1\},\dots,\{c_n\})$ 
  has a continuous clone homomorphism to $\Pol(K_3)$. 
  But if  $\bB$ has a pwnu polymorphism, then so does every expansion of $\bB$ by finitely many unary singleton relations (Proposition 10.1.13 in~\cite{Book}).
  Since clone homomorphisms preserve the existence of pwnu polymorphisms and $K_3$ does not have such a polymorphism (see, e.g.,~\cite[Proposition 6.1.43]{Book}), the statement follows.
 \end{proof}

\section{Algebraic Products} 
\label{sect:algprod}
We devote this section to presenting the algebraic product and studying some of its properties:
Section~\ref{sec:algprodbasics} contains the definition together with some elementary facts while
Section~\ref{sec:determinedclauses} describes connections
with $i$-determined clauses (that
we introduce in Section~\ref{sec:algprodbasics}).
The algebraic product has been studied in the past. For instance, Greiner~\cite{Greiner:PhD} uses it for studying CSPs
of combinations of structures or background theories (a topic we will touch upon in Section~\ref{sect:conc}),
Baader and Rydval~\cite{RydvalBaader} use it for analysing the complexity of description logics, 
and Bodirsky~\cite{Book}
uses it in connection with Ramsey structures.

\subsection{Basic Properties}
\label{sec:algprodbasics}

The algebraic product is defined
as follows.

\begin{definition}
Let ${\mathfrak A}_1$ and ${\mathfrak A}_2$ be structures with signature $\tau_1$ and $\tau_2$, respectively. 
Then the \emph{algebraic product} 
${\mathfrak A}_1 \boxtimes {\mathfrak A}_2$
is the structure with domain $A_1 \times A_2$ which has for every atomic 
$\tau_1$-formula $\phi(x_1,\dots,x_k)$ the relation 
$$\{((u_1,v_1),\dots,(u_k,v_k)) \mid {\mathfrak A}_1 \models \phi(u_1,\dots,u_k) \}$$
and analogously for every atomic $\tau_2$-formula $\phi$ the relation 
$$\{((u_1,v_1),\dots,(u_k,v_k)) \mid {\mathfrak A}_2 \models \phi(v_1,\dots,v_k) \}.$$
\end{definition}

The relation symbol for the atomic $\tau_i$-formula $x=y$ will be denoted by $=_i$. Clauses over the signature of $\bA_1 \boxtimes \bA_2$ where all atomic formulas are built from symbols that have been introduced for atomic $\tau_i$-formulas are called \emph{$i$-determined}.

\begin{ex} \label{ex:algprod}
The structure $({\mathbb Q};<) \boxtimes ({\mathbb Q};<)$
has the domain $D={\mathbb Q}^2$ and
contains four relations 
\begin{itemize}
\item
$=_1$ equal to $\{((u_1,v_1),(u_2,v_2)) \in D^2 \; | \; u_1=u_2\}$,
\item
$=_2$ equal to $\{((u_1,v_1),(u_2,v_2)) \in D^2 \; | \; v_1=v_2\}$,
\item
$<_1$ equal to $\{((u_1,v_1),(u_2,v_2)) \in D^2\; | \; u_1 < u_2\}$, and
\item
$<_2$ equal to $\{((u_1,v_1),(u_2,v_2)) \in D^2 \; | \; v_1 < v_2\}$.
\end{itemize}
The clauses  $(x =_1 y \vee y <_1 z)$
and $(x <_2 y \vee x =_2 z \vee x <_2 u)$ are
$1$- and $2$-determined, respectively. The clause
$(x <_1 y \vee x =_2 y)$ is not $i$-determined for any
$i \in \{1,2\}$.
\end{ex}

\begin{rem}\label{rem:hom}
We note that the algebraic product preserves some of the important fundamental properties of structures. For example,
if $\bA_1$ and $\bA_2$ are homogeneous, then 
so is ${\mathfrak A}_1 \boxtimes {\mathfrak A}_2$ (Proposition 4.2.19 in~\cite{Book}),
and if  $\bA_1$ and $\bA_2$ are $\omega$-categorical, then so is ${\mathfrak A}_1 \boxtimes {\mathfrak A}_2$. 
\end{rem}

We define the $n$-{\em fold algebraic product} ${\mathfrak A}_1 \boxtimes \dots \boxtimes {\mathfrak A}_n$ in the natural way
together with the 
 $n$-{\em fold algebraic power}
$$\mathfrak{A}^{(n)} \coloneqq 
\underbrace{\mathfrak{A} \boxtimes \cdots \boxtimes \mathfrak{A}}_{\mbox{$n$ times}}.$$
The forthcoming definitions and statements in this section are presented for the binary product, but they can easily be generalised to $n$-fold algebraic products. 
We continue by studying the polymorphism clone of ${\mathfrak A}_1 \boxtimes {\mathfrak A}_2$.
For $i \in \{1,2\}$, let ${\mathscr C}_i$ be a clone of operations on a set $A_i$. If $f_1 \in {\mathscr C}_1$
and $f_2 \in {\mathscr C}_2$ both have arity $k$, then we write $(f_1,f_2)$ for the operation on $A_1 \times A_2$
given by $$((a_1,b_1),\dots,(a_k,b_k)) \mapsto (f_1(a_1,\dots,a_k),f_2(b_1,\dots,b_k)).$$

The \emph{direct product} ${\mathscr C}_1 \times {\mathscr C}_2$ of ${\mathscr C}_1$ and ${\mathscr C}_2$ is the clone ${\mathscr D}$ on the set $A_1 \times A_2$ whose operations of arity $k$ consist of the set of all operations $(f_1,f_2)$ where
$f_i \in {\mathscr C}_i$ is of arity $k$.
Note that this generalises the usual definition of direct products of permutation groups. 
If ${\mathscr C}_1 = {\mathscr C}_2 = {\mathscr C}$ then ${\mathscr D}$ is called a \emph{direct power}
and we write ${\mathscr C}^2$ instead of ${\mathscr C} \times \mathscr C$. 
Note that the function $\theta_i \colon {\mathscr C}_1 \times {\mathscr C}_2 \to {\mathscr C}_i$ given by 
$(f_1,f_2) \mapsto f_i$ is a uniformly continuous minor-preserving map. 
Also note that if ${\mathscr C}_1$ and ${\mathscr C}_2$ are oligomorphic, then so is ${\mathscr C}_1 \times {\mathscr C}_2$.

The following proposition is one of the important features of the algebraic product. 
We present it for two-fold algebraic products but it can 
obviously be generalised to the $n$-fold case.

\begin{proposition}
\label{prop:product-hom}
For all  
structures $\bA_1$ and $\bA_2$ we have 
$$\Pol({\mathfrak A}_1 \boxtimes {\mathfrak A}_2) = \Pol({\mathfrak A}_1) \times \Pol({\mathfrak A}_2).$$ 
Likewise, we have $\End(\bA_1 \boxtimes \bA_2) = \End(\bA_1) \times \End(\bA_1)$ and $\Aut(\bA_1 \boxtimes \bA_2) = \Aut(\bA_1) \times \Aut(\bA_2)$. 
\end{proposition}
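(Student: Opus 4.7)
The statement splits into two inclusions for $\Pol$. The containment $\Pol(\bA_1) \times \Pol(\bA_2) \subseteq \Pol(\bA_1 \boxtimes \bA_2)$ is essentially immediate from the definitions: every relation of $\bA_1 \boxtimes \bA_2$ comes from an atomic $\tau_i$-formula and depends only on the $i$-th coordinates of its entries, so if $f_1, f_2$ preserve the atomic formulas of $\bA_1$ and $\bA_2$ respectively, then the componentwise operation $(f_1, f_2)$ preserves each such lifted relation. The cases of equality and $\bot$ are trivial because every operation preserves equality and the empty relation.

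\textbf{The core step} is the reverse inclusion, for which I exploit the special relations $=_1$ and $=_2$ that, by construction, belong to $\bA_1 \boxtimes \bA_2$ (they come from the atomic formula $x = y$ taken in each $\tau_i$, respectively). Given any $k$-ary $g \in \Pol(\bA_1 \boxtimes \bA_2)$, preservation of $=_1$ says: if two $k$-tuples of pairs agree entrywise in their first coordinates, then so must the two outputs. This forces the first coordinate of $g((a^{(1)}, b^{(1)}), \dots, (a^{(k)}, b^{(k)}))$ to depend only on $a^{(1)}, \dots, a^{(k)}$, yielding a well-defined operation $f_1$ on $A_1$; the symmetric argument via $=_2$ yields $f_2$ on $A_2$, and hence $g = (f_1, f_2)$. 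To verify $f_1 \in \Pol(\bA_1)$, fix an $m$-ary $R \in \tau_1$ and tuples $\bar a^{(1)}, \dots, \bar a^{(k)} \in R^{\bA_1}$, pad them arbitrarily with second coordinates to obtain tuples in the lifted relation $R_1$ of $\bA_1 \boxtimes \bA_2$, apply $g$ componentwise, and read off from the first coordinates that $f_1$ preserves $R^{\bA_1}$. The argument for $f_2$ is symmetric, and the atomic cases $x = y$ and $\bot$ are again trivial.

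\textbf{End and Aut.} The endomorphism identity is just the unary case of the polymorphism identity, so it follows immediately from the above. For automorphisms I additionally need to check bijectivity transfer: if $g = (f_1, f_2)$ is a bijection on $A_1 \times A_2$, then each $f_i$ is a bijection on $A_i$, which is a routine fibrewise verification. Then $g^{-1} = (f_1^{-1}, f_2^{-1})$, and since $g^{-1}$ is itself an automorphism of $\bA_1 \boxtimes \bA_2$, applying the already-proved endomorphism case to $g^{-1}$ shows that each $f_i^{-1}$ is an endomorphism of $\bA_i$; hence $f_i \in \Aut(\bA_i)$.

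\textbf{Main obstacle.} There is no deep difficulty here; the whole argument pivots on the single observation that $=_1$ and $=_2$ are among the relations of the algebraic product, since the definition of $\bA_1 \boxtimes \bA_2$ quantifies over \emph{all} atomic $\tau_i$-formulas and the formula $x = y$ is atomic. The only place demanding care is the bookkeeping: polymorphisms act on $k$-tuples of pairs rather than on pairs of $k$-tuples, so the indexing deserves a moment of attention, but nothing more.
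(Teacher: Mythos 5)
Your proof is correct and follows essentially the same route as the paper's: define $f_1, f_2$ by fixing or varying coordinates, use preservation of $=_1$ and $=_2$ to show well-definedness and the identity $g = (f_1, f_2)$, and pad with arbitrary second coordinates to check $f_i \in \Pol(\bA_i)$. Your treatment of the automorphism case is a bit more explicit than the paper's (which just says "analogously"), but the underlying argument is the same.
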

\begin{proof}
If $f_i \in \Pol(\bA_i)$, for $i \in \{1,2\}$, then clearly $(f_1,f_2)$ preserves all relations of ${\mathfrak A}_1 \boxtimes {\mathfrak A}_2$. Conversely, let $f \in \Pol({\mathfrak A}_1 \boxtimes {\mathfrak A}_2)$.
Pick $a \in A_2$ and 
define $f_1(x_1,\dots,x_n) \coloneqq f((x_1,a),\dots,(x_n,a))_1$. 
Since 
$f$ preserves $=_1$, 
this definition does not depend on the choice of $a \in A_2$.  Note that $f_1 \in \Pol(\bA_1)$. The function $f_2 \in \Pol(\bA_2)$ is defined analogously, with the roles of $1$ and $2$ swapped, using a fixed element $b \in A_1$. 
Finally,  we observe that $f = (f_1,f_2)$. Indeed, if $(x_1,y_1), \dots, (x_n,y_n) \in A_1 \times A_2$, then
\begin{align*}
(f_1, f_2)((x_1, y_1),\dots (x_n, y_n)) &= (f((x_1, a), \dots, (x_n,a))_1, f((b, y_1), \dots, (b, y_n))_2) \\
&= (f((x_1, y_1), \dots, (x_n,y_n))_1, f((x_1, y_1), \dots, (x_1, y_n))_2) \\
& = f((x_1, y_1),\dots (x_n, y_n)),
\end{align*} 
where the middle equality uses that $f$ preserves $=_1$ and $=_2$.
The statement for endomorphisms and automorphisms of algebraic product structures can be proved analogously. 
\end{proof}

Under fairly general assumptions on ${\mathscr C}$ it holds that $\theta_i({\mathscr C})$, for $i \in \{1,2\}$, is closed.

\begin{proposition}\label{prop:closed}
Let $\bA_1$ and $\bA_2$ be countable $\omega$-categorical structures and let ${\mathscr C} \subseteq \Pol(\bA_1 \boxtimes \bA_2)$ be a closed set of 
operations on $A_1 \times A_2$. If ${\mathscr C}$ contains $\alpha f$ for every $f \in {\mathscr C}$ and every $\alpha \in \Aut(\bA_1 \boxtimes \bA_2)$, then $\theta_1({\mathscr C})$ and $\theta_2({\mathscr C})$ are closed.
\end{proposition}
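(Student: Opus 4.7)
By Proposition~\ref{prop:product-hom}, every element of $\mathscr{C}$ factors uniquely as a pair $(f_1,f_2)$ with $f_i\in\Pol(\bA_i)$, so $\theta_1$ and $\theta_2$ are well-defined on $\mathscr{C}$. By symmetry, it suffices to show that $\theta_1(\mathscr{C})$ is closed. Fix a $k$-ary operation $g$ in $\overline{\theta_1(\mathscr{C})}$; the goal is to exhibit some $h\in\Pol(\bA_2)$ of arity $k$ with $(g,h)\in\mathscr{C}$, since then $g=\theta_1(g,h)\in\theta_1(\mathscr{C})$. The plan is a diagonal compactness argument, using $\omega$-categoricity of $\bA_2$ to normalise the second coordinates via automorphisms.

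Since $\bA_1$ and $\bA_2$ are countable, fix exhausting chains $S_1^{(1)}\subseteq S_1^{(2)}\subseteq\cdots$ of $A_1$ and $S_2^{(1)}\subseteq S_2^{(2)}\subseteq\cdots$ of $A_2$ by finite sets. For each $n$ the hypothesis on $g$ yields some $(g_n,h_n)\in\mathscr{C}$ with $g_n|_{(S_1^{(n)})^k}=g|_{(S_1^{(n)})^k}$. Now by Proposition~\ref{prop:product-hom} every $(\id_{A_1},\alpha)$ with $\alpha\in\Aut(\bA_2)$ lies in $\Aut(\bA_1\boxtimes\bA_2)$, so the invariance assumption on $\mathscr{C}$ gives $(g_n,\alpha\circ h_n)\in\mathscr{C}$. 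Combined with $\omega$-categoricity of $\bA_2$---which makes $\Aut(\bA_2)$ act on $A_2^m$ with finitely many orbits for every finite $m$---a pigeonhole-and-normalise recursion produces, stage by stage, functions $\phi_n\colon(S_2^{(n)})^k\to A_2$ with $\phi_n|_{(S_2^{(n-1)})^k}=\phi_{n-1}$ and an infinite subsequence of indices such that, after replacing the corresponding $h_m$ by $\alpha_m\circ h_m$ for suitable $\alpha_m\in\Aut(\bA_2)$, every surviving $h_m$ satisfies $h_m|_{(S_2^{(n)})^k}=\phi_n$. A convenient way to guarantee coherence of the $\phi_n$ is to single out, at each stage, one surviving index $m_0$ (with $\alpha_{m_0}:=\id$) and to set $\phi_n:=h_{m_0}|_{(S_2^{(n)})^k}$; the coherence relation $\phi_n|_{(S_2^{(n-1)})^k}=\phi_{n-1}$ is then automatic because that particular $h_{m_0}$ has been untouched at all earlier stages.

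Setting $h:=\bigcup_n\phi_n\colon A_2^k\to A_2$ yields a well-defined function, and a diagonal subsequence of the modified pairs $(g_n,h_n)$ agrees with $(g,h)$ on $(S_1^{(n)}\times S_2^{(n)})^k$ and hence converges pointwise to $(g,h)$. Closedness of $\mathscr{C}$ then gives $(g,h)\in\mathscr{C}$, finishing the proof. The main difficulty to watch is precisely the inductive maintenance of coherence in the middle paragraph: $\omega$-categoricity on its own only delivers convergence of the $h_m$ up to automorphism, and it is the hypothesis that $\mathscr{C}$ is closed under left composition with $\Aut(\bA_1\boxtimes\bA_2)$---channelled through the factorisation of that group provided by Proposition~\ref{prop:product-hom}---that lets those automorphisms be absorbed back into $\mathscr{C}$ without leaving it.
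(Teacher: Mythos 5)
Your overall strategy matches the paper's: build the second-coordinate function $h$ by a compactness/diagonalisation argument that exhausts $A_2$ by finite sets, uses oligomorphicity of $\Aut(\bA_2)$ for pigeonhole, and then absorbs the normalising automorphisms back into $\mathscr{C}$ via its $\Aut$-invariance and closedness. (The paper phrases the compactness via K\H{o}nig's tree lemma with $\sim$-classes modulo $\Aut(\bA_1\boxtimes\bA_2)$; restricting to $\{\id_{A_1}\}\times\Aut(\bA_2)$, as you do, is if anything a minor cleanup, since it visibly keeps the first coordinate fixed.) However, the coherence step is incorrectly justified. You claim that singling out a surviving index $m_0$ at each stage with $\alpha_{m_0}:=\id$ and setting $\phi_n:=h_{m_0}|_{(S_2^{(n)})^k}$ makes $\phi_n|_{(S_2^{(n-1)})^k}=\phi_{n-1}$ ``automatic because that particular $h_{m_0}$ has been untouched at all earlier stages.'' This cannot be arranged: the nested infinite survivor sets $I_1\supseteq I_2\supseteq\cdots$ need not have a common element, so no single $m_0$ persists through all stages, and any $m_0$ selected at stage $n$ will in general have already been replaced by $\alpha_{m_0}\circ h_{m_0}$ at one or more earlier stages. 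The stated reason for coherence is therefore false as given.

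The repair uses a different invariant and is essentially what the paper's inductive construction along the infinite branch of the K\H{o}nig tree does: maintain that after stage $n-1$ every surviving (already modified) $h_m$ restricted to $(S_2^{(n-1)})^k$ equals $\phi_{n-1}$. Then, at stage $n$, choosing any survivor $m_n$ and setting $\phi_n:=h_{m_n}|_{(S_2^{(n)})^k}$ extends $\phi_{n-1}$ because the \emph{current} $h_{m_n}$ already satisfies the invariant, regardless of whether it was touched before. One then finds, via oligomorphicity, automorphisms $\alpha_m\in\Aut(\bA_2)$ aligning the other survivors with $\phi_n$ on $(S_2^{(n)})^k$; these automatically fix the image of $\phi_{n-1}$ pointwise, since they relate two maps that both restrict to $\phi_{n-1}$, so the invariant is preserved. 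In short, coherence comes from the normalisation performed at the previous stage, not from any $h_m$ being left alone; as written, your proof has a genuine gap here.
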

\begin{proof} 
It suffices to show the statement for $i = 1$.
Let $f \in \overline{\theta_1({\mathscr C})}$ be of arity $k$. 
Fix an enumeration $p_0,p_1,\dots$
of $A_1$ and an enumeration $q_0,q_1,\dots$ of $A_2$. 
Then for every $n \in {\mathbb N}$ there exists $g_n \in \Pol({\mathscr C})$ such that $\theta_1(g_n)|_{\{p_0,\dots,p_n\}^k} = f|_{\{p_0,\dots,p_n\}^k}$. 
In the proof we use {\em K\H{o}nig's tree lemma}: if $T$ is a rooted tree with an infinite number of nodes and
each node has a finite number of children, then $T$ contains a branch of infinite length. To define the tree $T$, let $S_n := \{p_0,\dots,p_n\} \times \{q_0,\dots,q_n\}$ and consider the functions ${\mathcal F} \coloneqq \bigcup_{n \in {\mathbb N}} {\mathcal F}_{n}$ where 
$${\mathcal F}_{n} \coloneqq \{ g_m|_{S^k_n} \mid m \geq n \}.$$
For $h_1,h_2 \in {\mathcal F}$ define 
$h_1 \sim h_2$ if there exists $\alpha \in \Aut(\bA_1 \boxtimes \bA_2)$ such that $h_1 = \alpha h_2$. 
The vertices of the tree $T$ that we consider here are the equivalence classes of the equivalence relation $\sim$. The edges of $T$ are defined as follows: 
if $h_1 \in {\mathcal F}_{\ell}$ 
is the restriction of $h_2 \in {\mathcal F}_{\ell+1}$, then 
the equivalence class of $h_1$ and the equivalence class of $h_2$ are linked by an edge. 
Clearly the tree thus defined is infinite, and by the oligomorphicity of $\Aut(\bA_1 \boxtimes \bA_2)$ there are finitely many $\sim$-classes on ${\mathcal F}_{\ell}$, which implies that each vertex in $T$ has finitely many neighbours. 
K\H{o}nig's tree lemma implies the existence of an infinite path in the tree.

We construct an operation $g$ such that $g|_{S_n^k}$ is in a $\sim$-class that lies on this infinite path, for every $n \in \mathbb{N}$. This is trivial to achieve for $n = 0$. Now suppose inductively that $g$ is already defined on $S^k_n$, for $n \geq 0$. By the construction of the infinite path, we find representatives $h_n$ and $h_{n+1}$ of the $n$-th and the $(n+1)$-st element on the path such that $h_n$ is a restriction of $h_{n+1}$. The inductive assumption gives us an automorphism $\alpha$ of $\bA$ such that $\alpha h_n(x) = g(x)$ for all $x \in S_n^k$. We set $g(x)$ to be $\alpha h_{n+1}(x)$ for all $x \in S_{n+1}^k$. Since $\mathscr C$ is closed, the operation $g$ thus defined is in $\mathscr C$. By the choice of $g$, for $n \in {\mathbb N}$, we have that $\theta_1(g) = f$.
Therefore, $f \in \theta_1(\mathscr C)$ as we wanted to prove.
\end{proof}

Another important feature of the algebraic product is that it preserves certain computational properties.

\begin{proposition}\label{prop:prod-alg}
For $i \in \{1,2\}$, let $\bA_i$ be a countable $\omega$-categorical structure with finite relational signature $\tau_i$. If both
$\Csp(\bA_1)$ and $\Csp(\bA_2)$ are in P,
 then $\Csp(\bA_1 \boxtimes \bA_2)$ is in P. 
\end{proposition}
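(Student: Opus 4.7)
The plan is to show that an instance of $\Csp(\bA_1 \boxtimes \bA_2)$ decomposes into two independent CSP instances, one for $\bA_1$ and one for $\bA_2$, which can then each be solved in polynomial time by assumption. The key structural fact, which is immediate from the definition of the algebraic product, is that every relation of $\bA_1 \boxtimes \bA_2$ is either $1$-determined (depends only on the first coordinates of its arguments) or $2$-determined; the signature of $\bA_1 \boxtimes \bA_2$ may thus be identified with a disjoint union $\tau_1 \sqcup \tau_2$ (adding symbols for $=_1$ and $=_2$), where constraints over $\tau_i$-symbols constrain only the $i$-th coordinate of a candidate homomorphism.

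More precisely, let $\bX$ be a finite input structure over the signature of $\bA_1 \boxtimes \bA_2$. Let $\bX_i$ denote the $\tau_i$-reduct of $\bX$ (viewed as an instance of $\Csp(\bA_i)$, with $=_i$ interpreted as equality on $A_i$, which can be encoded by identifying variables). The main observation is the following equivalence: there exists a homomorphism $h \colon \bX \to \bA_1 \boxtimes \bA_2$ if and only if there exist homomorphisms $h_1 \colon \bX_1 \to \bA_1$ and $h_2 \colon \bX_2 \to \bA_2$. Indeed, if $h$ exists, then $h_i := \pi_i \circ h$ works by the definition of the relations of $\bA_1 \boxtimes \bA_2$; conversely, given $h_1$ and $h_2$, setting $h(x) := (h_1(x), h_2(x))$ gives a homomorphism to $\bA_1 \boxtimes \bA_2$, since each constraint of $\bX$ is $i$-determined for some $i \in \{1,2\}$ and hence satisfied as soon as $h_i$ is a homomorphism into $\bA_i$.

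Given the equivalence above, the algorithm is straightforward: compute $\bX_1$ and $\bX_2$ in linear time, run the assumed polynomial-time algorithms for $\Csp(\bA_1)$ and $\Csp(\bA_2)$ on $\bX_1$ and $\bX_2$ respectively, and accept iff both accept. The total running time is polynomial in the size of $\bX$.

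There is no substantial obstacle; the only minor technical point is handling the equality symbols $=_1$ and $=_2$ in the reduction, and making sure that the finiteness of $\tau_1$ and $\tau_2$ is enough to regard the signature of $\bA_1 \boxtimes \bA_2$ as effectively finite (so that ``the'' polynomial-time algorithms for $\Csp(\bA_i)$ apply to the reducts $\bX_i$). Neither the $\omega$-categoricity of $\bA_1$ and $\bA_2$ nor Proposition~\ref{prop:product-hom} is needed for this result; the statement is a purely combinatorial property of the algebraic product construction. (The $\omega$-categoricity hypothesis becomes relevant only when one wants to lift the hypothesis to CSPs of first-order expansions via polymorphism clones, as will be needed later in the paper.)
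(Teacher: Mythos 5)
Your proof is correct and follows essentially the same approach as the paper: decompose the instance into its $\tau_1$- and $\tau_2$-parts (handling the symbols $=_1$ and $=_2$), solve each factor's CSP with the assumed polynomial-time algorithm, and accept iff both accept, using the observation that $h=(h_1,h_2)$ is a homomorphism to the product iff each $h_i$ is a homomorphism to $\bA_i$. Your closing remark that $\omega$-categoricity plays no role in this particular proposition is accurate and worth noting, though the paper does not comment on it.
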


\begin{proof}
Without loss of generality, we may assume that the signatures $\tau_1$ and $\tau_2$ are disjoint and that $\bA_1 \boxtimes \bA_2$ is a $(\tau_1 \cup \tau_2 \cup \{=_1, =_2\})$-structure. Let $\bC$ be an instance of $\Csp(\bA_1 \boxtimes \bA_2)$. For $i\in \{1,2\}$, let $\bC_i$ be the
$(\tau_i \cup \{=_i\})$-reduct of $\bC$. 
For each $i \in \{1,2\}$, run an algorithm for $\Csp(\bA_i)$ on the input $\bC_i$. Accept the instance $\bC$ if and only if both algorithms accept.

The correctness of the algorithm follows from the fact that the map $h_i \colon C \to A_i$ is a homomorphism from $\bC_i$ to $\bA_i$ for both $i \in \{1,2\}$ if and only if the map $h \colon C \to A_1 \times A_2$ such that $h(c)=(h_1(c),h_2(c))$ for every $c \in C$ is a homomorphism from $\bC$ to $\bA_1 \times \bA_2$.
\end{proof}

\begin{ex}
CSP$({\mathbb Q}; <)$ is solvable in polynomial time by using any cycle detection algorithm for directed graphs. Hence, Proposition~\ref{prop:prod-alg} implies that
CSP$(({\mathbb Q}; <)^{(n)})$ is polynomial-time solvable for arbitrary $n \geq 1$, too.
\end{ex}

\begin{corollary}\label{cor:prod-alg}
Let $\bA_1, \bA_2, \bB$ be countable $\omega$-categorical structures with finite relational signature such that
$\Pol(\bB)$ contains $\Pol(\bA_1) \times \Pol(\bA_2)$. 
If both
$\Csp(\bA_1)$ and $\Csp(\bA_2)$ are in P,
 then $\Csp(\bB)$ is in P, too. 
\end{corollary}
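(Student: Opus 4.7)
The plan is to reduce $\Csp(\bB)$ to $\Csp(\bA_1 \boxtimes \bA_2)$ and then invoke Proposition~\ref{prop:prod-alg}. The corollary is essentially a formal combination of three things already established in the excerpt: Proposition~\ref{prop:product-hom} (which identifies the direct product of polymorphism clones with the polymorphism clone of the algebraic product), the standard $\omega$-categorical Galois correspondence between pp-definability and polymorphisms (recalled in Section~\ref{sect:cat}), and Lemma~\ref{lem:pp-red}.

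First, I would rewrite the hypothesis using Proposition~\ref{prop:product-hom}: the inclusion $\Pol(\bA_1) \times \Pol(\bA_2) \subseteq \Pol(\bB)$ is the same as $\Pol(\bA_1 \boxtimes \bA_2) \subseteq \Pol(\bB)$. By Remark~\ref{rem:hom}, $\bA_1 \boxtimes \bA_2$ is itself countable and $\omega$-categorical (as both $\bA_1$ and $\bA_2$ are), and it has the same domain $A_1\times A_2$ as $\bB$ (which is implicit in the hypothesis, since $\Pol(\bA_1)\times\Pol(\bA_2)$ acts on $A_1\times A_2$).

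Second, I would apply the Galois correspondence: since $\bA_1\boxtimes \bA_2$ is $\omega$-categorical, every relation over $A_1\times A_2$ that is preserved by all of $\Pol(\bA_1\boxtimes \bA_2)$ has a primitive positive definition in $\bA_1 \boxtimes \bA_2$. Each relation of $\bB$ is preserved by every polymorphism in $\Pol(\bB)$ and hence, via the inclusion, by every polymorphism in $\Pol(\bA_1\boxtimes \bA_2)$; so every relation of $\bB$ is primitively positively definable in $\bA_1\boxtimes \bA_2$. Since $\bB$ has a finite relational signature, Lemma~\ref{lem:pp-red} yields a polynomial-time reduction from $\Csp(\bB)$ to $\Csp(\bA_1\boxtimes \bA_2)$.

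Third, Proposition~\ref{prop:prod-alg} gives that $\Csp(\bA_1\boxtimes \bA_2)$ is in P whenever $\Csp(\bA_1)$ and $\Csp(\bA_2)$ are, so $\Csp(\bB)$ is in P as well. I do not anticipate any real obstacle here: once Proposition~\ref{prop:product-hom} is invoked to convert the product-of-clones hypothesis into a statement about $\Pol(\bA_1\boxtimes \bA_2)$, everything follows from standard tools already collected in Sections~\ref{sect:csp}--\ref{sect:algprod}. The only point requiring mild care is checking the $\omega$-categoricity of $\bA_1\boxtimes \bA_2$ so that the pp-definability characterisation applies, but this is exactly what Remark~\ref{rem:hom} supplies.
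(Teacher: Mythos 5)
Your proposal matches the paper's proof exactly: the paper also converts the hypothesis via Proposition~\ref{prop:product-hom}, invokes the $\omega$-categorical Galois correspondence to get pp-definability of the relations of $\bB$ in $\bA_1 \boxtimes \bA_2$, and then applies Lemma~\ref{lem:pp-red} followed by Proposition~\ref{prop:prod-alg}. You simply spell out the $\omega$-categoricity check (Remark~\ref{rem:hom}) that the paper leaves implicit.
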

\begin{proof}
Since $\Pol(\bB)$ contains $\Pol(\bA_1) \times \Pol(\bA_2) = \Pol(\bA_1 \boxtimes \bA_2)$, all relations of $\bB$ are primitively positively definable in $\bA_1 \boxtimes \bA_2$ \cite{BodirskyNesetril}. Hence, there is a polynomial-time reduction from 
$\Csp(\bB)$ to $\Csp(\bA_1 \boxtimes \bA_2)$ by Lemma~\ref{lem:pp-red} so $\Csp(\bB)$ is in P
by Proposition~\ref{prop:prod-alg}.
\end{proof}

The following lemma enables us to use Lemma~\ref{lem:excl} for first-order expansions of algebraic products.

\begin{lemma}\label{lem:prod-mc-cores}
For $i \in \{1,\dots, n\}$, let $\bA_i$ be a
structure such that $\overline{\Aut(\bA_i)}=\End(\bA_i)$. If $\bB$ is a first-order expansion of $\bA_1 \boxtimes \dots \boxtimes \bA_n$, then $\overline{\Aut(\bB)}=\End(\bB)$.
\end{lemma}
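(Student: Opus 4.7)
The plan is to exploit the product decompositions of both automorphism and endomorphism monoids given by (the $n$-fold generalisation of) Proposition~\ref{prop:product-hom}. Write $\bA \coloneqq \bA_1 \boxtimes \cdots \boxtimes \bA_n$. Since $\bB$ is a first-order expansion of $\bA$, every relation of $\bB$ is preserved by $\Aut(\bA)$, so $\Aut(\bB) = \Aut(\bA) = \Aut(\bA_1) \times \cdots \times \Aut(\bA_n)$. Likewise, every endomorphism of $\bB$ preserves all relations of $\bA$, so $\End(\bB) \subseteq \End(\bA) = \End(\bA_1) \times \cdots \times \End(\bA_n)$. The inclusion $\overline{\Aut(\bB)} \subseteq \End(\bB)$ is immediate: $\Aut(\bB) \subseteq \End(\bB)$, and $\End(\bB)$ is closed in the pointwise convergence topology.

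For the nontrivial inclusion, I would take $f \in \End(\bB)$ and decompose it as $f = (f_1,\dots,f_n)$ with $f_i \in \End(\bA_i)$, using the observation above. To show $f \in \overline{\Aut(\bB)}$, fix a finite subset $S$ of the domain of $\bB$. Let $F_i \subseteq A_i$ be the (finite) projection of $S$ onto the $i$-th coordinate. By hypothesis $f_i \in \End(\bA_i) = \overline{\Aut(\bA_i)}$, so there exists $\alpha_i \in \Aut(\bA_i)$ that agrees with $f_i$ on $F_i$. Set $\alpha \coloneqq (\alpha_1,\dots,\alpha_n)$; this lies in $\Aut(\bA_1) \times \cdots \times \Aut(\bA_n) = \Aut(\bB)$. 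For any $s = (s^1,\dots,s^n) \in S$, we have $s^i \in F_i$, hence
\[
\alpha(s) = (\alpha_1(s^1),\dots,\alpha_n(s^n)) = (f_1(s^1),\dots,f_n(s^n)) = f(s),
\]
so $\alpha$ agrees with $f$ on $S$. As $S$ was arbitrary, $f \in \overline{\Aut(\bB)}$.

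There is no real obstacle here: the proof is essentially a coordinate-wise application of the hypothesis, glued together by the product decomposition of $\End(\bA)$ and $\Aut(\bA)$. The only point requiring mild care is that $\End(\bB)$ may be strictly smaller than $\End(\bA)$ (because of the additional first-order relations in $\bB$), but this does not affect the argument: the componentwise decomposition of an arbitrary $f \in \End(\bB)$ is inherited from its membership in $\End(\bA)$, and the approximating tuple $\alpha$ automatically lies in $\Aut(\bB) = \Aut(\bA)$ regardless of which extra relations $\bB$ carries.
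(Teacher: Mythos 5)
Your proof is correct and follows essentially the same approach as the paper: both rely on the product decompositions $\End(\bA_1 \boxtimes \cdots \boxtimes \bA_n) = \End(\bA_1) \times \cdots \times \End(\bA_n)$ and $\Aut(\bA_1 \boxtimes \cdots \boxtimes \bA_n) = \Aut(\bA_1) \times \cdots \times \Aut(\bA_n)$ from Proposition~\ref{prop:product-hom}, together with $\Aut(\bB) = \Aut(\bA_1 \boxtimes \cdots \boxtimes \bA_n)$ and $\End(\bB) \subseteq \End(\bA_1 \boxtimes \cdots \boxtimes \bA_n)$. The only difference is presentational: the paper asserts the identity $\overline{\Aut(\bA_1)} \times \cdots \times \overline{\Aut(\bA_n)} = \overline{\Aut(\bA_1) \times \cdots \times \Aut(\bA_n)}$ in a single line, whereas you unpack it via the explicit coordinate-wise finitary approximation argument.
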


\begin{proof}
Clearly, $\overline{\Aut(\bB)} \subseteq \End(\bB)$. The converse inclusion also holds since 
\begin{align*}
\End(\bB) \subseteq \End(\bA_1 \boxtimes \dots \boxtimes \bA_n) 
& = \End(\bA_1) \times \dots \times \End(\bA_n) 
&& \text{(Proposition~\ref{prop:product-hom})} \\
& = \overline{\Aut(\bA_1)} \times \dots \times \overline{\Aut(\bA_n)} \\
& =
\overline{\Aut(\bA_1) \times \dots \times \Aut(\bA_n)} \\
& =  
\overline{\Aut(\bA_1 \boxtimes \dots \boxtimes \bA_n)} &&  \text{(Proposition~\ref{prop:product-hom})} \\
&
=  \overline{\Aut(\bB)}. && 
\end{align*}
\end{proof}

Lemma~\ref{lem:prod-mc-cores} holds, for instance, for the structure
$(\Q;<) \boxtimes (\Q;<)$ since
$\overline{\Aut(\Q;<)}$ is equal to $\End(\Q;<)$: every endomorphism of $({\mathbb Q};<)$ is injective and preserves the complement of $<$, and by the  homogeneity of $(\Q;<)$ every restriction of an endomorphism to a finite subset of $\Q$ can be extended to an automorphism of $(\Q;<)$.

We finish this section by a corollary of Lemmas~\ref{lem:clone-homo}, \ref{lem:excl}, and \ref{lem:prod-mc-cores}, which will be useful in Section~\ref{sect:applications} to prove disjointness of cases in dichotomy results.

\begin{corollary}\label{cor:excl}
Let $\bC$ be a first-order expansion of $(\Q; <)^{(n)}$ and let $\bB$ be a structure with a pwnu polymorphism that is mutually primitively positively interpretable with $\bC$. Then $\Pol(\bB)$ does not have a uniformly continuous minor-preserving map to $\Pol(K_3)$.
\end{corollary}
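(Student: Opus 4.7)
The plan is to apply Lemma~\ref{lem:excl} to $\bC$, viewed as a first-order reduct of the homogeneous finite-signature structure $(\Q;<)^{(n)}$, and then to transfer the resulting conclusion from $\Pol(\bC)$ back to $\Pol(\bB)$ using the mutual primitive positive interpretability.

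First, I would verify the hypotheses of Lemma~\ref{lem:excl} for $\bC$. The structure $(\Q;<)^{(n)}$ is homogeneous by Remark~\ref{rem:hom} and has the finite signature $\{=_1,\dots,=_n,<_1,\dots,<_n\}$, and $\bC$ is a first-order reduct of it because it is a first-order expansion. Since $\overline{\Aut(\Q;<)}=\End(\Q;<)$, Lemma~\ref{lem:prod-mc-cores} yields $\overline{\Aut(\bC)}=\End(\bC)$.

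Second, I would transfer the pwnu polymorphism from $\bB$ to $\bC$. The mutual primitive positive interpretability together with Lemma~\ref{lem:clone-homo} supplies uniformly continuous clone homomorphisms $\xi\colon\Pol(\bB)\to\Pol(\bC)$ and $\eta\colon\Pol(\bC)\to\Pol(\bB)$. Since $\xi$ is a clone homomorphism, it preserves arities, the projections, and composition; in particular, the image $\xi(f)$ of a pwnu polymorphism $f$ of $\bB$ satisfies the identity~\eqref{eq:pwnu} with the unary operations $\xi(e_1),\dots,\xi(e_k)$, which are endomorphisms of $\bC$. Thus $\bC$ has a pwnu polymorphism, and Lemma~\ref{lem:excl} gives that $\Pol(\bC)$ has no uniformly continuous minor-preserving map to $\Pol(K_3)$.

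Finally, suppose for contradiction that $\Pol(\bB)$ did admit a uniformly continuous minor-preserving map $\mu$ to $\Pol(K_3)$. Since clone homomorphisms are minor-preserving, and since the composition of uniformly continuous minor-preserving maps is again uniformly continuous and minor-preserving, $\mu\circ\eta$ would be a uniformly continuous minor-preserving map from $\Pol(\bC)$ to $\Pol(K_3)$, contradicting the previous step. The only mildly delicate point in the argument is that the witnessing endomorphisms in~\eqref{eq:pwnu} are correctly transported by $\xi$, but this is immediate from arity-preservation of clone homomorphisms, so I do not expect any real obstacle here.
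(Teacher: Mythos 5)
Your proposal is correct and follows essentially the same route as the paper's proof: apply Lemma~\ref{lem:clone-homo} in both directions to obtain uniformly continuous clone homomorphisms $\Pol(\bB)\to\Pol(\bC)$ and $\Pol(\bC)\to\Pol(\bB)$, use the first to transport the pwnu polymorphism to $\bC$, invoke Lemma~\ref{lem:prod-mc-cores} and Lemma~\ref{lem:excl} on $\bC$, and use the second (together with the fact that clone homomorphisms are minor-preserving and compositions of uniformly continuous minor-preserving maps remain such) to transfer the conclusion back to $\bB$. The extra detail you give about why $\xi$ transports the pwnu witnesses is just an expansion of the paper's appeal to the general fact, noted earlier in the paper, that clone homomorphisms preserve identities such as~\eqref{eq:pwnu}.
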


\begin{proof}
By Lemma~\ref{lem:clone-homo}, there is a uniformly continuous clone homomorphism from $\Pol(\bB)$ to $\Pol(\bC)$. Therefore, $\Pol(\bC)$ has a pwnu polymorphism.
By Lemma~\ref{lem:prod-mc-cores}, $\overline{\Aut(\bC)}=\End(\bC)$. Since $(\Q;<)^{(n)}$ is homogeneous, $\Pol(\bC)$ does not have a uniformly continuous minor-preserving map to $\Pol(K_3)$ by Lemma~\ref{lem:excl}. Finally, by Lemma~\ref{lem:clone-homo}, there is a uniformly continuous clone homomorphism from $\Pol(\bC)$ to $\Pol(\bB)$ and therefore $\Pol(\bB)$ does not have a uniformly continuous minor-preserving map to $\Pol(K_3)$ either.
 \end{proof}

\subsection{$i$-Determined Clauses} \label{sec:determinedclauses}

In the complexity analysis of first-order expansions of algebraic products $\bA_1 \boxtimes \bA_2$, we would like to use as much  information about first-order expansions of $\bA_1$ and of $\bA_2$ as possible; in this context, $i$-determined clauses are of particular relevance. In this section we collect several general observations about definability by (conjunctions of) $i$-determined clauses. 
Throughout this section, let
$\bA_i$ be a $\tau_i$-structure for $i \in \{1,2\}$.

We begin by making a definition.
If $\phi$ is a conjunction of  $i$-determined clauses over $\bA_1 \boxtimes \bA_2$, then  we let $\hat \phi$  denote the $\tau_i$-formula obtained from
replacing each atomic formula
$R(x_1,\dots,x_k)$ by $\psi(x_1,\dots,x_k)$ where 
$\psi$ is the atomic $\tau_i$-formula for which $R$ has been introduced in $\bA_1 \boxtimes \bA_2$. 
Let us reconsider Example~\ref{ex:algprod} and a formula
$\phi=(x =_1 y \vee x =_1 z) \wedge (x <_1 y \vee x <_1 z \vee x <_1 u)$. Every clause
in $\phi$ is $1$-determined and $\hat \phi$ equals
$(x = y \vee x = z) \wedge (x < y \vee x < z \vee x < u)$

\begin{lemma}\label{lem:pres}
Let
$f \in \Pol(\bA_1 \boxtimes \bA_2)$.
A conjunction of $i$-determined clauses $\phi$ is preserved by 
$f$ if and only if
$\hat \phi$ is preserved by $\theta_i(f)$.
\end{lemma}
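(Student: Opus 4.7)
The plan is to prove both directions by a direct unpacking of the definition of $\bA_1 \boxtimes \bA_2$, using Proposition~\ref{prop:product-hom} to write $f = (f_1, f_2)$ with $f_i = \theta_i(f) \in \Pol(\bA_i)$.

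The key observation I would establish first is a \emph{projection lemma}: if $\phi(x_1,\dots,x_n)$ is a conjunction of $i$-determined clauses over the signature of $\bA_1 \boxtimes \bA_2$, then for any tuple $((u_1,v_1),\dots,(u_n,v_n)) \in (A_1 \times A_2)^n$,
\[ \bA_1 \boxtimes \bA_2 \models \phi((u_1,v_1),\dots,(u_n,v_n)) \iff \bA_i \models \hat\phi(w_1,\dots,w_n), \]
where $w_j = u_j$ if $i=1$ and $w_j = v_j$ if $i=2$. This is immediate from the definition of $\bA_1 \boxtimes \bA_2$: every relation symbol introduced for an atomic $\tau_i$-formula $\psi$ is interpreted as the relation depending only on the $i$-th coordinate, so the truth value of an $i$-determined literal at the pair-tuple coincides with the truth value of the corresponding $\tau_i$-literal at the $i$-th coordinates. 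Extending to clauses and conjunctions is trivial.

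For the ``only if'' direction, assume $f$ preserves $\phi$ and let $\bar u^{(1)},\dots,\bar u^{(k)} \in A_i^n$ satisfy $\hat\phi$ in $\bA_i$. Since the other factor $A_{3-i}$ is nonempty, pick any element $a \in A_{3-i}$ and build pair-tuples whose $i$-th coordinate is $\bar u^{(j)}$ and whose $(3-i)$-th coordinate is constantly $a$. By the projection lemma these pair-tuples satisfy $\phi$, so applying $f$ coordinatewise gives a pair-tuple satisfying $\phi$. Projecting onto the $i$-th coordinate and applying the projection lemma again, together with $\theta_i(f) = f_i$, yields that $f_i(\bar u^{(1)},\dots,\bar u^{(k)})$ satisfies $\hat\phi$. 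The ``if'' direction is completely symmetric: take $k$ pair-tuples satisfying $\phi$, apply the projection lemma to extract $k$ tuples of $i$-th coordinates satisfying $\hat\phi$, use that $\theta_i(f)$ preserves $\hat\phi$, and apply the projection lemma in reverse to conclude that $f$ applied coordinatewise produces a pair-tuple satisfying $\phi$.

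There is essentially no obstacle here — the statement is a direct structural consequence of how $i$-determined clauses are defined in the algebraic product and of the decomposition $f = (f_1,f_2)$ granted by Proposition~\ref{prop:product-hom}. The only minor point worth mentioning explicitly is the need for $A_{3-i}$ to be nonempty in the padding step of the ``only if'' direction, which is harmless in our setting since all structures under consideration have infinite domain.
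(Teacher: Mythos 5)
Your proof is correct and follows essentially the same route as the paper: establish the projection biconditional (that an $i$-determined conjunction holds at a pair-tuple iff its image $\hat\phi$ holds at the $i$-th coordinates), observe that $\theta_i(f)$ on the projections is the projection of $f$ on the pair-tuples, and conclude. The only difference is presentational — you spell out the padding step (choosing a constant in the other coordinate) needed for the ``only if'' direction, which the paper compresses into ``This implies the statement.''
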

\begin{proof}
Let $x_1,\dots,x_m$ be the free variables of $\phi$. 
Let $((a^{1,1}_1,a^{1,1}_2),\dots,(a^{k,m}_1,a^{k,m}_2)) \in (A_1 \times A_2)^{k \times m}$ and let $f$
be of arity $k$. 
For $j \in \{1,\dots,k\}$, the tuple  $((a^{j,1}_1,a^{j,1}_2),\dots,(a^{j,m}_1,a^{j,m}_2))$ satisfies $\phi$
in $\bA_1 \boxtimes \bA_2$
if and only if 
$(a^{j,1}_i,\dots,a^{j,m}_i)$ satisfies $\hat \phi$ in 
$\bA_i$. Likewise, 
$$\big(f((a^{1,1}_1,a^{1,1}_2),\dots,(a^{k,1}_1,a^{k,1}_2)),\dots, f((a^{1,m}_1,a^{1,m}_2),\dots,(a^{k,m}_1,a^{k,m}_2))\big)$$ satisfies $\phi$ in $\bA_1 \boxtimes \bA_2$ if and only if $\big (\theta_i(f)(a^{1,1}_i,\dots,a^{k,1}_i),\dots, \theta_i(f)(a^{1,m}_i,\dots,a^{k,m}_i) \big )$ satisfies $\hat \phi$ in $\bA_i$. This implies the statement.
\end{proof}

We continue by analysing the polymorphisms of first-order expansion of $\bA_1 \boxtimes \bA_2$ and definability via $i$-determined clauses.
The proof is based on {\em reduced} formulas (see, e.g., \cite[Definition 42]{BodChenPinsker}):
a quantifier-free formula $\phi$ (over a structure $\bB$) in CNF is called reduced if all formulas obtained from $\phi$ by removing one of the literals from one of the clauses in the formula are not equivalent to $\phi$ (over $\bB$). Clearly, every quantifier-free formula is equivalent to a reduced formula (in particular, if a formula is unsatisfiable, the reduced formula contains an empty clause). Satisfiable reduced formulas $\phi$ have the property that for every literal in $\phi$ there exists a satisfying assignment for $\phi$ that satisfies the literal, but satisfies no other literal of the same clause. 
For example, the formula $(x<y \vee x=y) \wedge (y<x \vee z<x)$ over $(\Q; <)$ is not reduced, but it is equivalent to the reduced formula $(x<y \vee x=y) \wedge (z<x)$.

\begin{lemma}\label{lem:factors}
Suppose that $\bA_1$ and $\bA_2$ are structures with quantifier elimination and 
let $\bB$ be a first-order expansion of $\bA_1 \boxtimes \bA_2$. 
Then, the following are equivalent.
\begin{enumerate}
    \item Every relation of $\bB$ has a definition by a conjunction of clauses each of which is either $1$-determined or $2$-determined. 
    \item 
$\Pol(\bB) = \theta_1(\Pol(\bB)) \times \theta_2(\Pol(\bB))$. 
\item $\Pol(\bB)$ contains $(\pi^2_1,\pi^2_2)$.
\end{enumerate}
The implication from 1 to 2 and from 2~to 3~also hold without the assumption that $\bA_1$ and $\bA_2$ have quantifier elimination. 
\end{lemma}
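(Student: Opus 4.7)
The plan is to prove the cyclic implications 1 $\Rightarrow$ 2 $\Rightarrow$ 3 $\Rightarrow$ 1; only the last step will need quantifier elimination, and the first two will work in full generality.

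For 1 $\Rightarrow$ 2, the inclusion $\Pol(\bB) \subseteq \theta_1(\Pol(\bB)) \times \theta_2(\Pol(\bB))$ is immediate from Proposition~\ref{prop:product-hom}, since every $f \in \Pol(\bB) \subseteq \Pol(\bA_1) \times \Pol(\bA_2)$ has the form $(\theta_1(f), \theta_2(f))$. For the reverse inclusion, given $g_1 \in \theta_1(\Pol(\bB))$ and $g_2 \in \theta_2(\Pol(\bB))$ of a common arity $k$, with witnesses $(g_1, h_2), (h_1, g_2) \in \Pol(\bB)$, I will show $(g_1, g_2)$ preserves each relation $R$ of $\bB$. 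Writing $R$ as $\bigwedge_\ell \phi_\ell$ with each $\phi_\ell$ either $1$- or $2$-determined, my plan for a $1$-determined clause is to observe that $(g_1, g_2)(\bar a_1, \ldots, \bar a_k)$ and $(g_1, h_2)(\bar a_1, \ldots, \bar a_k)$ agree in their first coordinates; since the latter tuple lies in $R$ and a $1$-determined clause depends only on first coordinates (cf.\ Lemma~\ref{lem:pres}), both satisfy $\phi_\ell$. The $2$-determined case is symmetric via $(h_1, g_2)$. Implication 2 $\Rightarrow$ 3 is a one-line check: the binary projections $\pi^2_1, \pi^2_2$ of $\bA_1 \boxtimes \bA_2$ are respectively $(\pi^2_1, \pi^2_1)$ and $(\pi^2_2, \pi^2_2)$, both of which lie in $\Pol(\bB)$; applying $\theta_i$ yields $\pi^2_1, \pi^2_2 \in \theta_i(\Pol(\bB))$ for $i \in \{1,2\}$, whence $(\pi^2_1, \pi^2_2) \in \theta_1(\Pol(\bB)) \times \theta_2(\Pol(\bB)) = \Pol(\bB)$.

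The substantive direction is 3 $\Rightarrow$ 1. First I will establish that the swapped operation $(\pi^2_2, \pi^2_1)$ also lies in $\Pol(\bB)$: it is the composition $(\pi^2_1, \pi^2_2)((\pi^2_2, \pi^2_2),(\pi^2_1, \pi^2_1))$, where the two inner operations are binary projections of $\bA_1 \boxtimes \bA_2$. Next, using the QE of $\bA_1$ and $\bA_2$, a routine argument (put a formula in DNF, split each disjunct into its $\tau_1$- and $\tau_2$-parts, and note that an existential over $A_1 \times A_2$ decomposes into independent existentials over $A_1$ and $A_2$ acting on the corresponding parts) shows that $\bA_1 \boxtimes \bA_2$ itself admits QE. Thus every relation $R$ of $\bB$ is defined by a quantifier-free formula, which I may put in reduced CNF as $\phi = \bigwedge_\ell C_\ell$. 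Suppose for contradiction some clause $C$ contains both a $1$-determined literal $L_1$ and a $2$-determined literal $L_2$. By the reducedness property, I choose $a, b \in R$ such that $L_1$ is the only literal of $C$ satisfied by $a$ and $L_2$ is the only one satisfied by $b$. Applying the swapped operation coordinatewise to $a$ and $b$ produces the tuple $d$ with $d_j = (b_j[1], a_j[2])$, which must lie in $R$ and therefore satisfy $C$. However, every $1$-determined literal of $C$ depends only on first coordinates, which in $d$ come from $b$, and $b$ satisfies no $1$-determined literal of $C$; symmetrically every $2$-determined literal of $C$ fails on $d$ because its second coordinates come from $a$. Thus $d$ satisfies no literal of $C$, a contradiction.

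The key conceptual ingredient is the swapped operation $(\pi^2_2, \pi^2_1)$: applying $(\pi^2_1, \pi^2_2)$ directly to $a$ and $b$ would produce a tuple that still satisfies both $L_1$ and $L_2$, giving no contradiction. The main technical point is transferring QE from the factors to $\bA_1 \boxtimes \bA_2$, which is essentially bookkeeping once one notes that a single quantifier over $A_1 \times A_2$ decomposes coordinatewise into one quantifier over each factor.
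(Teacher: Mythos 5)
Your proof is correct and follows essentially the same route as the paper's: the $1 \Rightarrow 2$ argument identifies the factor-wise behaviour of $i$-determined clauses, $2 \Rightarrow 3$ is immediate, and $3 \Rightarrow 1$ is the contrapositive, using the reducedness of a quantifier-free CNF definition to find two assignments witnessing incompatible literals in a mixed clause. The only cosmetic difference is in $3 \Rightarrow 1$: you derive the swapped operation $(\pi^2_2,\pi^2_1) \in \Pol(\bB)$ by composition with projections and apply it to $(a,b)$, whereas the paper simply applies $(\pi^2_1,\pi^2_2)$ to the arguments in the opposite order, $(\alpha_2(x),\alpha_1(x))$ --- these produce the same mixed tuple, and your derivation, while not needed, is correct.
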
 
\begin{proof}
$1 \Rightarrow 2$.
Clearly, $\Pol(\bB) \subseteq \theta_1(\Pol(\bB)) \times \theta_2(\Pol(\bB))$. To prove the converse inclusion, 
let $g_1,g_2 \in \Pol(\bB)$ be of arity $k$ and let $\phi$ be a formula that defines an $m$-ary relation of $\bB$. 
We claim that $(\theta_1(g_1),\theta_2(g_2))$ preserves $\phi$.
Let $t^1 = (t^1_1,\dots,t^1_m),\dots, t^k = (t^k_1,\dots,t^k_m) \in B^m$ be tuples that satisfy $\phi$ in $\bB$.
Let $\psi$ be a conjunct of $\phi$; then $\psi$ is $i$-determined, for some $i \in \{1,2\}$. 
Since $g_i \in \Pol(\bB)$ we have that $g_i(t^1,\dots,t^k)$ 
satisfies $\psi$. 
Since for every $j \in \{1,\dots,m\}$ we have  $$g_i(t^1_j,\dots,t^k_j)_i = (\theta_1(g_1),\theta_2(g_2))(t^1_j,\dots,t^k_j)_i$$
and $\psi$ is $i$-determined, 
we have that $(\theta_1(g_1),\theta_2(g_2))(t^1,\dots,t^k)$ satisfies $\psi$ as well. 
This implies that $(\theta_1(g_1),\theta_2(g_2))$ preserves $\phi$ and shows that every operation in 
$\theta_1(\Pol(\bB)) \times \theta_2(\Pol(\bB))$
is a polymorphism of $\bB$. 

\smallskip

$2 \Rightarrow 3$. Trivial. 

\smallskip

$3 \Rightarrow 1$.
We show the contrapositive. Arbitrarily choose a relation $R$ in $\bB$. By assumption, $R$ has a quantifier-free first-order definition $\phi$ in $\bA_1 \boxtimes \bA_2$ and we may additionally assume that $\phi$ is written in reduced CNF. Suppose for contradiction that $\phi$ contains a clause which is neither $1$- nor $2$-determined, i.e., a clause $\psi$ that contains a $\tau_1$-literal $\psi_1$ and a $\tau_2$-literal $\psi_2$. By the assumption that $\phi$ is reduced, $\phi$ has for every $i \in \{1,2\}$ a satisfying assignment $\alpha_i$  such that $\alpha_i$ satisfies $\psi_i$ and does not satisfy all other literals of $\psi$.
But then $x \mapsto (\pi^2_1,\pi^2_2)(\alpha_2(x),\alpha_1(x))$ satisfies none of the literals of $\psi$, and hence $(\pi^2_1,\pi^2_2)$ is not in $\Pol(\bB)$. 
\end{proof}

The final result in this section
connects primitive positive definability and $i$-determined clauses.
We will first (in Lemmas~\ref{lem:wreath}--\ref{lem:extract}) prove a restricted result for 
$\bA_1 \boxtimes \bA_2$ and then extend it to $n$-fold products $\bA_1 \boxtimes \dots \boxtimes \bA_n$ in
Corollary~\ref{cor:extract-n}.
We use it
for defining {\em conjunction replacement} and verify its properties; this concept
is important in our
algorithmic results (see Propositions~\ref{prop:tract2} and \ref{prop:tract-n}). 
The proof exploits
the so-called \emph{wreath product}, which is a central 
group-theoretic construction.
Let us denote $\bA_1 \boxtimes \bA_2$ by $\bA$.
The wreath product will be used for
concisely describing the automorphism group of $\bA^{*j}$, where
$\bA^{*j}$, $j \in \{1,2\}$, denotes the structure with domain $A_1 \times A_2$ that contains all relations that are defined by a $j$-determined clause.
If $G$ is a permutation group on $A_1$ and $H$ is a permutation group on $A_2$, then the \emph{action of the (unrestricted) wreath product
$G \ltimes H^{A_1}$ on $A_1 \times A_2$} (also denoted in the literature by $H \, \text{Wr}_{A_1} \, G$) is the
permutation group 
$$\{ (a_1,a_2) \mapsto (\alpha(a_1),\beta_{a_1}(a_2))  \mid 
\alpha \in G, \beta_{a} \in H \text{ for every } a \in A_1 \}.$$

\begin{lemma}\label{lem:wreath} 
For any structures $\bA_1$ and $\bA_2$, let $\bA$ denote $\bA_1 \boxtimes \bA_2$. Then it holds that
\[\Aut(\bA^{*1}) = \Aut(\bA_1) \ltimes \Sym(A_2)^{A_1}\]
and if $\bA_1$ is homogeneous then so is $\bA^{*1}$. 
 The analogous statements hold for $\bA^{*2}$. 
\end{lemma}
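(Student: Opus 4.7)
The plan is to prove the two claimed equalities in turn and then derive homogeneity from the first part.

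For the inclusion $\Aut(\bA_1) \ltimes \Sym(A_2)^{A_1} \subseteq \Aut(\bA^{*1})$, I would observe that any permutation $\pi$ of the form $(a_1, a_2) \mapsto (\alpha(a_1), \beta_{a_1}(a_2))$ with $\alpha \in \Aut(\bA_1)$ and $\beta_a \in \Sym(A_2)$ acts on the first coordinate by $\alpha$. Since every relation of $\bA^{*1}$ is defined by a $1$-determined clause, whose literals only constrain the first coordinates of their arguments, such a $\pi$ preserves all relations of $\bA^{*1}$. A direct computation shows that $\pi^{-1}$ has the same form (with $\alpha^{-1}$ and the fibrewise inverses $\beta_{\alpha^{-1}(b_1)}^{-1}$), so the same reasoning yields $\pi \in \Aut(\bA^{*1})$.

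For the converse inclusion, given $\pi \in \Aut(\bA^{*1})$, I will use that $=_1$ is a relation of $\bA^{*1}$ (it is defined by the single-literal $1$-determined clause $x =_1 y$) and hence is preserved by $\pi$. This allows $\alpha(a_1) := \pi(a_1, a_2)_1$ to be defined independently of $a_2$, and the same reasoning applied to $\pi^{-1}$ shows that $\alpha$ is a permutation of $A_1$. Because every atomic $\tau_1$-formula yields a $1$-determined relation of $\bA^{*1}$ preserved in both directions by $\pi$, we get $\alpha \in \Aut(\bA_1)$. Setting $\beta_{a_1}(a_2) := \pi(a_1, a_2)_2$, injectivity of $\beta_{a_1}$ follows from that of $\pi$, and surjectivity follows from surjectivity of $\pi$ combined with preservation of $=_1$ (any preimage of $(\alpha(a_1), b_2)$ must have first coordinate $a_1$). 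Thus $\pi$ decomposes in the required wreath form.

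For the homogeneity statement, suppose $\bA_1$ is homogeneous and let $\iota\colon \bF_1 \to \bF_2$ be an isomorphism between finite substructures of $\bA^{*1}$. Since $\iota$ preserves $=_1$, it induces a well-defined map on first-coordinate projections; this map is an isomorphism between the corresponding finite substructures of $\bA_1$, because every atomic $\tau_1$-formula lifts to a $1$-determined relation of $\bA^{*1}$ that $\iota$ preserves. Homogeneity of $\bA_1$ extends this induced map to some $\alpha \in \Aut(\bA_1)$. For each element $a \in A_1$ that appears as a first coordinate in $F_1$, the map $\iota$ prescribes a partial injection on the second coordinate with equal-size finite domain and range, which extends to some $\beta_a \in \Sym(A_2)$; set $\beta_a := \id_{A_2}$ for all other $a$. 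The first part of the lemma then yields that $(a_1,a_2) \mapsto (\alpha(a_1),\beta_{a_1}(a_2))$ is an automorphism of $\bA^{*1}$ extending $\iota$.

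The only point requiring care is that the $\beta_a$ may be chosen independently across fibres; but this is precisely what the wreath product encodes, and it is forced by the fact that $1$-determined relations impose no constraints relating different fibres. I therefore expect no substantial obstacle beyond careful bookkeeping.
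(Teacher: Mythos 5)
Your proof is correct and follows essentially the same route as the paper's: both inclusions of the group equality are established by projecting to the first coordinate (using that $=_1$ is a $1$-determined relation to make the projection well-defined), and homogeneity is derived by reducing to homogeneity of $\bA_1$ on first coordinates and then choosing fibrewise permutations $\beta_a$ freely, which is legitimate precisely because $1$-determined clauses impose no cross-fibre constraints. The only presentational difference is that you explicitly record that $\pi^{-1}$ lies in the wreath product; the paper relies implicitly on the group structure of the wreath product for the same fact.
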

\begin{proof}
To show that $\Aut(\bA_1) \ltimes \Sym(A_2)^{A_1} \subseteq \Aut(\bA^{*1})$, let $\psi(x_1,\dots,x_m)$ be a 1-determined clause and let $((s_1,t_1),\dots,(s_m,t_m))$ be a tuple that satisfies $\psi$;
i.e., there exists an atomic $\tau_1$-formula $\phi$
such that $(s_1,\dots,s_m)$ satisfies $\phi$. 
For $\alpha \in \Aut(\bA_1)$ and $\beta_{s_1},\dots,\beta_{s_m} \in \Sym(A_2)$,
note that $$((\alpha(s_1),\beta_{s_1}(t_1)),\dots,(\alpha(s_m),\beta_{s_m}(t_m)))$$ satisfies $\psi$ since
$(\alpha(s_1),\dots,\alpha(s_m))$ satisfies $\phi$. 

To show that $\Aut(\bA^{*1}) \subseteq \Aut(\bA_1) \ltimes \Sym(A_2)^{A_1}$, let $\gamma \in \Aut(\bA^{*1})$. 
Arbitrarily fix $t \in A_2$. 
The operation $\gamma$ preserves $=_1$ so the operation $\alpha$ defined by  $s \mapsto \gamma((s,t))_1$ is well-defined
and it is an automorphism of $\bA_1$. The operation $\gamma$ is bijective
so for every $s \in A_1$, the map 
$\beta_s$ defined by $t \mapsto \gamma(s,t)_2$ is a member of $\Sym(A_2)$. Since $\gamma$ equals the map
that sends $(s,t)$ to $(\alpha(s),\beta_s(t))$, this shows that $\gamma \in \Aut(\bA_1) \ltimes \Sym(A_2)^{A_1}$. 

Now suppose that $\bA_1$ is homogeneous. 
Let $\alpha$ be an isomorphism between $m$-element 
substructures of $\bA_1^{*1}$ that maps $(s_j,t_j)$ to $(s_j',t'_j)$ for $j \in \{1,\dots,m\}$ and $m \in {\mathbb N}$. Note that if $s_j = s_k$, then $s_j' = s_k'$ because $\alpha$ must preserve the relation $=_1$. Hence, the map $\alpha_1$ that sends $s_j$ to $s_j'$ is a well-defined map between finite subsets of $A_1$. Moreover, since $\alpha$ preserves all $i$-determined clauses, the map $\alpha_1$ is in fact an isomorphism between finite substructures of $\bA_1$, and hence can be extended to an automorphism $\beta$ of $\bA_1$ by the homogeneity of $\bA_1$. 
Note that if $p,q$ are distinct
and $s_p \neq s_q$, then
$\alpha(s_p,t_p) \neq \alpha(s_q,t_q)$, because $\alpha$ is injective. Hence, for each $s$ in the domain of $\alpha_1$ 
we may fix a bijection
$\gamma_s$ of $A_2$ such that
$\alpha(s,t) = (\beta(s),\gamma_s(t))$ for all
$t$ such that $(s,t)$ lies in the domain of $\alpha$. For all other $s \in A_1$ we may define $\gamma_s$ to be the identity. 
Then the map that sends $(a,b)$ to $(\beta(a),\gamma_a(b)) \in \Aut(\bA^{*1})$ extends $\alpha$. We conclude that $\bA^{*1}$ is homogeneous.
\end{proof}

\begin{lemma}\label{lem:wreath-pres} 
Assume that
$\bA_1$ and $\bA_2$ are countable, homogeneous, and $\omega$-categorical. 
 A relation $R$ can be defined by a conjunction of $1$-determined clauses 
over $\bA_1 \boxtimes \bA_2$ if and only if it is preserved by the wreath product 
 $$\Aut(\bA_1) \ltimes \Sym(A_2)^{A_1} $$
 in its action on $A_1 \times A_2$; the analogous characterisation holds for clauses that are $2$-determined. 
\end{lemma}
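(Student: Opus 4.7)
The plan is to treat $\bA^{*1}$ as the natural intermediate structure and reduce the characterisation to quantifier elimination there. The forward direction is immediate from Lemma~\ref{lem:wreath}: every 1-determined clause defines a relation of $\bA^{*1}$, and that lemma identifies $\Aut(\bA^{*1})$ with the wreath product $\Aut(\bA_1) \ltimes \Sym(A_2)^{A_1}$; any conjunction of 1-determined clauses is therefore automatically preserved by the wreath product.

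For the converse, I would first verify that $\bA^{*1}$ is $\omega$-categorical by showing that the wreath product acts oligomorphically on $A_1 \times A_2$. The key observation is that the orbit of an $n$-tuple $((s_1,t_1),\dots,(s_n,t_n))$ is determined by (i) the $\Aut(\bA_1)$-orbit of $(s_1,\dots,s_n)$ in $A_1^n$, and (ii) for each value $s$ occurring among the $s_i$, the equality pattern of the subtuple $(t_i)_{i : s_i = s}$. Possibility (i) yields finitely many possibilities by the $\omega$-categoricity of $\bA_1$, while (ii) yields finitely many because a finite index set has only finitely many partitions. That no cross-fiber information is needed stems from the permutations $\beta_s \in \Sym(A_2)$ in the wreath product being chosen independently at each first-coordinate value, so that $\Sym(A_2)^{A_1}$ acts transitively on tuples matching a given fiberwise equality pattern. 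Combined with the homogeneity of $\bA^{*1}$ supplied by Lemma~\ref{lem:wreath}, this yields quantifier elimination in $\bA^{*1}$, and hence $R$, being preserved by $\Aut(\bA^{*1})$, admits a quantifier-free definition over the signature of $\bA^{*1}$.

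It remains to unfold this quantifier-free definition into a conjunction of 1-determined clauses over the original signature of $\bA_1 \boxtimes \bA_2$. Putting the definition into CNF over $\bA^{*1}$ produces $\bigwedge_j \bigvee_k L_{j,k}$, where each $L_{j,k}$ is either a relation symbol $R_{\psi_{j,k}}$ corresponding to a 1-determined clause $\psi_{j,k}$, or its negation. Positive literals unfold to the 1-determined clauses themselves, whereas negative literals unfold via De Morgan to conjunctions of negated 1-determined literals. Distributing $\vee$ over $\wedge$ inside each outer clause then rewrites it as a conjunction of disjunctions of 1-determined literals, i.e.\ as a conjunction of 1-determined clauses, and the result follows. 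The 2-determined case is entirely symmetric.

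I expect the main delicate point to be the orbit analysis for oligomorphicity: one must use the independence of the permutations $\beta_s$ across distinct first-coordinate values to rule out any cross-fiber invariants on the $t_i$, while arguing that within each fiber only the equality type matters. The subsequent CNF and De Morgan rewriting is routine.
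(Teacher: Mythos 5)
Your proposal takes the same route as the paper's proof. The forward direction is identical, and for the converse both go through homogeneity of $\bA^{*1}$ (from Lemma~\ref{lem:wreath}) plus $\omega$-categoricity, then quantifier elimination, then an unfolding into $1$-determined clauses. The only deviation is how $\omega$-categoricity of $\bA^{*1}$ is obtained: the paper simply observes that $\bA^{*1}$ is a first-order reduct of $\bA_1 \boxtimes \bA_2$, which is $\omega$-categorical by Remark~\ref{rem:hom}, whereas you give a direct orbit count on the wreath product. Both are correct; the paper's route is shorter, and your fiberwise orbit description is exactly right.

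That said, the last step of your argument contains a gap that also lies behind the paper's one-sentence unfolding claim. A quantifier-free formula over $\bA^{*1}$ may contain literals built from the logical equality $x = y$ on $A_1 \times A_2$; this is \emph{not} a $1$-determined atomic formula (it would have to unfold to $x =_1 y \wedge x =_2 y$, and $=_2$ does not lie in the signature of $\bA^{*1}$). Your CNF/De~Morgan rewriting lists only literals of the form $R_{\psi_{j,k}}$ or their negations, so it tacitly assumes no equality literals occur; but your own orbit analysis shows they can, since the orbit of a tuple records the fiberwise equality pattern on second coordinates, and within a fiber that pattern is expressed via product equality. Concretely, the relation $R = \{(x,y) : x \neq_1 y \vee x =_2 y\}$ is preserved by $\Aut(\bA_1) \ltimes \Sym(A_2)^{A_1}$ -- if $\alpha(s_1) = \alpha(s_2)$ then $s_1 = s_2$, hence $\beta_{s_1}=\beta_{s_2}$ and, by the hypothesis, $t_1=t_2$, so $\beta_{s_1}(t_1)=\beta_{s_2}(t_2)$ -- yet it is not definable by any conjunction of $1$-determined clauses, because every such conjunction depends only on the first coordinates of its arguments while $R$ does not. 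So the lemma as literally stated is false for relations of arity at least two. The paper's sole application, Lemma~\ref{lem:extract}, survives because the relation $S$ constructed there is invariant under arbitrary (not merely bijective) modifications of the second coordinates, which forces its quantifier-free definition over $\bA^{*1}$ to avoid product equality; but that strictly stronger invariance is not part of the hypotheses of Lemma~\ref{lem:wreath-pres}, and neither your proof nor the paper's closes this gap.
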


\begin{proof}
The forward implication is an immediate consequence of Lemma~\ref{lem:wreath}. Conversely, suppose that $R$ is preserved by $\Aut(\bA_1) \ltimes \Sym(A_2)^{A_1}$. 
Recall that $\Aut(\bA^{*1}) = \Aut(\bA_1) \ltimes \Sym(A_2)^{A_1}$ by Lemma~\ref{lem:wreath}.
The structure $\bA_1$ is homogeneous by assumption so $\bA^{*1}$ is homogeneous, too. 
We have assumed that $\bA_1$ and $\bA_2$ are $\omega$-categorical so $\bA_1 \boxtimes \bA_2$
is $\omega$-categorical by Remark~\ref{rem:hom}.
Consequently, $\bA^{*1}$ is $\omega$-categorical since it is a first-order reduct of $\bA_1 \boxtimes \bA_2$. It follows that $R$ is first-order definable in 
$\bA^{*1}$, and even has a quantifier-free definition because $\bA^{*1}$ is homogeneous. This implies that $R$ can be defined by a conjunction of $1$-determined clauses over $\bA_1 \boxtimes \bA_2$. 
\end{proof}

\begin{lemma}\label{lem:extract}

Assume the following:

\begin{enumerate}
\item
$\bA_1$ and $\bA_2$ are countable, homogeneous, and $\omega$-categorical,

\item
$\bB$ is a first-order expansion of $\bA_1 \boxtimes \bA_2$, and

\item
$\phi_1 \wedge \phi_2$ is a formula that defines a relation $R$
over $\bA_1 \boxtimes \bA_2$ 
such that
$\phi_1$ is a conjunction of  $1$-determined clauses 
and such that $R$ is primitively positively definable over $\bB$. 
\end{enumerate}

Then there exists 
a conjunction $\psi_1$ of $1$-determined clauses 
which is equivalent to a  primitive positive formula over $\bB$ such that $\psi_1 \wedge \phi_2$ still defines $R$ over $\bA_1 \boxtimes \bA_2$.
An analogous statement holds when $\phi_2$ is a conjunction of $2$-determined clauses.
\end{lemma}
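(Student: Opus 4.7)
The plan is to construct the desired $\psi_1$ by extracting the ``$1$-determined content'' of the primitive positive definition of $R$ through an explicit pp-formula. Let $\rho(\vec{y})$ be a pp-definition of $R$ over $\bB$, and consider
\[\chi(\vec{x}) := \exists \vec{y}.\; \rho(\vec{y}) \wedge \bigwedge_{j=1}^m x_j =_1 y_j,\]
which is itself a pp-formula over $\bB$ since $=_1$ is a relation symbol of $\bA_1 \boxtimes \bA_2$ (and hence of $\bB$). The key observation is that over $\bA_1 \boxtimes \bA_2$ this formula defines exactly $p_1^{-1}(p_1(R))$, where $p_1 \colon (A_1 \times A_2)^m \to A_1^m$ denotes projection onto the first coordinates.

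Next I will argue that $p_1^{-1}(p_1(R))$ is already definable by a conjunction $\psi_1$ of $1$-determined clauses. Since $\phi_i$ is $i$-determined, a direct inspection shows $R_{\phi_i} = p_i^{-1}(R_i)$ for some relations $R_i$ on $A_i$, so that $R = p_1^{-1}(R_1) \cap p_2^{-1}(R_2)$. From this it is immediate that $p_1(R)$ equals $R_1$ if $R_2 \neq \emptyset$ and equals $\emptyset$ otherwise, and in both cases $p_1(R)$ is preserved by $\Aut(\bA_1)$. By $\omega$-categoricity, $p_1(R)$ is then first-order definable in $\bA_1$, and by homogeneity of $\bA_1$ it admits a quantifier-free definition in CNF. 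Lifting each such clause from $\tau_1$ to the signature of $\bA_1 \boxtimes \bA_2$ produces the desired $\psi_1$, which defines $p_1^{-1}(p_1(R))$ and is therefore equivalent to $\chi$, hence to a primitive positive formula over $\bB$.

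To conclude I will verify that $\psi_1 \wedge \phi_2$ defines $R$. The inclusion $R \subseteq R_{\psi_1} \cap R_{\phi_2}$ is immediate. Conversely, any $\vec{t}$ satisfying $\psi_1 \wedge \phi_2$ shares its first-coordinate projection with some $\vec{s} \in R$, so $p_1(\vec{t}) \in R_1$; combined with $\vec{t} \in R_{\phi_2}$ this places $\vec{t}$ in $p_1^{-1}(R_1) \cap p_2^{-1}(R_2) = R$. The main conceptual step---and what I expect to be the crux of the argument---is the product-like decomposition $R = p_1^{-1}(R_1) \cap p_2^{-1}(R_2)$ that follows from $\phi_1$ and $\phi_2$ being $1$- and $2$-determined respectively; it is precisely this structure that forces $p_1(R)$ to be $\Aut(\bA_1)$-invariant, so that the natural pp-candidate $\chi$ can be rewritten as a conjunction of $1$-determined clauses. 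Without such a decomposition $p_1(R)$ could fail to be definable in $\bA_1$ alone, and the approach would collapse.
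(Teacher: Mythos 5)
Your proof assumes that $\phi_2$ is a conjunction of $2$-determined clauses (you write ``Since $\phi_i$ is $i$-determined...''), but the lemma does not grant this: only $\phi_1$ is assumed to be a conjunction of $1$-determined clauses, while $\phi_2$ is an arbitrary (quantifier-free) formula. Consequently, the decomposition $R = p_1^{-1}(R_1) \cap p_2^{-1}(R_2)$ that you single out as ``the main conceptual step'' does not hold, and your final inference --- that $\vec t \in R_{\psi_1} \cap R_{\phi_2}$ lands in $p_1^{-1}(R_1) \cap p_2^{-1}(R_2) = R$ --- collapses. The worry in your last sentence, that without such a decomposition $p_1(R)$ ``could fail to be definable in $\bA_1$ alone'', is also misplaced: $p_1(R)$ is \emph{always} $\Aut(\bA_1)$-invariant, because any $(\alpha,\id_{A_2}) \in \Aut(\bA_1 \boxtimes \bA_2)$ preserves $R$ and satisfies $p_1 \circ (\alpha,\id_{A_2}) = \alpha \circ p_1$; by $\omega$-categoricity $p_1(R)$ is therefore always first-order definable in $\bA_1$, with no product decomposition of $R$ needed.

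The repair is to use the $1$-determinedness of $\phi_1$ where it actually bites, namely in showing $\psi_1 \Rightarrow \phi_1$. If $\vec t$ satisfies $\psi_1$, then $p_1(\vec t) = p_1(\vec s)$ for some $\vec s \in R$; since $\vec s$ satisfies the $1$-determined formula $\phi_1$, and satisfaction of a $1$-determined formula depends only on the image under $p_1$, $\vec t$ satisfies $\phi_1$ as well. Combined with $\vec t$ satisfying $\phi_2$, this gives $\vec t \in R$. With this correction, the rest of your argument is sound: your formula $\chi$ coincides with the paper's $\psi$, and replacing the paper's invocation of Lemma~\ref{lem:wreath-pres} (the wreath-product characterisation) by the direct observation that $p_1(R)$ is $\Aut(\bA_1)$-invariant, first-order definable in the homogeneous $\bA_1$, and hence quantifier-free definable, after which you lift the CNF to $1$-determined clauses over $\bA_1 \boxtimes \bA_2$, is a perfectly good and arguably more elementary route to the same $\psi_1$.
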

\begin{proof} 
Suppose that items 1--3 hold true.
Let $\psi(x_1,\dots,x_m)$ be the formula $$\exists y_1,\dots,y_m  \left (R(y_1,\dots,y_m) \wedge \bigwedge_{j \in \{1,\dots,m\}} x_j =_1 y_j \right).$$
Note that $\psi$ is equivalent to a primitive positive formula over $\bB$.

We first show that the relation $S$ defined by $\psi$ over $\bB$ can be defined by a conjunction of $i$-determined clauses over $\bA_1 \boxtimes \bA_2$. We use Lemma~\ref{lem:wreath-pres}. 
Let $((a^1_1,a^1_2),\dots,(a^m_1,a^m_2)) \in S$, let $\alpha \in \Aut(\bA_1)$, and 
let $\pi_1, \dots, \pi_m \in \Sym(A_2)$ be such that
  $\pi_p = \pi_q$ whenever $a^p_1 = a^q_1$ for $p,q \in \{1,\dots,m\}$. 
 We have to show that 
 $$t := \big ((\alpha(a^1_1),\pi_1(a^1_2)),\dots,(\alpha(a^m_1),\pi_m(a^m_2)) \big)$$ satisfies $\psi$ as well. 
 Let $(b^1_1,b^1_2)$,
 \dots,
 $(b^m_1,b^m_2) \in B$ be the witnesses from $B$ for the existentially quantified variables of $\psi$ that show that $\psi$ holds for $((a^1_1,a^1_2),\dots,(a^m_1,a^m_2))$. Then the tuple
 $((\alpha(b^1_1),b^1_2),\dots,(\alpha(b^m_1),b^m_2))$ 
 provides witnesses that show that the same formula holds for $t$:
 \begin{itemize}
     \item 
     $R((\alpha(b^1_1),b^1_2),\dots,(\alpha(b^m_1),b^m_2))$ holds in $\bB$ 
     because 
     $R((b^1_1,b^1_2),
 \dots,
 (b^m_1,b^m_2))$ holds in $\bB$ 
     and 
     $(\alpha,\id) \in \Aut(\bB)$, and 
     \item  $(\alpha(a^j_1),\pi(a^j_2)) =_1 (\alpha(b^j_1),
     b^j_2)$ holds for every $j \in \{1,\dots,m\}$ because 
      $(a^j_1,a^j_2) =_1 (b^j_1,b^j_2)$. 
 \end{itemize}
 Lemma~\ref{lem:wreath-pres} shows
 that there exists a conjunction $\psi_1$ of $1$-determined clauses that is equivalent to $\psi$.
 Clearly, $\psi_1$ implies $\phi_1$ (since $\phi_1$ is $1$-determined),
 so $\psi_1 \wedge \phi_2$ defines $R$ in $\bB$ and this concludes the proof.  

The case when $\phi_2$ is a conjunction of $2$-determined clauses can be treated analogously.
\end{proof}

We continue by generalising the previous lemma to algebraic products involving more than two structures. To this end, we need a particular notion that generalizes $i$-determined clauses. Let $S \subseteq \{1,\dots,n\}$. A clause over $\bA_1 \boxtimes \dots \boxtimes \bA_n$ is called \emph{$S$-determined} if all atomic formulas in the clause are built from symbols that have been introduced for atomic $\tau_i$-formulas for some $i\in S$.

\begin{corollary}\label{cor:extract-n}
Assume the following:
\begin{enumerate}
\item $\bA_1, \dots, \bA_n$ are countable, homogeneous, and $\omega$-categorical,
\item $\bB$ is a first-order expansion of $\bA_1 \boxtimes \dots \boxtimes \bA_n$, and
\item $\phi_{S} \wedge \phi$ is a formula that defines a relation $R$ over $\bA_1 \boxtimes \dots \boxtimes \bA_n$
such that for some $S \subseteq \{1, \dots, n\}$, the formula
$\phi_{S}$ is a conjunction of  $S$-determined clauses 
and such that $R$ is primitively positively definable over $\bB$. 
\end{enumerate}
Then there exists 
a conjunction $\psi_{S}$ of $S$-determined clauses
which is equivalent to a  primitive positive formula over $\bB$ such that $\psi_{S} \wedge \phi$ still defines $R$ over $\bA_1 \boxtimes \dots \boxtimes \bA_n$.
\end{corollary}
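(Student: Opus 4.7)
The plan is to reduce Corollary~\ref{cor:extract-n} to Lemma~\ref{lem:extract} by grouping the factors of the $n$-fold algebraic product according to $S$. Let $T := \{1,\dots,n\} \setminus S$ and, writing $S = \{i_1,\dots,i_k\}$ and $T = \{j_1,\dots,j_{n-k}\}$, set $\bC := \bA_{i_1} \boxtimes \cdots \boxtimes \bA_{i_k}$ and $\bD := \bA_{j_1} \boxtimes \cdots \boxtimes \bA_{j_{n-k}}$. Because the algebraic product is associative at the level of atomic formulas, $\bC \boxtimes \bD$ carries, up to the obvious identification of relation symbols, exactly the same signature and relations as $\bA_1 \boxtimes \cdots \boxtimes \bA_n$. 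Under this identification, $\bB$ is also a first-order expansion of $\bC \boxtimes \bD$, and a clause over $\bA_1 \boxtimes \cdots \boxtimes \bA_n$ is $S$-determined precisely when it is $1$-determined when read over $\bC \boxtimes \bD$ (with $\bC$ as the first factor).

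Next I would check that $\bC$ and $\bD$ themselves are countable, homogeneous and $\omega$-categorical: this follows by iterating Remark~\ref{rem:hom} over the factors indexed by $S$ and by $T$ respectively. With these preconditions in place, Lemma~\ref{lem:extract} applies to $\bC \boxtimes \bD$, the expansion $\bB$, and the decomposition $\phi_S \wedge \phi$ (now viewed as a $1$-determined conjunction together with a further formula). It produces a conjunction $\psi_S$ of $1$-determined clauses over $\bC \boxtimes \bD$ that is equivalent to a primitive positive formula over $\bB$ and for which $\psi_S \wedge \phi$ still defines $R$. Translating back via the identification above, $\psi_S$ is a conjunction of $S$-determined clauses over $\bA_1 \boxtimes \cdots \boxtimes \bA_n$, which is exactly the conclusion sought.

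The two extreme choices of $S$ require a brief separate treatment. If $S = \emptyset$, take $\psi_S$ to be the empty conjunction, which is equivalent to the primitive positive tautology $\exists y\,(y =_1 y)$ over $\bB$; trivially $\psi_S \wedge \phi$ then still defines $R$. If $S = \{1,\dots,n\}$, then $R$ is primitively positively definable in $\bB$ by hypothesis, and by Remark~\ref{rem:hom} the $n$-fold product is homogeneous, so $R$ admits a quantifier-free definition over it, which is automatically a conjunction of $\{1,\dots,n\}$-determined clauses and may be taken as $\psi_S$.

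The main obstacle is purely bookkeeping: one must carefully verify the identification of $\bA_1 \boxtimes \cdots \boxtimes \bA_n$ with $\bC \boxtimes \bD$ as relational structures, so that the notion of an $S$-determined clause over the former translates faithfully into that of a $1$-determined clause over the latter, and that primitive positive definability over $\bB$ is preserved by this change of viewpoint (which is immediate, since the domain, signature, and relations of $\bB$ are unchanged). Once this identification is in place, the result is a direct application of Lemma~\ref{lem:extract} together with the trivial treatment of the two boundary cases.
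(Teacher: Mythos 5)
Your proof is correct and takes essentially the same approach as the paper: partition the factors into those indexed by $S$ and those indexed by its complement, view the $n$-fold product as a two-fold product, and apply Lemma~\ref{lem:extract}. The paper simply assumes $S = \{1,\dots,p\}$ for some $p \geq 1$ without loss of generality, whereas you carry the indices explicitly, which is only a cosmetic difference.

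One small wrinkle in your edge-case handling: for $S = \emptyset$, the only $\emptyset$-determined clause is the empty disjunction $\bot$, so $\phi_S$ is either the empty conjunction (true) or a conjunction containing $\bot$. Taking $\psi_S$ to be the empty conjunction is only correct in the former case; in the latter (degenerate) case you would instead take $\psi_S = \bot$, which is itself an atomic formula and hence primitive positive. This is truly a corner case—and the paper itself silently assumes $S \neq \emptyset$—but since you chose to treat it explicitly, it is worth being precise.
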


\begin{proof}
Assume without loss of generality that $S = \{1, \dots, p\}$ for some $p \geq 1$. We can view the $n$-fold product $\bA_1\boxtimes \dots \boxtimes \bA_n$ as $\bB_1 \boxtimes \bB_2$, where $\bB_1 = \bA_1 \boxtimes \dots \boxtimes \bA_{p}$ and $\bB_2 = \bA_{p+1} \boxtimes \dots \boxtimes \bA_n$. Note that $\phi_1$ is a conjunction of $1$-determined clauses when considered as a formula over $\bB_1 \boxtimes \bB_2$. By Lemma~\ref{lem:extract}, there exists a conjunction $\psi_{S}$ of $1$-determined clauses which is equivalent to a primitive positive formula over $\bB$ such that $\psi_{S} \wedge \phi$ still defines $R$. Since $\psi_{S}$ is $S$-determined when viewed as a formula over $\bA_1\boxtimes \dots \boxtimes \bA_n$, the claim follows.
\end{proof}

Let $\bA_1, \dots \bA_n$, $\bB$, $S$, and $\phi_{S} \wedge \phi$ be as in the statement of Corollary~\ref{cor:extract-n}. Arbitrarily choose a conjunction $\psi_S$ of $S$-determined clauses equivalent to a primitive positive formula over $\bB$ such that $\psi_S \wedge \phi$ is equivalent to $\phi_S \wedge \phi$. Note that the existence of $\psi_{S}$  follows from the corollary. We denote the formula $\psi_{S}$ by $\Cr(\phi_S \wedge \phi, S, \phi_{S})$, where $\Cr$ stands for {\em conjunction replacement}.
Note that since $\psi_S$ is equivalent to a primitive positive formula, it is preserved by $\Pol(\bB)$, which will be relevant for proving syntactic restrictions on $\psi_S$ and use of known algorithms for temporal CSPs.

\begin{rem}
In a typical example of a formula $\phi_S \wedge \phi$, where $\phi_S$ is a conjunction of all $S$-determined clauses of $\phi_S \wedge \phi$, conjunction replacement is not needed, because $\phi_S$ is already preserved by $\Pol(\bB)$ and therefore equivalent to a primitive positive formula over $\bB$. This is because syntactic normal forms are often defined as conjunctions of clauses of some specific shape. Nevertheless, one cannot guarantee that a general defining first-order formula of $R$ will be of this shape and therefore $\phi_S$ might not be preserved by $\Pol(\bB)$. Since we want to exploit the syntactic normal forms proved for temporal CSPs (see Section~\ref{sec:tempcons}), we ensure this property using the operator $\Cr$.
\end{rem}

\section{Algebraic Powers of $({\mathbb Q};<)$}
\label{sect:products}

In this section we classify the complexity of the CSP for every first-order expansion 
of the structure $({\mathbb Q};<)^{(n)}$.
The main outline of the section is as follows: we consider the case when
$n=2$ in Section~\ref{sect:2D-class}, generalize to arbitrary $n \geq 2$ in
Section~\ref{sect:n-dim}, and finally specialize
our results to binary signatures in Section~\ref{sect:binary}. 
We begin the section by recapitulating some
known results concerning first-order
expansions of $({\mathbb Q};<)$ (Section~\ref{sec:tempcons}),
studying the polymorphisms of $({\mathbb Q};<) \boxtimes ({\mathbb Q};<)$ (Section~\ref{sec:polymorphisms_prod_temp_constr}) and
presenting syntactic normal forms of certain relations that are first-order definable in 
$({\mathbb Q};<) \boxtimes ({\mathbb Q};<)$ (Section~\ref{sect:decomp}).

Recall that $({\mathbb Q};<) \boxtimes ({\mathbb Q};<) = ({\mathbb Q}^2;<_1,=_1,<_2,=_2)$ 
is 
$\omega$-categorical and homogeneous (Remark~\ref{rem:hom}),
and therefore has quantifier elimination.
From here until Section~\ref{sect:2D-class}, we let the symbol $\bD$ denote a first-order expansion of $({\mathbb Q}^2;<_1,=_1,<_2,=_2)$.

\subsection{First-order Expansions of $({\mathbb Q};<)$}
\label{sec:tempcons}
Let $\bB$ be a first-order reduct of $({\mathbb Q};<)$
with a finite relational signature. 
The complexity of CSP$(\bB)$
for all choices of $\bB$ has been determined by Bodirsky and Kára~\cite{tcsps-journal}.
For our purposes, it is sufficient to understand the complexity of all
first-order expansions of $({\mathbb Q};<)$ with a finite relational structure.
We next present first-order expansions of $({\mathbb Q};<)$ with a polynomial-time solvable CSP, we describe some of their polymorphisms,
and how the relations can be described with syntactically restricted definitions.
We make use of relational and functional dualities to simplify the presentation.

\begin{definition}
The \emph{dual} of a 
relation $R \subseteq \Q^k$ is the relation $$R^* \coloneqq \{(-x_1,\dots,-x_k) \mid (x_1,\dots,x_k) \in R\}.$$ If $\bB$ is a relational  structure with domain $\Q$, then the \emph{dual} 
of $\bB$ is the structure with domain $\Q$ and the same signature $\tau$ as $\bB$ where
$R \in \tau$ denotes $(R^{\bB})^*$. 
Similarly, if $f \colon {\mathbb Q}^n \to {\mathbb Q}$ is an operation, then the \emph{dual} of $f$ is the operation $f^*$ defined as follows.
$$(x_1,\dots,x_n) \mapsto -f(-x_1,\dots,-x_n)$$ 
If ${\mathscr C}$ is a operation clone on ${\mathbb Q}$, then the \emph{dual} of ${\mathscr C}$ is the operation clone ${\mathscr C}^* \coloneqq \{f^* \mid f \in {\mathscr C} \}.$
\end{definition}

\noindent
Now, consider the following first-order
expansions of $({\mathbb Q};<)$.

\begin{itemize}

        \item $\bU \coloneqq (\Q;<,R^{\min}_{\leq})$ where
    $R^{\min}_{\leq} \coloneqq \{(x,y,z) \in {\mathbb Q}^3 \mid y \leq x \; {\rm or} \; z \leq x \}$.

    \item $\bX \coloneqq (\Q;<,X)$ where  $X \coloneqq \{(x,y,z) \in {\mathbb Q}^3 \mid x=y < z \; {\rm or} \; y=z < x \; {\rm or} \; z=x < y \}$
\item $\bI \coloneqq (\Q;<,R^{\mi},S^{\mi})$ where
\begin{align*}
    R^{\mi} & \coloneqq \{(x,y,z) \in {\mathbb Q}^3 \mid y \leq x \; {\rm or} \; z < x \} \; {\rm and} \\
    S^{\mi} & \coloneqq \{(x,y,z) \in {\mathbb Q}^3 \mid y \neq x \; {\rm or} \; z \leq x \}. 
\end{align*}
   
 \item $\bL \coloneqq (\Q;<,L,I_4)$ where
    \begin{align*}
        L & \coloneqq \{(x,y,z) \in {\mathbb Q}^3 \mid y < x \; {\rm or} \; z < x \; {\rm or} \; x = y = z\} \; {\rm and} \\
        I_4 & \coloneqq \{(x,y,u,v) \in \Q^4 \mid x=y \; {\rm implies} \; u=v\}. 
    \end{align*}   
\end{itemize}

We need the following characterisation of primitive positive definability in these structures in terms of certain polymorphisms. The precise definition of these operations can be found in~\cite{RydvalFP} but it is not needed in this article; the properties stated in the next proposition suffice for our purposes.

    \begin{theorem}[Proposition 7.27 in~\cite{RydvalFP}]
\label{thm:pwnu}
For every $k \geq 3$ there are pseudo weak near unanimity polymorphisms $\min_k,\mx_k,\mi_k,\lele_k$ of arity $k$ such that a relation 
 $R \subseteq {\mathbb Q}^m$ is preserved by $\min_k$ ($\mx_k$, $\mi_k$, $\lele_k$) and $\Aut({\mathbb Q};<)$ if and only if
$R$ has a primitive positive definition in $\bU$ ($\bX$, $\bI$, $\bL$). The operation $\lele_k$ is injective.
\end{theorem}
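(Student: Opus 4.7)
The theorem bundles, for each of the four structures $\bU, \bX, \bI, \bL$, the existence of a $k$-ary pwnu polymorphism which, together with $\Aut(\Q;<)$, characterises the primitively positively definable relations. Since these structures are $\omega$-categorical, the pp-definability characterisation reduces via the Bodirsky--Nešetřil theorem to showing that, e.g., $\overline{\langle \{\min_k\} \cup \Aut(\Q;<) \rangle}$ equals $\Pol(\bU)$; the ``only if'' direction is then immediate once $\min_k$ is verified to be in $\Pol(\bU)$. The overall plan therefore has two algebraic ingredients: construct each $k$-ary operation with the desired properties, and show that it generates the full polymorphism clone together with $\Aut(\Q;<)$.

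For the construction, I would start from the canonical low-arity generators exhibited in the Bodirsky--Kára classification \cite{tcsps-journal}: a binary $\min$-like operation for $\bU$, a binary $\mx$-like operation for $\bX$, and ternary operations for $\bI$ and $\bL$. Define $\min_k$ by iteration, e.g., $\min_k(x_1,\dots,x_k) := \min(x_1,\min_{k-1}(x_2,\dots,x_k))$, and analogously $\mx_k$; for $\mi_k$ and $\lele_k$, use a carefully designed $k$-ary nesting of the ternary generator preserving the canonical behaviour. Verifying that each resulting operation preserves the relations of the corresponding structure is then a direct check on the defining clauses of the relations $R^{\min}_\leq$, $X$, $R^{\mi}$, $S^{\mi}$, $L$, $I_4$. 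The pwnu identity \eqref{eq:pwnu} follows from the near-symmetry of the generators: for each $i$, the value $f(x,\dots,x,y,x,\dots,x)$ with $y$ in position $i$ lies in a single $\Aut(\Q;<)$-orbit, and homogeneity of $(\Q;<)$ then furnishes endomorphisms $e_1,\dots,e_k \in \End(\bU) = \overline{\Aut(\Q;<)}$ equating these values.

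The step I expect to be the main obstacle is verifying that the constructed operation together with $\Aut(\Q;<)$ topologically generates the full polymorphism clone in the asymmetric cases $\mi_k$ and $\lele_k$. A natural approach is to recover the canonical low-arity generator from its $k$-ary analogue by identifying all but a few variables and post-composing with an automorphism, and then to appeal to the clone generation statements for the binary or ternary generators established in \cite{tcsps-journal}. For the injectivity of $\lele_k$, I would pick the base ternary generator to already be injective---a choice available in the $\lele$-case, where $\bL$ admits an injective ternary canonical generator---and verify inductively that the nesting preserves injectivity coordinate by coordinate.
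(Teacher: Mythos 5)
The statement you are trying to prove is not proved in the paper at all: it is quoted verbatim as Proposition 7.27 of~\cite{RydvalFP} and used as a black box. The authors even remark just before the theorem that ``the precise definition of these operations can be found in~\cite{RydvalFP} but it is not needed in this article.'' So there is no internal proof for your proposal to be compared against; the ``paper's proof'' is a bibliographic citation.

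As a roadmap for how such a proof might go, your sketch has the right overall shape -- reduce via the $\omega$-categorical Galois connection to the equality of closed clones $\overline{\langle\{\min_k\}\cup\Aut(\mathbb{Q};<)\rangle}=\Pol(\bU)$, and similarly for the other three, and then verify membership, pwnu-ness, and generation. But the construction step, which is the actual content of the cited proposition, is not carried out, and the iterated-nesting idea is unlikely to work as stated for the asymmetric operations. For $\min_k$ the ordinary $k$-ary minimum is fully symmetric, so it is a genuine wnu and everything is fine. For $\mx$, $\mi$, and $\lele$ however the binary/ternary generators are \emph{not} symmetric (for instance, an $\lele$-operation behaves lexicographically first-then-second on one half-plane and second-then-first on the other), and a naive left-nesting $f(x_1,f(x_2,\dots))$ produces $k$ binary slices $g_i(x,y)=f(x,\dots,y,\dots,x)$ which are in general \emph{not} related by post-composition with endomorphisms of the corresponding structure. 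Your appeal to ``near-symmetry of the generators'' therefore needs a genuine argument; it is precisely here that the actual construction in~\cite{RydvalFP} has to be designed specifically so that the $k$ slices are $\Aut(\mathbb{Q};<)$-equivalent, and this is the part your sketch leaves as a black box. The clone-generation step you identify as the obstacle is in fact the easier part (take a suitable minor of the $k$-ary operation to recover a low-arity generator, and use the characterisation from~\cite{tcsps-journal}); the hard part is getting the pwnu identity out of a clone that contains no genuinely symmetric operations.

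One small remark on terminology: you refer to ``the `only if' direction'' as the one that follows from $\min_k\in\Pol(\bU)$. Be careful which direction you mean: $\min_k\in\Pol(\bU)$ (together with $\Aut(\mathbb{Q};<)\subseteq\Pol(\bU)$) gives the implication from pp-definability to preservation. The implication from preservation to pp-definability requires the containment $\Pol(\bU)\subseteq\overline{\langle\{\min_k\}\cup\Aut(\mathbb{Q};<)\rangle}$, which is the generation step.
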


The following result is essentially taken from~\cite{tcsps-journal}
but we formulate it differently with the aid of polymorphisms.

\begin{theorem}[Theorem 12.0.1 in \cite{Book}]\label{thm:tcsps}
Let $\bB$ be a first-order expansion of $(\Q;<)$. 
Then exactly one of the following two cases applies. 
\begin{enumerate}
    \item 
    $\bB$ is preserved by the operation $\min_3$, $\mx_3$, $\mi_3$, or $\lele_3$ from Theorem~\ref{thm:pwnu}, or the dual of one of these operations. In this case, the CSP of every finite-signature reduct of $\bB$ is in P. 
\item $\Pol(\bB)$ has a uniformly continuous minor-preserving map to $\Pol(K_3)$. In this case, $\bB$ has a finite-signature reduct whose CSP is NP-complete. 
\end{enumerate}
\end{theorem}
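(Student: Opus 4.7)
The plan is to establish the two cases separately, beginning with a check of mutual exclusivity, then giving polynomial-time algorithms in Case~1 and extracting NP-hardness in Case~2 from the algebraic condition.

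For disjointness, suppose $\bB$ is preserved by one of $\min_3, \mx_3, \mi_3, \lele_3$ or a dual; by Theorem~\ref{thm:pwnu} this gives a pseudo weak near unanimity polymorphism of $\bB$. Since $({\mathbb Q}; <)$ is homogeneous, every endomorphism restricts on each finite subset to a partial isomorphism that extends to an automorphism, so $\overline{\Aut({\mathbb Q};<)} = \End({\mathbb Q};<)$; combining this with $\End(\bB) \subseteq \End({\mathbb Q};<)$ and $\Aut({\mathbb Q};<) \subseteq \Aut(\bB)$, both valid because $\bB$ is a first-order expansion of $({\mathbb Q};<)$, yields $\overline{\Aut(\bB)} = \End(\bB)$ exactly as in the proof of Lemma~\ref{lem:prod-mc-cores}. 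Lemma~\ref{lem:excl} then rules out a uniformly continuous minor-preserving map from $\Pol(\bB)$ to $\Pol(K_3)$, so the two cases cannot hold simultaneously.

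For tractability in Case~1, fix $f \in \{\min_3, \mx_3, \mi_3, \lele_3\}$ preserving $\bB$; dual cases are handled by the order-reversing symmetry $x \mapsto -x$ of $({\mathbb Q};<)$. Let $\bB'$ be any finite-signature reduct of $\bB$. Every relation of $\bB'$ is preserved by $f$, so by Theorem~\ref{thm:pwnu} it has a primitive positive definition in whichever of $\bU, \bX, \bI, \bL$ corresponds to $f$, and Lemma~\ref{lem:pp-red} reduces $\Csp(\bB')$ in polynomial time to the CSP of that base structure. Each of these base CSPs admits a well-known polynomial-time procedure from the temporal CSP literature (a fixed-point ``smallest-consistent-value'' algorithm for $\bU$, its dual for the $\mx_3$-case, a propagation with controlled equality case splits for $\bI$, and a generalised Ord-Horn unit-propagation procedure for $\bL$).

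For hardness in Case~2, Corollary~\ref{cor:hard} reduces the task to showing that whenever none of the eight operations preserves $\bB$, there is a uniformly continuous minor-preserving map $\Pol(\bB) \to \Pol(K_3)$. The plan is to use the Ramsey property of $({\mathbb Q};<)$: from any polymorphism $f$ one extracts a \emph{canonical} polymorphism $f'$ whose behaviour depends only on the order type of its input and which sits in the smallest closed subclone of $\Pol(\bB)$ containing $f$ and $\Aut({\mathbb Q};<)$. Canonical binary operations over $({\mathbb Q};<)$ admit a finite catalogue (essentially projections, $\min$, $\max$, variants of $\mi$, $\mx$, $\lele$, and their duals, up to behaviour on the equality orbit). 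Assuming none of the eight named operations lies in $\Pol(\bB)$, every canonical polymorphism must collapse, modulo automorphisms, onto a projection-like pattern on three representative orbits, yielding the desired clone map to $\Pol(K_3)$. The principal obstacle is precisely this combinatorial case analysis: one must enumerate the admissible canonical behaviours on orbits of $({\mathbb Q};<)^n$, eliminate those whose presence in $\Pol(\bB)$ would force one of the eight tractable operations back in, and verify that the remaining ones induce a uniformly continuous minor-preserving clone map to $\Pol(K_3)$; for the complete treatment I would follow the Bodirsky and K\'ara classification as presented in Chapter~12 of~\cite{Book}.
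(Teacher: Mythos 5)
This theorem is not proved in the paper at all: it is imported verbatim with the citation ``Theorem 12.0.1 in~\cite{Book}'' (and the surrounding text notes it is essentially the Bodirsky--K\'ara classification reformulated in terms of polymorphisms). So there is no in-paper proof to compare against; what I can do is assess your reconstruction on its own terms.

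Your outline of the disjointness and tractability directions is correct and essentially complete. For disjointness, the chain $\End(\bB) \subseteq \End(\Q;<) = \overline{\Aut(\Q;<)} = \overline{\Aut(\bB)} \subseteq \End(\bB)$ is exactly the right way to see that a first-order expansion of $(\Q;<)$ is a model-complete core, and combining this with Theorem~\ref{thm:pwnu} and Lemma~\ref{lem:excl} rules out coexistence of the two cases. For tractability in Case~1, applying Theorem~\ref{thm:pwnu} to get primitive positive definability in one of $\bU, \bX, \bI, \bL$, then Lemma~\ref{lem:pp-red}, and citing the known polynomial-time algorithms for these four base structures (and the $x \mapsto -x$ trick for duals) is the standard argument and is sound.

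The gap is in the hardness direction, which is the substantive content of this theorem. You correctly identify the strategy (extract a canonical polymorphism via the Ramsey property of $(\Q;<)$, classify the finitely many canonical behaviours, and argue that if none of the eight listed operations appear then a uniformly continuous minor-preserving map to $\Pol(K_3)$ exists), but this is precisely the multi-page combinatorial case analysis of Bodirsky--K\'ara, and your description of it is imprecise. The canonical operations that must be considered are ternary (e.g.\ $\min_3$, $\mx_3$, $\mi_3$, $\lele_3$ are all ternary by definition), not binary as your ``finite catalogue'' paragraph suggests; the analysis ranges over orbits of $n$-tuples, not ``three representative orbits''; and extracting the clone homomorphism to $\Pol(K_3)$ from the remaining ``essentially unary'' behaviours requires the Lemma~\ref{lem:excl}-style machinery (via pp-definability of a hard finite relation such as the Betweenness or Cyclic-Order relation), not merely an observation of ``projection-like patterns.'' You acknowledge this by deferring to Chapter~12 of~\cite{Book}, which is the honest thing to do, but it means your argument is a roadmap rather than a proof of the hard half of the theorem.
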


Theorem~\ref{thm:tcsps} immediately connects first-order expansions of $(\Q;<)$ with the infinite-domain
tractability conjecture from Section~\ref{sect:cat}.

We now describe polymorphisms and syntactic normal forms of the structures that were described 
earlier.
Clearly, an operation $f$ preserves a relation $R \subseteq \Q^k$ if and only if $f^*$ preserves $R^*$ so we can concentrate on 
the structures $\bU$, $\bX$, $\bI$, and $\bL$.
We start by considering polymorphisms of the structures $\bU$, $\bX$, and $\bI$.

\begin{definition} \label{def:pp}
A binary operation $f$ on ${\mathbb Q}$ is called a
\emph{$\pp$-operation} if $f(a,b) \leq f(a',b')$ if and only if
\begin{enumerate}
        \item $a\leq 0$ and $a \leq a'$, or
        \item $0 < a$, $0<a'$, and $b \leq b'$.
 \end{enumerate}
 \end{definition}
  
 \begin{rem}\label{rem:easy}
     Note that if $f,g \colon {\mathbb Q}^k \to {\mathbb Q}$ are such that for all $a,b \in {\mathbb Q}^k$ we have 
     $f(a) \leq f(b) \Leftrightarrow g(a) \leq g(b)$, then a relation $R$ which is first-order definable in $({\mathbb Q};<)$
     is preserved by $f$ if and only if it is preserved by $g$. To see this, suppose that $R$ is of arity $k$ and $a^1,\dots,a^k \in R$. Then 
     $s \coloneqq f(a^1,\dots,a^k)$ and $t \coloneqq g(a^1,\dots,a^k)$ satisfy
     the same atomic formulas over $({\mathbb Q};<)$, 
     and hence by the homogeneity of 
     $({\mathbb Q};<)$ there exists 
     $\alpha \in \Aut({\mathbb Q};<)$ which maps $s$ to $t$, and since $R$ is first-order definable over $({\mathbb Q};<)$ either both or neither of $s$ and $t$ lie in $R$.
     In particular,
     $R$ is preserved by a 
     $\pp$-operation 
     if and only if it is preserved by all $\pp$-operations. 
     \end{rem}

\begin{proposition}[\cite{tcsps-journal}]\label{prop:pp}
Each of the structures $\bU$, $\bX$, and $\bI$ is preserved by a $\pp$-operation. 
Equivalently, if a relation $R\subseteq \Q^k$ with a first-order definition in $(\Q; <)$ is preserved by $\min_3$, $\mx_3$ or $\mi_3$, then it is preserved by a $\pp$-operation.
\end{proposition}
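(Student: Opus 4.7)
The plan is to exhibit an explicit $\pp$-operation $f \colon \Q^2 \to \Q$ and verify directly that $f$ preserves every relation occurring in $\bU$, $\bX$, and $\bI$. By Remark~\ref{rem:easy} the particular choice of $f$ is immaterial once it is a $\pp$-operation, since all $\pp$-operations induce the same order pattern on tuples and all the relations in question are first-order definable in $(\Q;<)$. A convenient choice is
\[
f(a,b) \coloneqq \begin{cases} a & \text{if } a \leq 0, \\ g(b) & \text{if } a > 0, \end{cases}
\]
where $g \colon \Q \to \Q_{>0}$ is any strictly order-preserving injection (for instance $g(b) = 1/(1-b)$ for $b \leq 0$ and $g(b) = 1+b$ for $b > 0$). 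A case split on the signs of $a$ and $a'$ then shows that the biconditional defining a $\pp$-operation in Definition~\ref{def:pp} is satisfied. The qualitative feature of $f$ that drives every subsequent argument is that it is \emph{stratified}: every output with first coordinate $\leq 0$ lies strictly below every output with first coordinate $> 0$; within the lower stratum $f$ is monotone in $a$ and independent of $b$, and within the upper stratum $f$ is monotone in $b$ and independent of $a$.

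Preservation of $<$ by $f$ is immediate from the stratification. For each of the ternary relations $R^{\min}_{\leq}$, $R^{\mi}$ and $S^{\mi}$, which have the form of a disjunction of two literals with pivot variable $x$, I would take two input triples $(x_i,y_i,z_i)$ in the relation and split on which disjunct is witnessed in each. The matched cases, where both tuples satisfy the same disjunct, reduce to monotonicity of $f$ in a single coordinate. The mixed cases are where the stratification is decisive: whenever one of the relevant witnesses has first coordinate $\leq 0$, the corresponding output sits in the lower stratum and is therefore dominated by the pivot output; otherwise all outputs involved lie in the upper stratum, where the comparison reduces to an inequality between second coordinates that is guaranteed by the hypothesis. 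The disequality literal in $S^{\mi}$ is handled contrapositively: $f(y_1,y_2) = f(x_1,x_2)$ can hold only if either $y_1 = x_1 \leq 0$, or $y_1,x_1 > 0$ together with $y_2 = x_2$, and in each scenario the hypothesis that both tuples lie in $S^{\mi}$ yields the inequality $z_i \leq x_i$ needed to bound $f(z_1,z_2)$.

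The main obstacle is the preservation of the relation $X$ of $\bX$, which requires the output triple to have exactly two equal coordinates strictly below the third. There are $3 \times 3$ configurations for the positions of the strict maxima in the two input triples, reduced by the permutation symmetry of $X$ to a handful of essentially distinct ones. In each configuration, a sub-case split on the sign of the equal pair of first coordinates in tuple~$1$ pinpoints which pair of output coordinates becomes equal: in the lower-stratum sub-case it is the pair inherited from tuple~$1$'s equality that is reproduced in the output, while in the upper-stratum sub-case the equal pair of outputs is inherited from the equality of second coordinates in tuple~$2$. The third output is then strictly greater by combining monotonicity of $f$ with the strict inequality present in each input triple, and the result again lies in $X$. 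The delicate interaction between the stratification and the equality structure of $X$ is what makes this the most intricate part of the verification.

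Finally, the \emph{equivalently} clause follows from Theorem~\ref{thm:pwnu}: any relation $R$ first-order definable in $(\Q;<)$ and preserved by $\min_3$ (respectively $\mx_3$ or $\mi_3$) admits a primitive positive definition in $\bU$ (respectively $\bX$ or $\bI$); since primitive positive definitions preserve polymorphisms and $f$ preserves each of $\bU$, $\bX$, $\bI$ by the previous steps, $f$ also preserves $R$.
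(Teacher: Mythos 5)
Your argument is correct: the function you define is indeed a $\pp$-operation (the case split on the signs of $a$ and $a'$ checks out against Definition~\ref{def:pp}), and your stratification principle, combined with the four mutually exclusive clauses of the biconditional, is exactly what is needed to close each mixed case in the verification for $R^{\min}_{\leq}$, $X$, $R^{\mi}$, and $S^{\mi}$; the passage to the ``equivalently'' formulation via Theorem~\ref{thm:pwnu} and closure of polymorphism clones under primitive positive definability is also right. Note, however, that the paper itself gives no proof of Proposition~\ref{prop:pp} -- it is quoted verbatim from~\cite{tcsps-journal} -- so there is no in-paper argument to compare against; what you have produced is a self-contained verification by exhibiting a concrete $\pp$-operation and checking preservation of each generating relation, which is the natural route and presumably the one taken in the cited source. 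One small imprecision in your sketch for $S^{\mi}$: in case (a) ($y_1=x_1\le 0$) the hypothesis only delivers $z_1\le x_1$ (not $z_2\le x_2$), and in case (b) ($y_1,x_1>0$, $y_2=x_2$) it only delivers $z_2\le x_2$ (not $z_1\le x_1$); each is sufficient on its own for the corresponding branch of the $\pp$-biconditional, so the argument goes through, but the phrase ``yields the inequality $z_i\le x_i$'' should be read as referring to the single relevant coordinate in each case rather than to both.
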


One should note that if a structure $\bB$ is preserved by a $\pp$-operation, then this does not imply
that $\Csp(\bB)$ is polynomial-time solvable. It does, however, imply that the relations in $\bB$ can be
defined via a restricted form of definitions.

\begin{theorem}[Theorem 4 in~\cite{ToTheMax}]
\label{thm:pp}
Let $R \subseteq {\mathbb Q}^m$ be a relation with a first-order definition in $({\mathbb Q};<)$. Then the following are equivalent. 
\begin{itemize}
    \item $R$ is preserved by a (equivalently: every) $\pp$-operation. 
    \item $R$ has a definition by a conjunction of clauses of the form
    $$ y_1 \neq x \vee \cdots \vee y_k \neq x \vee z_1 \leq x \vee \cdots \vee z_l \leq x$$
    where it is permitted that $l = 0$ or $k=0$. 
\end{itemize}
\end{theorem}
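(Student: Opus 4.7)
The plan is to prove the two directions separately after first disposing of the ``some vs.\ every pp-operation'' question. By Remark~\ref{rem:easy} and the fact that the condition in Definition~\ref{def:pp} determines the relative order of the outputs of any pp-operation purely from its inputs (i.e., any two pp-operations $f,g$ satisfy $f(a,b)\le f(a',b') \Leftrightarrow g(a,b) \le g(a',b')$), a relation with a first-order definition in $(\Q;<)$ is preserved by one pp-operation if and only if it is preserved by every pp-operation. Fix therefore a single binary pp-operation $f$ for the remainder.

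For the direction from the syntactic form to preservation by $f$, it suffices to verify that a single clause $C \equiv \bigvee_{i=1}^k (y_i \neq x) \vee \bigvee_{j=1}^l (z_j \leq x)$ is preserved, since conjunctions inherit preservation and higher-arity pp-operations are obtained by composition. Given two assignments $s,t$ satisfying $C$, I would split on the sign of the first argument at the coordinate of $x$: if $s(x)\le 0$ then by Definition~\ref{def:pp} the function $f(s,-)$ is ``below'' $f(s',-)$ for $s'(x)>s(x)$ and behaves monotonically in the first argument, so any literal of the form $y_i \neq x$ or $z_j \leq x$ satisfied by $s$ is transferred to $f(s,t)$; the case $s(x)>0$ is treated symmetrically (the roles of the coordinates swap, and a literal satisfied by $t$ is transferred). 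The crucial point is that both kinds of literals involve only the ``anchor'' $x$, which is exactly what guarantees that one of the two input assignments dominates in the output.

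For the converse, I would start from a reduced quantifier-free CNF $\phi$ defining $R$ over the literal set $\{=,\neq,<,\le\}$, which exists by the homogeneity of $(\Q;<)$. Considering a clause $C$ of $\phi$, by reducedness there exists for each literal $L$ of $C$ an assignment $\alpha_L \in R$ satisfying $L$ and no other literal of $C$. If $C$ is not of the required shape, then either (i) two literals of $C$ involve disjoint variables, or (ii) $C$ contains a literal of the form $x=y$ or $x<y$. In each case I plan to apply $f$ (after pre-composing with suitable automorphisms of $(\Q;<)$ to place the relevant values on the appropriate side of $0$) to a carefully chosen pair $\alpha_{L_1},\alpha_{L_2}$, in order to produce a tuple in $R$ that satisfies none of the literals of $C$, contradicting preservation.

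The main obstacle is the case distinction in (ii): excluding strict inequality literals $x<y$ and equality literals $x=y$. The pp-operation ``breaks'' such literals because it projects the first argument when that argument is nonpositive and projects the second when both first arguments are positive, so an equality or strict inequality witnessed in one of the inputs can be neutralised in the output while every other literal of $C$ is falsified using the reducedness witnesses. Making this precise will require a bookkeeping of values across coordinates, ensuring that the output violates every literal of $C$ simultaneously; this is exactly where the asymmetry between $\le/\neq$ (allowed) and $</=$ (forbidden) in the syntactic form is forced.
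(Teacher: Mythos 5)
The paper does not actually prove this statement: Theorem~\ref{thm:pp} is cited verbatim as Theorem~4 of~\cite{ToTheMax}, so there is no internal proof to compare against, and I evaluate your attempt on its own merits. The ``some vs.\ every'' reduction and the direction from the syntactic form to preservation are sound in outline, although the phrase ``behaves monotonically in the first argument'' is not literally true of a $\pp$-operation (for a positive first argument the value depends only on the \emph{second} argument); what one really uses is that the region $\{a\le 0\}$ maps, via the first coordinate, order-isomorphically onto a ``low range'', the region $\{a>0\}$ maps, via the second coordinate, onto a ``high range'', and the low range lies strictly below the high range.

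The converse direction, however, has a genuine gap. You claim that if a reduced clause $C$ over $\{=,\neq,<,\le\}$ is not of the form $y_1\neq x\vee\cdots\vee y_k\neq x\vee z_1\le x\vee\cdots\vee z_l\le x$, then either (i) some two literals of $C$ have disjoint variable sets, or (ii) $C$ contains an $=$- or $<$-literal. This dichotomy is false. Take $C = (a\le b)\vee(a\le d)$: there is no $=$- or $<$-literal, any two literals share the variable $a$, and yet $C$ has no common anchor $x$ that sits on the large side of every $\le$-literal, so it is not of the required form. It is also genuinely a case you must exclude: for a $\pp$-operation $f$, the assignments $s=(s(a),s(b),s(d))=(1,2,0)$ and $t=(1,0,2)$ each satisfy exactly one disjunct of $C$, but $f(s,t)$ satisfies neither (with all three first arguments positive, the order of the outputs is governed by the second arguments $1,0,2$, which give $f(s,t)(a)>f(s,t)(b)$ and $f(s,t)(a)<f(s,t)(d)$ so that $a\le b$ fails, while $f(s,t)(a)\le f(s,t)(d)$ would require $1\le 2$ — wait, this one actually holds). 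Let me instead take $s=(1,2,0)$, $t=(1,0,2)$ but re-check: both coordinates of $a$ positive, so $f(s,t)(a)\le f(s,t)(d)$ iff $t(a)\le t(d)$ iff $1\le 2$, true — so that choice does not work; but $s=(1,2,0)$, $t=(2,0,1)$ gives $s(a)\le s(b)$, $t(a)\le t(d)$ (since $2\le 1$ is false — this also fails). The robust witness is $s=(1,2,0)$, $t=(1,0,2)$ with $f$ applied after the automorphism swap: take $s=(1,2,0)$, $t=(1,2,0)$ shifted so that $t(a)>t(b)$ and $t(a)>t(d)$, e.g.\ $t=(1,0,0)$ — but then $t$ does not satisfy $C$. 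The point stands that $a\le b\vee a\le d$ is not equivalent to any formula of the required shape and is not preserved by $\pp$-operations, so it must be excluded, and your dichotomy (i)/(ii) does not see it. More generally, two $\le$-literals may share a variable on the wrong sides (e.g.\ $a\le b\vee b\le c$, or $a\le b\vee a\le d$), and this is precisely what forces the common ``max'' anchor in the syntactic form; your plan also explicitly defers the entire content of case (ii) to unspecified ``bookkeeping''. As written, the converse direction is a sketch with an incomplete case split, not a proof.
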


We conclude this section by another characterisation of $\bL$ via polymorphisms and presenting a syntactic normal form.

\begin{definition}\label{def:lele}
A binary operation $f$ on $\mathbb{Q}$ is called an \emph{$\lele$-operation} if $f(a,b)<f(a',b')$ if and only if 
 \begin{enumerate}
 	\item $a\leq 0$ and $a<a'$, or
 	\item $a\leq 0$ and $a=a'$ and $b<b'$, or
 	\item $a,a'>0$ and $b<b'$, or
 	\item $a>0$ and $b=b'$ and $a<a'$.
\end{enumerate}
\end{definition}

Note that every $\lele$-operation is injective; this fact will be important in some of the forthcoming proofs.
A visualisation of a $\pp$-operation and an $\lele$-operation can be found in Figure~\ref{fig:ppll}.

 \begin{figure}
     \centering
     \includegraphics{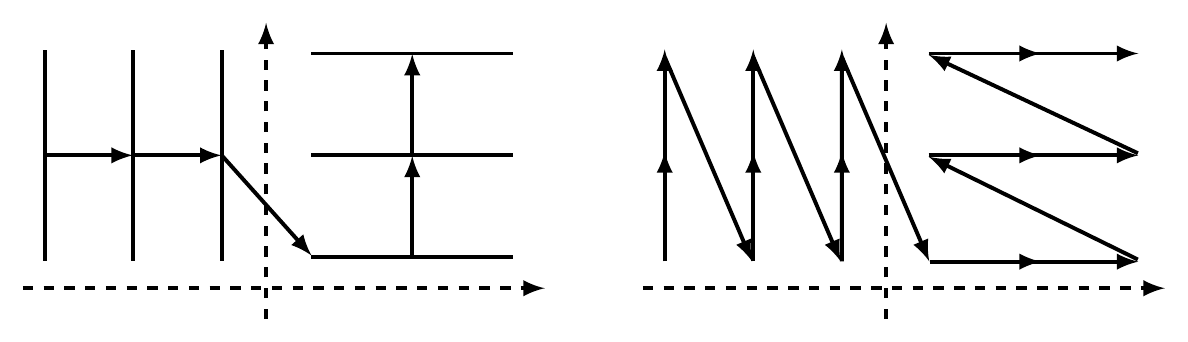}
     \caption{A visualisation of a pp-operation (left) and an $\lele$-operation (right) \cite{tcsps}. Arrows depict the growth of values.}
     \label{fig:ppll}
 \end{figure}

\begin{definition} \label{def:ll-hornclause}
A formula is an \emph{ll-Horn clause}
if it is of the form $$x_1 \neq y_1 \vee \cdots \vee x_m \neq y_m \vee z_1 < z_0 \vee \cdots \vee z_{\ell} < z_0$$
or
$$x_1 \neq y_1 \vee \cdots \vee x_m \neq y_m \vee z_1 < z_0 \vee \cdots \vee z_{\ell} < z_0 \vee (z_0 = z_1 = \cdots = z_{\ell})$$
where it is permitted that $l = 0$ or $m=0$.
\end{definition}

We also need \emph{lexicographic operations} in order
to formulate the final theorem of this section.

\begin{definition} \label{def:lex}
A binary operation $f$ on $\mathbb{Q}$ is called a \emph{$\lex$-operation} if $f(a,b)<f(a',b')$ if and only if 
 \begin{itemize}
 	\item $a < a'$, or
 	\item $a = a'$ and $b < b'$.
\end{itemize}
It is called a \emph{twisted $\lex$-operation} 
if $f(a,-b)$ is a $\lex$-operation. 
\end{definition}

\begin{rem}\label{rem:ll-lex}
Every relation $R \subseteq {\mathbb Q}^k$ with a first-order definition in $({\mathbb Q};<)$ that is preserved by an $\lele$-operation is also preserved by all $\lex$-operations.
\end{rem}

\begin{theorem}[\cite{ll} and~\cite{MottetMPRI}; also see Theorem 12.7.3 and Lemma 12.4.4 in~\cite{Book}]\label{thm:lele}
Let $R \subseteq \Q^k$ be a relation with a first-order definition in $(\Q;<)$. Then the following are equivalent. 
\begin{itemize}
    \item $R$ has a primitive positive definition in $\bL$. 
    \item $R$ is preserved by an (equivalently: every) $\lele$-operation.
    \item $R$ is preserved by $\lele_k$ (from Theorem~\ref{thm:pwnu}) for some (equivalently: for all) $k \geq 3$.
    \item $R$ has a definition by a conjunction of ll-Horn clauses. 
    \end{itemize}
Moreover, if $R$ is preserved by a $\pp$-operation and by a $\lex$-operation, then $R$ is preserved by an $\lele$-operation.
\end{theorem}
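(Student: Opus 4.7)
The plan is to establish the four-way equivalence by a short cycle and handle the ``moreover'' clause by an explicit construction. Theorem~\ref{thm:pwnu} already delivers $(1) \Leftrightarrow (3)$ for each $k \geq 3$: since every $R$ first-order definable in $(\Q;<)$ is preserved by $\Aut(\Q;<)$, being preserved by $\lele_k$ is equivalent to having a primitive positive definition in $\bL$. To link these with item~2, I would directly verify from Definition~\ref{def:lele} that every binary $\lele$-operation preserves $<$, $L$, and $I_4$, yielding $(1) \Rightarrow (2)$. Combined with Remark~\ref{rem:easy}, this also shows that ``some $\lele$-operation'' and ``every $\lele$-operation'' coincide for first-order definable $R$. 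The implication $(2) \Rightarrow (3)$ follows by observing that $\lele_k$ lies (up to local closure modulo $\Aut(\Q;<)$) in the clone generated by a binary $\lele$-operation, so preservation by the binary operation lifts to preservation by $\lele_k$.

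For $(4) \Rightarrow (1)$, I would show that every ll-Horn clause is primitively positively definable in $\bL$. The relation $L$ directly handles disjunctions of $<$-literals pointing to a common variable; the relation $I_4$ handles disequality literals paired with equalities (and combining two instances of $I_4$ with swapped inequalities gives any $x \neq y \vee u = v$); and a general ll-Horn clause with several $\neq$-literals, several $<$-literals, and possibly the trailing equality alternative of Definition~\ref{def:ll-hornclause} is assembled by existential quantification and conjunction over renamings of $L$, $I_4$, and $<$.

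The hard direction is $(3) \Rightarrow (4)$. Given $R$ preserved by $\lele_k$, I would take a quantifier-free CNF definition of $R$ (available by quantifier elimination in $(\Q;<)$), put it into reduced form, and show that every reduced clause is an ll-Horn clause. The argument is contrapositive: in a clause violating the ll-Horn shape one can identify literals that, by reducedness, admit individual satisfying assignments of $R$ that satisfy no other literal of that clause; applying an $\lele$-operation coordinatewise to carefully chosen such tuples yields a tuple in $R$ that falsifies every literal of the offending clause, contradicting preservation. The main obstacle is to organize this case distinction so that it systematically covers every possible non-ll-Horn clause shape, including the second form in Definition~\ref{def:ll-hornclause}; the asymmetry between $a \leq 0$ and $a > 0$ in Definition~\ref{def:lele} is precisely what forces the ll-Horn shape on the surviving clauses and must be exploited in choosing the witness tuples.

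For the moreover clause, suppose $R$ is preserved by a $\pp$-operation $p$ and a $\lex$-operation $\ell$. Comparing Definitions~\ref{def:pp}, \ref{def:lex}, and \ref{def:lele}, an $\lele$-operation is essentially a $\pp$-operation with $\lex$-style tiebreaking on the coordinates that a $\pp$-operation regards as equivalent. I would construct an explicit binary $f$ realizing the four cases of Definition~\ref{def:lele}, for instance by composing $\ell$ on top of $p$ (and a projection) so that ties of $p$ are resolved by $\ell$ without disturbing the coarse $\pp$-ordering. Preservation of $R$ by $f$ is then inherited from preservation by $p$ and $\ell$ via Remark~\ref{rem:easy}, and verifying that $f$ satisfies the defining cases of an $\lele$-operation is a direct case check.
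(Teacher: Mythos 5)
The paper does not prove this theorem itself---it is cited from \cite{ll}, \cite{MottetMPRI}, and \cite{Book}---so there is no in-paper proof to compare against. Your outline follows the standard route and is sound in broad strokes, but there are two places that need repair.

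First, the step ``$\lele_k$ lies (up to local closure modulo $\Aut(\Q;<)$) in the clone generated by a binary $\lele$-operation'' is not an independent fact you can appeal to: it is essentially equivalent to the implication $(2)\Rightarrow(1)$, which is what you are trying to prove, so as stated it is circular. Fortunately you do not need it. Your hard-direction argument already uses a \emph{binary} $\lele$-operation on a reduced CNF, so what you are really proving there is $(2)\Rightarrow(4)$ rather than $(3)\Rightarrow(4)$. Relabelling it that way gives a clean cycle $(1)\Rightarrow(2)\Rightarrow(4)\Rightarrow(1)$, together with $(1)\Leftrightarrow(3)$ from Theorem~\ref{thm:pwnu}, and the unsupported clone-generation claim can be dropped entirely.

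Second, the ``moreover'' construction is underspecified and, in the form you describe, does not work. If $p$ is a $\pp$-operation and $\ell$ a $\lex$-operation, then neither $\ell(p(a,b),a)$ nor $\ell(p(a,b),b)$ is injective: the first produces ties whenever $a=a'\leq 0$, the second whenever $a,a'>0$ and $b=b'$, so neither is an $\lele$-operation. What does work is the nested composition $f(a,b)=\ell\bigl(p(a,b),\,\ell(a,b)\bigr)$. Checking against Definition~\ref{def:lele}: when $p(a,b)<p(a',b')$ we get the two ``coarse'' cases of Definition~\ref{def:pp}; when $p(a,b)=p(a',b')$ the inner $\ell(a,b)$ breaks the tie, giving $b<b'$ on the region $a=a'\leq0$ and $a<a'$ on the region $a,a'>0,\ b=b'$, exactly the two tie-breaking cases of an $\lele$-operation. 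Since $f$ is a genuine composition of $p$, $\ell$, and projections, preservation of $R$ by $p$ and $\ell$ (together with Remark~\ref{rem:easy} to pass to arbitrary $\pp$- and $\lex$-operations) gives preservation by $f$, and $f$ is an $\lele$-operation.
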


The operations discussed in this section will be used frequently in the sequel.
A concise summary can be found in Table~\ref{tb:op-defs}.

\begin{table} 
\caption{Summary of Definitions~\ref{def:pp}, \ref{def:lele}, and~\ref{def:lex}.}
\begin{tabular}{|lrcl|}\hline
\rule{0pt}{12pt}
$f$ is pp if & $f(a,b) \leq f(a',b')$ & $\Longleftrightarrow$ & $(a \leq 0 \wedge a \leq a') \vee (0 < a \wedge 0 < a' \wedge b \leq b')$\\[5pt] \hline

\rule{0pt}{12pt}
$f$ is ll if & $f(a,b) < f(a',b')$ & $\Longleftrightarrow$ & $(a \leq 0 \wedge a<a') \vee (a \leq 0 \wedge a=a' \wedge b<b') \; \vee$ \\ & & & $(a>0 \wedge a'>0 \wedge b<b') \vee (a>0 \wedge b=b' \wedge a < a')$\\[5pt] \hline

\rule{0pt}{12pt}
$f$ is lex if & $f(a,b) < f(a',b')$ & $\Longleftrightarrow$ & $(a < a') \vee (a=a' \wedge b < b')$ \\[5pt] \hline

\end{tabular}
\label{tb:op-defs}
\end{table}

\subsection{Polymorphisms} \label{sec:polymorphisms_prod_temp_constr}

We will now analyse the polymorphism clones of first-order expansions of $({\mathbb Q};<) \boxtimes ({\mathbb Q};<)$. This involves a study of \emph{canonical functions} (see, e.g.,~\cite{BP-canonical}) in the product setting. 

We formulate the results for the polymorphisms of first-order expansions of $({\mathbb Q};<) \boxtimes ({\mathbb Q};<)$, since they can be described more explicitly than in first-order expansions of $(\Q;<)^{(n)}$.
Nevertheless, generalised versions of Lemma~\ref{lem:canoni}, Lemma \ref{lem:canonise}, Lemma \ref{lem:dom}, Corollary \ref{cor:dom} and Lemma \ref{lem:flip} for first-order expansions of $(\mathbb Q;<)^{(n)}$ can be proved in a similar fashion.
The basic idea is to choose one or two dimensions from $\{1,\dots, n\}$ that are referred to in the statements. For a
concrete example, see the proof of Proposition~\ref{prop:syntax-n} that is a generalization of Corollary~\ref{cor:dom}. Generalizations of this kind will be 
important in Section \ref{sect:n-dim}.

Let $G$ be a permutation group on a set $A$ and let $H$ be a permutation group on a set $B$. 
A function $f \colon A \to B$ is called \emph{canonical with respect to $(G,H)$} if for every $m \in {\mathbb N}$, $t \in A^m$, and $\alpha \in G$ there exists a $\beta \in H$ such that $f \alpha(t) = \beta f(t)$ (where functions are applied to tuples componentwise). 
If $f$ is canonical with respect to $(\Aut(\bA)^n,\Aut(\bA))$ for some $n \in {\mathbb N}$,  then we say that $f$ is \emph{canonical over $\Aut(\bA)$}. 
In other words, $f$ is canonical over $\Aut(\bA)$ 
if and only if for every $m \in {\mathbb N}$ and all $t_1,\dots,t_n \in A^m$ the orbit of $f(t_1,\dots,t_n)$ in $\Aut(\bA)$ only depends on the orbits of $t_1,\dots,t_n$ in $\Aut(\bA)$. 
Note that if $\bB = \bA_1 \boxtimes \bA_2$, then an operation $f$ is canonical over $\Aut(\bB)$ if 
$\theta_1(f)$ is canonical over $\bA_1$ and $\theta_2(f)$ is canonical over $\bA_2$.

\begin{ex}
Let $f$ be a $\lex$-operation and $g$ a twisted $\lex$-operation (see Definition~\ref{def:lex}). Then $f$ and $g$ are canonical over $\Aut(\Q;<)$ and $(f,g)$ is canonical over $\Aut((\Q;<) \boxtimes (\Q; <))$.
\end{ex}

A permutation group $G$ is called \emph{extremely amenable} if every continuous action of $G$ on a compact Hausdorff space has a fixed point. The reader need not be familiar with this notion since it will only be used in a black-box fashion via Theorem~\ref{thm:canon} below; we refer the interested reader to~\cite{Topo-Dynamics}. 
A fundamental example of a structure with an extremely amenable automorphism group is $({\mathbb Q};<)$. Moreover, direct products of extremely amenable groups are extremely amenable~\cite{Topo-Dynamics}.

\begin{theorem}[see, e.g.,~\cite{BPT-decidability-of-definability,BP-canonical}]\label{thm:canon}
Let $G$ be an extremely amenable permutation group on a set $A$, let $H$ be an oligomorphic permutation group on a set $B$, and let $f \colon A \to B$ be a function. Then 
$$ \overline{ \{ \beta f  \alpha \mid \alpha \in G, \beta \in H \}}$$ 
contains a canonical function with respect to $(G,H)$. 
\end{theorem}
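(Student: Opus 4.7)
The plan is to employ a compactness argument driven by the extreme amenability of $G$, in the spirit of Kechris--Pestov--Todor\v{c}evi\'{c}. I would first produce a ``canonical'' assignment of $H$-orbits to tuples as the fixed point of a $G$-action on a suitable compact space, and then lift this type-data to an actual function using the homogeneity of $H$ along each orbit.

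First I would encode orbit data in a compact space. By oligomorphicity of $H$, for every $m \geq 1$ the set $\mathcal{O}_m$ of $H$-orbits on $B^m$ is finite. Take $Z := \prod_{m \geq 1} \mathcal{O}_m^{A^m}$ with the product topology, each $\mathcal{O}_m$ discrete; then $Z$ is compact Hausdorff by Tychonoff's theorem. Define $\Phi \colon B^A \to Z$ by letting $\Phi(h)(m,t)$ be the $H$-orbit of $h(t)$; this is continuous because pointwise convergence in $B^A$ forces eventual equality on any fixed finite tuple. Let $G$ act on $B^A$ by $(\alpha \cdot h)(a) := h(\alpha^{-1}(a))$ and on $Z$ by $(\alpha \cdot w)(m,t) := w(m,\alpha^{-1}(t))$; then $\Phi$ is $G$-equivariant, and $h \in B^A$ is canonical with respect to $(G,H)$ precisely when $\Phi(h)$ is $G$-fixed.

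Next I would invoke extreme amenability. Put $X := \overline{\{\beta f \alpha \mid \alpha \in G, \beta \in H\}}$. Since $\alpha' \cdot (\beta f \alpha) = \beta f (\alpha (\alpha')^{-1}) \in \{\beta f \alpha\}$, the set $X$ is $G$-invariant, and therefore $Y := \overline{\Phi(X)}$ is a compact $G$-invariant subset of $Z$. Extreme amenability of $G$ then yields a fixed point $w \in Y$. Note also that $Y$ inherits from $\Phi(B^A)$ the consistency property that the orbit assigned to any subtuple of $t$ is the natural projection of the orbit assigned to $t$; this is a closed condition on $Z$, so $w$ satisfies it as well.

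Finally I would lift $w$ to a function in $X$. Enumerate $A$ as $\{a_1, a_2, \dots\}$, with the uncountable case handled by a transfinite back-and-forth along a well-ordering. I build $g$ inductively so that $(g(a_1), \dots, g(a_n))$ lies in the orbit $w(n,(a_1,\dots,a_n))$. At step $n$, the membership $w \in \overline{\Phi(X)}$ together with discreteness of $\mathcal{O}_n$ supplies some $h_n \in X$ with $\Phi(h_n)(n,(a_1,\dots,a_n)) = w(n,(a_1,\dots,a_n))$; by consistency of $w$, the induced orbit of $(h_n(a_1),\dots,h_n(a_{n-1}))$ matches the inductive type of $(g(a_1),\dots,g(a_{n-1}))$, so some $\beta_n \in H$ aligns them, and I set $g(a_n) := \beta_n(h_n(a_n))$. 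Each $\beta_n h_n$ lies in $X$ and agrees with $g$ on $\{a_1,\dots,a_n\}$; closedness of $X$ under pointwise limits then forces $g \in X$, and by construction $\Phi(g) = w$, so $g$ is canonical. The principal obstacle is this lifting step: the fixed-point argument produces only a coherent family of orbits, with no a priori guarantee that a single function realises all of them simultaneously; the homogeneous action of $H$ on each orbit supplies the alignment between successive local choices, and closedness of $X$ ensures that the resulting diagonalised function lies back in $X$.
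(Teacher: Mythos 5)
Your proof is correct. The paper only cites this theorem (to the BPT and BP references) without proving it; your argument—encoding the orbit-profile of a function in a compact $G$-space via Tychonoff, applying extreme amenability to obtain a fixed profile $w$ inside $\overline{\Phi(X)}$, noting that the subtuple-compatibility of $w$ is a closed condition, and then lifting $w$ back to an actual function $g\in X$ with $\Phi(g)=w$ via a back-and-forth along an enumeration of $A$ using oligomorphicity of $H$, closure of $X$ under post-composition with elements of $H$, and closedness of $X$ under pointwise limits—is precisely the standard proof found in the cited literature.
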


The following result will be useful later on when we analyse the polymorphisms of first-order expansions of powers of $({\mathbb Q};<)$. If $f$ is an operation of arity $k$ and
$\alpha_1,\dots,\alpha_n$ are unary operations, then we write $f(\alpha_1,\dots,\alpha_n)$ to denote the function $(x_1,\dots,x_n) \mapsto f(\alpha_1(x_1),\dots,\alpha_n(x_n))$.

\begin{lemma}\label{lem:canoni}
Let $\bA_1,\bA_2$ be $\omega$-categorical structures such that $\Aut(\bA_1)$ is extremely amenable and 
assume $f \in \Pol(\bA_1 \boxtimes \bA_2)$ has arity $n$. 
Then, the set
$$ \mathscr C \coloneqq
\overline{ \{ \alpha_0 f (\alpha_1,\dots,\alpha_n) \mid \alpha_0 \in \Aut(\bA_1\boxtimes \bA_2),\; \alpha_j \in \Aut(\bA_1)\times \{\id_{A_2}\} \text{ for all }
j = 1, 2,\dots,n \} }
$$
contains an operation $g$ 
such that $\theta_1(g)$ is canonical over $\Aut(\bA_1)$, and
$\theta_2(g) = \theta_2(f)$. 
The symmetric statement holds if the roles of $\bA_1$ and $\bA_2$ are exchanged.

\end{lemma}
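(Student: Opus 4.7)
The approach is to reduce the problem to a direct application of Theorem~\ref{thm:canon} applied to the first component of $f$, keeping the second component frozen. The key structural observation is Proposition~\ref{prop:product-hom}, which tells us that $f$ decomposes as $f = (\theta_1(f), \theta_2(f))$ and, similarly, every automorphism $\alpha_0 \in \Aut(\bA_1 \boxtimes \bA_2)$ decomposes as $(\gamma_1, \gamma_2)$ with $\gamma_1 \in \Aut(\bA_1)$ and $\gamma_2 \in \Aut(\bA_2)$.

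First, I would compute explicitly what operations of the form $\alpha_0 f(\alpha_1, \dots, \alpha_n)$ look like, restricting attention to the subset where $\alpha_0 = (\gamma_1, \id_{A_2})$ (still inside $\Aut(\bA_1 \boxtimes \bA_2)$) and where $\alpha_j = (\beta_j, \id_{A_2})$ as required by the hypothesis. Using the decomposition, a direct calculation yields
\[ \alpha_0 f(\alpha_1, \dots, \alpha_n) = \bigl(\gamma_1 \theta_1(f)(\beta_1, \dots, \beta_n),\; \theta_2(f)\bigr), \]
so the second component is identically $\theta_2(f)$, while the first component ranges over $\{\gamma_1 \theta_1(f)(\beta_1,\dots,\beta_n) \mid \gamma_1,\beta_1,\dots,\beta_n \in \Aut(\bA_1)\}$.

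Next, I would view $\theta_1(f) \colon A_1^n \to A_1$ as a \emph{unary} map from the set $A_1^n$ to $A_1$, so that the family of maps in the previous paragraph becomes exactly the set of compositions $\gamma_1 \cdot \theta_1(f) \cdot \alpha$ where $\gamma_1 \in \Aut(\bA_1)$ acts on the codomain and $\alpha = (\beta_1, \dots, \beta_n) \in \Aut(\bA_1)^n$ acts componentwise on the domain $A_1^n$. The group $\Aut(\bA_1)^n$ acts on $A_1^n$; it is extremely amenable, because finite direct products of extremely amenable groups are extremely amenable, as noted in the text. The group $\Aut(\bA_1)$ acting on $A_1$ is oligomorphic by $\omega$-categoricity of $\bA_1$. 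Theorem~\ref{thm:canon} therefore provides an operation $h \in \overline{\{\gamma_1 \theta_1(f)(\beta_1,\dots,\beta_n)\}}$ which is canonical with respect to $(\Aut(\bA_1)^n, \Aut(\bA_1))$, i.e.\ canonical over $\Aut(\bA_1)$ in the sense defined in the text.

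Finally, I would lift this to the product. Define $g \coloneqq (h, \theta_2(f))$ on $A_1 \times A_2$; then $\theta_1(g) = h$ and $\theta_2(g) = \theta_2(f)$ by construction. To conclude, one checks that $g \in \mathscr C$: taking a net $(\gamma_1^{(i)}, \beta_1^{(i)}, \dots, \beta_n^{(i)})$ witnessing pointwise convergence of $\gamma_1^{(i)} \theta_1(f)(\beta_1^{(i)}, \dots, \beta_n^{(i)})$ to $h$ on $A_1^n$, the corresponding operations $\alpha_0^{(i)} f(\alpha_1^{(i)}, \dots, \alpha_n^{(i)})$ with $\alpha_0^{(i)} = (\gamma_1^{(i)}, \id)$ and $\alpha_j^{(i)} = (\beta_j^{(i)}, \id)$ converge pointwise on $(A_1 \times A_2)^n$ to $g$: on any finite set of tuples, the first coordinates stabilise by the choice of net and the second coordinates are constantly equal to $\theta_2(f)$. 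The symmetric statement is proved by swapping the roles of the factors. The only subtlety — and the main thing to verify with care — is the lifting of pointwise convergence from the first factor to the full product, but this is immediate once one observes that the second coordinate is held fixed throughout.
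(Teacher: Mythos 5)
Your proof is correct and takes a genuinely shorter route than the paper's. Both proofs begin the same way: apply Theorem~\ref{thm:canon} to $\theta_1(f)$, viewed as a unary map $A_1^n\to A_1$, to obtain a canonical operation in $\overline{\{\gamma_1\,\theta_1(f)(\beta_1,\dots,\beta_n)\}}$. The difference is in how the two proofs get from that canonical operation on $A_1$ back to an element of $\mathscr C$ on $A_1\times A_2$. The paper takes an indirect route: it invokes Proposition~\ref{prop:closed} to argue that $\theta_1(\mathscr C)$ is closed, chooses some $g'\in\mathscr C$ with $\theta_1(g')$ canonical, and then — because nothing so far controls $\theta_2(g')$ — runs a second closure argument (another use of Proposition~\ref{prop:closed}) to correct the second coordinate until it equals $\theta_2(f)$. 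You sidestep all of this by keeping $\theta_2$ under control from the start: by only ever using $\alpha_0$ of the form $(\gamma_1,\id_{A_2})$ and $\alpha_j=(\beta_j,\id_{A_2})$, your explicit computation $\alpha_0 f(\alpha_1,\dots,\alpha_n)=\bigl(\gamma_1\,\theta_1(f)(\beta_1,\dots,\beta_n),\,\theta_2(f)\bigr)$ shows that the second coordinate is \emph{identically} $\theta_2(f)$ throughout the approximating family. Membership of $g=(h,\theta_2(f))$ in the closure is then immediate: for any finite $T\subseteq A_1\times A_2$, project to the finite set $S\subseteq A_1$, choose approximators agreeing with $h$ on $S^n$, and observe that the resulting operations agree with $g$ on $T^n$ since the second coordinate never moves. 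You avoid Proposition~\ref{prop:closed} entirely and need no post-hoc correction of $\theta_2$. The cost of your approach is negligible; if anything it is more transparent, at the price of a slightly longer explicit calculation up front. One very minor wording nitpick: the first coordinates ``eventually agree with $h$ on each finite set'' rather than ``stabilise'', but the intent is clear and the argument is sound.
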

\begin{proof}
By Theorem~\ref{thm:canon} there exists 
an operation $$g'' \in  \overline{\{\alpha_0 \theta_{1}(f) (\alpha_1,\dots,\alpha_n) \mid \alpha_0,\alpha_1,\dots,\alpha_n \in \Aut(\bA_1)\}}$$ 
which is canonical over $\Aut(\bA_1)$. Note that $g'' \in \overline{\theta_1(\mathscr C)}$.
By Proposition~\ref{prop:closed}, $\theta_1(\mathscr{C})$ is closed and therefore there exists 
$g' \in \mathscr C$ such that $\theta_{1}(g') = g''$. 
Arbitrarily
choose $a^1, \dots, a^k$ in $A_2^n$. The definition of $\mathscr C$ implies that
there is an automorphism $\alpha \in \Aut(\bA_1 \boxtimes \bA_2)$ such that $\theta_2(g')(a^i)=\theta_2(\alpha) \theta_2(f)(a^i)$ for $i=1,\dots, k$, 
which we can rewrite
as 
$\theta_2(\alpha^{-1}) \theta_2(g') (a^i) = \theta_2(f)(a^i)$.
This shows that $\theta_2(f)  \in \overline{ \{ \beta \theta_2(g') \mid \beta \in \Aut(\bA_2) \} }$. 
Let $S :=  \overline{ \{ \gamma g' \mid \gamma \in \Aut(\bA_1\boxtimes \bA_2)\} }$. 
Note that $\theta_2(S)$ contains 
$\{\beta \theta_2(g') \mid \beta \in \Aut(\bA_2) \}$ so
$\overline{\theta_2(S)}$ contains $\theta_2(f)$. 
Applying 
Proposition~\ref{prop:closed} to the set 
$S$ implies that 
$\theta_2(S)$ is closed. Hence, 
there exists $g \in S$ such that $\theta_2(g) = \theta_2(f)$.
Note that for every finite subset $F$ of $A_1$, $\theta_1(g)|_F=\gamma'g''|_F$ for some $\gamma' \in \Aut(\bA_1)$. Therefore, $\theta_1(g)$ is canonical over $\Aut(\bA_1)$, because $g''$ is.
\end{proof}

We continue to introduce terminology.
An operation $f \colon A^k \to A$ is called \emph{essentially unary} if there exists $i \in \{1,\dots,k\}$ and a unary operation $g \colon A \to A$ such that $f(x_1,\dots,x_k) = g(x_i)$ for all $x_1,\dots,x_k \in A$. 
Let $S \subseteq {\mathbb Q}$. 
An operation $f \colon {\mathbb Q}^2 \to {\mathbb Q}$ is called \emph{dominated by the first argument on $S$} if $f(x,y) < f(x',y')$ for all $x,x' \in S$ such that  $x<x'$. 
If an operation $f \colon {\mathbb Q}^2 \to {\mathbb Q}$ 
is dominated by the first argument on all of ${\mathbb Q}$,
we say that is  \emph{dominated by the first argument}. 
Examples of operations that are dominated by their first argument
are $\lex$-operations, twisted $\lex$-operations, and order-preserving operations that 
only depend on the first argument.

Recall that $\bD$ is an arbitrary fixed first-order expansion of $(\Q; <) \boxtimes (\Q, <) = (\Q; <_1, =_1, <_2, =_2)$. Our aim is now to show that $\Pol(\bD)$ contains an operation with suitable domination properties (Lemma~\ref{lem:dom}).
This lemma will be a cornerstone in the proof of our first result on syntactic normal forms (Proposition~\ref{prop:decomp}).
We first note that
the binary polymorphisms of $({\mathbb Q};<)$ that are canonical over $\Aut({\mathbb Q};<)$ can
be given a succinct characterisation.

\begin{lemma}[see, e.g.,~Example 11.4.13 in~\cite{Book}]
\label{lem:lex}
Assume that $f \in \Pol({\mathbb Q};<)$ is a binary operation that is canonical over $\Aut({\mathbb Q};<)$. Then, either $f$ is 
essentially unary, or $f$ 
is a $\lex$-operation or a twisted $\lex$-operation, or the operation $(x,y) \mapsto f(y,x)$ is a $\lex$-operation or a twisted $\lex$-operation. 
\end{lemma}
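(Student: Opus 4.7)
The plan is to exploit canonicity to reduce the lemma to a finite case analysis on a $3 \times 3$ \emph{behavior table}. Since $\Aut(\mathbb{Q};<)$ has three orbits on pairs of elements (corresponding to $<$, $=$, and $>$), canonicity of $f$ translates to the following: for any inputs $(x_1, y_1), (x_2, y_2) \in \mathbb{Q}^2$, the relation ($<$, $=$, or $>$) between $f(x_1, y_1)$ and $f(x_2, y_2)$ is determined by the pair of relations $(r_1, r_2)$ holding between $x_1, x_2$ and between $y_1, y_2$. Call this value $b(r_1, r_2) \in \{<, =, >\}$. Three basic constraints pin $b$ down considerably: $b(<, <) = {<}$ since $f \in \Pol(\mathbb{Q}; <)$; $b(=, =) = {=}$ since $f$ is a function; and $b(\overline{r_1}, \overline{r_2}) = \overline{b(r_1, r_2)}$ where $\overline{<} = {>}$ and $\overline{=} = {=}$, obtained by swapping the two input pairs. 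This leaves only $b(=, <)$, $b(<, =)$, and $b(<, >)$ to determine.

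Next, I would split on the values of $b(=, <)$ and $b(<, =)$. If $b(=, <) = {=}$ then $f(x, y_1) = f(x, y_2)$ for all $x, y_1, y_2$, so $f$ depends only on its first argument and is essentially unary; the case $b(<, =) = {=}$ is symmetric. The case $b(=, <) = b(<, =) = {>}$ is immediately excluded: for $x_1 < x_2$ and $y_1 < y_2$, chaining gives $f(x_1, y_1) > f(x_2, y_1) > f(x_2, y_2)$, contradicting $b(<, <) = {<}$. In each of the three remaining subcases $(b(=, <), b(<, =)) \in \{(<, <), (<, >), (>, <)\}$, the value $b(<, >)$ is forced by a short chaining argument, after which the description of $f$ in terms of $b$ matches Definition~\ref{def:lex}: $(<, <)$ with $b(<, >) = {<}$ yields a $\lex$-operation, $(<, <)$ with $b(<, >) = {>}$ a $\lex$-operation after swapping arguments, $(>, <)$ a twisted $\lex$-operation, and $(<, >)$ a twisted $\lex$-operation after swapping arguments.

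The main obstacle is the subcase $b(=, <) = b(<, =) = {<}$ with $b(<, >) = {=}$, which needs to be ruled out. Both pairs $\bigl((0, 1), (1, 0)\bigr)$ and $\bigl((0, 2), (1, 0)\bigr)$ have type $(<, >)$, so $b(<, >) = {=}$ would force $f(0, 1) = f(1, 0) = f(0, 2)$, whereas $b(=, <) = {<}$ forces $f(0, 1) < f(0, 2)$: a contradiction. This step really needs canonicity rather than just preservation of $<$, as witnessed by the (non-canonical) operation $(x, y) \mapsto x + y$, which preserves $<$ but whose comparison on $(<, >)$-inputs is magnitude-dependent. Once this subtlety is handled, the five alternatives of the lemma follow by matching the remaining behavior tables with the definitions of $\lex$-operations and twisted $\lex$-operations.
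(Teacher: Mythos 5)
Your proof is correct, and it follows the standard approach for this kind of statement (the paper does not give its own proof, citing Example~11.4.13 of~\cite{Book} instead): encode canonicity over $\Aut(\Q;<)$ as a behavior function $b \colon \{<,=,>\}^2 \to \{<,=,>\}$, exploit the three structural constraints ($b(<,<)={<}$, $b(=,=)={=}$, $b(\overline{r_1},\overline{r_2})=\overline{b(r_1,r_2)}$), and classify the finitely many remaining tables. Your case analysis is complete and correct: the five cases with $b(=,<)={=}$ or $b(<,=)={=}$ give essentially unary, $(>,>)$ is excluded by chaining, $(>,<)$ forces $b(<,>)={<}$ and yields a twisted $\lex$-operation, $(<,>)$ yields a twisted $\lex$-operation after swapping, and $(<,<)$ splits into $\lex$ and swapped $\lex$ once $b(<,>)={=}$ is excluded. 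Your phrase ``the value $b(<,>)$ is forced by a short chaining argument'' in all three remaining subcases is slightly imprecise, since in $(<,<)$ chaining pins $f(x_1,y_2)$ below and $f(x_2,y_1)$ above both $f(x_1,y_1)$ and $f(x_2,y_2)$ without comparing them directly; but you then explicitly handle this subcase, so the argument as a whole is sound. Most importantly, you correctly identify that excluding $b(<,>)={=}$ in the $(<,<)$ subcase is the one step that genuinely needs the full strength of canonicity rather than mere preservation of $<$, and your argument with the witness tuples $(0,1)$, $(1,0)$, $(0,2)$ is valid: canonicity on the two type-$(<,>)$ pairs gives $f(0,1)=f(1,0)=f(0,2)$, contradicting $b(=,<)={<}$. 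The example $(x,y)\mapsto x+y$ is an apt illustration of why preservation of $<$ alone is insufficient.
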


We now turn our attention to the structure ${\mathfrak D}$ 
and obtain the following intermediate result
by analysing operations $g$ in $\Pol(\bD)$
that are canonical in a particular dimension. Note that the following statements of Lemma \ref{lem:canonise}, Lemma \ref{lem:dom} and Corollary \ref{cor:dom} remain true also if the duals of $\lele$- or $\pp$-operations are used. 

\begin{lemma}\label{lem:canonise}
If $\Pol(\bD)$ contains an operation $f$ such that 
$\theta_1(f)$ is an $\lele$-operation, 
then $\Pol(\bD)$ also contains an operation $g$
such that $\theta_1(g)$ is an $\lele$-operation
and $\theta_2(g)$ or $(x,y) \mapsto \theta_2(g)(y,x)$ is  either a $\lex$-operation
or essentially unary (and in particular preserves $\leq_2$ and $\neq_2$). 
The analogous statement holds if $\theta_1(f)$  is a $\pp$-operation.
\end{lemma}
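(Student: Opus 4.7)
The approach is to canonicalise the second factor of $f$ while leaving the first factor intact, and then read off the shape of the canonical second factor. Since an $\lele$-operation is binary by Definition~\ref{def:lele}, $f$ is binary. As $\Aut(\mathbb{Q};<)$ is extremely amenable, the symmetric version of Lemma~\ref{lem:canoni} (obtained by exchanging the roles of $\bA_1$ and $\bA_2$, with both set to $(\mathbb{Q};<)$) yields an operation $g_0 \in \Pol(\bD)$ with $\theta_1(g_0) = \theta_1(f)$, so $\theta_1(g_0)$ is still an $\lele$-operation, and with $\theta_2(g_0)$ canonical over $\Aut(\mathbb{Q};<)$.

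Next, I would invoke Lemma~\ref{lem:lex} to classify $\theta_2(g_0)$: up to swapping the two arguments, it is either essentially unary, a $\lex$-operation, or a twisted $\lex$-operation. In the essentially unary and $\lex$ cases (and their argument-swapped versions) the desired conclusion holds immediately with $g := g_0$, since such operations preserve $\leq$ and $\neq$ on $\mathbb{Q}$; equivalently, $g$ preserves $\leq_2$ and $\neq_2$ on $\mathbb{Q}^2$, as claimed.

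The main obstacle is thus the two twisted $\lex$ subcases. Iterated self-composition of a twisted $\lex$-operation remains twisted (a short direct check), so any elimination must exploit the interaction between the two factors and, in particular, the strictly order-preserving nature of the $\lele$-operation in $\theta_1(g_0)$. I expect the argument to proceed by constructing a polymorphism $g_1$ of $\bD$ obtained by composing $g_0$ with itself (and possibly with $f$) in a symmetrised way --- e.g.\ $g_1(x,y) := g_0(g_0(x,y),\, g_0(y,x))$ or a higher-arity variant --- so that $\theta_1(g_1)$ still belongs to the class generated by $\theta_1(f)$ (which by Theorem~\ref{thm:lele} is closed under the relevant manipulations and still contains $\lele$-operations) while $\theta_2(g_1)$ no longer has the consistent orientation required to be twisted $\lex$. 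A further application of Lemma~\ref{lem:canoni} to $g_1$, followed by Lemma~\ref{lem:lex}, then forces the canonicalised second factor into the essentially unary or $\lex$ case, yielding the desired $g$. This is the step where the technical heart of the proof lies.

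Finally, the $\pp$ case is proved by the same strategy: the argument above uses only the binarity of $\theta_1(f)$, the extremal amenability of $\Aut(\mathbb{Q};<)$, and the preservation of the class of $\theta_1(f)$ under the compositions invoked; Proposition~\ref{prop:pp} and Theorem~\ref{thm:pp} play the roles of their $\lele$-analogues, so replacing "$\lele$-operation" by "$\pp$-operation" throughout gives the claim.
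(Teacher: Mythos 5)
Your high-level structure is the same as the paper's: canonicalise the second factor via (the symmetric version of) Lemma~\ref{lem:canoni}, classify $\theta_2(g_0)$ via Lemma~\ref{lem:lex}, and observe that only the twisted $\lex$ cases remain. However, the crucial step --- eliminating the twisted $\lex$ case --- is where you go wrong, and the gap is real. You claim that iterated self-composition of a twisted $\lex$-operation remains twisted, and you conclude that the argument must exploit interaction between the two factors. Neither is correct. The composition that the paper uses, $g'(x,y) := g(x,g(x,y))$ (substituting $g$ into its own \emph{second} argument), converts twisted $\lex$ into $\lex$ purely inside the second factor: if $h$ is twisted $\lex$, then $h(a,h(a,b)) < h(a',h(a',b'))$ holds iff $a<a'$, or $a=a'$ and $h(a,b)>h(a,b')$ --- and the latter, by the definition of twisted $\lex$ with equal first coordinates, holds iff $b<b'$. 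So $\theta_2(g')$ is a $\lex$-operation. Simultaneously and independently, the same composition applied in the first factor maps an $\lele$-operation to an $\lele$-operation (a direct case check using injectivity of $\lele$-operations, which makes case 4 of Definition~\ref{def:lele} collapse), so $\theta_1(g')$ remains $\lele$. No cross-factor interplay is needed. (The symmetric twisted case uses $g'(x,y):=g(g(x,y),y)$.)

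Your explicitly proposed candidate $g_1(x,y) := g_0\bigl(g_0(x,y),\, g_0(y,x)\bigr)$ does not achieve the goal: since twisted $\lex$-operations are injective, $\theta_2(g_0)(a,b)=\theta_2(g_0)(a',b')$ forces $(a,b)=(a',b')$, so $\theta_2(g_1)$ induces exactly the same ordering as $\theta_2(g_0)$ and is therefore still twisted $\lex$. The correct composition is thus simpler than you anticipated, but it is a different one than you guessed, and without it the proof does not close.
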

\begin{proof}
Apply Lemma~\ref{lem:canoni} to the operation $f$ for dimension $i=2$ and let 
$g\in \Pol(\bD)$
be the resulting operation such that $\theta_2(g)$ is canonical and $\theta_1(g) = \theta_1(f)$.
By Lemma~\ref{lem:lex}, either $\theta_2(g)$ is essentially unary, or a $\lex$-operation, or a twisted $\lex$-operation, or $(x,y) \mapsto \theta_2(g)(y,x)$ is a $\lex$-operation, or a twisted $\lex$-operation. 
If $\theta_2(g)$ is a twisted $\lex$-operation, then 
we consider $g'$ defined by $g'(x,y) \coloneqq g(x,g(x,y))$ which is a $\lex$-operation. The argument if $(x,y) \mapsto \theta_2(g)(y,x)$ is a twisted $\lex$-operation is similar.
We finally note that $\theta_1(g')$ is an $\lele$-operation. 
The same proof works
if $\theta_1(f)$ is a  $\pp$-operation.
\end{proof}

In our final step, we show that if $\Pol(\bD)$ contains an operation that satisfies the preconditions of 
Lemma~\ref{lem:canonise}, then the expanded 
structure $(\bD;\leq_1,\neq_1,\leq_2,\neq_2)$ admits
a polymorphism with a certain domination property.

\begin{lemma}\label{lem:dom}
Let $f \in \Pol(\bD)$ be such that  $\theta_1(f)$ is an $\lele$-operation or a $\pp$-operation
and let $i \in \{1,2\}$. Then $\Pol(\bD;\leq_1,\neq_1)$ contains an operation $g$ such that $\theta_2(g) = \theta_2(f)$ and
$\theta_1(g)$ is dominated by the $i$-th argument. If $\theta_1(f)$ is 
a $\pp$-operation, 
then $g$ can be chosen
such that $\theta_1(g)$  equals $\pi^2_i$.
\end{lemma}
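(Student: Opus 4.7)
Write $h \coloneqq \theta_1(f)$. Because the relations of $\bD$ are first-order definable over the homogeneous, $\omega$-categorical structure $(\Q;<)^{(2)}$, Proposition~\ref{prop:product-hom} combined with $\omega$-categoricity gives $\Aut(\bD) = \Aut((\Q;<)^{(2)}) = \Aut(\Q;<)^2$, so $(\alpha, \id_\Q) \in \Aut(\bD)$ for every $\alpha \in \Aut(\Q;<)$. Therefore, for any $\alpha, \beta, \gamma \in \Aut(\Q;<)$ the binary operation
\[
g_{\alpha,\beta,\gamma} \coloneqq (\gamma, \id_\Q) \circ f \circ \bigl((\alpha, \id_\Q),\, (\beta, \id_\Q)\bigr)
\]
lies in $\Pol(\bD)$ and satisfies $\theta_2(g_{\alpha,\beta,\gamma}) = \theta_2(f)$ together with $\theta_1(g_{\alpha,\beta,\gamma})(x, y) = \gamma\bigl(h(\alpha(x), \beta(y))\bigr)$. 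Since $\Pol(\bD)$ is closed in the product topology, it suffices to show that for every finite $F \subset \Q$ one can choose $\alpha, \beta, \gamma$ so that $\theta_1(g_{\alpha,\beta,\gamma})|_{F \times F}$ coincides with the desired target --- $\pi^2_i|_{F \times F}$ in the $\pp$ case, and an operation strictly increasing in the $i$-th coordinate on $F \times F$ in the $\lele$ case --- and then pass to a limit in $F$.

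\textbf{The $\pp$ case.} Inspecting Definition~\ref{def:pp} yields two key facts: (i) for $a \leq 0$, the function $h(a, \cdot)$ is constant; (ii) for $a, a' > 0$, $h(a, b) = h(a', b)$ for every $b$. For $i = 1$ pick $\alpha \in \Aut(\Q;<)$ with $\alpha(F) \subset \Q_{\leq 0}$ (possible by homogeneity of $(\Q;<)$) and any $\beta$. By (i), for $x, y \in F$ the value $h(\alpha(x), \beta(y))$ depends only on $\alpha(x)$ and is order-preserving in $x$, so the values form an order-isomorphic copy of $\alpha(F)$. Pick $\gamma \in \Aut(\Q;<)$ sending each such value back to the corresponding $x \in F$ (again by homogeneity); this gives $\theta_1(g_{\alpha,\beta,\gamma})|_{F \times F} = \pi^2_1|_{F \times F}$. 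The case $i = 2$ is symmetric using (ii) with $\alpha(F) \subset \Q_{>0}$. Closedness of $\Pol(\bD)$ then produces $g$ with $\theta_1(g) = \pi^2_i$ and $\theta_2(g) = \theta_2(f)$.

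\textbf{The $\lele$ case.} An $\lele$-operation is injective, so we aim for domination rather than for equality with a projection. For $i = 1$, again take $\alpha(F) \subset \Q_{\leq 0}$; conditions (1)--(2) of Definition~\ref{def:lele} force $h(\alpha(x), \beta(y)) < h(\alpha(x'), \beta(y'))$ whenever $x < x'$ in $F$, and composing with any $\gamma$ preserves this strict inequality. For $i = 2$, take $\alpha(F) \subset \Q_{>0}$ and invoke condition (3) to obtain strict inequality whenever $y < y'$. Closedness of $\Pol(\bD)$ then yields $g \in \Pol(\bD)$ with $\theta_2(g) = \theta_2(f)$ and $\theta_1(g)$ dominated by the $i$-th argument.

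\textbf{Verifying that $g$ lies in $\Pol(\bD; \leq_1, \neq_1)$.} A direct case check on Definition~\ref{def:pp} shows that any collapse $h(a, b) = h(a', b')$ of a $\pp$-operation forces $a = a'$, so $\pp$-operations preserve $\neq$; the same definition manifestly yields preservation of $\leq$. For an $\lele$-operation, injectivity handles $\neq$ and coordinate-wise monotonicity handles $\leq$. In both cases $\theta_1(g)$ is either $\pi^2_i$ or a pointwise limit of compositions of such operations with automorphisms of $(\Q;<)$, and preservation of $\leq$ and $\neq$ is inherited by all such constructions. Thus $g$ preserves $\leq_1$ and $\neq_1$, completing the argument. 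The main subtlety is the sign-region analysis of $\pp$- and $\lele$-operations coupled with the uniform choice of $\alpha, \beta, \gamma$ as $F$ grows; once this is in place the rest is bookkeeping.
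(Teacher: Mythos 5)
Your proof takes essentially the same route as the paper: use automorphisms from $\Aut(\Q;<)\times\{\id_\Q\}$ (which lie in $\Aut(\bD)$) to push the first-coordinate arguments of $\theta_1(f)$ into $\Q_{\leq 0}$ for $i=1$, or into $\Q_{>0}$ for $i=2$, read off from the defining case distinction of $\pp$- resp.\ $\lele$-operations that the resulting operation is dominated by (or, in the $\pp$ case, projects onto) the $i$-th argument on the chosen finite set, then invoke closedness of $\Pol(\bD)$ to pass to a limit. Your observation that $\pp$- and $\lele$-operations preserve $\leq$ and $\neq$, and the resulting membership in $\Pol(\bD;\leq_1,\neq_1)$, also matches. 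The paper pre-composes an automorphism only with the first argument of $f$ and post-composes with a single normalizing automorphism; you allow a second pre-composed $\beta$ but never actually need it.

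The one spot that is genuinely glossed over is the limit in the $\lele$ case. For $\pp$, the target $\pi^2_i$ is a single fixed operation, so along a nested chain $F_1\subset F_2\subset\cdots$ exhausting $\Q$ the operations $g_{F_n}$ already converge pointwise (on each fixed input the $\theta_1$-values stabilize at $\pi^2_i$, while $\theta_2(g_{F_n})=\theta_2(f)$ throughout). For $\lele$, however, being dominated by the $i$-th argument on $F\times F$ does not pin down a unique operation, and $h(\alpha_F(\cdot),\cdot)$ for growing $F$ need not converge --- in fact no single $\alpha\in\Aut(\Q;<)$ satisfies $\alpha(\Q)\subset\Q_{\leq 0}$, so the $\alpha_F$ genuinely change with $F$. "Closedness of $\Pol(\bD)$" only helps once you have a convergent sequence. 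The paper supplies this by choosing the post-composed automorphisms $\beta_S$ so that the $S^2$-restrictions form an increasing chain under inclusion, which is possible by homogeneity of $(\Q;<)$ because each $\theta_1(f((\alpha_S,\id),\id))|_{S^2}$ realizes the same order type (the lexicographic order on $S^2$). You should make this compatibility step explicit; as written, your $\gamma$'s in the $\lele$ case are left arbitrary and the sequence $g_F$ need not converge.
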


\begin{proof}
We begin with the case $i=1$. Define
\begin{align*}
    U = \overline{ \{ \beta f (\alpha,\id_{{\mathbb Q}^2}) \mid \alpha,\beta \in \Aut({\mathbb Q};<)\times \{\id_\mathbb{Q}\} \} } 
\end{align*}
and note that
$U \subseteq \Pol(\bD;\leq_1,\neq_1)$
since $f\in \Pol(\bD)$ and $\theta_1(f)$
preserves $\leq$ and $\neq$. We claim that $U$
contains 
an operation $g$ such that $\theta_1(g)$ 
is dominated by the first argument. 
To see this, let $S \subseteq {\mathbb Q}$ be finite. If $f$ is an $\lele$-operation,
then choose $\alpha_S \in \Aut({\mathbb Q};<)$ so that $\alpha_S(x) < 0$ for every $x \in S$. Note that  $\theta_1(f((\alpha_S,\id_{\mathbb Q}),\id_{{\mathbb Q}^2}))$ is then dominated by the first argument on $S$.
If $T \subseteq S$, then the homogeneity of $({\mathbb Q};<)$ implies that we can choose $\beta_S,\beta_T \in \Aut({\mathbb Q};<)$ such that  
\[\left( (\beta_S,\id_{\mathbb Q}) f((\alpha_S,\id_{\mathbb Q}),\id_{{\mathbb Q}^2}) \right)|_{S^2}\] 
is an extension of 
\[\left( (\beta_T,\id_{\mathbb Q}) f((\alpha_T,\id_
{\mathbb Q}),\id_{{\mathbb Q}^2}) \right)|_{T^2}.\] 
Hence, $U$ contains an operation $g$ such that $\theta_1(g)$ is dominated by the first argument.  
Moreover, $\theta_2(g)=\theta_2(f)$ since we have only applied automorphisms that fix the second dimension, so $g$ satisfies the statement of the lemma. 

If $\theta_1(f)$ is a $\pp$-operation, then we proceed in the same way but in this case
the operation $\theta_1(g)$
is essentially unary, and by applying automorphisms and using the fact that $\Pol(\bD)$ is closed we may  suppose that $\theta_1(g)$
equals $\pi^2_1$.

The proof when $i=2$ only requires flipping the inequalities in the definition of the automorphisms $\alpha_S$.
\end{proof}

\begin{corollary}\label{cor:dom}
Let $f \in \Pol(\bD)$ be such that $\theta_1(f)$ is an $\lele$- or a $\pp$-operation.
Then there is an operation $g \in \Pol(\bD; \leq_1, \neq_1, \leq_2, \neq_2)$ such that $\theta_i(g)$ is dominated by the $i$-th argument for both $i=1,2$. If $\theta_1(f)$ is a $\pp$-operation,
then we can choose $g$ such that $\theta_1(g)=\pi_1^2$.
\end{corollary}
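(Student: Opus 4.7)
The plan is to apply Lemma~\ref{lem:canonise} to canonicalise $\theta_2$, then Lemma~\ref{lem:dom} to dominate $\theta_1$ in the argument opposite to the one that dominates $\theta_2$, and finally to swap the arguments of the resulting binary operation whenever the dominance pattern produced is ``anti-diagonal'' rather than the ``diagonal'' pattern demanded by the statement.

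In detail: applying Lemma~\ref{lem:canonise} to $f$ yields $h \in \Pol(\bD)$ with $\theta_1(h)$ still an $\lele$- or $\pp$-operation of the same type as $\theta_1(f)$, and $\theta_2(h)$ equal to a $\lex$-operation or an essentially unary operation, possibly with its arguments interchanged; in particular $\theta_2(h)$ preserves $\leq_2$ and $\neq_2$ and is dominated by some argument index $j \in \{1,2\}$. Applying Lemma~\ref{lem:dom} to $h$ with the choice $i \coloneqq 3-j$ then produces $g_0 \in \Pol(\bD;\leq_1,\neq_1)$ such that $\theta_2(g_0) = \theta_2(h)$ is still dominated by the $j$-th argument while $\theta_1(g_0)$ is dominated by the $(3-j)$-th argument; in the $\pp$ case one can additionally arrange $\theta_1(g_0) = \pi^2_{3-j}$. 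Because $\theta_2(g_0)$ preserves $\leq_2$ and $\neq_2$, the operation $g_0$ already lies in $\Pol(\bD;\leq_1,\neq_1,\leq_2,\neq_2)$.

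If $j=2$, I set $g \coloneqq g_0$: then $\theta_1(g)$ and $\theta_2(g)$ are dominated by the first and second argument respectively, as required (and $\theta_1(g) = \pi^2_1$ in the $\pp$ case). If $j=1$, I set $g(x,y) \coloneqq g_0(y,x)$: swapping the two arguments of a binary polymorphism simultaneously flips the dominating argument of each $\theta_i$, so $\theta_1(g)$ becomes dominated by the first argument and $\theta_2(g)$ by the second, and in the $\pp$ case $\theta_1(g)(a,b) = \pi^2_2(b,a) = a = \pi^2_1(a,b)$. Swapping arguments preserves membership in any polymorphism clone, so $g$ remains in $\Pol(\bD;\leq_1,\neq_1,\leq_2,\neq_2)$. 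The main obstacle to watch is that Lemma~\ref{lem:canonise} does not let one prescribe which argument dominates $\theta_2(h)$; the argument-swap step in the $j=1$ case is precisely what resolves this asymmetry.
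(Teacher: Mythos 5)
Your proof is correct and follows essentially the same route as the paper: apply Lemma~\ref{lem:canonise} to make $\theta_2$ dominated by some argument $j$, then apply Lemma~\ref{lem:dom} with $i = 3-j$ so that the two coordinates are dominated by opposite (diagonal or anti-diagonal) arguments, and finally swap the two arguments of $g$ in the anti-diagonal case. You are a bit more explicit than the paper's own proof in spelling out why the choice $i=3-j$ is forced and how the argument swap resolves the asymmetry, but the underlying argument is identical, including the observation that $\theta_2(g_0)=\theta_2(h)$ already preserves $\leq$ and $\neq$, which upgrades membership from $\Pol(\bD;\leq_1,\neq_1)$ to $\Pol(\bD;\leq_1,\neq_1,\leq_2,\neq_2)$, and the check that swapping arguments turns $\pi_2^2$ into $\pi_1^2$ in the $\pp$ case.
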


\begin{proof}
Lemma~\ref{lem:canonise} implies that there is an operation $f'\in \Pol(\bD; \leq_2, \neq_2)$ and an index $j \in \{1,2\}$ such that $\theta_1(f')$ is an $\lele$-operation or a $\pp$-operation, 
and $\theta_2(f')$ is dominated by the $j$-th argument. Hence, by Lemma~\ref{lem:dom} applied on $f'$, there is $g \in \Pol(\bD; \leq_1, \neq_1, \leq_2, \neq_2)$ such that $\theta_i(g)$ is dominated by the $i$-th argument for both $i$ or by the $(3-i)$-th argument for both $i$. Without loss of generality, we can assume that $g$ satisfies the former, because otherwise we can replace $g$ by the operation obtained from $g$ by flipping arguments. Similarly, if $\theta_1(f)$ is a $\pp$-operation,
then we can choose $g$ such that $\theta_1(g)$ is the projection $\pi_1^2$.
\end{proof}

We conclude this section with a duality result that reduces the number of cases that we have to consider in some of the forthcoming proofs.

\begin{lemma}\label{lem:flip}
Let $\bD$ be a first-order expansion of $({\mathbb Q}^2;<_1,=_1,<_2,=_2)$. 
The map given by 
$(x,y) \mapsto (x,-y)$ is an isomorphism  between $\bD$ and a
structure which is primitively positively interdefinable with a 
first-order expansion $\bC$ of $({\mathbb Q}^2;<_1,=_1,<_2,=_2)$.
For every $f \in \Pol(\bD)$ 
there exists $f' \in \Pol(\bC)$ such that 
\begin{itemize}
    \item $\theta_1(f') = \theta_1(f)$ and
       \item $\theta_2(f') = \theta_2(f)^*$. 
\end{itemize} 
\end{lemma}
\begin{proof}
For each relation $R$ of $\bD$, let $\phi$ be the defining formula of $R$ over $({\mathbb Q}^2;<_1,=_1,<_2,=_2)$. Replace each atomic formula of the form $x <_2 y$ in $\phi$ by the formula $y <_2 x$. The relation defined by the formula will be denoted by $R'$. The structure $\bC$ is then the first-order expansion of $({\mathbb Q}^2;<_1,=_1,<_2,=_2)$ by all relations of the form $R'$ where $R$ is a relation of $\bD$.
Then, $(x,y) \mapsto (x,-y)$ is an isomorphism between $\bD$ and a structure $\bC'$ which is the first-order expansion of $({\mathbb Q}^2;<_1,=_1,>_2,=_2)$ by the relations $R'$. Note that $\Pol(\bC')=\Pol(\bC)$.
Let $f \in \Pol(\bD)$ be of arity $k$. Then, the operation
$f'$ defined as 
$$f'((x_1,y_1),\dots,(x_k,y_k)) \coloneqq (\theta_1(f)(x_1,\dots,x_k),\theta_2(f)^*(y_1,\dots,y_k))$$ 
is a polymorphism of $\bC$ that satisfies the requirements of the lemma. 
\end{proof}

\subsection{Syntactic Normal Forms} 
\label{sect:decomp}
In this section we prove that if $\theta_1(\Pol(\bD))$ and 
$\theta_2(\Pol(\bD))$ contain certain
polymorphisms, then the relations of $\bD$ can be defined by formulas satisfying simple syntactic restrictions. 
These restrictions are all of the same kind: the relations can be defined by conjunctions of clauses with
straightforward definitions.
We start with the case that there exists an $f \in \Pol(\bD)$ 
such that $\theta_1(f)$ is an $\lele$-operation or a $\pp$-operation (Proposition~\ref{prop:decomp}). 
We then prove a stronger statement if
$\theta_1(f)$
is a $\pp$-operation
(Proposition~\ref{prop:syntax-2}),
and an even stronger result if additionally there is no binary operation $g \in \Pol(\bD)$ such that $\theta_2(g)$ is a $\lex$-operation
(Proposition~\ref{prop:factor}). 
Finally, we treat the situation that for both $i \in \{1,2\}$ there exists $f_i \in \Pol(\bD)$ such that $\theta_i(f_i)$
is an $\lele$-operation (Proposition~\ref{prop:pp-pow}). These results are collected in Section~\ref{sec:normalforms}.
The proofs of Propositions~\ref{prop:decomp} and \ref{prop:syntax-2} are based on a particular normalisation of formulas that we describe
in Section~\ref{sec:normalisation}.

\subsubsection{Normalisation}
\label{sec:normalisation}
We will now describe a normalisation process for formulas that will be extensively
used in Section~\ref{sec:normalforms}.
Let $\phi$ be a quantifier-free 
formula over $({\mathbb Q}^2;=_1,<_1,=_2,<_2)$. 
We may assume 
that
\begin{itemize}
\item[R1.] $\phi$ is in reduced CNF (as defined in Section~\ref{sec:determinedclauses}), 

\item[R2.] $\phi$ does not contain literals of the form $x \neq y$, because such literals can be replaced by $x <_1 y \vee y <_1 x \vee x <_2 y \vee y <_2 x$, and

\item[R3.] $\phi$ does not contain  literals of the form $x = y$, because such literals can be replaced by $x =_1 y \wedge x =_2 y$.

\item[R4.] $\phi$ does not contain literals of the form $\neg (x <_i y)$, for $i \in \{1,2\}$, because such literals can be replaced by $y <_i x \vee y =_i x$.
\end{itemize}

We introduce two rewriting rules R5 and R6, each of which yields a formula equivalent to the original formula $\phi$. The basis of both rules is the same and the only difference is in the relation contained in one of the affected literals.
Suppose that $\phi$ contains, for distinct $i,j \in \{1,2\}$, a clause $\alpha$ of the form $(u \circ_i v \vee x <_j y \vee \beta)$ where $u, v, x, y$ are (not necessarily distinct) variables, $\circ_i \in \{<_i,=_i,\neq_i\}$
and let $\phi'$ be the other clauses of $\phi$. If
\begin{align*} 
\phi' \wedge \neg \beta \wedge u \circ_i v & \text{ implies } x =_j y, 
\end{align*}
then we replace $\alpha$ by the two clauses 
\begin{align*}
& (x <_j y \vee x =_j y \vee \beta) \\
\text{ and } \; &  (u \circ_i v \vee x \neq_j y \vee \beta).
\end{align*}
If the relation $\circ_i$ is $<_i$, then we will refer to the rewriting rule as R5, and otherwise (that is, when  $\circ_i \in \{=_i, \neq_i\}$) we will refer to the rule as R6.
To see that the new formula is equivalent to $\phi$, let $s$ be a solution to $\phi$. If $s$ satisfies
$\beta$, then the two new clauses are satisfied. 
If $s$ does not satisfy $\beta$, then it must satisfy $u \circ_i v$ or $x <_j y$. In the first case, $s$ satisfies the second  new clause, and by assumption it also  satisfies the first new clause. In the latter case, it clearly satisfies both the first and the second clause. 
Now suppose that conversely, $s$ satisfies $\phi'$ and the two new clauses. If $s$ satisfies $\beta$
or $x <_j y$ then $\alpha$ is satisfied. Otherwise,
the first new clause implies that $x =_j y$,
and hence the second clause implies that $u \circ_i v$, and hence  $\alpha$ is satisfied.  
If the formula obtained from applying R5 or R6 is not reduced, we remove literals to make it reduced.
Note that after every application of R5 or R6 the conditions R1--R4 will still be satisfied.

\begin{ex}
We give an example of the rewriting procedure starting from a formula $\phi$ in reduced CNF.
\begin{align*}
&(x=_1 y) \wedge (u =_1 v \vee x=y) \wedge (u <_1 v \vee x <_2 y) \\
&(x=_1 y) \wedge (u =_1 v \vee (x =_1 y \wedge x =_2 y)) \wedge (u <_1 v \vee x <_2 y) &&\text{ (R3)} \\
&(x=_1 y) \wedge (u =_1 v \vee x =_1 y) \wedge (u=_1 v \vee x =_2 y) \wedge (u <_1 v \vee x <_2 y) &&\text{ (R1)} \\
&(x=_1 y) \wedge (u=_1 v \vee x =_2 y) \wedge (u <_1 v \vee x <_2 y) &&\text{ (R1)} \\
&(x=_1 y) \wedge (u=_1 v \vee x =_2 y) \wedge (x <_2 y \vee x=_2 y) \wedge (u <_1 v \vee x \neq_2 y) &&\text{ (R5)} \\
&(x=_1 y) \wedge (u=_1 v\vee x =_2 y) \wedge (x <_2 y \vee x=_2 y) \wedge (u <_1 v \vee u =_1 v) \wedge (x \neq_2 y \vee u \neq_1 v) &&\text{ (R6)}
\end{align*}
The resulting formula does not admit application of any of the rewriting rules.
\end{ex}

The reason to split the rewriting rule into two rules R5 and R6 is that only R5 terminates (i.e. can be applied only finitely many times) on every quantifier-free CNF formula (Lemma \ref{lem:R5-term}).  To prove termination of R6 (Lemma \ref{lem:R6-term}) and existence of an equivalent formula to which none of the rewriting rules above can be applied, we require the existence of a certain operation that preserves the formula. On the way we also prove a syntactic restriction on such formulas (Lemma \ref{lem:ineq-mix}). Note that Lemma \ref{lem:ineq-mix} and \ref{lem:R6-term} remain true also if $\theta_1(f)$ is the dual of an $\lele$- or $\pp$-operation; it can be proved using the version of Corollary \ref{cor:dom} based on duals.

\begin{lemma}\label{lem:R5-term}
Let $\phi$ be a quantifier-free CNF formula over $({\mathbb Q}^2;=_1,<_1,=_2,<_2)$. Then the rewriting rule R5 applied on $\phi$ terminates.

\end{lemma}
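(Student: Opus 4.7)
The plan is to exhibit a non-negative integer-valued potential $W$ on (reduced) formulas that strictly decreases with each application of R5, so that termination follows from the well-foundedness of $(\mathbb{N},<)$. The rewriting rules R2--R4 guarantee that every literal appearing in $\phi$ has one of the three forms $x<_iy$, $x=_iy$, or $x\neq_iy$. Call a literal \emph{strict} if it has the first form, and for each clause $C$ write $\sigma(C)$ for the number of strict literals of $C$. I would then define
$$W(\phi) \coloneqq \sum_{C \text{ clause of } \phi} 3^{\sigma(C)}$$
and aim to show that one application of R5 followed by the subsequent reduction step strictly decreases $W$.

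To this end, suppose R5 is applied to the clause $\alpha = (u<_iv \vee x<_jy \vee \beta)$ with $i\neq j$. Because $\phi$ is assumed reduced, the literals $u<_iv$ and $x<_jy$ do not already occur in $\beta$, so $\sigma(\alpha) = 2 + s$, where $s$ denotes the number of strict literals of $\beta$. The two replacement clauses $C_1 = (x<_jy \vee x=_jy \vee \beta)$ and $C_2 = (u<_iv \vee x\neq_jy \vee \beta)$ satisfy $\sigma(C_\ell) \leq 1 + s$ for $\ell\in\{1,2\}$, since neither $x=_jy$ nor $x\neq_jy$ is strict. Consequently, replacing $\alpha$ by $C_1$ and $C_2$ changes $W$ by at most
$$2 \cdot 3^{1+s} - 3^{2+s} = -3^{1+s} < 0.$$
Moreover, the reduction step only deletes literals, and such deletions are monotone on $\sigma$ (deleting a non-strict literal leaves $\sigma$ unchanged, deleting a strict one decreases it by one), so $W$ can only decrease further during reduction. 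Hence $W$ strictly decreases after each R5-plus-reduction step, and since $W\in\mathbb{N}$ this establishes termination.

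The principal conceptual step is the choice of the measure $3^{\sigma(C)}$: a naive count of clauses or of literals does not work since R5 can increase both, and only a potential function that drops by strictly more than a factor of two upon removing a single strict literal captures the effect of R5 (the bound $2\cdot 3^{1+s} < 3^{2+s}$ amounts to $2 < 3$). The only minor bookkeeping concern is that the literal $x=_jy$ or $x\neq_jy$ introduced by R5 may already occur in $\beta$, creating duplicates in $C_1$ or $C_2$; but this only strengthens the bound $\sigma(C_\ell)\leq 1+s$, and so it does not affect the argument.
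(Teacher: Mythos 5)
Your proof is correct, and it rests on the same key observation as the paper's: each application of R5 replaces a clause $\alpha$ by two clauses each of which has strictly fewer $<$-literals than $\alpha$ (your $\sigma(C)$ is precisely the paper's $m(\alpha)$), and subsequent reduction cannot increase this count. The difference is in how this observation is turned into a termination proof. The paper argues combinatorially: it bounds the maximal clause length $\ell$ and number of variables $k$, observes that the ``descendants'' of any single clause under repeated R5 applications form a binary tree of depth at most $\ell$ (since $m$ strictly decreases along each branch), and concludes with an explicit but somewhat loosely stated upper bound of the form $f(\ell,k)\cdot 2^\ell$ on the total number of R5 applications. You instead aggregate the per-clause counts into a single integer-valued potential $W(\phi)=\sum_C 3^{\sigma(C)}$ and verify that a split-and-reduce step decreases it by at least $3^{1+s}>0$, since $2\cdot 3^{1+s}<3^{2+s}$. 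This is a genuinely cleaner packaging: the base-$3$ exponential is exactly what is needed to absorb the fact that one clause becomes two, and the reduction step is handled uniformly because literal deletion is monotone on $W$. Your version avoids the imprecision in the paper's counting (the phrase ``there are at most $f(\ell,k)$ clauses in $\phi$'' is only correct up to deduplication), while still yielding an explicit bound, namely $W(\phi)/3 \le N\cdot 3^{\ell-1}$ steps where $N$ is the initial number of clauses. Both proofs correctly use that the formula is kept reduced throughout, which is what justifies $u<_iv\notin\beta$ and $x<_jy\notin\beta$ and hence the strict drop in $\sigma(\alpha)$.
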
 

\begin{proof}
Let $\alpha$ be an arbitrary clause over $({\mathbb Q}^2;=_1,<_1,=_2,<_2)$. 
Let $m(\alpha)$ denote the total number of $<_1$- and $<_2$-literals in $\alpha$.
Assume we apply R5  to the clause
$\alpha=(u <_i v \vee x <_j y \vee \beta)$ where we assume (without loss of generality) that
$u <_i v \not\in \beta$ and $x <_j y \not\in \beta$.
This yields two clauses $\alpha_1=(x <_j y \vee x =_j y \vee \beta)$ and $\alpha_2=(u <_i v \vee x \neq_j y \vee \beta)$. Note that $m(\alpha_1)<m(\alpha)$ and $m(\alpha_2)<m(\alpha)$ and reducing the formula cannot increase $m(\alpha)$ for any clause $\alpha$.

Now consider a quantifier-free CNF formula $\phi$ over $({\mathbb Q}^2;=_1,<_1,=_2,<_2)$. 
Let $\ell$ be the maximum clause length of $\phi$ and let $k$
be the number of variables appearing in $\phi$. Note that R5-rewriting cannot increase $\ell$ or $k$, and
for any clause $\alpha$ in $\phi$, it holds that $0 \leq m(\alpha) \leq \ell$. Let $f(\ell,k)$ denote the (finite) number of possible clauses over $({\mathbb Q}^2;=_1,<_1,=_2,<_2)$
where clause length is bounded by $\ell$ and at most $k$ variables are used.
Arbitrarily choose a clause $\alpha$ in $\phi$. If R5 is applied to $\alpha$,
then we know that $\alpha$ is replaced by at most two new clauses $\alpha_1$ and $\alpha_2$
where $m(\alpha_1) < m(\alpha)$ and $m(\alpha_2) < m(\alpha)$. Thus, the clause $\alpha$
can result in at most $2^{\ell}$ applications of rule R5.
Since there are at most $f(\ell,k)$ clauses in $\phi$, we conclude that
R5 can be applied at most $f(\ell,k) \cdot 2^{\ell}$ times to $\phi$.
\end{proof}

We continue with rewriting rule R6.
The following notation will be practical in several proofs dealing with syntactic forms. Let $g$ be a binary operation on $\Q$, $\phi$ a formula over $({\mathbb Q}^2;=_1,<_1,=_2,<_2)$ and set of variables $X$ and $s,t: X \rightarrow \Q$ assignments of $\phi$. Then $g(s,t)$ represents the assignment of $\phi$ that assigns to variable $x \in X$ the value $g(s(x), t(x))$. 

\begin{lemma}\label{lem:ineq-mix}
Let $\phi$ be a quantifier-free CNF formula over $({\mathbb Q}^2;=_1,<_1,=_2,<_2)$ such that R5 cannot be applied to $\phi$. Suppose that $\phi$ is preserved by an operation $f\in \Pol({\mathbb Q}^2;=_1,<_1,=_2,<_2)$ such that $\theta_{1}(f)$ is an $\lele$-operation or
a $\pp$-operation.
Then $\phi$ does not contain a clause that contains a $<_j$-literal for both $j\in\{1,2\}$.
\end{lemma}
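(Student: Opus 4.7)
The plan is to argue by contradiction: suppose $\phi$ contains a clause $\alpha = \ell_1 \vee \ell_2 \vee \beta$ with $\ell_1 = (x <_1 y)$ a $<_1$-literal and $\ell_2 = (u <_2 v)$ a $<_2$-literal, and let $\phi' = \phi \setminus \{\alpha\}$. I would construct an assignment $t$ with $t \models \phi$ but $t \not\models \alpha$, which contradicts $\phi \models \alpha$.

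First I would extract witnesses from the non-applicability of R5 to $\alpha$ in both symmetric ways (once with $\ell_1$ in the role of the $(u <_i v)$-literal, once with $\ell_2$), obtaining $s_1, s_2 \models \phi$ with $s_1 \models \phi' \wedge \neg \beta \wedge \ell_1 \wedge (u \neq_2 v)$ and $s_2 \models \phi' \wedge \neg \beta \wedge \ell_2 \wedge (x \neq_1 y)$. Using that $\Aut(\Q;<) \times \Aut(\Q;<)$ is contained in $\Aut(\bD_\phi)$ for $\bD_\phi := ({\mathbb Q}^2; =_1, <_1, =_2, <_2, R_\phi)$ (Proposition~\ref{prop:product-hom}), I would compose $s_1, s_2$ with automorphisms of the two individual dimensions to normalize the $\neq$-witnesses to the $>$-direction: $\theta_2(s_1(u)) > \theta_2(s_1(v))$ and $\theta_1(s_2(x)) > \theta_1(s_2(y))$.

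Since $f \in \Pol(\bD_\phi)$, Corollary~\ref{cor:dom} applied in its flipped variant provides $g \in \Pol(\bD_\phi; \leq_1, \neq_1, \leq_2, \neq_2) \subseteq \Pol(\bD_\phi)$ with $\theta_1(g)$ dominated by its 2nd argument and $\theta_2(g)$ dominated by its 1st argument. Setting $t := g(s_1, s_2)$, preservation of $\phi$ by $g$ gives $t \models \phi$. I would then verify $t \not\models \alpha$ as follows. Domination of $\theta_1(g)$ by its 2nd argument together with the normalization $\theta_1(s_2(x)) > \theta_1(s_2(y))$ forces $\theta_1(t(x)) > \theta_1(t(y))$, so $t \not\models \ell_1$; symmetrically $t \not\models \ell_2$. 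For each literal $\ell \in \beta$ I would argue by type: for an $=_i$-literal $(p =_i q)$, neither $s_1$ nor $s_2$ satisfies it, so the $\neq$-preservation implied by domination in the $i$-th component propagates $\neq$ to $t$; for a $<_i$-literal, preservation of $\leq_i$ by $g$ propagates $\geq$ from both $s_1,s_2$ to $t$; for a $\neq_i$-literal, both $s_1,s_2$ give equal values on $p,q$ in dimension $i$, so $t$ does too.

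The main obstacle will be guaranteeing the $>$-direction of the $\neq$-witnesses used in the normalization step. The non-applicability of R5 alone only furnishes witnesses satisfying $\neq$ without prescribing a sign, and when both signs are realizable one can flip via dimension-wise automorphisms. The hard case is when, say, every $s \models \phi' \wedge \neg \beta \wedge \ell_1 \wedge (u \neq_2 v)$ has $u <_2 v$; one then obtains the implication $\phi' \wedge \neg \beta \wedge \ell_1 \Rightarrow u \leq_2 v$, which in combination with reducedness forces the relevant R1-solution $s_1$ to satisfy $u =_2 v$. In this edge case I would complete the argument by invoking the refined structure of $\theta_2(g)$ supplied by Lemma~\ref{lem:canonise} (namely, $\theta_2(g)$ being a $\lex$-operation, its twisted variant, or essentially unary up to swap): selecting the essentially-unary case transports the enforced equality $\theta_2(s_1(u)) = \theta_2(s_1(v))$ through $g$ into an equality in $t$, still giving $t \not\models \ell_2$.
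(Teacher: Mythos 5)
Your plan closely mirrors the paper's own argument: both aim to build $t := g(\cdot,\cdot)$, with $g$ supplied by Corollary~\ref{cor:dom}, that falsifies every literal of $\alpha$ while still satisfying $\phi$. You correctly isolate the soft spot that the paper itself glosses over: R5-non-applicability only yields witnesses satisfying $x \neq_j y$, whereas the domination argument needs the \emph{strict} inequality pointing the right way. But your two remedies do not close the gap. First, ``normalization via dimension-wise automorphisms'' cannot produce the required sign: automorphisms of $(\Q;<)$ are order-preserving, so they cannot turn $<$ into $>$; when both signs are realizable no flip is needed, and when only the wrong sign is realizable no automorphism helps. Second, ``selecting the essentially-unary case'' of Lemma~\ref{lem:canonise} is not a choice you get to make: the canonicalization determines $\theta_2(g)$, and if it is a $\lex$-type operation then the equality $\theta_2(s_1(u)) = \theta_2(s_1(v))$ is immediately broken by the second argument ($s_2(u)_2 < s_2(v)_2$), so $t \models \ell_2$ and no contradiction results.

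In fact the gap is not merely a missing argument: with R5 exactly as stated, the lemma can fail. Take $\phi = (u <_1 v \vee x <_2 y) \wedge (x <_2 y \vee x =_2 y)$. This formula satisfies R1--R4, is reduced, and neither R5 instance applies to its first clause ($\phi' \wedge u <_1 v \Rightarrow x =_2 y$ and $\phi' \wedge x <_2 y \Rightarrow u =_1 v$ both fail, since $\phi'$ only forces $x \leq_2 y$, not $x =_2 y$); yet one checks directly that the relation defined by $\phi$ is preserved by $(g_1,g_2)$ with $g_1$ any $\pp$-operation and $g_2$ any $\lele$-operation. So the hypotheses hold, but the first clause mixes a $<_1$- and a $<_2$-literal. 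And indeed, since $\phi \models x \leq_2 y$, no satisfying assignment has $u <_1 v \wedge y <_2 x$: the witness $s$ asserted by the paper (and needed by your construction) does not exist. The lemma becomes correct, and both your argument and the paper's go through, once R5's condition is strengthened to $\phi' \wedge \neg\beta \wedge \neg(x <_j y) \wedge u <_i v \Rightarrow x =_j y$ (equivalently, $\phi' \wedge \neg\beta \wedge u <_i v \Rightarrow x \leq_j y$); on the example above this strengthened R5 rewrites the first clause to $(u <_1 v \vee x \neq_2 y)$, after which the conclusion holds.
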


\begin{proof}
By Corollary \ref{cor:dom}, there is $g\in \Pol({\mathbb Q}^2;=_1,<_1,=_2,<_2,\leq_1, \neq_1, \leq_2, \neq_2)$ that preserves $\phi$ such that $\theta_j(g)$ is dominated by $j$-th argument for both $j$.

Suppose for contradiction that $\phi$ contains a clause $\alpha$ of the form $(u <_1 v \vee x <_2 y \vee \beta)$. Since R5 cannot be applied and $\phi$ is reduced, there are satisfying assignments $s$ and $t$ of $\phi$ such that $s$ satisfies $u <_1 v$, $y <_2 x$, and falsifies $\beta$ and $t$ satisfies $v <_1 u$, $x <_2 y$, and falsifies $\beta$.  Then the tuple $g(t,s)$ 
satisfies $v <_1 u$ since $\theta_1(g)$ is
dominated by the first argument, 
and it satisfies $y <_2 x$ since
$\theta_2(g)$ is dominated by the second argument. Moreover, all other literals of $\alpha$ 
are falsified, too, 
since $g$ preserves $<_j$, $=_j$, $\leq_j$, and $\neq_j$ for $j \in \{1,2\}$. 
\end{proof}

\begin{lemma}\label{lem:R6-term}
Let $\phi$ be a quantifier-free CNF formula over $({\mathbb Q}^2;=_1,<_1,=_2,<_2)$ such that R5 cannot be applied on $\phi$. Suppose that $\phi$ is preserved by an operation $f\in \Pol({\mathbb Q}^2;=_1,<_1,=_2,<_2)$ such that $\theta_{1}(f)$ is an $\lele$-operation or a $\pp$-operation.
Then the rewriting rule R6 applied to $\phi$ terminates.
\end{lemma}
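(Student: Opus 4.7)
The plan is to define a well-founded measure on clauses that strictly decreases under every R6 application, and then invoke the Dershowitz--Manna multiset ordering to deduce termination. First, by Lemma~\ref{lem:ineq-mix} applied to $\phi$, no clause contains both a $<_1$- and a $<_2$-literal; hence every clause $\alpha$ has a well-defined ``$<$-dimension'' $j(\alpha)\in\{1,2,*\}$, with $j(\alpha)=*$ meaning $\alpha$ has no $<$-literal. For each clause $\alpha$ I would then define
\[
W(\alpha):=\begin{cases} n_{<_{j(\alpha)}}(\alpha)+n_{\circ_{3-j(\alpha)}}(\alpha) & \text{if } j(\alpha)\in\{1,2\},\\ 0 & \text{if } j(\alpha)=*,\end{cases}
\]
where $n_{<_j}(\alpha)$ counts the $<_j$-literals of $\alpha$ and $n_{\circ_i}(\alpha)$ counts the $=_i$- and $\neq_i$-literals of $\alpha$.

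Next I would analyse the effect of R6 in terms of $W$. Applied to a clause $\alpha=(u\circ_i v\vee x<_j y\vee \beta)$ with $\{i,j\}=\{1,2\}$ and $\circ_i\in\{=_i,\neq_i\}$, set $a:=n_{<_j}(\alpha)$ and $b:=n_{\circ_i}(\alpha)$, both at least $1$, so $W(\alpha)=a+b$. By Lemma~\ref{lem:ineq-mix}, $\beta$ contains no $<_{3-j}$-literal; hence the two resulting clauses
\[
\alpha_1=(x<_j y\vee x=_j y\vee\beta),\qquad \alpha_2=(u\circ_i v\vee x\neq_j y\vee\beta)
\]
satisfy $j(\alpha_1)=j$ and $j(\alpha_2)\in\{j,*\}$. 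A direct count gives $W(\alpha_1)=a+(b-1)$, and, if $a>1$, $W(\alpha_2)=(a-1)+b$; if $a=1$ then $j(\alpha_2)=*$ and $W(\alpha_2)=0$. In every case $W(\alpha_1),W(\alpha_2)<W(\alpha)$. Moreover reduction only removes literals, so it cannot increase $W$ on any clause; and since the new clauses have all $<$-literals in a single dimension, the conclusion of Lemma~\ref{lem:ineq-mix} is preserved and R5 remains inapplicable, so the hypotheses of the lemma (and hence the analysis above) remain valid after each R6 application.

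It follows that each R6 step replaces one element $W(\alpha)$ of the multiset $\{\!\{W(\gamma):\gamma\text{ a clause of }\phi\}\!\}$ with two strictly smaller elements (and any accompanying reductions can only decrease the multiset further). This is a strict decrease in the Dershowitz--Manna ordering on finite multisets of natural numbers, which is well-founded, so only finitely many R6 applications are possible. I expect the main obstacle to be the choice of measure: R6 splits one clause into two, so naive size or total-literal counts do not decrease, and one has to design a measure tailored to the two ``witnesses'' $u\circ_i v$ and $x<_j y$ that trigger the rule, so that each split is paid for by losing exactly one witness on each side.
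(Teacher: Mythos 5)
Your proof is correct and takes essentially the same approach as the paper: after invoking Lemma~\ref{lem:ineq-mix} to give each clause a well-defined $<$-dimension $j$, you build a per-clause measure from the counts of $<_j$-literals and of $\{=_{3-j},\neq_{3-j}\}$-literals, verify that it strictly decreases on both clauses produced by an R6 step and does not increase under reduction, and conclude termination from well-foundedness. The paper's measure $p(\alpha)$ is the product of these two counts (the number of ``mixed'' pairs) where yours is the sum $W(\alpha)$, and the paper finishes with an ad hoc bounding argument analogous to Lemma~\ref{lem:R5-term} rather than the Dershowitz--Manna multiset ordering; both choices work, and your wrap-up is arguably cleaner.
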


\begin{proof}
By Lemma \ref{lem:ineq-mix}, $\phi$ does not contain a clause that contains a $<_j$-literal for both $j\in\{1,2\}$. Note that by an application of the rewriting rule R6 no such clause can be created. For a clause $\alpha$ in $\phi$, let $p(\alpha)$ be the number of pairs of a $\{=_i, \neq_i\}$-literal and a $<_j$-literal, for distinct $i$ and $j$, that appears in $\alpha$.

Arbitrarily choose a clause $\alpha$ in $\phi$ that admits an application of R6. Then $\alpha=(u \circ_i v \vee x <_j y \vee \beta)$ for $\circ_i \in \{=_i, \neq_i\}$. After applying R6, $\alpha$ is replaced by two clauses $\alpha_1=(x <_j y \vee x =_j y \vee \beta)$ and $\alpha_2=(u \circ_i v \vee x \neq_j y \vee \beta)$. Observe that $p(\alpha_1)<p(\alpha)$  and $p(\alpha_2)<p(\alpha)$ since $\alpha$ does not contain $<_{3-j}$-literals. Moreover, reducing the formula does not increase $p(\gamma)$ for any clause $\gamma$ in the formula.

Let $\ell$ be the maximum clause length of $\phi$. For any clause $\gamma$ in $\phi$, it holds that $0 \leq p(\gamma) \leq \ell^2/4$. Using an argument analogous to the one in Lemma \ref{lem:R5-term}, we conclude that R6 can be applied only finitely many times to $\phi$.
\end{proof}

\noindent
If $\phi$ is a reduced formula such that none of the rewriting rules presented above
are applicable, then we call it \emph{normal}. If $\phi$ is a quantifier-free CNF formula over $({\mathbb Q}^2;=_1,<_1,=_2,<_2)$ preserved by  $f\in \Pol({\mathbb Q}^2;=_1,<_1,=_2,<_2)$ where $\theta_{1}(f)$ is an $\lele$-operation, a $\pp$-operation, or the dual of such an operation, it is possible to rewrite it to an equivalent normal formula by first applying R5 until it terminates and then applying R6 until it terminates. Observe that the resulting formula satisfies conditions R1--R4 and does not admit an application of R5 (since it does not contain a clause of the required form by Lemma \ref{lem:ineq-mix}) or R6.
Note that if $\phi$ is normal and contains a clause of the form $(u \circ_i v \; \vee \; x <_j y \; \vee \; \beta)$,
then $\phi$ has a satisfying assignment 
that satisfies $u \circ_i v$,
falsifies $\beta$, and satisfies $y <_j x$, because otherwise we could have applied R5 or R6.

The rewriting rules and Lemma~\ref{lem:R5-term} can be generalised in a straightforward fashion to formulas over $(\Q; <)^{(n)}$. To prove a generalised version of Lemma~\ref{lem:ineq-mix}, one needs to use Corollary~\ref{cor:dom} to produce a polymorphism with the particular domination property in two distinct dimensions $i$ and $j$ (see the proof of Proposition \ref{prop:syntax-n} for more details). The generalised version of the lemma then shows that there is no clause containing both $<_i$-literals and $<_j$-literals for distinct $i$ and $j$ under the assumption that for all but at most one $p \in \{1,\dots,n\}$ there is $f_p \in \Pol(\bD)$ such that $\theta_p(f_p)$ is an $\lele$-operation or a $\pp$-operation. Subsequently, a generalised version of Lemma~\ref{lem:R6-term} can be proved and normal formulas can be defined. The normalisation process for formulas over $(\Q; <)^{(n)}$ will be used in Section \ref{sect:n-dim}.

\subsubsection{Definitions via Restricted Clauses}
\label{sec:normalforms}

The following definition is central in our presentation of various syntactic normal forms.

\begin{definition}\label{def:weakly-determined}
A clause is \emph{weakly $1$-determined} if it is of the form 
$$\psi \vee \bigvee_{i \in \{1,\dots, k\}} x_i \neq_2 y_i$$ where $\psi$ is  $1$-determined and $k \geq 0$. Weakly $2$-determined clauses are defined analogously.
\end{definition}
A clause can simultaneously be weakly $1$-determined
\emph{and} weakly $2$-determined: $x \neq_1 y \vee u \neq_2 v$ is one example.
Normalised formulas in the sense of Section~\ref{sec:normalisation} play a key role in our first result
concerning logical definitions based on weakly $i$-determined clauses.

\begin{proposition}\label{prop:decomp}
Suppose that $\Pol(\bD)$ contains an operation $f$ such that $\theta_1(f)$ is an $\lele$-operation
or a $\pp$-operation.
Then, the following holds for every relation $R$ in $\bD$: if a normal formula $\phi$ is a definition of $R$ over $(\Q^2,<_1, =_1,<_2, =_2)$, then $\phi$ is a conjunction of clauses
each of which is weakly $i$-determined for some $i \in \{1,2\}$.
\end{proposition}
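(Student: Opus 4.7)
My plan is to argue by contradiction: assume that some clause $\alpha$ of $\phi$ is not weakly $i$-determined for either $i \in \{1,2\}$. After the normalization conditions R1--R4, every literal of $\alpha$ has the form $x <_i y$, $x =_i y$, or $x \neq_i y$; failing weak determinedness for both $i$ forces $\alpha$ to contain a literal from dim 1 other than $\neq_1$ together with a literal from dim 2 other than $\neq_2$. Lemma~\ref{lem:ineq-mix} (which uses R5 non-applicability, together with the hypothesis on $f$) already excludes the $<_1$-together-with-$<_2$ pattern, so I am left with the patterns $=_1 + =_2$, $<_1 + =_2$, and (by symmetry) $=_1 + <_2$.

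For each remaining pattern I would fix the polymorphism $g \in \Pol(\bD; \leq_1, \neq_1, \leq_2, \neq_2)$ supplied by Corollary~\ref{cor:dom}, with $\theta_i(g)$ dominated by its $i$-th argument (and additionally $\theta_1(g) = \pi^2_1$ in the $\pp$-case). From reducedness of $\phi$ I would extract two satisfying assignments $\sigma_1, \sigma_2 \models \phi$, each isolating exactly one of the offending literals of $\alpha$ and falsifying every other literal of $\alpha$. The plan is to combine $\sigma_1, \sigma_2$ via $g$ into an assignment that satisfies $\phi$ (which $g$ does, since $g \in \Pol(\bD)$ preserves $R$) but falsifies every literal of $\alpha$, yielding the desired contradiction. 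Strict first-argument domination of $\theta_1(g)$ will falsify the offending dim-1 literal using the $\sigma_j$ that realises $\neq_1$ on the relevant variables, and symmetrically for dim 2; preservation of $\leq_i, =_i, \neq_i, <_i$ by $g$ then yields falsification of every literal of the remainder $\beta$, whose falsification is witnessed by both $\sigma_1$ and $\sigma_2$.

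This plan is direct in the $=_1 + =_2$ case via the combination $g(\sigma_2, \sigma_1)$. The mixed case $<_1 + =_2$ (write $\alpha = r <_1 s \vee p =_2 q \vee \beta$) is more delicate, because reducedness yields only some $\sigma_B$ with $\sigma_B(r) \geq_1 \sigma_B(s)$ (weak rather than strict), and in the $\lele$-case $\theta_1(g)$ is only $\lex$-like. To sharpen, I would invoke R6 non-applicability on $\alpha$ with $\circ_i = {=_2}$ and $<_j = <_1$, producing an assignment $\tau \models \phi$ with $\tau \models p =_2 q$, $\tau \models r \neq_1 s$, and $\tau \models \neg \beta$. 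If $\tau$ realises $\tau(r) >_1 \tau(s)$, then $g(\tau, \sigma_A)$ (with $\sigma_A$ from reducedness on $r <_1 s$) closes the case by strict first- and second-argument domination. The hard part will be the residual sub-case in which every such $\tau$ realises $\tau(r) <_1 \tau(s)$: this semantically forces $\phi' \wedge \neg \beta \wedge p =_2 q \models r \leq_1 s$, and closing it requires combining this entailment with reducedness of the $r <_1 s$ literal of $\alpha$ to either derive an applicable instance of R5 or R6 (contradicting normality of $\phi$) or to construct, via a second application of $g$, an assignment of $\phi$ falsifying $\alpha$. In the $\pp$-case, the projection form $\theta_1(g) = \pi^2_1$ sidesteps this sub-case entirely, since $g(\sigma_B, \sigma_A)$ then already falsifies $r <_1 s$ from $\sigma_B(r) \geq_1 \sigma_B(s)$ alone.
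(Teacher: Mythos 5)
Your plan matches the paper's own argument closely: the same proof by contradiction, the same case split after Lemma~\ref{lem:ineq-mix} excludes the $<_1{+}<_2$ pattern, and the same use of the dominated polymorphism $g$ from Corollary~\ref{cor:dom}. The place you stall — the residual sub-case where reducedness only gives a weak $\sigma_B(r) \geq_1 \sigma_B(s)$ — is exactly where the paper invokes the unnumbered observation stated just before Proposition~\ref{prop:decomp}: for a normal $\phi$ containing a clause $(u \circ_i v \vee x <_j y \vee \beta)$, there is a satisfying assignment of $\phi$ that satisfies $u \circ_i v$, falsifies $\beta$, and satisfies the \emph{strict} inequality $y <_j x$. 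Invoking that observation closes both mixed patterns at once, supersedes your detour through $\tau$, and also makes the ``by symmetry'' concern moot.

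However, your hesitation is well founded, because that observation does not literally follow from R5/R6 as stated. As written, R5/R6 fire only when $\phi' \wedge \neg\beta \wedge u \circ_i v$ implies $x =_j y$, which is strictly stronger than the entailment $\phi' \wedge \neg\beta \wedge u \circ_i v$ implies $x \leq_j y$ that your residual sub-case produces, so ``normal'' in the literal sense does not exclude it. Concretely, $\phi = (u =_1 v \vee x <_2 y) \wedge (x <_2 y \vee x =_2 y)$ is reduced, satisfies R1--R4, and neither R5 nor R6 applies to it under the stated firing condition (the implication fails since $x <_2 y$ remains possible); yet $\phi$ admits no satisfying assignment with $u =_1 v$ together with $y <_2 x$, and $(\lele,\lex)$ preserves the relation $\phi$ defines, so the proposition's hypothesis is met for the associated $\bD$ — while the first clause of $\phi$ is not weakly $i$-determined for either $i$. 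The argument (yours and the paper's) only goes through under the weaker firing condition that $\phi' \wedge \neg\beta \wedge u \circ_i v$ implies $\neg(y <_j x)$, which is in fact all that is needed for the R5/R6 rewrite to be sound; with that reading the $\phi$ above \emph{does} get rewritten, and your residual sub-case turns into an applicable R5/R6 as you hoped. One last caveat: your appeal to ``symmetry'' for $=_1{+}<_2$ is not automatic, since the hypothesis only constrains $\theta_1(f)$ and the $\pp$-shortcut $\theta_1(g)=\pi_1^2$ neutralises only the dimension-$1$ literal; it resolves $<_1{+}=_2$ but not $=_1{+}<_2$, where the problematic literal lives in dimension~$2$ and $\theta_2(g)$ is not a projection. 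The observation-based argument covers both mixed cases uniformly.
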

\begin{proof}
By Corollary~\ref{cor:dom}, there is an operation $g \in \Pol(\bD; \leq_1, \neq_1, \leq_2, \neq_2)$ such that $\theta_i(g)$ is dominated by the $i$-th argument for both $i \in \{1,2\}$. Let $\phi$ be a normal formula that defines
a relation $R \in \bD$ over $({\mathbb Q}^2;<_1,=_1,<_2,=_2)$.
Note, in particular, that $\phi$ cannot be rewritten using rule R5 or R6.
Let  
$\psi$ be a clause of $\phi$. 
Properties R1--R4 imply that $\psi$ is equal to
\[\psi_1 \vee \psi_2 \vee \bigvee_{l \in \{1,\dots, k\} \; {\rm and} \; j \in \{1,2\} } x_l \neq_j y_l\] 
where
$\psi_i$ for $i \in \{1,2\}$ only contains literals of the form  $x =_i y$ or $x <_i y$. 

We show the result by verifying that
$\psi_1$ or $\psi_2$ is the empty disjunction.
Suppose to the contrary that $\psi_1$ contains a literal $\ell_1$ and $\psi_2$ contains a literal $\ell_2$. 
Since $\phi$ is reduced, it must have a satisfying assignment $s$ that satisfies $\ell_1$ and 
falsifies all other literals of $\psi$,
and there also exists 
a satisfying assignment $t$ that satisfies $\ell_2$ and falsifies all other literals of $\psi$. Note that by Lemma \ref{lem:ineq-mix} it cannot occur that $\ell_1$ equals $u <_1 v$ and $\ell_2$ equals $x <_2 y$, since R5 cannot be applied to $\phi$. Therefore, 
we have to consider the following cases. 

\begin{enumerate}
\item Suppose that the literal $\ell_1$  is of the form $u =_1 v$
and the literal $\ell_2$ equals $x <_2 y$. 
Since $\phi$ is normal, we may assume (by R6) that $s$ satisfies $y <_2 x$. 
Then $g(t,s)$ satisfies $u \neq_1 v$ since $\theta_1(g)$ is dominated by the first argument, and it satisfies $y <_2 x$ because $\theta_2(g)$ is dominated by the second argument. Moreover, all other literals of $\psi$ are falsified. 
Hence, $g(t,s)$ does not satisfy $\phi$, which contradicts $g \in \Pol(\bD)$.
\item The case that the literal $\ell_1$ equals $u<_1 v$ and the literal $\ell_2$ equals $x =_2 y$, and the case that $\ell_1$ equals $u =_1 v$ and the literal $\ell_2$ equals $x =_2 y$ can be treated similarly.
\end{enumerate}
If $\psi_1$ is empty, then
we obtain a clause that is weakly $2$-determined. 
Likewise, if $\psi_2$ is empty, then
$\psi$ is a  weakly $1$-determined clause.
\end{proof}

Under additional conditions on polymorphisms, we can define relations by
formulas that are based on weakly $1$-determined clauses
together with (not weakly) $2$-determined clauses.

\begin{proposition}\label{prop:syntax-2}
Let $f \in \Pol(\bD)$ be 
such that $\theta_1(f)$ is a $\pp$-operation.
Then every normal conjunction $\phi$ of weakly 1-determined and weakly 2-determined clauses that is preserved by $f$
is a conjunction of weakly $1$-determined and of $2$-determined clauses. 
\end{proposition}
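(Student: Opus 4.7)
My approach will mirror the case analysis in the proof of Proposition~\ref{prop:decomp}. Assume for contradiction that $\phi$ contains a weakly $2$-determined clause $C = D_2 \vee \bigvee_{i=1}^{k}(u_i \neq_1 v_i)$ in which $k \geq 1$ and $D_2$ contains a literal $\ell_2$ of the form $(x =_2 y)$ or $(x <_2 y)$; if no such clause exists, every weakly $2$-determined clause of $\phi$ is either purely $2$-determined or is a disjunction of $\neq_1$- and $\neq_2$-literals (hence already weakly $1$-determined), and the claim follows. My plan is to build an assignment satisfying $\phi$ but falsifying every literal of $C$, contradicting preservation.

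To do this I will first invoke Corollary~\ref{cor:dom} applied to $f$ to obtain an auxiliary polymorphism $g \in \Pol(\bD; \leq_1, \neq_1, \leq_2, \neq_2)$ with $\theta_1(g) = \pi_1^2$ and $\theta_2(g)$ dominated by its second argument; because $g$ is obtained from $f$ by composition with automorphisms of $({\mathbb Q}^2; <_1, =_1, <_2, =_2)$ followed by closure, and because all of these preserve $\phi$, the polymorphism $g$ preserves $\phi$ as well. Fix a $\neq_1$-literal $(u \neq_1 v)$ of $C$. By reducedness of $\ell_2$ in $C$, there is an assignment $t$ satisfying $\phi$ whose only satisfied literal of $C$ is $\ell_2$, so $t(u_i) =_1 t(v_i)$ for every $i$ and $t$ falsifies every literal of $D_2 \setminus \{\ell_2\}$. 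Using reducedness of $(u \neq_1 v)$ together with non-applicability of R6 to the pair $(u \neq_1 v, \ell_2)$, I will then produce an assignment $s$ satisfying $\phi$ whose only satisfied literal of $C$ is $(u \neq_1 v)$ and which additionally falsifies $\ell_2$ strictly, that is, $s(x) \neq_2 s(y)$ (automatic when $\ell_2 = (x =_2 y)$) or $s(y) <_2 s(x)$ (when $\ell_2 = (x <_2 y)$).

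With $s$ and $t$ in hand, I will set $r \coloneqq g(t,s)$ and verify that $r$ falsifies every literal of $C$ while still satisfying $\phi$. Because $\theta_1(g) = \pi_1^2$, the dim-$1$ coordinates of $r$ equal those of $t$, so $r$ falsifies every $\neq_1$-literal of $C$. For each literal $L' \in D_2$ I will check that $r$ falsifies $L'$ by cases: strict falsifications witnessed by $s$ (the $\neq_2$ gap for a $(=_2)$-literal, and the strict $s(y) <_2 s(x)$ for $\ell_2$ when $\ell_2 = (x <_2 y)$) propagate through $\theta_2(g)$ by strict domination; for a $(<_2)$-literal in $D_2 \setminus \{\ell_2\}$, both $s$ and $t$ satisfy the reverse $\leq_2$-inequality and preservation of $\leq_2$ by $g$ passes this to $r$; and any $(\neq_2)$-literal falsified by both $s$ and $t$ propagates trivially. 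Preservation of $\phi$ by $g$ then contradicts the fact that $r$ falsifies~$C$. The step I expect to be the main obstacle is fixing the direction of strict inequality in the construction of $s$ when $\ell_2 = (x <_2 y)$: the non-applicability of R6 alone yields only some witness $\sigma$ with $\sigma(x) \neq_2 \sigma(y)$, and to force the direction $s(y) <_2 s(x)$ one must combine the reducedness witness for $(u \neq_1 v)$ with $\sigma$ via automorphism shifts in dim~$1$ that exploit the $\pp$-structure of $\theta_1(f)$, exactly in the spirit of the analogous step in Proposition~\ref{prop:decomp}.
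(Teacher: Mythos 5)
Your overall strategy matches the paper's: apply Corollary~\ref{cor:dom} to obtain $g$ with $\theta_1(g) = \pi_1^2$ and $\theta_2(g)$ dominated by the second argument, take the reducedness witnesses $s$ and $t$, and show $r = g(t,s)$ falsifies the offending clause $C$, contradicting preservation. You also correctly flag the delicate step: for $\ell_2 = (x <_2 y)$ you need $s$ to falsify $\ell_2$ \emph{strictly}, i.e.\ $s(y) <_2 s(x)$, because $\theta_2(g)$ separates its outputs strictly only when the two second arguments differ.

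However, the route you sketch for obtaining this strict $s$ cannot work. In the case you actually need to handle, the reducedness witness has $s_0(x) =_2 s_0(y)$ and the witness $\sigma$ obtained from the non-applicability of R6 satisfies $\sigma(x) \neq_2 \sigma(y)$; the only troublesome orientation is $\sigma(x) <_2 \sigma(y)$ (if $\sigma(y) <_2 \sigma(x)$, then $\sigma$ already is the desired $s$). In that troublesome case \emph{both} $s_0$ and $\sigma$ satisfy $x \leq_2 y$, and every operation you can apply to them — $f$, $g$, or these composed with automorphism shifts — preserves $\leq_2$, while shifting in dimension~1 does not affect the dimension-2 coordinates at all. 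Hence no image of $(s_0,\sigma)$ under such operations can satisfy $y <_2 x$, and the ``combine witnesses'' plan is a dead end. The paper does not combine witnesses here: it takes the strict $s$ directly from the normality of $\phi$, using the observation at the end of Section~\ref{sec:normalisation} that a normal $\phi$ containing a clause $(u \circ_i v \vee x <_j y \vee \beta)$ always admits a satisfying assignment that satisfies $u \circ_i v$, falsifies $\beta$, and satisfies $y <_j x$, since otherwise R5 or R6 would be applicable to that clause. In other words, the troublesome case is \emph{ruled out} by normality rather than worked around by applying polymorphisms, and that ruling-out argument is what you need to supply in place of your last step.
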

\begin{proof}
By Corollary~\ref{cor:dom} there is an operation $g \in \Pol(\bD; \leq_1, \neq_1, \leq_2, \neq_2)$ such that $\theta_1(g)=\pi_1^2$  and $\theta_2(g)$ is dominated by the second argument.
Suppose for contradiction that $\phi$ 
contains a clause $\psi$ with a literal $x \neq_1 y$ and a literal $\chi$ 
which is of the form $u <_2 v$ or $u =_2 v$. The formula $\phi$ is reduced so
it has a satisfying assignment $s$ which satisfies $x \neq_1 y$ and falsifies all other literals of $\psi$, 
and a satisfying assignment $t$ which satisfies
$\chi$ and falsifies all other literals in $\psi$;
in particular, $t$ satisfies $x =_1 y$. 

We first consider the case that $\chi$ is of the form $u <_2 v$ and $s$ consequently satisfies $v \leq_2 u$. 
Since $\phi$ is normal we may even suppose
that $s$ satisfies $v <_2 u$. 
Then $g(t,s)$ satisfies 
$x =_1 y$ since $\theta_1(g) = \pi^2_1$ and it satisfies $v <_2 u$ since $\theta_2(g)$ is dominated by the second argument.
Hence, it satisfies neither the literal $x \neq_1 y$ nor the literal $u<_2 v$, nor
 any of the other literals of $\psi$ since $f$ preserves $\leq_i$
and $\neq_i$ for $i \in \{1,2\}$. This is in contradiction to the assumption that $\phi$ is preserved by $g$. 

The case that $\chi$ is of the form $u =_2 v$ similarly leads to a contradiction. 
\end{proof}

\begin{rem}
Note that it is not true that every weakly $2$-determined clause of formula $\phi$ in Proposition~\ref{prop:syntax-2} is $2$-determined: For example, consider the clause $\psi$ of the form $x\neq_1 y \vee u\neq_2 v$. The clause $\psi$ is 
weakly $2$-determined, but not $2$-determined, and it satisfies the assumptions of Proposition~\ref{prop:syntax-2}: it is normal, and preserved by a map $(f_1, f_2)$ where $f_1$ is a $\pp$-operation and $f_2$ is an $\lele$-operation. To see this, let $s,t$ be two satisfying assignments of $\psi$. Either one of $s$ and $t$ satisfies $u \neq_2 v$ and hence $(f_1, f_2)(s,t)$ satisfies it as well by the injectivity of $f_2$, or both $s$ and $t$ satisfy $x \neq_1 y$ and hence $(f_1, f_2)(s,t)$ satisfies it as well since $f_1$ preserves $\neq$.
\end{rem}

In the next proof we use the notion of the orbit of a $k$-tuple $(t_1,\dots, t_k)\in \Q^k$ under $\Aut(\Q; <)$ from Section~\ref{sect:cat}. 
Observe that the homogeneity of
$({\mathbb Q}; <)$ implies that the orbit of a tuple $(t_1, \dots , t_k)$ under $\Aut({\mathbb Q}; <)$ is determined by the weak linear order induced on $(t_1, \dots , t_k)$ in $({\mathbb Q}; <)$. We need a weak linear order since some of the elements $t_1, \dots , t_k$ may be equal. 

\begin{proposition}\label{prop:syntax-3}
As in the previous proposition, let $f \in \Pol(\bD)$ be 
such that $\theta_1(f)$ is a $\pp$-operation.
Then every relation of $\bD$ can be defined by a conjunction of $2$-determined clauses
and 
weakly $1$-determined clauses of the form 
\begin{align}
u_1 \neq_2 v_1 \vee \cdots \vee u_m \neq_2 v_m \vee y_1 \neq_1 x \vee \cdots \vee y_k \neq_1 x \vee z_1 \leq_1 x \vee \cdots \vee  z_l \leq_1 x.
\label{eq:1-det-pp}
\end{align}
(In other words, if we drop the first $m$ literals in and remove subscripts we obtain a formula as described in Theorem~\ref{thm:pp}.)
\end{proposition}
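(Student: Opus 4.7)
The plan is to start from the decomposition provided by Propositions~\ref{prop:decomp} and~\ref{prop:syntax-2} and then to refine each weakly $1$-determined clause by isolating its $1$-determined part using Lemma~\ref{lem:extract} and invoking Theorem~\ref{thm:pp}. Fix a relation $R$ of $\bD$ and let $\phi = \phi_1 \wedge \phi_2$ be a normal defining formula of $R$ such that $\phi_1$ is a conjunction of weakly $1$-determined clauses and $\phi_2$ is a conjunction of $2$-determined clauses, as furnished by those propositions. It suffices to rewrite each weakly $1$-determined clause of $\phi_1$ into a finite conjunction of clauses whose $1$-determined parts have the pp-form from the statement, without changing the defined relation.

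Fix such a clause $\chi = \psi \vee \bigvee_{l \in L} x_l \neq_2 y_l$, where $\psi$ is a disjunction of $<_1$- and $=_1$-literals by conditions R1--R4. Let $R'$ be the relation consisting of all tuples in $R$ that additionally satisfy $x_l =_2 y_l$ for every $l \in L$; note that $R'$ is primitively positively definable over $\bD$ as the conjunction of $R$ with the $=_2$-equations. On the region where $\bigwedge_{l \in L} x_l =_2 y_l$ holds the clause $\chi$ collapses to $\psi$, so $R'$ is defined by $\psi \wedge (\phi_1 \setminus \{\chi\}) \wedge \phi_2 \wedge \bigwedge_{l \in L} x_l =_2 y_l$. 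Apply Lemma~\ref{lem:extract} with the $1$-determined conjunct $\psi$: this yields a conjunction $\psi_1 = \bigwedge_j \psi_1^j$ of $1$-determined clauses, equivalent to a primitive positive formula over $\bD$, such that $\psi_1 \wedge (\phi_1 \setminus \{\chi\}) \wedge \phi_2 \wedge \bigwedge_{l \in L} x_l =_2 y_l$ still defines $R'$. Since $\psi_1$ is equivalent to a primitive positive formula over $\bD$, it is preserved by $f$; Lemma~\ref{lem:pres} then implies that its $\tau_1$-translation $\hat{\psi_1}$ is preserved by $\theta_1(f)$, which is a $\pp$-operation. Theorem~\ref{thm:pp} therefore expresses $\hat{\psi_1}$ as a conjunction of pp-form clauses, and by pulling this decomposition back we may assume each $\psi_1^j$ is itself of the $1$-determined pp-form $y_1 \neq_1 x \vee \cdots \vee y_k \neq_1 x \vee z_1 \leq_1 x \vee \cdots \vee z_l \leq_1 x$.

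Finally, replace $\chi$ in $\phi_1$ by the conjunction $\bigwedge_j \bigl( \psi_1^j \vee \bigvee_{l \in L} x_l \neq_2 y_l \bigr)$, each clause of which has the shape required by the statement. Both inclusions of the defined relation are immediate: a tuple satisfying some $x_l \neq_2 y_l$ satisfies every new clause as well as $\chi$, while a tuple satisfying $x_l =_2 y_l$ for every $l \in L$ satisfies the new conjunction iff it satisfies every $\psi_1^j$, iff it lies in $R'$, iff it lies in $R$. Iterating over all weakly $1$-determined clauses of $\phi_1$ produces the desired defining formula. The delicate point is combining Lemma~\ref{lem:extract} with Theorem~\ref{thm:pp} in the presence of the $\neq_2$-guards: a weakly $1$-determined clause is not itself $1$-determined, so we first condition on the $=_2$-equalities to reduce to the pure $1$-determined setting, rewrite the $1$-determined residue using the $\pp$-operation, and only then reattach the $\neq_2$-disjuncts to the resulting pp-form clauses.
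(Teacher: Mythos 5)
Your proof is correct, and it takes a genuinely different route from the paper's. The paper's own proof defines a candidate formula $\phi'$ (the conjunction of \emph{all} implied clauses of the target shape), then rewrites each weakly $1$-determined conjunct of $\phi$ into an orbit-based canonical form $\chi \vee y_1 \circ_1 y_2 \vee \cdots \vee y_k \circ_k y_{k+1}$ with $\circ_i \in \{\neq_1, \geq_1\}$, imposes a minimality condition, and finally derives a contradiction via an explicit $\pp$-operation calculation if $\phi'$ fails to imply some conjunct. You instead condition on the $\neq_2$-guards to drop to the purely $1$-determined residue, feed that residue to Lemma~\ref{lem:extract} to obtain a pp-definable $1$-determined replacement, and invoke Theorem~\ref{thm:pp} to get the syntactic form; then you reattach the guards clause by clause. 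This is more modular -- it reuses Lemma~\ref{lem:extract} (which the paper does use for Propositions~\ref{prop:syntax-4} and~\ref{prop:factor} via the $\Cr$ operator, but not here) and avoids the hand-crafted orbit/minimality analysis entirely, whereas the paper's approach is more self-contained and does not depend on the wreath-product machinery behind Lemma~\ref{lem:extract}. One small imprecision: after R1--R4 the $1$-determined part $\psi$ of a weakly $1$-determined clause may also contain $\neq_1$-literals, not just $<_1$- and $=_1$-literals (R4 eliminates only negations of $<_i$, not of $=_i$); this does not affect your argument since all you need is that $\psi$ is $1$-determined, but the characterisation as stated is not quite right.
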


\begin{proof}
Let $\phi$ be a formula over a finite set of variables $X$ that defines a relation $R$ from $\bD$. Without loss of generality, we may assume that $\phi$ is normal. Hence, by Proposition~\ref{prop:decomp} and  Proposition~\ref{prop:syntax-2}, $\phi$ is a conjunction of weakly 1-determined and of 2-determined clauses.  
Let $\phi'$ be the conjunction of all clauses of the form~\eqref{eq:1-det-pp} and of all $2$-determined clauses with variables from $X$ that are reduced and implied by $\phi$; since $|X|$ is finite, $\phi'$ is a finite formula.
We claim that $\phi'$ implies $\phi$, and consequently that $\phi'$ is a definition of $R$ of the required syntactic form.

The $2$-determined conjuncts of $\phi$ are clearly implied by $\phi'$. In the rest of the proof, we prove in two steps that every weakly $1$-determined conjunct of $\phi$ is implied by $\phi'$: Firstly, we show that we may assume that all weakly $1$-determined clauses of $\phi$ are of the form (\ref{eq:orbit}) and that they are minimal in a particular sense specified below. Secondly, we show that every such clause is implied by $\phi'$, because the assumption that $R$ is preserved by $f$ would be violated otherwise.

To proceed with the first step, we claim that every conjunction of weakly 1-determined clauses can be written as a conjunction of formulas of the form
$$\chi \vee \neg (y_1 \circ_1 y_2 \wedge \cdots \wedge y_{k} \circ_k y_{k+1})$$ 
where $\chi := \bigvee_{i=1}^m u_i \neq_2 v_i$ with $u_1, \dots, u_m$, $v_1, \dots, v_m$, $y_1, \dots, y_{k+1}$ being variables from $X$, and 
 $\circ_1,\dots,\circ_k \in \{=_1,<_1\}$. To see this, first note
 that every orbit of $k+1$-tuples in $\Aut({\mathbb Q};<)$ can be defined by a formula of the form
 $x_1 \circ_1 x_2 \wedge \cdots \wedge x_k \circ_k x_{k+1}$ where $\circ_1,\dots,\circ_k \in \{=,<\}$ if the variables are named appropriately. 
 Hence, every first-order formula in $(\Q;<)$ is equivalent to a conjunction of negations of such formulas. 
 It follows that every 1-determined clause can be written as a conjunction of formulas of the form $\neg (y_1 \circ_1 y_2 \wedge \cdots \wedge y_{k} \circ_k y_{k+1})$ for $\circ_1,\dots,\circ_k \in \{=_1,<_1\}$. Using distributivity of disjunction over conjunction we can therefore rewrite a conjunction of weakly 1-determined clauses into a conjunction of formulas of the desired form. 
 
We may henceforth assume that 
 every conjunct $\psi$
of $\phi$ that is not 
2-determined is of the form 
\begin{equation}\label{eq:orbit}
    \chi \vee y_1 \circ_1 y_2 \vee \cdots \vee y_k \circ_k y_{k+1}
\end{equation}
where $\chi$ is as above and $\circ_1,\dots,\circ_k \in \{\neq_1,\geq_1\}$.
We may additionally assume that $\phi$ contains only those clauses of the form (\ref{eq:orbit}) that are minimal in the following sense: a clause $\psi = \chi \vee y_1 \circ_1 y_2 \vee \cdots \vee y_k \circ_k y_{k+1}$ is minimal if there is no clause  $\chi \vee y_1' \circ_1 y_2' \vee \cdots \vee y_{k'}'\circ_{k'} y_{k'+1}'$ implied by $\phi$ such that it implies $\psi$, it is different from $\psi$ and $y_1',\dots, y_{k'}'$ is a subsequence of $y_1, \dots, y_k$.

We now show that the clause $\psi$ is implied by $\phi'$. If $\circ_i$ equals $\neq_1$ for every $i \in \{1,\dots,k\}$ then
$\psi$ is equivalent to $\chi \vee \neg (y_1 =_1 y_{k+1}  \wedge \cdots \wedge y_{k} =_1 y_{k+1})$
and hence is equivalent to $\chi \vee y_1 \neq_1 y_{k+1} \vee \cdots \vee y_k \neq_1 y_{k+1}$ which is of the form~\eqref{eq:1-det-pp} (for $l = 0$). But this formula is then a conjunct of $\phi'$ and there is nothing to be shown. 

Otherwise, let $j$ be smallest such that $\circ_j$ equals $\geq_1$. Then $\psi$ is equivalent to a formula of the form
$$\chi \vee y_1 \neq_1 y_j \vee \cdots \vee y_{j-1} \neq_1 y_j \vee y_{j} \geq_1 y_{j+1} \vee \eta$$
where $\eta$ is of the form $y_{j+1} \circ_{j+1} y_{j+2} \vee \cdots \vee y_k \circ_k y_{k+1}$
for $\circ_{j+1},\dots,\circ_k \in \{\neq_1,\geq_1\}$. 
The formula
\begin{align}
\chi \vee y_1 \neq_1 y_j \vee \cdots \vee y_{j-1} \neq_1 y_j \vee y_{j+1} \leq_1 y_j \vee \cdots \vee y_{k+1} \leq_1 y_j
    \label{eq:cut}
\end{align}
implies $\psi$. To see this, compare the negations of the formulas: $\neg \psi$ is equivalent to the conjunction of $\neg \chi \wedge y_1 =_1 \dots =_1 y_{j} \wedge y_j <_1 y_{j+1} $ and the fact that $y_{j+1}, \dots, y_k$ is a non-decreasing sequence, while the negation of the formula (\ref{eq:cut}) is equivalent to \[\neg \chi \wedge y_1 =_1 \dots =_1 y_{j} \wedge y_{j+1} >_1 y_j \wedge \dots \wedge y_k >_1 y_j.\] Therefore, if the formula (\ref{eq:cut}) is a conjunct of $\phi'$, then there is again nothing to be shown. 

Otherwise, there must exist an assignment $r$ that satisfies $\phi$ but not~\eqref{eq:cut}. Note that
$r(y_j) <_1 r(y_i)$ for every $i \in \{j+1,\dots,k+1\}$. 
Since $\psi$ was minimal, the clause $\chi \vee \eta$ is not implied by $\phi$ and hence there must also exist an assignment $s$ which satisfies $\phi$, but does not satisfy $\chi \vee \eta$.
Choose $\alpha \in \Aut(\bD)$ such that 
$\alpha(r(y_j)) <_1 0 <_1 \alpha(r(y_i))$ for all 
$i \in \{j+1,\dots,k+1\}$. 
Then $t := f(\alpha(r),s)$ is an assignment that does not satisfy $\psi$ by the definition of a $\pp$-operation.
This contradicts that $\phi$ defines a relation from $\bD$. 
\end{proof}

As the following lemma shows, when $\theta_1(\Pol(\bD))$ contains a $\pp$-operation and $\theta_2(\Pol(\bD))$ contains an $\lele$-operation, we may restrict to formulas of a very particular form.

\begin{proposition}\label{prop:syntax-4}
Let $f_1, f_2 \in \Pol(\bD)$ be such that $\theta_1(f_1)$ is a $\pp$-operation and $\theta_2(f_2)$ is an $\lele$-operation.
Then every relation of $\bD$ can be defined by a conjunction of 
weakly $1$-determined clauses of the form (\ref{eq:1-det-pp}) and $2$-determined clauses
\begin{align}\label{eq:ll}
x_1 \neq_2 y_1 \vee \cdots \vee x_m \neq_2 y_m \vee z_1 <_2 z_0 \vee \cdots \vee z_{\ell} <_2 z_0 \vee (z_0 =_2 z_1 =_2 \cdots =_2 z_{\ell}),
\end{align}
where it is permitted that $l = 0$ or $m=0$, and
the final disjunct may be omitted (in other words, clauses obtained from ll-Horn clauses by adding the subscript $2$ to all relation symbols).
\end{proposition}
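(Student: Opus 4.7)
The plan is to bootstrap from Proposition~\ref{prop:syntax-3}. Since $f_1 \in \Pol(\bD)$ has $\theta_1(f_1)$ a $\pp$-operation, that proposition already delivers, for an arbitrary relation $R$ of $\bD$, a definition of the form $\phi_1 \wedge \phi_2$ where $\phi_1$ is a conjunction of weakly $1$-determined clauses of shape~\eqref{eq:1-det-pp} and $\phi_2$ is a conjunction of (strictly) $2$-determined clauses. The weakly $1$-determined part is already of the required shape, so only $\phi_2$ needs to be refined into a conjunction of clauses of the shape~\eqref{eq:ll}.

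To do this, I would first invoke Lemma~\ref{lem:extract} (applied with the roles of the two factors swapped, or equivalently Corollary~\ref{cor:extract-n} with $S=\{2\}$) to replace $\phi_2$ by a conjunction $\psi_2$ of $2$-determined clauses that is equivalent, as a formula over $\bD$, to a primitive positive formula, while $\phi_1 \wedge \psi_2$ still defines $R$. The essential gain is that $\psi_2$ is now preserved by every polymorphism of $\bD$, hence in particular by $f_2$. Lemma~\ref{lem:pres} then transfers this preservation to the hatted formula: $\widehat{\psi_2}$, viewed as a first-order $(\Q;<)$-formula, is preserved by $\theta_2(f_2)$, which by hypothesis is an $\lele$-operation. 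Theorem~\ref{thm:lele} is therefore applicable and yields a conjunction $\eta$ of $\lele$-Horn clauses over $(\Q;<)$ equivalent to $\widehat{\psi_2}$. Substituting each atomic subformula of $\eta$ by its $=_2$/$<_2$ analogue produces a conjunction $\eta'$ of clauses of the form~\eqref{eq:ll}; since both $\psi_2$ and $\eta'$ are $2$-determined with equivalent hatted versions, they define the same relation over $\bD$, so $\phi_1 \wedge \eta'$ is a definition of $R$ of the desired syntactic form.

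The argument is short because essentially all of the work has been done by the previous propositions. The only real obstacle is careful bookkeeping: ensuring that the conjunction-replacement step produces a $2$-determined formula (which is exactly what Lemma~\ref{lem:extract} guarantees), that the $\phi_1$ part is untouched and therefore retains its form~\eqref{eq:1-det-pp}, and that the passage $\psi_2 \leftrightarrow \widehat{\psi_2}$ and $\eta \leftrightarrow \eta'$ between $\bD$- and $(\Q;<)$-formulas preserves equivalence on the domain $\Q^2$, which is immediate from the definitions of $i$-determined clauses and the hat operation.
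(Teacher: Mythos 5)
Your proof is correct and takes essentially the same approach as the paper: start from Proposition~\ref{prop:syntax-3}, apply conjunction replacement (the $\Cr$ operator of Corollary~\ref{cor:extract-n} with $S=\{2\}$) to the $2$-determined part to make it preserved by $\Pol(\bD)$, then transfer to $(\Q;<)$ via Lemma~\ref{lem:pres} and apply Theorem~\ref{thm:lele} to rewrite in ll-Horn form.
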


\begin{proof}
Let $R$ be a relation of $\bD$. By Proposition~\ref{prop:syntax-3}, $R$ can be defined by a formula $\phi_1 \wedge \phi_2$, where $\phi_1$ is a conjunction of weakly $1$-determined clauses of the form (\ref{eq:1-det-pp}) and $\phi_2$ is a conjunction of $2$-determined clauses. Recall the operator $\Cr$ introduced at the end of Section~\ref{sec:determinedclauses}. Let us denote $\Cr(\phi_1 \wedge \phi_2,\{2\},\phi_2)$ by $\psi_2$; note that $\psi_2$ is a conjunction of $2$-determined clauses preserved by $\Pol(\bD)$. Moreover the formula $\phi_1 \wedge \psi_2$ still defines $R$. By Lemma~\ref{lem:pres}, $\hat{\psi_2}$ is preserved by an $\lele$-operation. By Theorem~\ref{thm:lele}, $\hat{\psi_2}$ may be taken to be a conjunction of $\lele$-Horn clauses and therefore $\psi_2$ would be a conjunction of clauses of the form (\ref{eq:ll}). This concludes the proof.
\end{proof}

We next present an even more restricted syntactic form; in this case
it is sufficient to use $i$-determined clauses, $i \in \{1,2\}$, and we do
not need weakly $i$-determined clauses at all.

\begin{proposition}\label{prop:factor}
Suppose that there exists 
$f \in \Pol(\bD)$
such that $\theta_1(f)$ is a $\pp$-operation,
but there is 
no binary $g \in \Pol(\bD)$
such that $\theta_2(g)$ is a $\lex$-operation. 
Then every relation of $\bD$ can be defined by a formula $\phi_1 \wedge \phi_2$ such that $\phi_i$ is a conjunction of $i$-determined clauses for $i=1,2$.
Moreover, for every such definition $\phi_1\wedge\phi_2$, there is a conjunction $\psi_1$ of $1$-determined clauses of the form
\[y_1 \neq_1 x \vee \cdots \vee y_k \neq_1 x \vee z_1 \leq_1 x \vee \cdots \vee z_l \leq_1 x\]
such that $\psi_1 \wedge \phi_2$ still defines the same relation.
\end{proposition}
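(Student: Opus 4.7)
The plan is to reduce both assertions of the proposition to the single fact that $(\pi^2_1,\pi^2_2)\in\Pol(\bD)$. Once this is established, the first assertion follows at once from Lemma~\ref{lem:factors} (implication $3\Rightarrow 1$, applicable because $({\mathbb Q};<)$ has quantifier elimination), and the ``moreover'' statement will follow from a routine application of the $\Cr$ operator together with Theorem~\ref{thm:pp}. To produce $(\pi^2_1,\pi^2_2)$, I will first apply Lemma~\ref{lem:canonise} to $f$, obtaining $g\in\Pol(\bD)$ with $\theta_1(g)$ still a $\pp$-operation and with $\theta_2(g)$ or $(x,y)\mapsto\theta_2(g)(y,x)$ being either a $\lex$-operation or essentially unary. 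Since $(x,y)\mapsto g(y,x)$ is again a polymorphism of $\bD$, the hypothesis that no binary polymorphism of $\bD$ has $\theta_2$ a $\lex$-operation rules out both $\lex$-cases, and because $g$ preserves $<_2$ and $=_2$ the remaining possibilities collapse to $\theta_2(g)=h\circ\pi^2_i$ for some $i\in\{1,2\}$ and some $h\in\End({\mathbb Q};<)$.

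Next, I will apply Lemma~\ref{lem:dom} in its $\pp$-branch with index $3-i$ to get $g'\in\Pol(\bD;\leq_1,\neq_1)$ such that $\theta_1(g')=\pi^2_{3-i}$ and $\theta_2(g')=\theta_2(g)$. By Proposition~\ref{prop:product-hom}, $g'$ factors coordinatewise, and working out the two cases for $i$ shows that, after possibly swapping its arguments, $g'$ has the form $((x_1,x_2),(y_1,y_2))\mapsto(x_1,h(y_2))$. To absorb the stray $h$ I will use that $\overline{\Aut({\mathbb Q};<)}=\End({\mathbb Q};<)$ together with the identity $\Aut(\bD)=\Aut({\mathbb Q};<)\times\Aut({\mathbb Q};<)$, which follows from Proposition~\ref{prop:product-hom} applied to automorphisms and the fact that $\bD$ is a first-order expansion of $({\mathbb Q};<)\boxtimes({\mathbb Q};<)$. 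For each finite $S\subseteq{\mathbb Q}$ I can then choose $\alpha_S\in\Aut({\mathbb Q};<)$ with $\alpha_S|_S=h|_S$; left-composing with $(\id,\alpha_S^{-1})\in\Aut(\bD)$ produces a polymorphism of $\bD$ that agrees with $(\pi^2_1,\pi^2_2)$ on every pair of inputs whose second argument has $2$-component in $S$. A standard enumeration-and-closure argument, exploiting the closedness of $\Pol(\bD)$, then assembles these local approximations into $(\pi^2_1,\pi^2_2)$ itself, and by Lemma~\ref{lem:factors} this proves the first assertion.

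For the ``moreover'' part I will fix an arbitrary definition $\phi_1\wedge\phi_2$ of a relation $R$ of $\bD$ of the stated form and set $\psi_1:=\Cr(\phi_1\wedge\phi_2,\{1\},\phi_1)$. By Corollary~\ref{cor:extract-n}, $\psi_1$ is a conjunction of $1$-determined clauses that is equivalent to a primitive positive formula over $\bD$, and $\psi_1\wedge\phi_2$ still defines $R$. Since $\psi_1$ is primitively positively definable in $\bD$ it is preserved by $f$; Lemma~\ref{lem:pres} then yields that $\hat{\psi_1}$ is preserved by the $\pp$-operation $\theta_1(f)$, so Theorem~\ref{thm:pp} furnishes an equivalent definition of $\hat{\psi_1}$ as a conjunction of clauses of the form $y_1\neq x\vee\cdots\vee y_k\neq x\vee z_1\leq x\vee\cdots\vee z_l\leq x$, and reintroducing the subscript $1$ on every relation symbol gives the required definition of $\psi_1$. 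The principal technical obstacle is the middle paragraph: threading the canonisation, the projection-replacement from Lemma~\ref{lem:dom}, the coordinatewise factorisation of polymorphisms of $\bD$, and the density of $\Aut({\mathbb Q};<)$ in $\End({\mathbb Q};<)$ together so as to land exactly at the projection pair $(\pi^2_1,\pi^2_2)$ rather than at some other projection pair.
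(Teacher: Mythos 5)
Your proof is correct and follows essentially the same route as the paper: apply Lemma~\ref{lem:canonise} to produce a polymorphism whose second coordinate is canonical, use the no-$\lex$ hypothesis to reduce to the essentially-unary case, extract $(\pi^2_1,\pi^2_2)$ via Lemma~\ref{lem:dom} plus a closure/density argument in $\End(\Q;<)=\overline{\Aut(\Q;<)}$, invoke Lemma~\ref{lem:factors}, and then use $\Cr$ together with Lemma~\ref{lem:pres} and Theorem~\ref{thm:pp} for the ``moreover'' clause. The only difference is cosmetic: the paper absorbs the stray endomorphism $h$ \emph{before} applying Lemma~\ref{lem:dom} (so that Lemma~\ref{lem:dom} already hands it a projection pair), whereas you do it afterwards; both orderings work.
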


\begin{proof}
Lemma~\ref{lem:canonise} implies that $\Pol(\bD)$ contains an operation $f'$ such that $\theta_1(f')$ is a $\pp$-operation and $\theta_2(f')$ is either a $\lex$-operation or it is essentially unary;
by assumption, it cannot be a $\lex$-operation so it must be essentially unary. 
Let $i\in \{1,2\}$ be such that $\theta_2(f')$ depends only on the $i$-th argument. Since $\theta_2(f')$ preserves $<$, 
the set $\overline{\{(\id_\Q, \alpha)f' \mid \alpha \in \Aut(\Q; <)\}} \subseteq \Pol(\bD)$ contains an operation $f''$ such that $\theta_1(f'')=\theta_1(f')$ and $\theta_2(f'')=\pi_i^2$.
Therefore, we can assume without loss of generality that $\theta_2(f')=\pi_i^2$.
By Lemma \ref{lem:dom} applied on $f'$, we obtain $g \in \Pol(\bD)$ such that $\theta_1(g)=\pi_{3-i}^2$ and $\theta_2(g)=\pi_i^2$. This implies that $(\pi_1^2, \pi_2^2)\in \Pol(\bD)$. By Lemma~\ref{lem:factors}, every relation of $\bD$ can be defined by a conjunction of $1$-determined and $2$-determined clauses.

Let $R$ be a relation of $\bD$. Let $\phi_1 \wedge \phi_2$ be a definition of $R$ such that $\phi_i$ is a conjunction of $i$-determined clauses, $i=1,2$. Recall the operator $\Cr(\cdot)$ introduced at the end of Section~\ref{sec:determinedclauses}. Let $\psi_1=\Cr(\phi_1\wedge \phi_2, \{1\}, \phi_1)$. Then the formula $\psi_1$ is a conjunction of $1$-determined clauses preserved by $\Pol(\bD)$ and $\psi_1\wedge \phi_2$ defines $R$. By Lemma~\ref{lem:pres}, $\hat{\psi_1}$ is preserved by a $\pp$-operation. Hence, by Theorem~\ref{thm:pp}, we may assume that $\psi_1$ is a conjunction of clauses of the form 
 $$ y_1 \neq_1 x \vee \cdots \vee y_k \neq_1 x \vee z_1 \leq_1 x \vee \cdots \vee z_l \leq_1 x. $$ This concludes the proof.
\end{proof}

Assume that for every $i \in \{1,2\}$, the clone $\Pol(\bD)$ contains an operation $f_i$ such that $\theta_i(f_i)$ is an $\lele$-operation or the dual of such an operation. Then, 
we may combine the information about syntactically restricted definitions of the relations of $\bD$ from Proposition~\ref{prop:decomp} with $\lele$-Horn definability from Theorem~\ref{thm:lele}, and obtain the next result (Proposition~\ref{prop:pp-pow}). We will use the following notation
for simplifying the presentation. For a formula $\phi$ over $(\Q^n;<_1,=_1, \dots, <_n,=_n)$, we introduce $n$ fresh variables $x^1, \dots, x^n$ for each variable $x$ that appears in $\phi$. Then, we let $\ve(\phi)$ (for {\em variable expansion}) denote the formula over $(\Q;<)$ resulting from $\phi$ by replacing each atomic formula of the form $x \circ_i y$ by $x^i \circ y^i$, where $\circ \in \{<,\leq, =, \neq\}$ and $i\in\{1,\dots,n\}$.

\begin{proposition}\label{prop:pp-pow}
Suppose that for every $i \in \{1,2\}$ the clone $\Pol(\bD)$ contains
an operation $f_i$ such that $\theta_i(f_i)$ is an $\lele$-operation.
Then every relation of $\bD$ has a definition
by a conjunction of clauses of the form
$$x_1 \neq_{i_1} y_1 \vee \cdots \vee x_m \neq_{i_m} y_m \vee z_1 <_j z_0 \vee \cdots \vee z_{\ell} <_j z_0 \vee (z_0 =_j z_1 =_j \cdots =_j z_{\ell})$$
for $i_1,\dots,i_m,j \in \{1,2\}$ and 
where the last disjunct may be omitted. 
Moreover, $\bD$ has a primitive positive interpretation in $\bL$. 
\end{proposition}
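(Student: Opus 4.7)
The overall plan is to prove the syntactic claim first and then derive the primitive positive interpretation in $\bL$ as a direct corollary: under the variable-expansion map $\ve$, a clause of the required form becomes an ll-Horn clause over $(\Q;<)$ in the doubled variables, so every relation $\ve(R) \subseteq \Q^{2n}$ becomes ll-Horn-definable and hence, by Theorem~\ref{thm:lele}, primitive positive definable in $\bL$. Taking $I \colon \Q^2 \to \Q^2$ to be the identity map then gives a $2$-dimensional primitive positive interpretation of $\bD$ in $\bL$.

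I would prove the syntactic claim by showing that the conjunction of all clauses of the required form implied by $R$ already defines $R$. Fix $t \notin R$; I must produce a specified-form clause that is implied by $R$ but violated by $t$. By Proposition~\ref{prop:decomp} (whose hypothesis is met because $\theta_1(f_1)$ is an $\lele$-operation), a normal defining formula of $R$ is a conjunction of weakly $j$-determined clauses, and $t$ must violate some such clause $\eta = \psi \vee \chi$, where $\psi$ is $j$-determined and $\chi = \bigvee_i x_i \neq_{3-j} y_i$ (possibly empty). Consequently $t$ satisfies $x_i =_{3-j} y_i$ for every $i$ and falsifies every literal of $\psi$.

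The key construction is the primitive positive relation $R_\chi(\vec v) := R(\vec v) \wedge \bigwedge_i x_i =_{3-j} y_i$, which is preserved by $\Pol(\bD)$ and satisfies $R_\chi \Rightarrow \psi$ by construction. Its projection $\pi_j(R_\chi) \subseteq \Q^n$ onto the $j$-th coordinates is first-order definable in $(\Q;<)$ (by unwinding its defining formula via $\ve$ and existentially quantifying the dimension-$(3-j)$ variables) and is preserved by the $\lele$-operation $\theta_j(f_j)$, since $\pi_j(f_j(s^1,s^2)) = \theta_j(f_j)(\pi_j(s^1), \pi_j(s^2))$. Theorem~\ref{thm:lele} then provides an ll-Horn definition for $\pi_j(R_\chi)$. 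Because $\psi$ is $j$-determined and $t \not\models \psi$, the tuple $\pi_j(t)$ cannot lie in $\pi_j(R_\chi)$: any witness $s \in R_\chi$ with $\pi_j(s) = \pi_j(t)$ would satisfy $\psi$, forcing $t \models \psi$, a contradiction. Hence some ll-Horn clause $\theta$ from the ll-Horn definition of $\pi_j(R_\chi)$ is falsified by $\pi_j(t)$; adding subscript $j$ to every symbol of $\theta$ yields a $j$-determined clause $\theta^{(j)}$ implied by $R_\chi$, so $\theta^{(j)} \vee \chi$ is implied by $R$, is of the required form, and is violated by $t$.

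The main obstacle is establishing that $\pi_j(R_\chi)$ is first-order definable in $(\Q;<)$ and preserved by $\theta_j(f_j)$; both facts follow from carefully tracking the interaction between the projection onto the $j$-th dimension and the coordinate-wise action of polymorphisms on the product $(\Q;<) \boxtimes (\Q;<)$, but this is the place where the product structure really has to be unpacked.
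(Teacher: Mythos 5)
Your proof is correct, and it takes a genuinely different route from the one in the paper. The paper's proof also reduces to Proposition~\ref{prop:decomp} to get a normal definition $\phi$ of $R$ as a conjunction of weakly $i$-determined clauses, but it then establishes a single claim -- that the variable-expanded formula $\ve(\phi)$ over $(\Q;<)$ is preserved by every $\lele$-operation -- and invokes Theorem~\ref{thm:lele} once, applied to the $2m$-ary relation $\ve(R)$, to obtain an ll-Horn definition that is then translated back over $\bD$. Your argument instead works counterexample-by-counterexample and dimension-by-dimension: for each $t \notin R$ you locate a violated weakly $j$-determined clause $\eta = \psi \vee \chi$, form the primitive-positive relation $R_\chi := R \wedge \bigwedge_i x_i =_{3-j} y_i$, project to $\pi_j(R_\chi)$ (a first-order-definable relation over $(\Q;<)$ preserved by the $\lele$-operation $\theta_j(f_j)$, via $\pi_j \circ f_j = \theta_j(f_j) \circ \pi_j$), extract from its ll-Horn definition a clause $\theta$ violated by $\pi_j(t)$, and conclude with $\theta^{(j)} \vee \chi$. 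There are two things your route buys. First, because you only ever apply Theorem~\ref{thm:lele} to relations over $\Q$ itself, the re-subscripting of the resulting ll-Horn clauses is automatic, whereas the paper's application to the doubled-variable relation $\ve(R)$ leaves implicit the (true but not entirely trivial) fact that the ll-Horn definition can be taken to respect dimensions, i.e., to have all $<$- and $=$-literals within a single dimension -- a point the paper does not spell out. Second, your construction is structurally much closer to how the paper itself treats the neighbouring statements (Propositions~\ref{prop:syntax-4} and~\ref{prop:factor}) via the $\Cr$ operator, i.e., by extracting a primitive-positive, single-dimension relation and applying the one-dimensional classification to it; this makes your argument the more uniform one. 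One small hygiene point that you should make explicit: you need the defining conjunction of ``all clauses of the required form implied by $R$'' to be finite, which holds because a clause of the required form on the fixed variable set of $R$ is determined, up to logical equivalence, by a subset of finitely many literals together with the choice of $j$ and the presence or absence of the final equality disjunct.
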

\begin{proof}
Let $R$ be a relation of $\bD$, and let 
$\phi$ be a definition of $R$. Without loss of generality, we may assume that $\phi$ is normal and hence a conjunction of weakly $1$-determined and weakly $2$-determined clauses (Proposition~\ref{prop:decomp}).

\medskip 

\noindent
{\bf Claim.} The
formula $\ve(\phi)$ over $({\mathbb Q};<)$ is preserved by every $\lele$ operation. 

\smallskip

\noindent
To prove the claim, let $\lele$ be an $\lele$-operation and let $r'$ and $s'$ be satisfying assignments for $\ve(\phi)$. We have to show that
$t'(x) \coloneqq \lele(r'(x),s'(x))$ satisfies $\ve(\phi)$. Let $\psi'$ be a conjunct of $\ve(\phi)$. Then $\psi'$ has been created from a conjunct $\psi$ of $\phi$ with variables $y_1,\dots,y_m$, which must be weakly $i$-determined, for some $i \in \{1,2\}$. 
We assume henceforth that $i=1$; the other case can be shown analogously. 
Note that 
the maps $r \colon x \mapsto (r'(x^1),r'(x^2))$ and $s \colon x \mapsto (s'(x^1),s'(x^2))$ are satisfying assignments to $\phi$. Since $\theta_1(f_1)$ is an $\lele$-operation, we may choose $\alpha \in \Aut(\Q;<)$ such that
for $t(x) := (\alpha,\id) f_1\big ( r(x),s(x)\big )$
we have $(t(y_1)_1,\dots,t(y_m)_1) = (t'(y^1_1),\dots,t'(y^1_m))$ (see Remark~\ref{rem:easy}). 
Therefore, we are done if one of the disjuncts of
$\psi$ of the form $x =_1 y$, $x <_1 y$, or $x \neq_1 y$ is satisfied by $t$, because then a disjunct of 
$\psi'$ of the form $x^1 = y^1$, $x^1 < y^1$, or $x^1 \neq y^1$ is satisfied by $t'$. Otherwise, since $t$ satisfies $\psi$, there must be a literal of $\psi'$ of the form $x^2 \neq y^2$. We claim that $t'$ satisfies this literal. 
As $t$ satisfies $x \neq_2 y$, we must have
$r(x) \neq_2 r(y)$ or $s(x) \neq_2 s(y)$, so
$r'(x^2) \neq r'(y^2)$ or $s'(x^2) \neq s'(y^2)$. 
Hence, $t'(x^2) \neq t'(y^2)$ by the injectivity of $\lele$. 
$\hfill 
\diamond$

\medskip

\noindent
Note that the first statement of the proposition follows from the claim by Theorem~\ref{thm:lele}. 
The claim and Theorem~\ref{thm:lele} also imply that we obtain a two-dimensional primitive positive interpretation of $\bD$ in $\bL$, which proves
the second statement of the theorem.
\end{proof}

\subsection{Classification in the 2-Dimensional Case}
\label{sect:2D-class}

 The aim of this section is to present
our algebraic dichotomy result for polymorphism clones of
first-order expansions of $(\Q^2;<_1,=_1,<_2,=_2)$
together with a corresponding complexity dichotomy (Section~\ref{sect:class})
We begin by presenting algorithmic results.

\subsubsection{Polynomial-time Algorithms}
\label{sect:tract}
In this section we present polynomial-time solvability results for $\Csp(\bD)$ when $\bD$ is a first-order expansion of $({\mathbb Q}^2;<_1,=_1,<_2,=_2)$ such that $\theta_1(\Pol(\bD))$ and $\theta_2(\Pol(\bD))$ contain sufficiently strong polymorphisms. Intuitively speaking, in every tractable case, there must be in each of the dimensions a polymorphism that guarantees tractability. By Theorem~\ref{thm:tcsps}, this means that both $\theta_1(\Pol(\bD))$ and $\theta_2(\Pol(\bD))$ have to contain one of the operations $\min_3$, $\mx_3$, $\mi_3$ or $\lele_3$. The case when $\lele_3$, equivalently any $\lele$-operation, lies in $\theta_i(\Pol(\bD))$ for both $i$ is treated in Proposition~\ref{prop:tract}. The case when $\theta_1(\Pol(\bD))$ contains $\min_3$, $\mx_3$, $\mi_3$ and $\theta_2(\Pol(\bD))$ contains $\lele_3$ is covered in Proposition~\ref{prop:tract2}; the case with the dimensions switched is symmetric. The case when both $\theta_1(\Pol(\bD))$ and $\theta_2(\Pol(\bD))$ contain one of the operations  $\min_3$, $\mx_3$, or $\mi_3$ can be easily handled using Proposition~\ref{prop:factor} and Lemma~\ref{lem:factors} and is treated directly in the proof of the classification theorem (Theorem~\ref{thm:main}).

\begin{proposition}\label{prop:tract}
Suppose that $\bD$ has a finite relational signature and for every $i \in \{1,2\}$, the clone $\Pol(\bD)$ contains an operation $f_i$ such that $\theta_i(f_i)$ is an $\lele$-operation.
Then $\Csp(\bD)$ can be solved in polynomial time. 
\end{proposition}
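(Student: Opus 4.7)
The plan is to reduce everything to the known tractability of $\Csp(\bL)$ via a primitive positive interpretation. I would not design a new algorithm from scratch: the heavy lifting has already been done in Proposition~\ref{prop:pp-pow}.

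First I would invoke Proposition~\ref{prop:pp-pow} directly: the hypothesis of that proposition coincides with the hypothesis here (for every $i \in \{1,2\}$, $\Pol(\bD)$ contains an operation $f_i$ such that $\theta_i(f_i)$ is an $\lele$-operation). Its conclusion yields a primitive positive interpretation of $\bD$ in $\bL$.

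Next I would apply Proposition~\ref{prop:pp-int-reduce}: because $\bD$ has a finite relational signature (and so does $\bL$), the existence of a primitive positive interpretation of $\bD$ in $\bL$ gives a polynomial-time reduction from $\Csp(\bD)$ to $\Csp(\bL)$.

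Finally I would observe that $\Csp(\bL)$ is in P. This follows from Theorem~\ref{thm:tcsps} applied to $\bL$: by Theorem~\ref{thm:pwnu} (or by Definition~\ref{def:lele} together with Theorem~\ref{thm:lele}), $\bL$ is preserved by $\lele_3$, which puts $\bL$ in case~1 of Theorem~\ref{thm:tcsps}; hence every finite-signature reduct of $\bL$ is tractable, and in particular $\Csp(\bL)$ itself is in P. Composing the reduction with the polynomial-time algorithm for $\Csp(\bL)$ yields a polynomial-time algorithm for $\Csp(\bD)$. There is no genuine obstacle here — all the nontrivial work (the syntactic normal form and the construction of the interpretation) has already been carried out in Proposition~\ref{prop:pp-pow}, so this proof is essentially a one-line chaining of earlier results.
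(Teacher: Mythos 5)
Your proof is correct and matches the paper's proof of Proposition~\ref{prop:tract}: both invoke Proposition~\ref{prop:pp-pow} to obtain a primitive positive interpretation of $\bD$ in $\bL$ and then appeal to the polynomial-time tractability of $\Csp(\bL)$. The only cosmetic difference is that the paper cites Lemma~\ref{lem:pp-construct-reduce} where you cite Proposition~\ref{prop:pp-int-reduce}, but these are interchangeable here since the former is derived from the latter.
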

\begin{proof}
The structure $\bD$ has a primitive positive interpretation in $\bL$ by Proposition~\ref{prop:pp-pow}. 
The result follows from 
Lemma~\ref{lem:pp-construct-reduce} since 
$\Csp(\bL)$ can be solved in polynomial time.
\end{proof}

We will use 
some additional observations and notations in the proof of the next proposition (and also in its generalisation Proposition \ref{prop:tract-n}). 
Assume that the relational structure $\bA$ with domain ${\mathbb Q}$ has a finite signature and
there is an $\lele$-operation in $\Pol(\bA)$. We know from Theorem~\ref{thm:tcsps} that CSP$(\bA)$ is polynomial-time
solvable.
Assume now that $\bA'$ is a solvable instance of $\Csp(\bA)$ with variable set $X$.
The polynomial-time algorithm for $\Csp(\bA)$ by Bodirsky and Kára~\cite[Section 4]{ll} computes additional information in the form of an 
{\em equality set}: a set $E \subseteq X^2$ such that for every $(x,x') \in E$ and every solution $s$, it holds that $s(x)=s(x')$.
Furthermore, there exists a solution $t$ such that $t(x) \neq t(x')$ for every $(x,x') \not\in E$.

With this in mind, we introduce
the following notation. Let $X$ be a set of variables,
$i \in \{1,2\}$, and
$E\subseteq \{(x^i,y^i)\mid x,y \in X\}$
where $x^i,y^i$ denote fresh variables.
Assume that $\phi$ is a formula over $(\Q^2,<_1,=_1, \dots, <_n,=_n)$ that only uses variables from $X$. If $\phi$ equals
$\phi_1 \wedge \phi_2$, where $\phi_i$ is a conjunction of $i$-determined clauses and $\phi_{3-i}$ is a conjunction of weakly $(3-i)$-determined clauses,
then we let $\cm(\phi, E)$ (for {\em clause modification}) denote the formula resulting from $\phi$ by performing the following procedure:
 \begin{itemize}
     \item If $(x^i,y^i)\in E$ and $\phi_{3-i}$ contains a clause with the literal $x \neq_i y$, then remove this literal from the clause and add the conjunct $x =_i y$ to $\phi$.
     \item If $(x^i,y^i)\not\in E$ and $\phi_{3-i}$ contains a clause with the literal $x \neq_i y$, then delete all literals in the clause but this one.
 \end{itemize}
For example, if $i=1$, $\phi = (x=_1 y \vee x <_1 z) \wedge (x \neq_1 z \vee u <_2 v) \wedge (u \neq_1 z \vee x =_2 y)$ and $E=\{(x^1, z^1)\}$, then $\cm(\phi,E) = (x=_1 y \vee x <_1 z) \wedge  (x =_1 z) \wedge (u <_2 v) \wedge (u \neq_1 z)$.
 
 The following proposition and its proof describe a polynomial time algorithm for $\Csp(\bD)$, where $\Pol(\bD)$ satisfies certain conditions (see Algorithm~\ref{alg:main}). A generalised version of this algorithm for first-order expansions of $(\Q, <)^{(n)}$ with $n\geq 2$ will be presented
 in Proposition~\ref{prop:tract-n} and Algorithm~\ref{alg:main2}.
 There are no profound differences between the algorithms but the case when $n=2$ is easier to present since
 we can keep the formal machinery at a minimum.

\begin{proposition} \label{prop:tract2}
Suppose that $\bD$ has a finite relational signature and that $\Pol(\bD)$ contains $f_1,f_2$ such that 
$\theta_1(f_1)$ equals 
$\min_3$, $\mx_3$, or $\mi_3$, 
and $\theta_2(f_2)$ is an $\lele$-operation.
Then $\Csp(\bD)$ can be solved in polynomial time. 
\end{proposition}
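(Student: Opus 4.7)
The plan is to reduce $\Csp(\bD)$ to the two tractable temporal CSPs associated with the two dimensions of the product. I would first extract a syntactic decomposition of every relation of $\bD$ along the two dimensions, and then present a three-phase algorithm that solves the two dimensions in turn using the polynomial-time algorithms for first-order expansions of $(\Q;<)$ provided by Theorem~\ref{thm:tcsps}.

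Since $\bD$ is a first-order expansion of $(\Q;<) \boxtimes (\Q;<)$, and first-order definability cannot shrink the automorphism group, we have $\Aut(\bD) = \Aut((\Q;<)\boxtimes(\Q;<)) = \Aut(\Q;<)^2$ by Proposition~\ref{prop:product-hom}; in particular $\theta_1(\Aut(\bD)) = \Aut(\Q;<)$. Combined with $\theta_1(f_1) \in \{\min_3,\mx_3,\mi_3\}$ and the local closedness of $\theta_1(\Pol(\bD))$ provided by Proposition~\ref{prop:closed}, this implies that $\theta_1(\Pol(\bD))$ contains the polymorphism clone of whichever of $\bU$, $\bX$, $\bI$ is singled out by $\theta_1(f_1)$, and hence by Proposition~\ref{prop:pp} contains a $\pp$-operation. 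The latter lifts to some $g \in \Pol(\bD)$ with $\theta_1(g)$ a $\pp$-operation. Together with the hypothesis that $\theta_2(f_2)$ is an $\lele$-operation, Proposition~\ref{prop:syntax-4} then applies: every relation of $\bD$ admits a definition $\phi_1 \wedge \phi_2$ in which $\phi_1$ is a conjunction of weakly $1$-determined clauses of the form~(\ref{eq:1-det-pp}) and $\phi_2$ is a conjunction of $2$-determined clauses of the form~(\ref{eq:ll}).

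Given a finite input instance $\bA$ over the signature of $\bD$, the algorithm first rewrites every constraint using this decomposition, collecting a global weakly $1$-determined part $\Phi_1$ and a global $2$-determined part $\Phi_2$. In the first phase it runs the Bodirsky--K\'ara algorithm for $\bL$ on $\ve(\Phi_2)$ regarded as a formula over $(\Q;<)$; if this rejects we reject. Otherwise the algorithm returns an equality set $E \subseteq \{(x^2,y^2) : x,y \in X\}$ witnessed by a solution $t_2$ of $\Phi_2$ with $t_2(x)_2 \neq t_2(y)_2$ exactly when $(x^2,y^2) \notin E$. In the second phase the operator $\cm(\Phi_1, E)$ transforms $\Phi_1$ into a conjunction $\psi_1$ of $1$-determined clauses of the shape allowed by Theorem~\ref{thm:pp}, possibly generating additional $2$-determined equalities which are appended to $\Phi_2$ and trigger a re-run of the first phase (this loop terminates because only finitely many such equalities can arise). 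In the third phase we decide satisfiability of $\ve(\psi_1)$ using the polynomial-time algorithm for $\bU$, $\bX$, or $\bI$ corresponding to $\theta_1(f_1)$, and accept if and only if it accepts.

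The main technical obstacle will be proving correctness, namely that $\psi_1 \wedge \Phi_2$ is equisatisfiable with $\Phi_1 \wedge \Phi_2$ and that any solution of $\ve(\psi_1)$ in dimension $1$, combined with the witness $t_2$ in dimension $2$, reassembles into a homomorphism $\bA \to \bD$. The essential ingredient is that the equality set $E$ certifies a single solution $t_2$ of $\Phi_2$ simultaneously realising all the $\neq_2$ literals that can possibly hold, while the presence of a $\pp$-operation acting only in dimension $1$ allows us to freely vary the first-dimension component of an assignment once the second-dimension component has been fixed to $t_2$; this independence is precisely what the syntactic separation granted by Proposition~\ref{prop:syntax-4} is designed to exploit.
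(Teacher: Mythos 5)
Your proposal follows the same high-level algorithmic plan as the paper (split each constraint into a dimension-1 part and a dimension-2 part via Proposition~\ref{prop:syntax-4}, solve the dimension-2 part with the Bodirsky--K\'ara $\lele$ algorithm, propagate the resulting equality set into the dimension-1 part via $\cm$, then solve the dimension-1 part), but it has a genuine gap in the third phase. After applying $\cm$, you obtain a conjunction $\psi_1$ of $1$-determined clauses and you propose to decide satisfiability of $\ve(\psi_1)$ ``using the polynomial-time algorithm for $\bU$, $\bX$, or $\bI$ corresponding to $\theta_1(f_1)$.'' But the syntactic shape guaranteed by Proposition~\ref{prop:syntax-4} (namely~\eqref{eq:1-det-pp}) only tells you, via Theorem~\ref{thm:pp}, that $\ve(\psi_1)$ is preserved by a $\pp$-operation. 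Preservation by a $\pp$-operation is strictly weaker than preservation by $\min_3$, $\mx_3$, or $\mi_3$ and is \emph{not} one of the tractable cases of Theorem~\ref{thm:tcsps}; so the algorithm for $\bU$ (resp.\ $\bX$, $\bI$) is simply not guaranteed to be applicable to $\ve(\psi_1)$, and its output has no semantic guarantees. The individual conjunct $\phi_1$ extracted from a definition $\phi_1 \wedge \phi_2$ of a relation $R \in \bD$ is \emph{not} itself a relation of $\bD$, so it need not be preserved by $\Pol(\bD)$, and $\cm$ does nothing to repair this.

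This is precisely why the paper introduces the conjunction replacement operator $\Cr$ (built on Lemma~\ref{lem:extract} / Corollary~\ref{cor:extract-n}): after $\cm$, the modified formula $\phi'$ as a whole still defines a relation that is primitively positively definable in $\bD$ (here the paper also carefully arranges, via Lemma~\ref{lem:canoni} and Lemma~\ref{lem:lex}, that $\neq_2$ is among the relations of $\bD$ so that $\cm$ preserves pp-definability). One then replaces the $1$-determined subformula by $\psi_1' := \Cr(\phi',\{1\},\phi_1')$, which \emph{is} equivalent to a primitive positive formula over $\bD$, hence preserved by $\Pol(\bD)$; by Lemma~\ref{lem:pres}, $\hat{\psi_1'}$ is then preserved by $\theta_1(\Pol(\bD)) \ni m$, and only now is the temporal algorithm applicable. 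Without this replacement step your third phase is unjustified. As a secondary remark, your loop that re-runs the first phase is unnecessary: the $=_2$-conjuncts produced by $\cm$ are already implied by the equality set $E$, and, as you yourself observe, the injectivity of $\lele$-operations provides a single dimension-2 solution realising all remaining $\neq_2$ literals simultaneously, so the equality set cannot change and one pass suffices.
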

\begin{proof}
Apply Lemma \ref{lem:canoni} to the operation $f_1$ for dimension $i=2$. Then, there is an operation $f_1'\in \Pol(\bD)$ such that
$\theta_2(f_1')$ is canonical over $\Aut(\Q; <)$ and 
$m := \theta_1(f_1')$ equals $\min_3$, $\mx_3$, or $\mi_3$. By Lemma \ref{lem:lex}, $f_1'$ preserves $\neq_2$. Since $\theta_2(f_2)$ is an $\lele$-operation, $f_2$ preserves $\neq_2$ as well. Therefore we may assume without loss of generality that $\bD$ contains the  relation $\neq_2$.
Since $\theta_1(\Pol(\bD))$ is closed (by Proposition~\ref{prop:closed}),
it follows from Proposition~\ref{prop:pp}
that $\theta_1(\Pol(\bD))$ contains a $\pp$-operation. 

Let $\tau$ be the signature of $\bD$ and let $\bA$ be an instance of $\Csp(\bD)$. For every $R \in \tau$ of arity $k$ and  $\bar{a}=(a_1,\dots,a_k) \in R^{\bA}$, 
let $\phi_{R,\bar a}$ be the first-order definition of $R$ in $({\mathbb Q}^2;<_1,=_1,<_2,=_2)$, using the elements $a_1,\dots,a_k$ as free variables. 
Since $\theta_1(\Pol(\bD))$ contains a $\pp$-operation and $\theta_2(\Pol(\bD))$ contains an $\lele$-operation, we may assume that $\phi_{R,\bar{a}}$ has the form described in Proposition~\ref{prop:syntax-4}.
Let $\Phi = \{\phi_{R,\bar a} \; | \; R \in \tau \text{ and } \bar a \in R^{\bA}\}$.
The set $\Phi$ can be computed in polynomial time since $\bD$ has a finite signature.
It is clear that $\bA$ is a yes-instance of $\Csp(\bD)$ if and only if $\Phi$ is satisfiable.
We will now present a polynomial-time algorithm for checking the satisfiability of $\Phi$; it is outlined in Algorithm~\ref{alg:main}. The basic idea is to compute
two sets $\Psi_1$ and $\Psi_2$ of logical formulas that are simultaneously satisfiable if and only if $\Phi$
is satisfiable. The sets $\Psi_1$ and $\Psi_2$ are, in a sense that will be clarified below,
connected to formulas in $\Phi$ that contain weakly 1-determined and 2-determined clauses, respectively.

 Every $\phi \in \Phi$ is of the form $\phi_1 \wedge \psi_2$, where $\phi_1$ is a conjunction of weakly $1$-determined clauses and $\psi_2$ is a conjunction of clauses of the form
\[x_1 \neq_2 y_1 \vee \cdots \vee x_m \neq_2 y_m \vee z_1 <_2 z_0 \vee \cdots \vee z_{\ell} <_2 z_0 \vee (z_0 =_2 z_1 =_2 \cdots =_2 z_{\ell}).\]
Therefore $\hat{\psi_2}$ is a conjunction of $\lele$-Horn clauses and by Theorem~\ref{thm:lele} preserved by an $\lele$-operation.
We let $\Psi_2$ denote the set of all formulas $\hat \psi_2$ obtained in this way from the
 members of $\Phi$. Note that $\Psi_2$ can be computed in polynomial time since $\bD$ has a finite relational signature.
 
Since $\Psi_2$ is preserved by an $\lele$-operation, we can check its satisfiability in polynomial time
by Theorem~\ref{thm:lele} combined with Theorem~\ref{thm:tcsps}.
If $\Psi_2$ is not satisfiable, then we reject the input $\bA$. 
Otherwise, we let $E'$ denote the equality set of the instance.
 For each $(x,y) \in E'$, we put a pair $(x^2,y^2)$ in the set $E$; this set will later on be used as an argument to the clause modification operator $\cm(\cdot)$
 that was introduced in connection with
 Proposition~\ref{prop:tract2}.
 
 Every $\phi$ in $\Phi$ is equivalent to a formula $\phi_1 \wedge \psi_2$ as described above.
 Note that the formula $\phi_1 \wedge \psi_2$ admits application of the clause modification operator $\cm(\cdot)$. Hence, we define $\Phi'$ to be the set of the formulas  $\cm(\phi_1 \wedge \psi_2, E)$ obtained from formulas $\phi \in \Phi$.
 Note that each formula in $\Phi'$ still defines a relation that has a primitive positive definition in $\bD$ (since $\bD$ contains the relations $=_2$ and $\neq_2$). Further note that, up to renaming variables, only finitely many different formulas may appear in $\Phi'$: there are only finitely many inequivalent ways to remove $\neq_2$-literals or clauses with such literals and to add $=_2$- or $\neq_2$-conjuncts to the formulas $\phi_1 \wedge \psi_2$ defining one of the finitely many relations in $\bD$.
 
Every weakly $1$-determined clause of a formula in $\Phi'$ that does not contain a literal of the form $x \neq_2 y$ is
 $1$-determined. Every $\phi' \in \Phi'$ can thus be written as $\phi_1' \wedge \phi_2'$, where $\phi_i'$ is a conjunction of $i$-determined clauses.
It follows that $\phi'$ is equivalent to $\psi_1' \wedge \phi_2'$ where $\psi_1'=\Cr(\phi',\{1\}, \phi_1')$.
 Note that $\psi_1'$ is preserved by $\Pol(\bD)$.
 Since $\bD$ has finite relational signature, there are only finitely many inequivalent formulas that can arise in this way so the formulas $\psi_1'$ can be computed in polynomial time: they can simply be stored in a fixed-size database that is computed off-line.  
 By Lemma~\ref{lem:pres}, $\hat \psi_1'$ is  
 preserved by the operation $m \in \theta_1(\Pol(\bD))$. 
 Let $\Psi_1$ be the set of all formulas $\hat \psi_1'$ obtained from $\Phi'$ in this way.
 We may use the algorithmic part of Theorem~\ref{thm:tcsps}
 to decide whether $\Psi_1$ is satisfiable. 
 If $\Psi_1$ is not satisfiable, then we reject the input $\bA$ and we accept it otherwise. 
 We claim that in this case $\Phi$ is satisfiable and, consequently, that $\bA$ has a homomorphism to $\bD$.
 
Indeed, let $s \colon A \to {\mathbb Q}$ be a solution to $\Psi_1$
and let $t \colon A \to {\mathbb Q}$ be a solution to $\Psi_2$. 
Since $\bD$ is preserved by an operation $g$ such that  $\theta_2(g)$
is an $\lele$-operation, and $\lele$-operations are injective, we may assume that $t$ satisfies $x \neq_2 y$ unless this literal has been removed from $\Phi$ by the algorithm. 
Then the map $x \mapsto (s(x),t(x))$
satisfies all formulas in $\Phi$ and it follows that $\bA$ admits a homomorphism to $\bD$.  
\end{proof}

\begin{algorithm}
\caption{Solve-by-Factors}\label{alg:main}
\begin{algorithmic}
\State \textbf{Input:} an instance $\bA$ of $\Csp(\bD)$
\State $\Phi \coloneqq \{ \phi_{R,\bar{a}} \mid R\in \tau, \bar{a} \in R^{\bA} \}$

\ForAll{$\phi \in \Phi$}
    \State \parbox[t]{\dimexpr\linewidth-\algorithmicindent}{ write $\phi$ as $\phi_1 \wedge \psi_2$, where $\phi_1$ is a conjunction of weakly $1$-determined clauses and $\psi_2$ is a conjunction of $2$-determined clauses such that each clause of $\hat{\psi}_2$ is $\lele$-Horn}
\EndFor

\State $\Psi_2 \coloneqq \{\hat{\psi}_2 \mid \phi \in \Phi \}$

\blockcomment{Satisfiability of $\Psi_2$ can be checked in polynomial time by Theorems~\ref{thm:tcsps} and \ref{thm:lele}.}
\If{$\Psi_2$ is not satisfiable}
    \State \textbf{reject}
\EndIf

\State let $E'$ be the equality set of $\Psi_2$
\State $E \coloneqq \{(x^2, y^2) \mid (x,y) \in E' \}$

\ForAll{$\phi \in \Phi$}
    \State \parbox[t]{\dimexpr\linewidth-\algorithmicindent}{write $\cm(\phi,E)$ as $\bigwedge_{p\in \{1,\dots, n\}\setminus S}{\phi_1'}\wedge \psi_2 \wedge \psi_2'$, where $\phi_1'$ is the conjunction of $1$-determined clauses resulting from $\phi_1$ and $\psi_2'$ is a conjunction of the added conjuncts}
\EndFor

\State $\Phi' \coloneqq \{\cm(\phi, E) \mid \phi \in \Phi\}$

\blockcomment{Every $\phi' \in \Phi'$ defines a relation that is primitively positively definable over $\bD$.}

\State $\Psi_1 \coloneqq \{\hat{\psi_1} \mid \phi' \in \Phi', \psi_1 = \Cr(\phi',\{1\},\phi_1')\}$

\blockcomment{$\Psi_1$ is preserved by $\min_3$, $\mx_3$, or $\mi_3$ and its satisfiability can be checked in polynomial time by Theorems~\ref{thm:pwnu} and \ref{thm:tcsps}.}
\If{$\Psi_1$ is not satisfiable}
    \State \textbf{reject}
\EndIf

\State \textbf{accept}
\end{algorithmic}
\end{algorithm}

\subsubsection{Classification Result}
\label{sect:class}
The known results about first-order expansions of $({\mathbb Q};<)$
from Section~\ref{sec:tempcons}
combined with
the results from Section~\ref{sect:decomp} imply an  algebraic dichotomy for polymorphism clones of first-order expansions of $(\Q^2;<_1,=_1,<_2,=_2)$; the algebraic dichotomy implies a complexity dichotomy, using the results from Section~\ref{sect:tract}.

\begin{theorem}\label{thm:main}
Let $\bD$ be a first-order expansion of $(\Q; <) \boxtimes (\Q;<)$.
Exactly one of the following two cases applies.
\begin{itemize}
\item For each $i \in \{1,2\}$
 we have that $\theta_i(\Pol(\bD))$ contains  
$\min_3$, $\mx_3$, $\mi_3$, or $\lele_3$, or one of their duals. Furthermore, $\bD$ has a pwnu polymorphism and if $\bD$ has a finite relational signature, then $\Csp(\bD)$ is in P.
\item $\Pol(\bD)$ has a 
uniformly continuous minor-preserving map
to $\Pol(K_3)$. 
In this case, $\bD$ has a finite-signature reduct whose
$\Csp$ is NP-complete.
\end{itemize}
\end{theorem}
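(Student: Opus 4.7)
The proof would reduce the problem to understanding the two projections $\theta_i(\Pol(\bD))$ for $i \in \{1,2\}$. By Proposition~\ref{prop:closed} each such projection is a closed clone on $\Q$ containing $\Aut(\Q;<)$, so it equals $\Pol(\bA_i)$ for some first-order expansion $\bA_i$ of $(\Q;<)$. Applying Theorem~\ref{thm:tcsps} then yields, for each $i$, the following dichotomy: either $\theta_i(\Pol(\bD))$ contains one of $\min_3, \mx_3, \mi_3, \lele_3$ (or a dual), or it admits a uniformly continuous minor-preserving map to $\Pol(K_3)$. In the latter situation for some $i$, composing with the uniformly continuous minor-preserving map $\theta_i$ yields such a map from $\Pol(\bD)$ to $\Pol(K_3)$, and Corollary~\ref{cor:hard} then supplies an NP-complete finite-signature reduct of $\bD$, placing us in the second case of the theorem.

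On the positive side I would split into three subcases, reducing to non-dualised operations via Lemma~\ref{lem:flip} and to one orientation of the factors by the intrinsic symmetry of the algebraic product. \emph{Subcase (a)}: both projections contain an $\lele$-operation. Then Proposition~\ref{prop:pp-pow} provides a primitive positive interpretation of $\bD$ in $\bL$; tractability follows from Lemma~\ref{lem:pp-construct-reduce} combined with the tractability of $\Csp(\bL)$, and the pwnu polymorphism $\lele_k$ of $\bL$ pulls back to a pwnu polymorphism of $\bD$ via Lemma~\ref{lem:clone-homo}. \emph{Subcase (b)}: one projection contains an $\lele$-operation and the other contains one of $\min_3, \mx_3, \mi_3$ but no $\lele$-operation; tractability is exactly Proposition~\ref{prop:tract2}. \emph{Subcase (c)}: neither projection contains an $\lele$-operation. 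Since each projection contains a $\pp$-operation by Proposition~\ref{prop:pp}, the last clause of Theorem~\ref{thm:lele} forbids a $\lex$-operation in either projection, so Proposition~\ref{prop:factor} applies in both orientations; Lemma~\ref{lem:factors} then yields $\Pol(\bD) = \theta_1(\Pol(\bD)) \times \theta_2(\Pol(\bD))$. Tractability follows from Corollary~\ref{cor:prod-alg}, and a pwnu polymorphism is obtained as the pair of pwnu polymorphisms coming from each factor.

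For the pwnu polymorphism in subcase (b), I would start with a binary polymorphism $f$ of $\bD$ whose $\theta_2$-component is the binary $\lele$-operation and canonicalise in the other dimension using Lemma~\ref{lem:canoni}; by Lemma~\ref{lem:lex} together with the subcase assumption (which, via Theorem~\ref{thm:lele}, forbids a $\lex$-operation in the other projection), the resulting canonical first component must be essentially unary. Pairing this with a polymorphism whose first component is one of $\min_3, \mx_3, \mi_3$ yields, after verifying preservation on the two clause types supplied by Proposition~\ref{prop:syntax-4}, a polymorphism of $\bD$ whose two components are both pwnu. Finally, the two cases of the theorem are mutually exclusive: in Case~1, $\bD$ has a pwnu polymorphism and is a model-complete core by Lemma~\ref{lem:prod-mc-cores}, so Corollary~\ref{cor:excl} (applied with $\bB = \bC = \bD$, trivially mutually primitively positively interpretable with itself) rules out a uniformly continuous minor-preserving map from $\Pol(\bD)$ to $\Pol(K_3)$. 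The main obstacle will be the pwnu construction in subcase (b): one has to verify preservation of the weakly $1$-determined clauses allowed by Proposition~\ref{prop:syntax-4}, which may carry cross-dimensional $\neq_2$-literals, and combine this with the pwnu structure of the $\min$-type component to obtain the required identities.
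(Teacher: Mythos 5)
Your overall architecture matches the paper's: identify $\theta_i(\Pol(\bD))$ as $\Pol(\bA_i)$ for some first-order expansion $\bA_i$ of $(\Q;<)$, apply Theorem~\ref{thm:tcsps} per factor, split into the same three tractability subcases, and conclude exclusivity via the model-complete core argument. The tractability routes you cite (Proposition~\ref{prop:pp-pow}/\ref{prop:tract}, Proposition~\ref{prop:tract2}, Proposition~\ref{prop:factor} with Lemma~\ref{lem:factors} and Corollary~\ref{cor:prod-alg}) are the paper's.

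There is, however, a genuine gap in your pwnu construction for subcase (b), and the ``essentially unary'' detour both obscures and fails to close it. You claim that after verifying preservation on the two clause types of Proposition~\ref{prop:syntax-4}, the ternary operation with components $\min_3$ (say) and $\lele_3$ is a polymorphism of $\bD$. But verifying preservation of the weakly $1$-determined clauses \eqref{eq:1-det-pp} is subtler than it looks: once all $\neq_2$-literals are falsified by the three input tuples, you are left needing $\min_3$ applied to the first coordinates to satisfy the $1$-determined residual clause, which is in pp-form. This does \emph{not} follow from the form alone: Proposition~\ref{prop:pp} only gives the implication ``preserved by $\min_3 \Rightarrow$ preserved by a $\pp$-operation,'' not its converse, and indeed $\bU,\bX,\bI$ have incomparable polymorphism clones despite all being preserved by $\pp$-operations. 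The paper handles this by \emph{not} reasoning about $\min_3$ directly on the $1$-determined part; instead it picks an actual polymorphism $f_1'\in\Pol(\bD)$ with $\theta_1(f_1')=\min_3$, notes that $f_1'(a,b,c)$ satisfies $\phi$ and preserves $=_2$, hence must satisfy the $1$-determined sub-clause, and only then transfers to $(\min_3,\lele_3)$ via Lemma~\ref{lem:pres}. Your canonicalisation step producing an operation with essentially unary first component is correct as a statement but plays no role in this verification; it does not produce the needed ternary polymorphism, and ``pairing'' it with a polymorphism whose first projection is $\min_3$ is not a defined operation. If you replace that detour by the paper's trick of quoting the full preimage polymorphisms $f_i'\in\Pol(\bD)$ with $\theta_i(f_i')=f_i$, the argument goes through. (Your pwnu arguments for subcases (a) and (c) via clone-homomorphism pullback from $\bL$ and via the product decomposition, respectively, are fine and slightly different in flavour from the paper's uniform treatment.)
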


\begin{proof}
For $i \in \{1,2\}$, let
${\mathscr C}_i = \theta_i(\Pol(\bD))$. 
If ${\mathscr C}_i$, for some $i \in \{1,2\}$, has a uniformly continuous minor-preserving map from ${\mathscr C}_i$ to $\Pol(K_3)$, then by composing uniformly continuous minor-preserving maps there is also a uniformly continuous minor-preserving map from $\Pol(\bD)$ to $\Pol(K_3)$, which implies that $\bD$ has a finite signature reduct whose $\Csp$ is NP-hard by Corollary~\ref{cor:hard}.
Assume henceforth that there is no uniformly continuous minor-preserving map from $\mathscr C_i$ to $\Pol(K_3)$. 

Let $\bD_i$ be the structure with domain ${\mathbb Q}$ that contains all relations that are preserved by ${\mathscr C}_i$;
note that $\Pol(\bD_i) = {\mathscr C}_i$ by Proposition~\ref{prop:closed}. Clearly, ${\mathscr C}_i$ contains $\Aut({\mathbb Q};<)$ and preserves $<$, 
so $\bD_i$ is a first-order expansion of $({\mathbb Q};<)$.
By Theorem~\ref{thm:tcsps}, $\mathscr C_i$ contains $\min_3$, $\mx_3$, $\mi_3$, or $\lele_3$, or the dual of one of these operations. By Lemma~\ref{lem:flip}, we may assume that $\mathscr C_i$ contains an operation $f_i \in \{\min_3, \mx_3, \mi_3, \lele_3$\}; we assume without loss of generality that $f_i = \lele_3$ whenever $\mathscr C_i$ contains $\lele_3$.
If ${\mathscr C}_i$, for some $i \in \{1,2\}$,  contains a $\lex$-operation, then
${\mathscr C}_i$ contains  $\lele_3$: otherwise, 
$\theta_i(\Pol(\bD))$ would have to contain $\mi_3$, $\mx_3$, or $\mi_3$, and by Proposition~\ref{prop:pp} a $\pp$-operation. It then follows from the final statement in Theorem~\ref{thm:lele} that ${\mathscr C}_i$ contains $\lele_3$ and this contradicts our assumptions.

Assume that ${\mathscr C}_{2}$ contains $\lele_3$, and hence an $\lele$-operation by Theorem~\ref{thm:lele}. 
If ${\mathscr C}_1$ contains 
$\min_3$, $\mi_3$, or $\mx_3$,  
then the polynomial-time  tractability of $\Csp(\bD)$ follows from
Proposition~\ref{prop:tract2}.
Otherwise, 
${\mathscr C}_{1}$ contains 
$\lele_3$ (and hence an
$\lele$-operation by Theorem~\ref{thm:lele}) and the polynomial-time  tractability of $\Csp(\bD)$ follows from Proposition~\ref{prop:tract}.
The case when $\mathscr C_1$ contains $\lele_3$ follows from the same argument with the roles of the two dimensions exchanged.

Suppose in the following that neither ${\mathscr C}_1$ nor ${\mathscr C}_2$ contains a $\lex$-operation. Remark~\ref{rem:ll-lex} combined with Theorem~\ref{thm:lele} imply that they do not contain the operation $\lele_3$.
Then,
both 
${\mathscr C}_1$ and ${\mathscr C}_2$ contain 
$\min_3$, $\mi_3$, or $\mx_3$, 
and 
Proposition~\ref{prop:pp} imply that 
both ${\mathscr C}_1$ and ${\mathscr C}_2$ contain a $\pp$-operation. By 
Proposition~\ref{prop:factor}, 
every relation of $\bD$ can be defined by a conjunction of $1$-determined clauses and of $2$-determined clauses. 
Thus, Lemma~\ref{lem:factors} 
implies
that  $\Pol(\bD)$ contains
${\mathscr C}_1 \times {\mathscr C}_2$.
Then the polynomial-time tractability of $\Csp(\bD)$ follows from Corollary~\ref{cor:prod-alg} 
applied to $\bD_1,\bD_2$, and $\bD$.

We continue by proving that $\bD$ admits a pwnu polymorphism.
Let $f$ be the ternary operation such that $\theta_i(f)$ equals $f_i$ for every $i \in \{1, 2\}$. We claim that $f$ preserves $\bD$. Let $\phi$ be a formula that defines a relation from $\bD$. By Proposition~\ref{prop:decomp}, we may assume that $\phi$ is a normal conjunction of clauses each of which is weakly $i$-determined for some $i$. Let $a,b,c$ be tuples that satisfy $\phi$. Let $\psi$ be a clause of $\phi$. We may assume that $\psi$ is weakly $2$-determined, since the case where it is weakly $1$-determined can be treated analogously. Then $\psi$ is of the form $\psi'\vee \psi''$, where $\psi'$ is a $2$-determined clause and $\psi''$ is a disjunction of $\neq_1$-literals.
We show that $f(a,b,c)$ satisfies $\psi$.

Note that it follows from the discussion above that $\mathscr C_i$ contains for each $i\in\{1,2\}$ a $\pp$-operation or an $\lele$-operation.
If $\psi$ contains a literal $x\neq_1 y$, then it is not $2$-determined and it follows from Proposition~\ref{prop:factor} that either $\mathscr C_1$ does not contain a $\pp$-operation or $\mathscr C_2$ contains a $\lex$-operation.
In the first case, $\mathscr C_1$ does not contain $\min_3$, $\mx_3$ or $\mi_3$ by Proposition~\ref{prop:pp} so $f_1=\lele_3$. 
In the second case, Proposition~\ref{prop:syntax-2} implies that $\psi$ is weakly $1$-determined. 
We see that $f_2=\lele_3$
by Theorem~\ref{thm:lele} since either (1) $\mathscr C_2$ simultaneously contains a $\lex$-operation and a $\pp$-operation or (2) $\mathscr C_2$ contains an $\lele$-operation. We may therefore assume that $f_1=\lele_3$ since, otherwise, we can treat $\psi$ as a weakly $1$-determined clause.

If one of $a,b,c$ satisfies the literal $x \neq_1 y$, then $f(a,b,c)$ satisfies the literal as well since $\theta_1(f)=f_1$ is injective. So suppose that none of $a,b,c$ satisfies such literals. We show that $f(a,b,c)$ satisfies $\psi'$.
Since $f_2 \in \mathscr C_2$, there is $f_2'\in \Pol(\bD)$ such that $\theta_2(f_2')=f_2$. Since $f_2'$ preserves $\phi$ and the relation $=_1$, the tuple $f_2'(a,b,c)$ must satisfy the $2$-determined clause $\psi'$ and hence $f_2(a,b,c)$ satisfies $\hat{\psi'}$ by Lemma~\ref{lem:pres}. Another application of Lemma~\ref{lem:pres} shows that $f(a,b,c)$ satisfies $\psi'$ as well.

Finally, we prove that $f$ is indeed a pwnu polymorphism of $\bD$. If $e^i_1, e^i_2, e^i_3$ show that $f_i$ is a pwnu polymorphism of $\bD_i$, then $e_j := (e^1_j, e^2_j)$, for $j \in \{1,2,3\}$, are the endomorphisms of $\bD$ that show that $f$ is a pwnu polymorphism of $\bD$.

Since $\overline{\Aut(\Q;<)}=\End(\Q;<)$, it follows from Lemma~\ref{lem:prod-mc-cores} that $\overline{\Aut(\bD)}=\End(\bD)$. Therefore, Lemma~\ref{lem:excl} implies that the two cases in the statement are mutually exclusive.
\end{proof}

\subsection{Classification in the $n$-Dimensional Case}
\label{sect:n-dim}
The approach in the previous section can be generalised to first-order expansions of $(\mathbb Q;<)^{(n)}  = (\mathbb Q^n;<_1,=_1,\dots,<_n,=_n)$.
However, this requires some work both on the algebraic and the algorithmic side. In order to make the transition as smooth as possible, we generalize weakly $i$-determined clauses into {\em $S$-weakly $i$-determined} clauses. This allows us to systematically transfer both our 2-dimensional algebraic results (Propositions~\ref{prop:syntax-2}-\ref{prop:pp-pow}) and algorithmic results (Propositions~\ref{prop:tract} and~\ref{prop:tract2}
into the $n$-dimensional setting.
From now on and for the remainder of Section~\ref{sect:products}, we let the symbol $\bD$ denote a first-order expansions of $(\mathbb Q;<)^{(n)}  = (\mathbb Q^n;<_1,=_1,\dots,<_n,=_n)$.
We begin by generalising Definition~\ref{def:weakly-determined}.

\begin{definition}
Let $S \subseteq \{1,\dots,n\}$ and $p \in \{1,\dots,n\}$. 
A clause is called \emph{$S$-weakly $p$-determined} if it is
of the form 
$$\psi \vee \bigvee_{i \in \{1,\dots, k\} \; {\rm and} \; j_i \in S} x_i \neq_{j_i} y_i$$ 
where $\psi$ is  $p$-determined and $k \geq 0$.
A clause is called \emph{weakly $p$-determined} if it is $\{1,\dots,n\} \setminus \{p\}$-weakly $p$-determined (note that this is consistent with the notion of weakly $p$-determined for $n=2$ from Definition~\ref{def:weakly-determined}).
\end{definition}

Next, we connect conjunctions of $S$-weakly $p$-determined clauses with first-order expansions of $({\mathbb Q}; <)^{(n)}$ that admit certain
polymorphisms.
Recall the normal forms of formulas over $(\Q; <)^{(n)}$ defined at the end of Section~\ref{sec:normalisation}; we will use them in similar fashion as in the case $n=2$.

\begin{proposition}\label{prop:syntax-n}
Suppose that for every $p \in \{1,\dots,n\}$  there is an operation $f \in \Pol(\bD)$ such that $\theta_p(f)$ is an $\lele$-operation or a $\pp$-operation. 
Then for every relation $R$ of $\bD$, if $\phi$ is a first-order definition of $R$ over $(\mathbb Q^n;<_1,=_1,\dots,<_n,=_n)$ that is normal, then $\phi$ is 
a conjunction of clauses 
each of which is weakly $i$-determined for some $i \in \{1,\dots,n\}$. 
\end{proposition}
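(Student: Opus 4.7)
The plan is to mirror the two-dimensional argument of Proposition~\ref{prop:decomp}, with the preparatory algebraic lemmas lifted to the $n$-dimensional setting in the natural way. The first step is to establish a generalization of Corollary~\ref{cor:dom}: for any two distinct $p, q \in \{1,\dots,n\}$, provided there exist operations $f_p, f_q \in \Pol(\bD)$ with $\theta_p(f_p)$ and $\theta_q(f_q)$ each being an $\lele$- or a $\pp$-operation, there exists a binary $g \in \Pol(\bD; \leq_1, \neq_1, \dots, \leq_n, \neq_n)$ with $\theta_p(g)$ dominated by its first argument and $\theta_q(g)$ dominated by its second argument. This is a routine dimension-tracking adaptation of the 2D argument: apply the generalized Lemma~\ref{lem:canoni} to canonize first in dimension $q$ (keeping $\theta_p$ fixed), use Lemma~\ref{lem:lex} to further normalize $\theta_q$, and then apply (twice) the generalized Lemma~\ref{lem:dom}, which lets us introduce a domination property in one dimension without disturbing the others.

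With this in hand, let $R$ be a relation of $\bD$ and $\phi$ a normal definition of $R$. By properties R1--R4 of the normalization, any clause $\psi$ of $\phi$ can be written as
\[ \psi_1 \vee \dots \vee \psi_n \vee \bigvee_{l \in \{1,\dots,k\},\, j_l \in \{1,\dots,n\}} x_l \neq_{j_l} y_l, \]
where each $\psi_p$ contains only literals of the form $x <_p y$ or $x =_p y$. It suffices to show that at most one $\psi_p$ is non-empty. Suppose for contradiction that $\psi_p$ and $\psi_q$ are both non-empty for distinct $p,q$, with literals $\ell_p \in \psi_p$ and $\ell_q \in \psi_q$. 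By reducedness, there exist satisfying assignments $s$ and $t$ of $\phi$ such that $s$ satisfies $\ell_p$ and falsifies every other literal of $\psi$, and symmetrically $t$ satisfies $\ell_q$ and falsifies every other literal of $\psi$. The generalized version of Lemma~\ref{lem:ineq-mix}, whose hypothesis is implied by our assumption, rules out the case where both $\ell_p$ and $\ell_q$ are $<$-literals. Choose $g$ as above for the pair $(p,q)$ and inspect $g(t,s)$: by exactly the same case analysis as in the proof of Proposition~\ref{prop:decomp} (three cases depending on whether $\ell_p$ and $\ell_q$ are $=$- or $<$-literals, invoking the non-applicability of R6 to strengthen $\leq_q$-values in $s$ to strict $<_q$-values when needed), the assignment $g(t,s)$ falsifies every literal of $\psi$. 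Domination makes $g(t,s)$ falsify $\ell_p$ via the $t$-coordinate and $\ell_q$ via the $s$-coordinate, while $g$ preserving $\leq_j$, $\neq_j$, $<_j$, and $=_j$ for every $j$ ensures that all remaining literals of $\psi$ remain falsified. This contradicts $g \in \Pol(\bD)$.

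The main obstacle is verifying that all the preparatory machinery---normalization (termination of R5 and R6), Lemma~\ref{lem:ineq-mix}, and Corollary~\ref{cor:dom}---generalizes cleanly to $n$ dimensions; this is largely mechanical since each result constrains only one or two dimensions at a time, and our hypothesis (supplying a suitable polymorphism for every dimension) is strictly stronger than what any individual 2D lemma requires. The only mildly delicate point is that the \emph{same} normalized $\phi$ must work across all pairs $(p,q)$; this causes no trouble because the normalization is performed once and only the auxiliary operation $g$ depends on the pair.
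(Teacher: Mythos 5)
Your proposal takes essentially the same route as the paper's proof: generalise Corollary~\ref{cor:dom} to produce a binary polymorphism with the right domination properties in an arbitrary pair of dimensions $(p,q)$, and then re-run the case analysis from Proposition~\ref{prop:decomp} clause by clause, choosing the auxiliary operation $g$ per pair of literals. One small detail you should make explicit in the generalised Corollary~\ref{cor:dom}: to get $g \in \Pol(\bD;\leq_1,\neq_1,\dots,\leq_n,\neq_n)$ (and not merely $g$ preserving $\leq_p,\neq_p,\leq_q,\neq_q$), you must first canonise the starting operation in \emph{every} dimension other than $p$, so that it preserves $\leq_k,\neq_k$ for all $k$, before applying Lemma~\ref{lem:canonise} in dimension $q$ and Lemma~\ref{lem:dom} in dimension $p$ — this preliminary step is exactly what the paper spells out with its ``without loss of generality'' reduction, and your sketch (which only mentions canonising dimension $q$) currently leaves the dimensions $k \notin \{p,q\}$ uncontrolled.
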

\begin{proof}
The proof is a generalisation of the proof of Proposition~\ref{prop:decomp}; the key step is to use the generalisation of Corollary~\ref{cor:dom} which states that for $i, j\in \{1, \dots, n\}$ and $f\in \Pol(\bD)$ such that $\theta_i(f)$ is an $\lele$- or a $\pp$-operation there is an operation $g \in \Pol(\bD;\leq_1,\neq_1,\dots,\leq_n,\neq_n)$ such that \begin{itemize}
    \item $\theta_i(g)$ is dominated by the first argument (or even equal to $\pi_1^2$ if $\theta_i(f)$ is a $\pp$-operation) and 
    \item $\theta_j(g)$ is dominated by the second argument.
\end{itemize}
To prove this, note first that we may without loss of generality assume that $f$ preserves $\leq_k$ and $\neq_k$ for all $k$; for $k=i$ this follows from the assumption and for $k \neq i$ we may repeatedly apply 
the $n$-dimensional generalisation of Lemma \ref{lem:canonise}
(as discussed immediately after Lemma~\ref{lem:canoni}) 
to obtain an operation that preserves $\leq_k$ and $\neq_k$. By applying Lemma \ref{lem:canonise} to canonise the operation in the $j$-th position and subsequent application of the $n$-dimensional generalisation of Lemma \ref{lem:dom} to modify the $i$-th position, we can prove the statement analogously to the proof of Corollary \ref{cor:dom}.

To prove the proposition, one can proceed as in the proof of Proposition~\ref{prop:decomp}; the only difference is that the choice of the polymorphism $g$ depends on the considered pair of literals, because it needs to have the domination property in the right dimensions.
In fact, the generalisation of that proof yields the conclusion under the weaker assumption that for all but at most one $p \in \{1,\dots,n\}$ there exists $f \in \Pol(\bD)$ such that $\theta_p(f)$ is an $\lele$-operation, a $\pp$-operation, or the dual of such an operation. 
\end{proof}

The following proposition gives a concrete syntactic description for defining formulas of relations of $\bD$ based on the polymorphisms of $\bD$.
It can be viewed as
a generalisation of Propositions~\ref{prop:syntax-2}-\ref{prop:pp-pow}.

\begin{proposition}\label{prop:strong-syntax-n}
Let $S \subseteq \{1,\dots,n\}$
be such that 
\begin{itemize}
    \item for every $p \in S$ there exists $f_p \in \Pol(\bD)$ such that 
$\theta_p(f_p)$ is an $\lele$-operation, and
    \item for every $p \in \{1,\dots,n\} \setminus S$ 
    there exists $f_p \in \Pol(\bD)$ 
    such that $\theta_p(f_p)$ is a $\pp$-operation, 
    but there is
    no $g \in \Pol(\bD)$ such that $\theta_p(g)$ is
    a $\lex$-operation. 
\end{itemize}
Then, the following hold:
\begin{enumerate}
\item  every relation of 
$\bD$ can be defined by a conjunction of clauses each of which is an $S$-weakly $p$-determined clause for some $p \in \{1,\dots,n\}$,
\item if $p \in \{1,\dots,n\}\setminus S$, then the $S$-weakly $p$-determined clauses can be chosen to be of the form
\begin{equation} \label{eq:p-det-pp}
    u_1 \neq_{i_1} v_1 \vee \cdots u_m \neq_{i_m} v_m \vee y_1 \neq_p x \vee \cdots \vee y_k \neq_p x \vee z_1 \leq_p x \vee \cdots \vee  z_l \leq_p x,
\end{equation}
where $i_1, \dots, i_m\in S$, and
\item if $p \in S$,
then the $S$-weakly $p$-determined clauses can be chosen to be $\lele$-Horn clauses of the form
$$x_1 \neq_{i_1} y_1 \vee \cdots \vee x_m \neq_{i_m} y_m \vee z_1 <_p z_0 \vee \cdots \vee z_{\ell} <_p z_0 \vee (z_0 =_p z_1 =_p \cdots =_p z_{\ell})$$
for $i_1,\dots,i_m \in S$
(and where the last disjunct may not appear).
\end{enumerate}
\end{proposition}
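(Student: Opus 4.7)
The plan is to proceed in three stages, mirroring the two-dimensional analysis of Section~\ref{sect:decomp}. First, apply Proposition~\ref{prop:syntax-n} to obtain a normal defining formula $\phi$ of $R$ as a conjunction of weakly $p$-determined clauses. Second, strengthen this to a conjunction of $S$-weakly $p$-determined clauses, yielding~(1). Third, impose the concrete shapes of~(2) and~(3) on the resulting clauses.

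For~(1), set $T := \{1,\dots,n\} \setminus S$, fix $q \in T$, and suppose for contradiction that some weakly $p$-determined clause $\psi$ of $\phi$ with $p \neq q$ contains a $\neq_q$-literal. Starting from $f_q \in \Pol(\bD)$ with $\theta_q(f_q)$ a $\pp$-operation, iteratively canonise in every dimension $r \neq q$ using the $n$-dimensional generalisation of Lemma~\ref{lem:canonise}. The absence of $\lex$-operations in dimension $r \in T$ forces $\theta_r$ to become essentially unary, while for $r \in S$ the canonical $\theta_r$ is a $\lex$-, twisted $\lex$-, or essentially unary operation; in all cases $\theta_r$ preserves $\leq_r$ and $\neq_r$, and $\theta_q$ remains a $\pp$-operation throughout the iteration. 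The $n$-dimensional generalisation of Lemma~\ref{lem:dom} applied in dimension $q$ then delivers an operation $g \in \Pol(\bD; \leq_1, \neq_1, \dots, \leq_n, \neq_n)$ with $\theta_q(g) = \pi^2_1$. The contradiction argument of Proposition~\ref{prop:syntax-2} now applies: normality of $\phi$ supplies satisfying assignments $s,t$ of $\phi$ witnessing incompatible literals of $\psi$, and $g(t,s)$ simultaneously falsifies every literal of $\psi$ while satisfying the rest of $\phi$.

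For~(2) and~(3), I will adapt the proof of Proposition~\ref{prop:syntax-3}: define $\phi'$ as the finite conjunction, over the variables of $\phi$, of all reduced clauses of one of the prescribed forms --- \eqref{eq:p-det-pp} for $p \in T$ or the $\lele$-Horn shape of~(3) for $p \in S$ --- that are implied by $\phi$, and show that $\phi'$ implies $\phi$. The $S$-weakly $p$-determined conjuncts of $\phi$ with $p \in S$ collectively form an $S$-determined sub-formula; using $\Cr$ to extract the corresponding primitive positive formula and applying an $n$-dimensional generalisation of Proposition~\ref{prop:pp-pow} (via $\ve(\cdot)$ and Theorem~\ref{thm:lele}, invoking the $\lele$-operations $\theta_p(f_p)$ for $p \in S$) shows it is equivalent to a conjunction of $\lele$-Horn clauses with indices in $S$. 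For each $p \in T$, the $p$-determined base of an $S$-weakly $p$-determined conjunct of $\phi$ is brought into the $\pp$-shape of Theorem~\ref{thm:pp} by the orbit-minimality argument of Proposition~\ref{prop:syntax-3}, using the $\pp$-operation in dimension $p$; the already-permitted $\neq_{i_j}$-literals with $i_j \in S$ complete the form~\eqref{eq:p-det-pp}.

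The principal obstacle is step~(1): iterating Lemmas~\ref{lem:canonise} and~\ref{lem:dom} across many dimensions while keeping the $\pp$-behaviour in dimension $q$ intact and maintaining preservation of $\leq_r$ and $\neq_r$ in every remaining dimension. Once this generalised Corollary~\ref{cor:dom}-style operation $g$ is in hand, steps~(2) and~(3) reduce to syntactic bookkeeping of the kind already developed in Section~\ref{sect:decomp}.
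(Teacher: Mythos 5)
Your treatment of items~(2) and~(3) follows the paper's strategy: use $\Cr$ and the $\ve$-plus-Theorem~\ref{thm:lele} route for the $S$-determined part, and the orbit-minimality argument of Proposition~\ref{prop:syntax-3} for the $p \notin S$ clauses; that part is fine. The gap is item~(1). The statement you attempt to prove by contradiction is simply false: ``weakly $p$-determined'' is a property a clause may enjoy for several $p$ simultaneously, and the clause $x \neq_p y \vee u \neq_q v$ with $p \in S$ and $q \notin S$ is weakly $p$-determined and contains a $\neq_q$-literal, yet it is a legitimate $S$-weakly $q$-determined clause of exactly the shape item~(1) permits. Your contradiction argument accordingly breaks down: for $p \in S$ the canonised $\theta_p(g)$ will typically be a $\lex$- or twisted-$\lex$-type operation (hence injective) or the projection $\pi^2_1$, and in all these cases $g(t,s)$ still satisfies the $\neq_p$-literal satisfied by $t$, hence satisfies $\psi$; no contradiction arises.

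What the paper proves in item~(1) is strictly weaker and inherently pairwise: for $i \in S$, $j \notin S$, a clause may not simultaneously contain a $\{<_i, =_i\}$-literal and a $\neq_j$-literal (but $\neq_i$-literals may coexist with $\neq_j$-literals); and for distinct $i, j \notin S$, a clause may not contain any index-$i$ literal together with a $\neq_j$-literal. Combined over all pairs, these constraints force each clause of the normal formula to be $S$-weakly $p$-determined for some $p$. Each constraint is moreover established with a $g$ tailored to the specific pair $(i,j)$ so that $\theta_i(g)$ and $\theta_j(g)$ are dominated by \emph{opposite} arguments (via the $n$-dimensional generalisation of Corollary~\ref{cor:dom}, or, for $i,j \notin S$, the argument of Proposition~\ref{prop:factor}). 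Your single $g$, obtained by canonising all dimensions $\neq q$ at once and then projecting dimension $q$ onto the first argument, leaves the domination direction of $\theta_p(g)$ for $p \neq q$ uncontrolled; when $\theta_p(g)$ comes out dominated by the first argument, $g(t,s)$ agrees with $t$ in dimension $p$ and satisfies the very literal you need to falsify. To repair the argument you need both the weaker, pairwise incompatibility statements in place of the blanket ban on $\neq_q$-literals, and a pairwise, direction-controlled construction of $g$ in place of the one global canonisation.
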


\begin{proof}
Every relation of $\bD$ has a definition by a normal formula and, by  Proposition~\ref{prop:syntax-n}, it can be defined by a conjunction of clauses each of which is weakly $i$-determined for some $i \in \{1,\dots,n\}$. 
Let $R$ be a relation of $\bD$ and let $\phi$ be such a definition. We show step by step that the statements in items $1-3$ hold true for $R$.

\smallskip

\noindent\textit{Proof of item 1.}
Let $\psi$ be a weakly $i$-determined clause of $\phi$ for some $i\in \{1,\dots, n\}$.
When $i\in S$ and $j\in \{1,\dots,n\}\setminus S$, then we can proceed as in the proof of Proposition~\ref{prop:syntax-2}: we use the generalised version of Corollary~\ref{cor:dom} described in detail in the previous proof and we rule out the possibility that $\psi$ simultaneously contains a $\{<_i, =_i\}$-literal and a $\neq_j$-literal. Therefore, $\psi$ is an $S$-weakly $i$-determined clause or an $S$-weakly $j$-determined clause in this case.

If $i,j \in \{1,\dots, n\}\setminus S$ are distinct,
then there is an operation $g \in \Pol(\bD; \leq_1, \neq_1, \dots, \leq_n, \neq_n)$ with $\theta_i(g)=\pi_1^2$ and $\theta_j(g)=\pi_2^2$.
To see this, we first assume that $g$ preserves $\leq_k$ and $\neq_k$ for all $k$ --- if this is not the case, then we repeatedly apply Lemma~\ref{lem:canonise} in all but the $i$-th dimension. The existence of $g$ can now be proved similarly as in the proof of Proposition~\ref{prop:factor} (the proof uses the fact that there is no $g \in \Pol(\bD)$ such that $\theta_i(g)$ or $\theta_j(g)$ is a $\lex$-operation).
As in the proof of Lemma~\ref{lem:factors}, $g$ can be used to prove that $\psi$ cannot contain a $\{<_i, =_i, \neq_i\}$-literal and $\neq_j$-literal at the same time. Hence, the clause $\psi$ is an $S$-weakly $i$-determined clause. It follows that the normal formula $\phi$ is in fact a conjunction of clauses each of which is $S$-weakly $p$-determined for some $p$.

\smallskip
\noindent\textit{Proof of item 2.}
We now prove that we may choose the clauses that are $S$-weakly $p$-determined for $p\not\in S$ to have the syntactic form (\ref{eq:p-det-pp}). We will proceed analogously to the proof of Proposition~\ref{prop:syntax-3}. Let $p\in \{1,\dots,n\}\setminus S$ and $\phi=\phi_p \wedge \phi_0$, where $\phi_p$ is the conjunction of all $S$-weakly $p$-determined clauses of $\phi$ and $\phi_0$ is the conjunction of the remaining clauses. Let $\phi_p'$ be the conjunction of all clauses of the form (\ref{eq:p-det-pp}) that are reduced and implied by $\phi$. We will show that $\phi'=\phi_p' \wedge \phi_0$ implies $\phi$ and hence defines $R$. Applying the same procedure for all $p \in \{1,\dots,n\}\setminus S$ concludes the proof of item 2.

To see that $\phi'$ implies $\phi$, we use the same orbit argument as in the proof of Proposition~\ref{prop:syntax-3}: $\phi_p$ is equivalent to a conjunction of $S$-weakly $p$-determined clauses of the form
\[\chi \vee y_1 \circ_1 y_2 \vee \cdots \vee y_k \circ_k y_{k+1},\]
where $\chi = \bigvee_{j=1}^m u_j \neq_{i_j} v_j$, $i_j \in S$, $j=1,\dots,m$, and $\circ_1,\dots,\circ_k \in \{\neq_p,\geq_p\}$. We assume that these clauses are minimal in the same sense as in the proof of Proposition~\ref{prop:syntax-3}. Let $\psi$ be such a clause of $\phi$. The argument in the rest of the proof of Proposition~\ref{prop:syntax-3} is not dependent on the indices $i_j$ in the literals $u_j \neq_{i_j} v_j$ in $\chi$. Thus, it is applicable also in this case and shows that $\psi$ is implied by $\phi'$. This proves that $\phi'$ implies $\phi$ since $\psi$ was chosen arbitrarily.

\smallskip
\noindent\textit{Proof of item 3.}
By items 1 and 2, we may assume without loss of generality that $\phi$ satisfies the following condition: for every $p\in \{1,\dots,n\}\setminus S$, every $S$-weakly $p$-determined clause of $\phi$ is of the form (\ref{eq:p-det-pp}). Let $\phi_1$ be a conjunction of all $S$-weakly $p$-determined clauses of $\phi$ where $p\in S$ and let $\phi_2$ be the conjuction of the remaining clauses of $\phi$. We will now use Corollary~\ref{cor:extract-n} and the conjunction replacement operator $\Cr(\cdot)$. We note that $\phi_1$ is a conjunction of $S$-determined clauses 
and hence $\phi$ is equivalent to a formula $\psi_1\wedge \phi_2$ where $\psi_1 = \Cr(\phi,S,\phi_1)$. 
Without loss of generality, we may assume that $\psi_1$ is normal. By Proposition \ref{prop:syntax-n}, $\psi_1$ is in fact a conjunction of clauses each of which is weakly $p$-determined for some $p$ and hence $S$-weakly $p$-determined for some $p\in S$.

Recall the variable expansion operator $\ve(\cdot)$ that we defined just before Proposition~\ref{prop:pp-pow}. 
Since for every $p\in S$, there is an operation $f_p \in \Pol(\bD)$ such that $\theta_p(f_p)$ is an $\lele$-operation and $\psi_1$ is $S$-determined, it can be shown analogously to the claim in the proof of Proposition~\ref{prop:pp-pow} that $\ve(\psi_1)$ is preserved by every $\lele$-operation. By Theorem~\ref{thm:lele}, $\ve(\psi_1)$ is equivalent to a conjunction of $\lele$-Horn clauses. Since the formula $\psi_1 \wedge \phi_2$ defines $R$, item 3 follows.
\end{proof}

Note that the formula produced by Proposition~\ref{prop:strong-syntax-n} is not necessarily normal. Also note the difference between the proof for $n=2$ and general $n$: For $n=2$, there are just three cases -- $S$ is empty, $S=\{p\}$ for some $p$,  or $S = \{1, 2\}$. If $S=\{p\}$, then $S$-determined clauses are $p$-determined. If $S=\{1, 2\}$, then the formula $\phi_1$ is equal to $\phi$ and thus trivially preserved by $\Pol(\bD)$.

We continue by generalizing our algorithmic results.
Recall the clause modification operator $\cm$ from Section~\ref{sect:tract}. We generalize this operator for the case of arbitrary $n>2$. Let $X$ be a set of variables, $S\subseteq \{1, \dots n\}$ and $E\subseteq \{(x^i,y^i)\mid x,y \in X, i\in S\}$
where $x^i,y^i$ denote fresh variables.
Assume $\phi$ is a formula over $(\Q^n,<_1,=_1, \dots, <_n,=_n)$ that only uses variables from $X$. If $\phi$ equals
$\bigwedge_{p\in \{1,\dots, n\}\setminus S}{\phi_p}\wedge \phi_S$, where $\phi_p$ is a conjunction of $S$-weakly $p$-determined clauses for each $p$ and $\phi_S$ is a conjunction of $S$-determined clauses, then $\cm(\phi, E)$ denotes the formula resulting from $\phi$ by performing the following procedure for each $p\in \{1,\dots,n\} \setminus S$:
 \begin{itemize}
     \item If $(x^i,y^i)\in E$ and $\phi_p$ contains a clause with the literal $x \neq_i y$, then remove this literal from the clause and add the conjunct $x =_i y$ to $\phi$.
     \item If $(x^i,y^i)\not\in E$ and $\phi_p$ contains a clause with the literal $x \neq_i y$, then delete all literals in the clause but this one.
 \end{itemize}
 Note that the previously defined notion of the operator $\cm$ was a special case for $n=2$ and $S=\{i\}\subseteq \{1,2\}$.
 
The proof of our $n$-dimensional computational result is based on the ideas underlying
the 2-dimensional results found in Propositions~\ref{prop:tract} and~\ref{prop:tract2}, but there are also crucial differences. It may be enlightening to compare the generalized Algorithm~\ref{alg:main2} with
the two-dimensional Algorithm~\ref{alg:main} underlying Proposition~\ref{prop:tract2}. In particular, such a comparison may clarify how $S$-weakly $p$-determined and the $n$-dimensional $\cm$ operator come into play.

\begin{proposition}\label{prop:tract-n}
Suppose that $\bD$ has a finite relational signature $\tau$ and suppose that for each $p \in \{1,\dots,n\}$ there exists $f_p \in \Pol(\bD)$ such that $\theta_p(f_p)$ equals  $\lele_3$, $\min_3$, $\mx_3$, or  $\mi_3$.
Then, $\Csp(\bD)$ can be solved in polynomial time. 
\end{proposition}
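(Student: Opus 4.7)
The plan is to generalise the algorithm underlying Proposition~\ref{prop:tract2} (Algorithm~\ref{alg:main}) to $n$ dimensions, replacing Proposition~\ref{prop:syntax-4} with the richer syntactic normal form of Proposition~\ref{prop:strong-syntax-n}. Set $S := \{p \in \{1,\dots,n\} : \theta_p(\Pol(\bD)) \text{ contains an } \lele\text{-operation}\}$. For $p \notin S$, the operation $\theta_p(f_p)$ lies in $\{\min_3, \mx_3, \mi_3\}$, so by Proposition~\ref{prop:closed} and Proposition~\ref{prop:pp} the clone $\theta_p(\Pol(\bD))$ contains a $\pp$-operation. Moreover, $\theta_p(\Pol(\bD))$ cannot contain a $\lex$-operation: together with the $\pp$-operation it would yield an $\lele$-operation via the last part of Theorem~\ref{thm:lele}, contradicting $p \notin S$. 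Hence the hypotheses of Proposition~\ref{prop:strong-syntax-n} are met, and every relation of $\bD$ has a definition of the prescribed form.

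On input $\bA$, I would compute $\Phi = \{\phi_{R,\bar a} \mid R \in \tau,\ \bar a \in R^{\bA}\}$ with each $\phi_{R,\bar a}$ written in the normal form supplied by Proposition~\ref{prop:strong-syntax-n}, and split each $\phi \in \Phi$ as $\bigwedge_{p \notin S} \phi_p \wedge \phi_S$, where $\phi_p$ collects the $S$-weakly $p$-determined clauses of shape~\eqref{eq:p-det-pp} and $\phi_S$ collects the $S$-determined clauses of item~3 of that proposition. Set $\Psi_S := \{\ve(\phi_S) \mid \phi \in \Phi\}$: this is an $\lele$-Horn instance over the expanded variables $\{x^i : x \in X,\ i \in S\}$, and so is decidable in polynomial time by Theorems~\ref{thm:lele} and~\ref{thm:tcsps}. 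If $\Psi_S$ is unsatisfiable, reject; otherwise, extract its equality set $E$.

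Next, compute $\cm(\phi, E)$ for each $\phi \in \Phi$. This resolves all $\neq_i$ literals with $i \in S$ occurring in each $\phi_p$, so that $\phi_p$ becomes a $\{p\}$-determined formula $\phi_p'$, while the added conjuncts are $S$-determined and already enforced by $\Psi_S$. Put $\psi_p := \Cr(\cm(\phi, E), \{p\}, \phi_p')$; then $\psi_p$ is $\{p\}$-determined and equivalent to a primitive positive formula over $\bD$, so by Lemma~\ref{lem:pres} the formula $\hat{\psi_p}$ is preserved by $\theta_p(f_p) \in \{\min_3, \mx_3, \mi_3, \lele_3\}$. By Theorems~\ref{thm:pwnu} and~\ref{thm:tcsps}, each $\Psi_p := \{\hat{\psi_p} \mid \phi \in \Phi\}$ is polynomial-time decidable. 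Reject if any $\Psi_p$ is unsatisfiable; otherwise accept.

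For correctness, take a solution $t$ to $\Psi_S$ chosen (using the injectivity of $\lele$-operations) so that $t(x^i) \neq t(y^i)$ whenever $(x^i, y^i) \notin E$, and solutions $s_p$ to $\Psi_p$ for each $p \notin S$. Define $h \colon A \to \Q^n$ by $h(x)_p := t(x^p)$ for $p \in S$ and $h(x)_p := s_p(x)$ for $p \notin S$; a clause-by-clause check against the shapes in Proposition~\ref{prop:strong-syntax-n} yields that $h$ is a homomorphism from $\bA$ to $\bD$. The main obstacle is this final verification: one must track which dimensions feed each literal and argue that every $\neq_i$ literal with $i \in S$ surviving in a $\phi_p$ part was retained by $\cm$ precisely because $(x^i, y^i) \notin E$, so that it is satisfied by $t$ by choice, while the conjuncts added by $\cm$ are enforced by $t$ since they are implied by $\Psi_S$.
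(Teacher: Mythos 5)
Your proposal is correct and follows essentially the same approach as the paper's Algorithm~\ref{alg:main2}: the same dimension split $S$, the same use of Proposition~\ref{prop:strong-syntax-n}, $\ve$, $\cm$, and $\Cr$, and the same per-dimension correctness argument. One detail to make the $\Cr$ step airtight: you should first arrange (via Lemma~\ref{lem:canoni}, as the paper does) that $\neq_i$ is primitively positively definable in $\bD$ for every $i \in S$, so that $\cm(\phi, E)$ defines a relation that is in fact primitively positively definable over $\bD$.
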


\begin{proof}
Without loss of generality, suppose that for every $p\in \{1,\dots,n\}$ such that $\theta_p(\Pol(\bD))$ contains $\lele_3$, the operation $f_p$ is chosen to be such that $\theta_p(f_p) = \lele_3$. Let $S\subseteq \{1, \dots, n\}$ be the set of all such $p$.   Moreover, we may assume that $\bD$ contains relations $\neq_i$ for every $i \in S$. Otherwise we repeatedly apply Lemma~\ref{lem:canoni} on the operations $f_p$, $p\in \{1, \dots, n\}$, and obtain polymorphisms $f_p'$ such that
\begin{itemize}
    \item $\theta_i(f_p')$ is canonical over $\Aut(\Q; <)$ for every $i\neq p$ (and hence preserves $\neq_i$ by Lemma~\ref{lem:lex}),  
    \item $\theta_p(f_p)$ equals $\lele_3$, $\min_3$, $\mx$, or  $\mi_3$ (and hence preserves $\neq_p$ whenever $p\in S$).
\end{itemize}

Note that $\theta_p(\Pol(\bD))$ contains a $\pp$-operation for every $p \in \{1,\dots, n \}\setminus S$ (by Proposition~\ref{prop:closed} and Proposition~\ref{prop:pp}), 
but it does not contain a $\lex$-operation (by 
Theorem \ref{thm:lele}). 
By Theorem~\ref{thm:lele}, $\theta_p(\Pol(\bD))$ contains a $\lele$-operation for $p\in S$.
Let $\bA$ be an instance of $\Csp(\bD)$. 
For every $R \in \tau$ of arity $k$ and  $\bar{a}=(a_1,\dots,a_k) \in R^{\bA}$, let $\phi_{R,\bar{a}}$ be the first-order definition of $R$ in the structure $(\mathbb Q^n;<_1,=_1,\dots,<_n,=_n)$
using the elements $a_1,\dots,a_k$ as the free variables. 
We then may assume that $\phi_{R,\bar{a}}$ is of the form as described in Proposition~\ref{prop:strong-syntax-n}.

%%%%%%%%%%%%%%%%%%% ALGORITHM %%%%%%%%%%%%%%%%%%%%%%%%%%%%%%%

\begin{algorithm}
\caption{Solve-by-Factors-$n$}\label{alg:main2}
\begin{algorithmic}
\State \textbf{Input:} an instance $\bA$ of $\Csp(\bD)$
\State $\Phi \coloneqq \{ \phi_{R,\bar{a}} \mid R \in \tau, \bar{a} \in R^{\bA} \}$

\ForAll{$\phi \in \Phi$}
    \State \parbox[t]{\dimexpr\linewidth-\algorithmicindent}{write $\phi$ as $\bigwedge_{p \in \{1, \dots, n\} \setminus S} \phi_p \wedge \phi_S$, where $\phi_p$ is a conjunction of $S$-weakly $p$-determined clauses and $\phi_S$ is a conjunction of $S$-determined $\lele$-Horn clauses}
\EndFor

\State $\Psi_S \coloneqq \{\ve(\phi_S) \mid \phi \in \Phi \}$

\blockcomment{$\Psi_S$ contains conjunctions of $\lele$-Horn clauses over $(\Q,<)$ and its satisfiability can be checked in polynomial time by Theorems~\ref{thm:tcsps} and \ref{thm:lele}.}
\If{$\Psi_S$ is not satisfiable}
    \State \textbf{reject}
\EndIf

\State let $E$ denote the equality set corresponding to $\Psi_S$

\ForAll{$\phi \in \Phi$}
    \State \parbox[t]{\dimexpr\linewidth-\algorithmicindent}{write $\cm(\phi, E)$ as $\bigwedge_{p \in \{1, \dots, n\} \setminus S} \phi_p' \wedge \phi_S \wedge \phi_S'$, where $\phi_p'$ is the conjunction of $p$-determined clauses resulting from $\phi_p$ and $\phi_S'$ is a conjunction of the added conjuncts}
\EndFor

\State $\Phi' \coloneqq \{\cm(\phi, E) \mid \phi \in \Phi\}$

\ForAll{$p \in \{1, \dots, n\} \setminus S$}
    \blockcomment{Every $\phi' \in \Phi'$ defines a relation that is primitively positively definable over $\bD$.}
    \State $\Psi_p \coloneqq \{\hat{\psi_p} \mid \phi' \in \Phi', \psi_p = \Cr(\phi', \{p\}, \phi_p')\}$
    \blockcomment{$\Psi_p$ is preserved by $\min_3$, $\mx_3$, or $\mi_3$.}
    \blockcomment{Satisfiability of $\Psi_p$ can be checked in polynomial time by Theorems~\ref{thm:pwnu} and \ref{thm:tcsps}.}
    \If{$\Psi_p$ is not satisfiable}
        \State \textbf{reject}
    \EndIf
\EndFor

\State \textbf{accept}
\end{algorithmic}
\end{algorithm}
%%%%%%%%%%%%%%%%%%%%%%%%%%%%%%%%%%%%%%%%%%%%%%%%%%%%%%%%%%%%%

In Algorithm~\ref{alg:main2} we present the algorithm for deciding whether a given instance $\bA$ of $\Csp(\bD)$ has a homomorphism to $\bD$. The algorithm uses the same ideas as are used in the proof of Proposition~\ref{prop:tract2}. In particular, we can show by similar arguments that the sets $\Phi$, $\Psi_S$, $\Phi'$ and $\Psi_p$, $p\in \{1,\dots,n\}\setminus S$, can be computed in polynomial time. Finally, the set
$E$ can be computed in polynomial time (as was pointed out after Proposition~\ref{prop:tract}) so the whole algorithm runs in polynomial time.

It is clear that if the algorithm rejects, then there is no homomorphism from $\bA$ to $\bD$. In case that the algorithm accepts, the existence of a homomorphism from $\bA$ to $\bD$ can be proved in a similar fashion as in the proof of Proposition~\ref{prop:tract2}.
\end{proof}

We are now in the position of proving the main result of this section.

\begin{theorem}\label{thm:n}
Let $\bD$ be a first-order expansion of $(\Q; <_1, =_1, \dots, <_n, =_n)$. Exactly one of the following two cases applies. 
\begin{itemize}
\item For each $p \in \{1,\dots,n\}$ we have that $\theta_p(\Pol(\bD))$ contains $\min_3$, $\mx_3$, $\mi_3$, or $\lele_3$, or one of their duals. In this case, $\bD$ has a pwnu polymorphism. 
If $\bD$ has a finite relational signature, then $\Csp(\bD)$ is in P. 
\item $\Pol(\bD)$ has a uniformly continuous minor-preserving map to $\Pol(K_3)$. In this case $\bD$ has a finite-signature reduct whose $\Csp$ is NP-complete.
\end{itemize}
\end{theorem}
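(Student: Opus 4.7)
My plan is to mimic the proof of Theorem~\ref{thm:main}, substituting the $n$-dimensional tools developed in this section---Proposition~\ref{prop:syntax-n}, Proposition~\ref{prop:strong-syntax-n}, and Proposition~\ref{prop:tract-n}---for their two-dimensional counterparts. For each coordinate $p \in \{1,\dots,n\}$, I set $\mathscr{C}_p \coloneqq \theta_p(\Pol(\bD))$; by Proposition~\ref{prop:closed} each $\mathscr{C}_p$ is closed, hence equals $\Pol(\bD_p)$ for a first-order expansion $\bD_p$ of $(\Q;<)$.

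If some $\mathscr{C}_p$ admits a uniformly continuous minor-preserving map to $\Pol(K_3)$, precomposition with $\theta_p$ yields one from $\Pol(\bD)$, so Corollary~\ref{cor:hard} delivers an NP-complete finite-signature reduct and we are done. Otherwise, Theorem~\ref{thm:tcsps} places one of $\min_3, \mx_3, \mi_3, \lele_3$ or a dual into each $\mathscr{C}_p$, and applying the $n$-dimensional analogue of Lemma~\ref{lem:flip} dimension by dimension lets me reduce to the non-dual case. Tractability of $\Csp(\bD)$ for finite-signature $\bD$ is then immediate from Proposition~\ref{prop:tract-n}.

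To build the pwnu polymorphism, I define $S \coloneqq \{p : \lele_3 \in \mathscr{C}_p\}$. For every $p \notin S$, closedness of $\mathscr{C}_p$ together with Proposition~\ref{prop:pp} forces a $\pp$-operation into $\mathscr{C}_p$; no $\lex$-operation can lie in $\mathscr{C}_p$ for such $p$, for otherwise Remark~\ref{rem:ll-lex} and the last clause of Theorem~\ref{thm:lele} would place $\lele_3$ into $\mathscr{C}_p$, contradicting $p \notin S$. This is exactly the setup of Proposition~\ref{prop:strong-syntax-n}, so every relation of $\bD$ has a definition as a conjunction of $S$-weakly $p$-determined clauses of the restricted shapes stated there. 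I then take $f$ to be the operation on $\Q^n$ with $\theta_p(f) = f_p$ for every $p$ (which exists by Proposition~\ref{prop:product-hom}) and verify $f \in \Pol(\bD)$ clause by clause on this normal form: the $p$-determined part of each clause is preserved because $\theta_p(f) = f_p$ via Lemma~\ref{lem:pres}, while every $\neq_{i_j}$-literal occurring in the $S$-determined prefix is preserved because $\theta_{i_j}(f) = \lele_3$ is injective. The pwnu identities for $f$ on $\bD$ are assembled coordinatewise from the pwnu witnesses for each $f_p$ on $\bD_p$, exactly as in the two-dimensional case.

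The main obstacle is the clause-by-clause verification that $f$ preserves $\bD$; Proposition~\ref{prop:strong-syntax-n} is the technical ingredient that makes this verification separable along coordinates, because it isolates, for each weakly $p$-determined clause, a $p$-determined body on which only $\theta_p(f)$ acts nontrivially and a prefix of $\neq_{i_j}$-literals on which only the injectivity of $\theta_{i_j}(f) = \lele_3$ is needed. Finally, the two cases are mutually exclusive: Lemma~\ref{lem:prod-mc-cores} gives $\overline{\Aut(\bD)} = \End(\bD)$, and Lemma~\ref{lem:excl} then rules out a uniformly continuous minor-preserving map from $\Pol(\bD)$ to $\Pol(K_3)$ once $\bD$ has a pwnu polymorphism.
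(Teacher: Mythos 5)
Your proof is correct and follows the paper's own argument essentially verbatim: the same reduction via Theorem~\ref{thm:tcsps} and the $n$-fold version of Lemma~\ref{lem:flip} to pin down $f_p\in\{\min_3,\mx_3,\mi_3,\lele_3\}$, the same set $S$ together with Proposition~\ref{prop:strong-syntax-n} for the syntactic normal form, the same coordinatewise construction of the pwnu polymorphism $f$ (verified via Lemma~\ref{lem:pres} and injectivity of $\lele_3$, just as in the proof of Theorem~\ref{thm:main}), Proposition~\ref{prop:tract-n} for tractability, and Lemma~\ref{lem:prod-mc-cores} plus Lemma~\ref{lem:excl} for mutual exclusivity. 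One small citation slip: the step ruling out a $\lex$-operation in $\mathscr{C}_p$ for $p\notin S$ relies on Proposition~\ref{prop:pp} together with the final clause of Theorem~\ref{thm:lele} (a $\pp$-operation plus a $\lex$-operation forces an $\lele$-operation, hence $\lele_3$), not on Remark~\ref{rem:ll-lex}, which states the converse implication and is not needed here.
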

\begin{proof}
For $p \in \{1,\dots,n\}$, define 
${\mathscr C}_p \coloneqq \theta_p(\Pol(\bD))$. 
If ${\mathscr C}_p$, for some $p \in \{1,\dots,n\}$, has a uniformly continuous minor-preserving map from ${\mathscr C}_p$ to $\Pol(K_3)$,
then by composing uniformly continuous minor-preserving maps there is also a uniformly continuous minor-preserving map from $\Pol(\bD)$ to $\Pol(K_3)$. This implies that $\bD$ has a finite-signature reduct whose $\Csp$ is NP-hard by Corollary~\ref{cor:hard}.

Otherwise, 
for every $p \in \{1,\dots,n\}$ the clone $\mathscr C_p$ does not have a uniformly continuous minor-preserving map to $\Pol(K_3)$. Since ${\mathscr C}_p$ is a closed clone (by Proposition~\ref{prop:closed}) 
that contains $\Aut({\mathbb Q};<)$ and preserves $<$, there exists
a first-order expansion $\bD_p$ of $\Aut({\mathbb Q};<)$ such that $\Pol(\bD_p) = {\mathscr C}_p$. 
We may therefore apply Theorem~\ref{thm:tcsps} and conclude that 
for every $p \in \{1,\dots,n\}$ 
the clone ${\mathscr C}_p$ contains an operation $f_p$ which equals $\min_3$, $\mx_3$, $\mi_3$, or $\lele_3$, or the dual of one of these operations. 
By the version of Lemma~\ref{lem:flip} for $n$-fold algebraic products, we may assume without loss of generality that $f_p \in \{ \min_3, \mx_3, \mi_3,  \lele_3\}$
for every $p\in \{1, \dots, n\}$.
In case that $\bD$ has a finite relational signature, the polynomial-time  tractability of $\Csp(\bD)$ follows from Proposition~\ref{prop:tract-n}.

We may assume that $f_p=\lele_3$ for every $p$ such that $\mathscr C_p$ contains $\lele_3$.
Let $f$ be the ternary operation such that 
$\theta_p(f)$ equals $f_p$ for every $p \in \{1,\dots,n\}$. We claim that $f$ preserves $\bD$.
Let $\phi$ be a formula that defines a relation from $\bD$ and has the form as described in Proposition~\ref{prop:strong-syntax-n} (the argument in the proof of Proposition~\ref{prop:tract-n} implies that the assumptions are satisfied). 
We show that $f$ preserves $\phi$. 
Let $a,b,c$ be tuples that satisfy $\phi$ and let $\psi$ be a clause of $\phi$. We see that there is a $p\in \{1,\dots,n\}$ such that $\psi$ is $S$-weakly $p$-determined.
As in the proof of Theorem~\ref{thm:main}, one can show that $f(a,b,c)$ satisfies $\psi$.
It follows that $f$ preserves $\phi$ and $f \in \Pol(\bD)$.

As in the case when $n=2$ (Theorem~\ref{thm:main}), we can show that $f$ is a pwnu polymorphism, and hence it follows from Lemma~\ref{lem:prod-mc-cores} and Lemma~\ref{lem:excl} that the two cases of the statement are mutually exclusive.
\end{proof}

\subsection{Classification of Binary Relations} 
\label{sect:binary}
A relational signature is called \emph{binary} if all its relation symbols have arity two,
 and a relational structure is binary if its signature is binary. 
If $\bD$ is binary, then the results from the previous sections can be substantially strengthened. Note that an $\omega$-categorical structure has only finitely many distinct relations of arity at most two so we may assume that binary structures have a finite signature.

\begin{definition} \label{def:ord-hornclause}
A formula is called an \emph{Ord-Horn clause}
if it is of the form $$x_1 \neq y_1 \vee \cdots \vee x_m \neq y_m \vee z_1 \; \circ \; z_0$$
where $\circ \in \{<,\leq,=\}$,
it is permitted that $m = 0$, and
the final disjunct may be omitted. An \emph{Ord-Horn formula} is a conjunction of Ord-Horn clauses.
\end{definition}

Ord-Horn clauses are ll-Horn formulas and a first-order formula over $(\Q;<)$ is equivalent to an Ord-Horn formula if and only if it is preserved by an $\lele$-operation and the dual of an $\lele$-operation~\cite{ll}. 
We say that a relation has an Ord-Horn definition if it can be defined by an Ord-Horn formula.
The polynomial-time tractability of $\Csp(\bB)$ if all relations of $\bB$ have an Ord-Horn definition follows from Theorem~\ref{thm:tcsps} and Theorem~\ref{thm:lele}, but this was first shown by Nebel and B\"urckert~\cite{Nebel} using a very different approach. The reader should note that the Ord-Horn fragment does not have a characterisation in terms of equations satisfied by the polymorphism clone~\cite[Theorem 7.2]{RydvalFP}.

The following theorem strengthens the results of Theorem~\ref{thm:n} for binary first-order expansions, providing a very concrete syntactic condition for tractability based on Ord-Horn formulas.

\begin{theorem}\label{thm:cdc-binary}
Let $\bD$ be a binary first-order expansion of $(\Q; <_1, =_1, \dots, <_n, =_n)$.
Then exactly one of the following two cases applies. 
\begin{itemize}
\item Each relation in $\bD$, viewed as a relation of arity $2n$ over ${\mathbb Q}$, has an Ord-Horn definition. In this case, $\bD$ has a pwnu polymorphism and 
$\Csp(\bD)$ is in P. 
\item $\Pol(\bD)$ has a uniformly continuous minor-preserving map to $\Pol(K_3)$. In this case, 
$\Csp(\bD)$ is NP-complete.
\end{itemize} 
\end{theorem}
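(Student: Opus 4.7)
My plan is to bootstrap from Theorem~\ref{thm:n} by combining the refined syntactic description in Proposition~\ref{prop:strong-syntax-n} with the fact that a binary relation has only two free variables, which forces the allowed clause shapes to collapse to the Ord-Horn template of Definition~\ref{def:ord-hornclause}.

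First I would apply Theorem~\ref{thm:n}. If $\Pol(\bD)$ has a uniformly continuous minor-preserving map to $\Pol(K_3)$, then Corollary~\ref{cor:hard} gives NP-hardness of $\Csp(\bD)$; NP-membership is automatic since $(\Q;<)^{(n)}$ is a finitely bounded homogeneous structure and $\Csp$s of its reducts are therefore in NP by standard arguments. Mutual exclusivity of the two cases is inherited from Theorem~\ref{thm:n}, because our second case coincides with its second case and our first case is a strict strengthening of its first (via Corollary~\ref{cor:excl}, once Ord-Horn definability is established). So what remains is to show that in the tractable case every relation of $\bD$ has an Ord-Horn definition; the pwnu polymorphism and polynomial-time tractability statements are already delivered by Theorem~\ref{thm:n}.

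For the Ord-Horn part I would, as in the proof of Theorem~\ref{thm:n}, invoke the $n$-dimensional analogue of Lemma~\ref{lem:flip} to reduce to the situation where $\theta_p(\Pol(\bD))$ contains one of $\lele_3$, $\min_3$, $\mx_3$, $\mi_3$ (no duals) for every $p$; this reduction is harmless because the sign flip in dimension $p$ sends $x <_p y$ to $y <_p x$ while preserving $=_p$ and $\neq_p$, and hence maps Ord-Horn definitions to Ord-Horn definitions. I then set
\[
S \coloneqq \{p \in \{1,\dots,n\} \mid \theta_p(\Pol(\bD)) \text{ contains } \lele_3\}.
\]
For $p \notin S$, $\theta_p(\Pol(\bD))$ contains a $\pp$-operation (Proposition~\ref{prop:pp}) but no $\lex$-operation, because otherwise Theorem~\ref{thm:lele} would place $\lele_3$ in $\theta_p(\Pol(\bD))$, contradicting $p \notin S$. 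Hence the hypotheses of Proposition~\ref{prop:strong-syntax-n} are met, producing for every relation $R$ of $\bD$ a defining formula that is a conjunction of $S$-weakly $p$-determined clauses of the explicit forms in items~2 and~3 of that proposition.

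The final step is the syntactic reduction. Because $R$ is binary, only the two free variables $x$ and $y$ are available to instantiate the template variables $u_j, v_j, y_j, z_i, z_0$ in these clause forms. After deleting literals of the form $x \neq_i x$ (unsatisfiable) and discarding clauses containing $x \leq_p x$ or $x =_p x$ (tautological), every surviving clause becomes a disjunction
\[
x \neq_{i_1} y \vee \cdots \vee x \neq_{i_m} y \vee y \circ_p x
\]
with $\circ_p \in \{<_p, \leq_p, =_p\}$ (or with the order disjunct omitted), where for $p \in S$ I use the identity $y <_p x \vee x =_p y \equiv y \leq_p x$ to merge the order and equality disjuncts produced by item~3. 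Interpreted in the variables $x^1,\dots,x^n,y^1,\dots,y^n$ over $(\Q;<)$ via the expansion $\ve(\cdot)$ from Section~\ref{sect:decomp}, these are precisely the Ord-Horn clauses of Definition~\ref{def:ord-hornclause}. The main obstacle is this last step: tracking carefully how the multi-literal $S$-weakly $p$-determined clauses simplify once the variable set is cut down to $\{x,y\}$, and in particular verifying that for $p \in S$ the item~3 form with at most two possible values for each $z_i$ collapses to a single order-or-equality literal rather than introducing two incomparable order literals in the same clause.
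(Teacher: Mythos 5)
Your proposal is correct, but it routes through more machinery than the paper's own proof. The paper invokes only Proposition~\ref{prop:syntax-n}, which requires merely an $\lele$- or $\pp$-operation in each factor (supplied by Proposition~\ref{prop:pp} and Theorem~\ref{thm:lele} once Lemma~\ref{lem:flip} has removed the duals) and delivers a definition by a normal conjunction of weakly $s$-determined clauses. It then finishes with three elementary rewrites on the $s$-determined part of each clause, which has only two variables available: replace $x <_s y \vee y <_s x$ by $x \neq_s y$, discard a clause containing both $x =_s y$ and $x \neq_s y$, and replace $x <_s y \vee x =_s y$ by $x \leq_s y$. The resulting clauses consist of $\neq_i$-literals plus at most one order/equality literal, i.e.\ they are Ord-Horn after variable expansion, and mutual exclusivity follows from Lemma~\ref{lem:excl}. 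You instead reach for Proposition~\ref{prop:strong-syntax-n}, which obliges you to single out the set $S$ of $\lele$-dimensions and to verify (via the final statement of Theorem~\ref{thm:lele}) that the complementary dimensions lack a $\lex$-operation; this check is valid but unnecessary here, because for binary relations the weaker Proposition~\ref{prop:syntax-n} already yields a form that collapses. Your case analysis of the item~2 and item~3 clause templates is sound (the distinguished variable $z_0$ can only be one of the two free variables, so all order literals point the same way and merge with the optional equality disjunct into a single $\leq_p$ literal), so the argument goes through --- it simply does extra work to obtain a finer syntactic normal form before the two-variable restriction renders that refinement superfluous.
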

\begin{proof}
If the second item of the statement does not apply, then 
Theorem \ref{thm:n} implies that $\bD$ has a pwnu polymorphism and for each $p \in \{1,\dots,n\}$ we have that $\theta_p(\Pol(\bD))$ contains 
$\min_3$, $\mx_3$, $\mi_3$, or $\lele_3$, or one of their duals. 
By Lemma~\ref{lem:flip} we may focus on the situation that $\theta_p(\Pol(\bD))$ contains 
$\min_3$, $\mx_3$, $\mi_3$, or $\lele_3$ (note that the dual of a relation with an Ord-Horn definition has an Ord-Horn definition as well).
Then Proposition~\ref{prop:pp} and Theorem~\ref{thm:lele} imply that the assumptions of Proposition~\ref{prop:syntax-n} hold, and therefore every relation of $\bD$ can be defined by a normal conjunction of clauses each of which is weakly $s$-determined for some $s \in \{1,\dots,n\}$.
If such a clause contains 
two disjuncts of the form $x <_i y$ and $y <_i x$, then replace the disjuncts by $x \neq_i y$. 
If such a clause contains 
two disjuncts of the form $x =_i y$ and $x \neq_i y$, then remove the clause (since it is always true). 
If such a clause contains 
two disjuncts of the form $x <_i y$ and $x =_i y$, then replace the disjuncts by $x \leq_i y$. 
Since the relations of $\bD$ are binary, the resulting formula is Ord-Horn. 
The result follows since we know that the satisfiability of Ord-Horn formulas can be decided in polynomial time.
Lemma~\ref{lem:excl} implies that the two items cannot hold simultaneously.
\end{proof}

\section{Complexity Classification Transfer}
\label{sect:transfer}
Assume that ${\cal C}$ and ${\cal D}$ are classes of structures
and that the complexity of
CSP$(\bD)$ is known for every $\bD \in {\cal D}$.
A {\em complexity classification transfer} is
a process that systematically
uses this information for
inferring the complexity of
CSP$(\bC)$ for every $\bC \in {\cal C}$.
The particular method that we will use originally 
appeared in~\cite{Book}.
Combined with our classification for first-order expansions of $(\mathbb Q;<)^{(n)}$, this method
allows us to derive several new dichotomy results in Section~\ref{sect:applications}.

Let $\bC$ and $\bD$ denote relational
structures. Recall that an interpretation of $\bC$ in $\bD$ is a partial surjection from a finite power of $D$ to $C$.
Two interpretations $I$ and $J$ of $\bC$ in $\bD$ are called 
\emph{primitively positively homotopic}\footnote{We follow the terminology from~\cite{AhlbrandtZiegler}.} (pp-homotopic)
if the relation $\{(\bar x,\bar y) \; | \; I(\bar x) = J(\bar y) \}$
is primitively positively definable in $\bD$. 
The \emph{identity interpretation} of a $\tau$-structure $\bC$
 is the identity map on $C$, which is clearly a primitive positive interpretation. 
 We write
$I_1 \circ I_2$ for the natural composition of two interpretations $I_1$ and $I_2$. 

\begin{definition}
Let $\bC$ and $\bD$ be two mutually primitively positively interpretable structures with a primitive positive interpretation $I$ of $\bC$ in $\bD$ and a primitive positive interpretation $J$ of $\bD$ in $\bC$.
They are called \emph{primitively positively bi-interpretable} if additionally $I \circ J$ and $J \circ I$ are pp-homotopic to the identity interpretation. 
\end{definition}

Note that structures that are primitively positively bi-definable are in particular bi-interpretable (via 1-dimensional interpretations). It is known that two $\omega$-categorical structures are primitively positively bi-interpretable if and only if there is a topological clone isomorphism between their polymorphism clones \cite{Topo-Birk}.

\begin{ex}\label{ex:allen-bi}
The structure $({\mathbb Q};<)$ and the structure $({\mathbb I};{\sf m})$ from Example~\ref{ex:allen} are primitively positive bi-interpretable (Example 3.3.3 in~\cite{Book}): 
the identity map $I$ on $\mathbb I$ is a 2-dimensional primitive positive interpretation of $({\mathbb I};{\sf m})$ in $(\mathbb Q;<)$, and the projection $J \colon {\mathbb I} \to {\mathbb Q}$ to the first coordinate is a 1-dimensional 
interpretation of  $(\mathbb Q;<)$ in $({\mathbb I};{\sf m})$. 

Consider first the map $I$. The domain is $\mathbb{I} \subseteq \Q^2$, defined by a primitive positive formula $\delta(a,b) = a < b$. The relation 
\[I^{-1}( {\sf m})= \{ ((a,b), (c,d)) \mid I(a,b) \; {\sf m} \; I(c,d) \}\] has a primitive positive definition over $(\Q; <)$ by a formula
$b=c$. The relation 
\[I^{-1}(\equiv) = \{ ((a,b), (c,d)) \mid I(a,b) \; \equiv \; I(c,d) \}\]
has a primitive positive definition $(a=c) \wedge (b=d)$. It follows that $I$ is a primitive positive interpretation of $(\mathbb{I}; \sf m)$ in $(\Q; <)$.

Concerning $J$, the domain is the full set $\mathbb{I}$ and hence defined by a formula $x=x$. The relation \[J^{-1}(<) = \{ (x,y) \mid J(x) < J(y) \} = \{ ((x_1,x_2),(y_1,y_2)) \mid x_1 < y_1 \}\] has a primitive positive definition 
\[\exists u,v (u \; {\sf m} \; x) \wedge (u \; {\sf m} \; v) \wedge (v \; {\sf m} \; y).\]
The relation $J^{-1}(=)$ has a primitive positive definition $\phi(x,y)$ given by
\[\exists u (u \; {\sf m} \; x) \wedge (u \; {\sf m} \; y).\]
It follows that $J$ is a primitive positive interpretation.

Observe that $J \circ I$ is pp-homotopic to the identity interpretation over $(\Q; <)$ since the relation
\[\{ (a,b,c) \mid J \circ I (a,b) = c\}\]
has a primitive positive definition 
\[a=c.\]
Finally, we show $I \circ J$ is pp-homotopic to the identity interpretation over $(\mathbb{I}; {\sf m})$. First note that the formula $\phi(x,y) $ 
is a primitive positive definition of the relation
\[\{(x,y) \mid x = (x_1, x_2), y = (y_1,y_2), x_1=y_1 \} \]
over $(\mathbb{I}; {\sf m})$.
Observe that for $(x_1, x_2), (y_1, y_2)$ in the domain of $I \circ J$, we have $I \circ J ((x_1,x_2), (y_1,y_2)) = I (J(x_1,x_2), J(y_1,y_2)) = (z_1, z_2)$ if and only if $x_1 = z_1$ and $y_1=z_2$. Then
\[I \circ J (x, y) = z\]
is equivalent to the primitive positive formula
\[ \phi(x,z) \wedge z \; {\sf m} \; y,\]
and hence $I \circ J$ is pp-homotopic to the identity interpretation. This finishes the proof that $(\mathbb{I}; {\sf m})$ and $(\Q; <)$ are bi-interpretable.
We note that the proof of Theorem~\ref{thm:rect} generalizes this construction.
\end{ex}

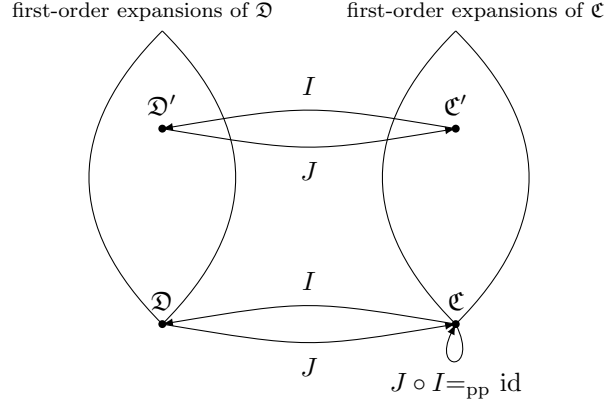
\begin{figure}

\centering
\begin{tikzpicture}[scale=1.3]
\coordinate (D) at (0,0);
\coordinate (U1) at (1cm,1cm);
\coordinate (U2) at (1cm,2cm);
\coordinate (V) at (0,3cm);
\coordinate[label=above: {\footnotesize first-order expansions of $\mathfrak D$}]  (V') at (-0.2cm,3cm);
\coordinate (W1) at (-1cm,1cm);
\coordinate (W2) at (-1cm,2cm);
\coordinate (D') at (0,2cm);
\draw (D) .. controls (U1) and (U2) .. (V);
\draw (V) .. controls (W2) and (W1) .. (D);
\node [circle,fill,inner sep=0pt,minimum size=3pt,label=above:$\mathfrak D$] at (D) {};
\node [circle,fill,inner sep=0pt,minimum size=3pt,label=above:$\mathfrak D'$] at (D') {};

\coordinate (C) at (3cm,0);
\coordinate (X1) at (4cm,1cm);
\coordinate (X2) at (4cm,2cm);
\coordinate(Y) at (3cm,3cm);
\coordinate[label=above: {\footnotesize first-order expansions of $\mathfrak C$}] (Y') at (3.2cm,3cm);
\coordinate (Z1) at (2cm,1cm);
\coordinate (Z2) at (2cm,2cm);
\coordinate (C') at (3cm,2cm);
\draw (C) .. controls (X1) and (X2) .. (Y);
\draw (Y) .. controls (Z2) and (Z1) .. (C);
\node [circle,fill,inner sep=0pt,minimum size=3pt,label=above:$\mathfrak C$] at (C) {};
\node [circle,fill,inner sep=0pt,minimum size=3pt,label=above:$\mathfrak C'$] at (C') {};

\coordinate[label=above: $I$]  (G) at (1.5cm,2.25cm);
\coordinate[label=below: $J$]  (H) at (1.5cm,1.75cm);
\coordinate[label=above: $I$]  (A) at (1.5cm,0.25cm);
\coordinate[label=below: $J$]  (B) at  (1.5cm,-0.25cm); 
\draw[-latex] (C') .. controls (G) .. (D');
\draw[-latex] (D') .. controls (H) .. (C');
\draw[-latex] (C) .. controls (A) .. (D);
\draw[-latex] (D) .. controls (B) .. (C);

\coordinate (P) at (2.75cm,-0.5cm);
\coordinate (R) at (3.25cm,-0.5cm);
\draw[-latex] (C) .. controls (R) and (P) .. (C);

\coordinate[label=below: $J\circ I {=_{\rm pp}}$ id] (Q) at (3cm,-0.4cm);

\end{tikzpicture}
\caption{Visualisation of Theorem~\ref{thm:transfer}. We use the symbol $=_{\rm pp}$ to denote that two interpretations are pp-homotopic.}
\label{fig:visual}
\end{figure}

The proof of Lemma 3.4.1 in~\cite{Book} shows in fact the following statement. 

\begin{theorem}\label{thm:transfer}
Suppose $\bD$ has a primitive positive interpretation $I$ in $\bC$,
and $\bC$ has a primitive positive interpretation $J$ in $\bD$ such that
 $J \circ I$ is pp-homotopic to the identity interpretation of $\bC$.
Then for every first-order expansion $\bC'$ of $\bC$
there is a first-order expansion $\bD'$ of $\bD$
such that $I$ is a primitive positive interpretation of $\bD'$ in $\bC'$ and $J$ is a primitive positive interpretation of $\bC'$ in $\bD'$. The theorem is described 
in Figure~\ref{fig:visual}.
\end{theorem}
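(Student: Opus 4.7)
My plan is to construct $\bD'$ by adjoining, for each new relation $R$ of $\bC'$, the preimage $R^* \coloneqq J^{-1}(R)$ as a relation on $D$: if $R$ has arity $k$ and $J$ has dimension $e$, then $R^*$ will have arity $ke$ on $D$. This choice is essentially forced, because if $J$ is to remain a primitive positive interpretation of $\bC'$ in $\bD'$, then $J^{-1}(R)$ must be primitive positively definable in $\bD'$, and the simplest way to achieve this is to include $R^*$ itself as one of the new relations.

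The first thing to check would be that $\bD'$ is a first-order expansion of $\bD$. Since $\bC'$ is a first-order expansion of $\bC$, every new relation $R$ of $\bC'$ admits a first-order definition over $\bC$; since $J$ is an interpretation of $\bC$ in $\bD$, the preimage $J^{-1}(R)$ is then first-order definable in $\bD$, so each $R^*$ is a legitimate addition. Next I would verify that $J$ remains a primitive positive interpretation, now of $\bC'$ in $\bD'$: the preimages under $J$ of $x=x$, of $x=y$, and of all atomic relations of $\bC$ are already pp-definable in $\bD \subseteq \bD'$ by hypothesis; and for each new relation $R$ of $\bC'$, we have $J^{-1}(R) = R^*$, which is itself an atomic relation of $\bD'$, so this case is immediate.

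The main obstacle, where the pp-homotopy assumption will enter, will be to verify that $I$ is a primitive positive interpretation of $\bD'$ in $\bC'$. The preimages of $x=x$, of $x=y$, and of the atomic $\bD$-relations under $I$ are pp-definable in $\bC \subseteq \bC'$, so the only nontrivial task is to pp-define $I^{-1}(R^*)$ in $\bC'$ for each new $R^*$. Observe that $I^{-1}(R^*) = I^{-1}(J^{-1}(R)) = (J \circ I)^{-1}(R)$. Letting $d$ and $e$ denote the dimensions of $I$ and $J$, the pp-homotopy between $J \circ I$ and the identity interpretation of $\bC$ supplies a primitive positive $\bC$-formula $\phi(x_1, \dots, x_{de}, y)$ such that $\phi(\bar a, b)$ holds iff $(J \circ I)(\bar a) = b$. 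With $\phi$ in hand, the relation $(J \circ I)^{-1}(R)(\bar a_1, \dots, \bar a_k)$ is equivalent to
\[
\exists y_1 \dots \exists y_k\, \bigwedge_{i=1}^{k} \phi(\bar a_i, y_i) \wedge R(y_1, \dots, y_k),
\]
which is a primitive positive formula over $\bC'$ (note that $\phi$ is a pp-formula over $\bC$, and $\bC$ is a reduct of $\bC'$, so $\phi$ is also pp over $\bC'$). This will complete the proof.
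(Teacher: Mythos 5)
Your proof is correct and follows essentially the approach one would expect from the cited Lemma 3.4.1 in~\cite{Book}: you expand $\bD$ by $R^* := J^{-1}(R)$ for each new relation $R$ of $\bC'$, so that $J$ pulls each new atomic relation of $\bC'$ back to an atomic relation of $\bD'$, and you use the primitive positive formula $\phi$ witnessing $(J\circ I)(\bar x) = y$ to express $I^{-1}(R^*) = (J\circ I)^{-1}(R)$ primitively positively over $\bC'$. The remaining verifications---that $\bD'$ is a first-order expansion of $\bD$ (via the interpretation translation of the first-order definitions of the new relations along $J$) and that the preimages of the old atomic relations are already handled by the hypotheses---are correctly dispatched as well.
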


In particular, if $\bC$, $\bD$, $\bC'$ and $\bD'$ are as in Theorem~\ref{thm:transfer}, 
and $\bC'$ or $\bD'$ has a finite relational signature (and hence we may assume that both have a finite signature),
then $\Csp(\bC')$ and $\Csp(\bD')$ have the same computational complexity (up to polynomial-time reductions) by Proposition~\ref{prop:pp-int-reduce}. 
Now,
let ${\cal C}$ and ${\cal D}$ denote the sets of first-order expansions of
$\bC$ and $\bD$, respectively, and
assume that the complexity of CSP$(\bD)$
is known for every $\bD \in {\cal D}$.
It follows
that we can deduce the complexity of 
CSP$(\bC)$ for every $\bC \in {\cal C}$.

\section{Applications}
\label{sect:applications}
This section demonstrates that our dichotomy result for first-order expansions of the structure $({\mathbb Q}^n;<_1,=_1,\dots,<_n,=_n)$ from Section~\ref{sect:n-dim} can be combined with the complexity classification transfer result from Section~\ref{sect:transfer}
 to obtain surprisingly strong new classification results.
We obtain full classifications for the complexity of the CSP for first-order expansions of several interesting structures;
an overview can be found in Table~\ref{tb:applications}. Our results have stronger formulations when specialised to binary languages; this yields simple new proofs of known results (Sections~\ref{sec:cdc} and~\ref{sect:allen-m}) and solves long-standing open problems from the field of temporal and spatial reasoning (Sections~\ref{sec:cdc} and~\ref{sect:rect}).
For the basic structures in Table~\ref{tb:applications}, our results show that the CSP for a first-order
expansion $\bB$ is polynomial-time solvable if and only if each relation in $\bB$ can be defined
via an Ord-Horn formula.

\begin{table}
\caption{Overview of results and methods used for studying first-order expansions of the
basic relations (unless otherwise stated). By `Interval Algebra above $\sa,\fa$' we mean a first-order expansion of $(\mathbb I; \sa,\fa)$.}
\begin{tabular}{|l|l|l|} \hline
Formalism & $({\mathbb Q}^n;<_1,=_1,\dots,<_n,=_n)$ classification & Classification transfer\\ \hline
Cardinal Direction Calculus & for $n=2$  & not used \\
Generalized CDC & for general $n$ & not used \\
Interval Algebra& not used & used \\
Interval Algebra above $\sa,\fa$ & for $n=2$ & used\\
Rectangle Algebra & for $n=2$ & used \\
$n$-dimensional Block Algebra &  for general $n$ & used \\ \hline
\end{tabular}
\label{tb:applications}
\end{table}

\subsection{Cardinal Direction Calculus}
\label{sec:cdc}
The {\em Cardinal Direction Calculus}~(CDC)~\cite{LigozatCDC}
is a formalism where the basic objects are 
the points in the plane, i.e., the domain is ${\mathbb Q}^2$.
The basic relations correspond to eight
cardinal directions (North, East, South, West and 
four intermediate ones)
together with the equality relation.
The basic relations can be viewed as pairs $(R^1, R^2)$
for all choices of $R^1, R^2 \in \{<,=,>\}$,
where each relation applies 
to the corresponding coordinate.
The connection
between cardinal directions and pairs 
$(R^1, R^2)$
is described in Table~\ref{tb:cdc}. 
Let $\bC$ denote the structure containing the basic relations of CDC.
The classical formulation of CDC contains all binary relations that are unions of relations in $\bC$. 
In the sequel, we will additionally be interested in the richer set of relations of arbitrary arity that are first-order 
definable in $\bC$.

\begin{table}[b]
    \caption{The basic relations of Cardinal Direction Calculus.}
    \centering
    \begin{tabular}{|c|c|c|c|c|c|c|c|c|}
        \hline
        {\rm =} & {\rm N} & {\rm E} & {\rm S} & {\rm W} & {\rm NE} & {\rm SE} & {\rm SW} & {\rm NW} \\
        \hline
        $(=,=)$ & $(=,>)$ & $(>,=)$ & $(=,<)$ & $(<,=)$ & $(>,>)$  & $(>,<)$  & $(<,<)$  & $(<,>)$  \\
        \hline
    \end{tabular}
    \label{tb:cdc}
\end{table}

\begin{theorem} \label{thm:cdc}
Let $\bB$ be a first-order expansion of $\bC$. 
Then exactly one of the following two cases applies.
\begin{itemize}
\item Each of $\theta_1(\Pol(\bB))$ and
$\theta_2(\Pol(\bB))$ contains 
$\mi_3$, $\min_3$, $\mx_3$, or $\lele_3$, or one of their duals.
In this case, $\Pol(\bB)$ has a pwnu polymorphism.
If the signature of $\bB$ is finite, then $\Csp(\bB)$ is in P.
\item $\Pol(\bB)$ has a 
uniformly continuous minor-preserving map
to $\Pol(K_3)$ and
$\bB$ has a finite-signature reduct whose CSP is NP-complete.
\end{itemize}
\end{theorem}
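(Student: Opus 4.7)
The plan is to show that $\bC$ is primitively positively interdefinable with $({\mathbb Q};<)^{(2)} = ({\mathbb Q}^2;<_1,=_1,<_2,=_2)$; the theorem then follows directly from Theorem~\ref{thm:main}. Each basic CDC relation has the shape ``first-coordinate comparison $\wedge$ second-coordinate comparison'' with each comparison from $\{<,=,>\}$, so it is a conjunction of two atomic formulas of $({\mathbb Q};<)^{(2)}$ (possibly with arguments swapped to realize $>_i$ from $<_i$). This yields primitive positive definitions of all basic CDC relations in $({\mathbb Q};<)^{(2)}$.

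For the converse direction I would exhibit primitive positive definitions of $<_1,=_1,<_2,=_2$ in $\bC$. For $<_1$, the formula
$$\exists z\ \bigl((x_1,y_1)\,(<,<)\,z\ \wedge\ z\,(<,>)\,(x_2,y_2)\bigr)$$
forces $x_1 < x_2$ while leaving $y_1, y_2$ unconstrained, since for any $y_1,y_2$ one can choose the second coordinate of $z$ sufficiently large whenever $x_1 < x_2$. For $=_1$, the formula
$$\exists z\ \bigl((x_1,y_1)\,(=,<)\,z\ \wedge\ (x_2,y_2)\,(=,<)\,z\bigr)$$
enforces $x_1 = x_2$ with no constraint on the second coordinates. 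Symmetric formulas handle $<_2$ and $=_2$.

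Given any first-order expansion $\bB$ of $\bC$, its extra relations are also first-order in $({\mathbb Q};<)^{(2)}$, so $\bB$ is primitively positively interdefinable with a first-order expansion $\bB'$ of $({\mathbb Q};<)^{(2)}$. Thus $\Pol(\bB) = \Pol(\bB')$, and, for finite signatures, $\Csp(\bB)$ and $\Csp(\bB')$ are polynomial-time equivalent by Lemma~\ref{lem:pp-red}. Applying Theorem~\ref{thm:main} to $\bB'$ then delivers both cases of the dichotomy for $\bB$: mutual exclusivity and the pwnu polymorphism in the tractable case, and an NP-complete finite-signature reduct in the hard case (where the minor-preserving map to $\Pol(K_3)$ also holds for $\Pol(\bB)$ by the equality of the polymorphism clones).

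The main task is the verification of primitive positive interdefinability; once this is in hand, the theorem is an immediate corollary of the already-proved 2-dimensional classification, so no new algebraic or algorithmic work is required.
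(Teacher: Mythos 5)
Your proposal is correct and follows essentially the same approach as the paper: show that $<_1,=_1,<_2,=_2$ are primitively positively definable in $\bC$ (your formulas are equivalent to the paper's, which uses $\exists z(x\mathrel{\rm SW}z\wedge z\mathrel{\rm NW}y)$ for $<_1$ and $\exists z(x\mathrel{\rm S}z\wedge z\mathrel{\rm N}y)$ for $=_1$), observe that every relation of $\bB$ is first-order definable over $({\mathbb Q}^2;<_1,=_1,<_2,=_2)$, and then invoke Theorem~\ref{thm:main}. You spell out the pp-interdefinability and polymorphism-clone equality a bit more explicitly than the paper's terse ``can be viewed as a first-order expansion,'' which is fine but not a different route.
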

\begin{proof}
Clearly, every relation in $\bB$ is first-order definable in  $({\mathbb Q}^2;<_1,=_1,<_2,=_2)$. Moreover, 
the relations in $({\mathbb Q}^2;<_1,=_1,<_2,=_2)$ are primitively positively definable in $\bC$:
\begin{itemize}
    \item 
$x <_1 y$ is defined by $\exists z \big(x \mathrel {\rm SW} z \wedge z \mathrel {\rm NW} y \big)$, 
\item $x =_1 y$ is defined by $\exists z \big (x \mathrel {\rm S} z \wedge z \mathrel {\rm N} y \big)$, and 
\item the relations
$<_2$ and $=_2$ can be defined analogously.
\end{itemize}
Thus, the result follows immediately from
Theorem~\ref{thm:main} because every first-order expansion of $\bC$ can be viewed as a first-order expansion of
 $({\mathbb Q}^2;<_1,=_1,<_2,=_2)$.
\end{proof}

A slightly weaker version of the following result has been proved by Ligozat~\cite{LigozatCDC} using a fundamentally different approach.

\begin{corollary} \label{cor:cdc-binary-again} 
Let $\bB$ be a binary first-order expansion of ${\mathfrak C}$. Then
exactly
one of the following cases applies. 
\begin{itemize}
\item Each relation in $\bB$, viewed as a relation of arity four over ${\mathbb Q}$, has an Ord-Horn definition.
In this case, $\bB$ has a pwnu polymorphism 
and $\Csp(\bB)$ is in P. 

\item $\Pol(\bB)$ has a uniformly continuous minor-preserving map to $\Pol(K_3)$ and $\Csp(\bB)$ is NP-complete.
\end{itemize}
\end{corollary}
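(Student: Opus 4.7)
The plan is to deduce this corollary directly from Theorem~\ref{thm:cdc-binary} applied with $n=2$, using the primitive positive interdefinability between $\mathfrak{C}$ and $(\mathbb{Q}^2; <_1, =_1, <_2, =_2) = (\mathbb{Q};<)^{(2)}$ that was exhibited in the proof of Theorem~\ref{thm:cdc}.

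First, I would invoke the observation (already used in Theorem~\ref{thm:cdc}) that the relations of $\mathfrak{C}$ are primitively positively definable in $(\mathbb{Q}^2;<_1,=_1,<_2,=_2)$, and conversely that $<_1,=_1,<_2,=_2$ are primitively positively definable in $\mathfrak{C}$ (the formulas for $<_1,=_1$ use an auxiliary point witnessing an SW/NW or S/N chain, and symmetrically for $<_2,=_2$). Consequently, the class of first-order expansions of $\mathfrak{C}$ coincides with the class of first-order expansions of $(\mathbb{Q}^2;<_1,=_1,<_2,=_2)$, and moreover $\Pol(\mathfrak{B})$ is the same polymorphism clone whichever of the two viewpoints we take.

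Next, since $\mathfrak{B}$ is binary as a structure over the domain $\mathbb{Q}^2$, each of its relations has arity $4$ when regarded as a relation over $\mathbb{Q}$. Thus $\mathfrak{B}$ is precisely a binary first-order expansion of $(\mathbb{Q};<)^{(2)}$ in the sense of Theorem~\ref{thm:cdc-binary} with $n=2$. Applying that theorem produces the two cases of the corollary verbatim: either every relation of $\mathfrak{B}$, viewed as a relation of arity $4$ over $\mathbb{Q}$, is Ord-Horn definable (and then $\mathfrak{B}$ has a pwnu polymorphism and $\Csp(\mathfrak{B}) \in \mathrm{P}$), or $\Pol(\mathfrak{B})$ has a uniformly continuous minor-preserving map to $\Pol(K_3)$. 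In the latter case, $\Csp(\mathfrak{B})$ is NP-hard by Corollary~\ref{cor:hard}, and it lies in NP because $\mathfrak{B}$ is a finite-signature reduct of the finitely bounded homogeneous structure $(\mathbb{Q};<)^{(2)}$ (note that the binary assumption on $\mathfrak{B}$ guarantees a finite signature by $\omega$-categoricity, as remarked at the start of Section~\ref{sect:binary}).

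The mutual exclusion of the two cases is likewise inherited from Theorem~\ref{thm:cdc-binary}, whose disjointness was obtained via Lemma~\ref{lem:excl} combined with Lemma~\ref{lem:prod-mc-cores} applied to $(\mathbb{Q};<)^{(2)}$. There is no genuine obstacle here: the entire content of the corollary has already been established in the more abstract Theorem~\ref{thm:cdc-binary}, and the only work is the translation between $\mathfrak{C}$ and $(\mathbb{Q};<)^{(2)}$, which is immediate from the identities spelled out in Theorem~\ref{thm:cdc}.
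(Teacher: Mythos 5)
Your proposal is correct and takes essentially the same approach as the paper, which simply cites the corollary as an immediate consequence of Theorem~\ref{thm:cdc-binary}; you spell out the primitive positive interdefinability of $\bC$ and $(\Q;<)^{(2)}$ (already established in the proof of Theorem~\ref{thm:cdc}) that the paper leaves implicit. One small imprecision: a binary first-order expansion of $\bC$ is not \emph{literally} a first-order expansion of $(\Q;<)^{(2)}$, since it need not carry the named relations $<_1,=_1,<_2,=_2$; the argument goes through because adding those (pp-definable, binary) relations yields a pp-interdefinable structure with the same polymorphism clone, the same CSP complexity, and the same Ord-Horn characterisation for its relations.
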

\begin{proof}
Immediate consequence of Theorem~\ref{thm:cdc-binary}.
\end{proof}

Ligozat~\cite{Ligozat:cosit2001} and Balbiani and Condotta~\cite{Balbiani:Condotta:appint2002} discuss the natural generalisation of CDC to the domain ${\mathbb Q}^n$: let CDC$_n$ denote this generalisation.
Balbiani and Condotta~\cite[Section 7]{Balbiani:Condotta:appint2002} claim that a particular set of relations (referred to as {\em strongly preconvex}) is a maximal tractable subclass in CDC$_3$ that contains all basic relations, and they state that they have not been able to generalise
this result to higher dimensions.
Theorem~\ref{thm:cdc} and Corollary~\ref{cor:cdc-binary-again}
can immediately be generalised to this setting by using our results for $(\mathbb Q;<)^{(n)}$.
We conclude that the Ord-Horn class is the unique maximal tractable subclass of CDC$_n$ ($n \geq 2$) that contains all basic relations.

\subsection{Allen's Interval Algebra}
\label{sect:allen}
We have already introduced Allen's Interval Algebra in Example~\ref{ex:allen} to illustrate interpretations. 
The complexity of all binary reducts of Allen's Interval Algebra have been classified in~\cite{KrokhinAllen}. However, little is known about the complexity of the CSP for first-order reducts of Allen's Interval Algebra. 
In this section we obtain classification results for first-order expansions of some reducts of Allen's Interval Algebra. Recall the relations $\sf m$ (`meets'), $\sf s$ (`starts'), and $\sf f$ (`finishes') from Table~\ref{tb:allen-basic-defs}. For reducts of Allen's Interval Algebra that contain ${\sf m}$, these classification results are an immediate consequence of the transfer result from Section~\ref{sect:transfer} (Section~\ref{sect:allen-m}). For the first-order expansions of the structure that just contains the relations ${\sf s}$ and ${\sf f}$ (Section~\ref{sect:allen-sf}), we combine classification transfer with our classification for the first-order expansions of $({\mathbb Q};<_1,=_1,<_2,=_2)$ from Section~\ref{sect:class}. 

\subsubsection{First-order Expansions of $\{{\sf m}\}$}
\label{sect:allen-m}
The following is a more explicit version
of Theorem 3.4.3 in~\cite{Book} (which only states that the CSP of a first-order expansions of $({\mathbb I};\ma)$ is polynomial-time solvable or NP-complete).

\begin{theorem}\label{thm:allen-m}
Let $\bB$ be a first-order expansion of $({\mathbb I};\ma)$.
 Then exactly one of the following cases applies. 
\begin{itemize}
    \item 
The identity map on $\mathbb I$ is
a 2-dimensional primitive positive interpretation of $\bB$ in $\bU$, $\bX$, $\bI$, or $\bL$. 
In this case, $\bB$ has a pwnu polymorphism, 
and if $\bB$ has a finite signature then $\Csp(\bB)$ is polynomial-time
solvable.
\item $\Pol(\bB)$ has a uniformly continuous minor-preserving map to $\Pol(K_3)$ (and $\bB$ has a finite-signature reduct whose CSP is NP-complete). 
\end{itemize}

\end{theorem}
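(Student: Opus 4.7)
The plan is to reduce the classification for first-order expansions of $(\mathbb{I}; \ma)$ to the known classification of first-order expansions of $(\mathbb{Q}; <)$ from Theorem~\ref{thm:tcsps}, using the primitive positive bi-interpretation between these two structures described in Example~\ref{ex:allen-bi}. The main work lies in checking that each of the four tractable branches of Theorem~\ref{thm:tcsps} (modulo taking duals) translates cleanly into a two-dimensional primitive positive interpretation of $\bB$ in one of $\bU$, $\bX$, $\bI$, $\bL$.

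First I would invoke the bi-interpretation from Example~\ref{ex:allen-bi}, with the two-dimensional identity interpretation $I$ of $(\mathbb{I}; \ma)$ in $(\mathbb{Q}; <)$ and the one-dimensional interpretation $J$ of $(\mathbb{Q}; <)$ in $(\mathbb{I}; \ma)$ given by the first-coordinate projection, noting that $J \circ I$ is pp-homotopic to the identity interpretation on $(\mathbb{Q}; <)$. Then Theorem~\ref{thm:transfer} applied to an arbitrary first-order expansion $\bB$ of $(\mathbb{I}; \ma)$ produces a first-order expansion $\bB'$ of $(\mathbb{Q}; <)$ such that $I$ is a primitive positive interpretation of $\bB$ in $\bB'$ and $J$ is a primitive positive interpretation of $\bB'$ in $\bB$. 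In particular, $\bB$ and $\bB'$ are mutually primitively positively interpretable, so by Lemma~\ref{lem:clone-homo} there are uniformly continuous clone homomorphisms in both directions between $\Pol(\bB)$ and $\Pol(\bB')$.

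Next I would apply Theorem~\ref{thm:tcsps} to $\bB'$. In the hard case, $\Pol(\bB')$ has a uniformly continuous minor-preserving map to $\Pol(K_3)$; composing with the uniformly continuous clone homomorphism from $\Pol(\bB)$ to $\Pol(\bB')$ yields one for $\bB$ as well, and Corollary~\ref{cor:hard} gives a finite-signature reduct of $\bB$ whose CSP is NP-complete (NP-membership follows as $\bB$ has a first-order interpretation in $(\mathbb{Q}; <)$, a reduct of a finitely bounded homogeneous structure). In the easy case, $\bB'$ is preserved by $\min_3$, $\mx_3$, $\mi_3$, or $\lele_3$, or one of their duals; by Theorem~\ref{thm:pwnu}, $\bB'$ is then primitively positively definable in $\bU$, $\bX$, $\bI$, $\bL$ (or a dual), so composing this pp-interpretation with $I$ yields a two-dimensional primitive positive interpretation of $\bB$ in the corresponding structure. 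The pwnu polymorphism of $\bB$ is obtained by transporting the operation from Theorem~\ref{thm:pwnu} along the uniformly continuous clone homomorphism $\Pol(\bB') \to \Pol(\bB)$ (using that clone homomorphisms preserve identities of the form~\eqref{eq:pwnu}). Polynomial-time tractability in the finite-signature case then follows from Proposition~\ref{prop:pp-int-reduce} together with the tractability of $\Csp(\bU)$, $\Csp(\bX)$, $\Csp(\bI)$, $\Csp(\bL)$ from Theorem~\ref{thm:tcsps}.

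Finally, for the mutual exclusivity of the two cases, I would combine the existence of a pwnu polymorphism with Lemma~\ref{lem:excl}: the structure $(\mathbb{I}; \ma)$ is homogeneous (since $(\mathbb{Q}; <)$ is, by a variant of Remark~\ref{rem:hom}/Lemma~\ref{lem:prod-mc-cores} argument) and a model-complete core, and $\bB$ is its first-order expansion, so the lemma rules out a uniformly continuous minor-preserving map to $\Pol(K_3)$ whenever $\bB$ has a pwnu polymorphism. The main technical obstacle I anticipate is the dualities: Theorem~\ref{thm:tcsps} includes the duals of $\min_3, \mx_3, \mi_3, \lele_3$, while the statement to prove lists only $\bU, \bX, \bI, \bL$; I would handle this either by absorbing the duals into the pp-interpretation (using a flip automorphism in the spirit of Lemma~\ref{lem:flip}) or by explicitly noting that the four dual structures admit analogous pp-interpretations, so that the conclusion about tractability and pwnu polymorphisms is unaffected.
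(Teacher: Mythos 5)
Your proposal follows the same overall route as the paper (bi-interpretation from Example~\ref{ex:allen-bi}, Theorem~\ref{thm:transfer}, classification of the resulting first-order expansion $\bC$ of $(\Q;<)$ via Theorem~\ref{thm:tcsps}, composition of interpretations, transport of pwnu polymorphisms along clone homomorphisms). The steps up to the mutual-exclusivity claim are sound. But the final step has a genuine gap: you invoke Lemma~\ref{lem:excl} directly, taking $(\mathbb{I};\ma)$ as the homogeneous base structure, and claim homogeneity ``by a variant of Remark~\ref{rem:hom}/Lemma~\ref{lem:prod-mc-cores}.'' This is false: $(\mathbb{I};\ma)$ is not homogeneous. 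Since $\Aut(\mathbb{I};\ma)$ is $\Aut(\Q;<)$ acting on ordered pairs of endpoints, the two $3$-element substructures $\{[0,1],[2,3],[4,5]\}$ and $\{[0,5],[1,2],[3,4]\}$ are isomorphic as $\{\ma\}$-structures (no $\ma$-relations hold in either), yet no automorphism maps one onto the other --- any such map would require a non-monotone rearrangement of the six endpoints. Neither Remark~\ref{rem:hom} nor Lemma~\ref{lem:prod-mc-cores} applies, as $(\mathbb{I};\ma)$ is not an algebraic product. The paper instead uses Corollary~\ref{cor:excl}, which was set up precisely for this situation: $\bB$ and the first-order expansion $\bC$ of $(\Q;<)^{(1)}$ from Theorem~\ref{thm:transfer} are mutually primitively positively interpretable, the pwnu polymorphism of $\bB$ is transported to $\Pol(\bC)$ via Lemma~\ref{lem:clone-homo}, Lemma~\ref{lem:excl} is applied to $\bC$ (where the base $(\Q;<)$ \emph{is} homogeneous), and then the non-existence of a minor-preserving map is transported back. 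You should either cite Corollary~\ref{cor:excl} or replicate its argument; applying Lemma~\ref{lem:excl} to $(\mathbb{I};\ma)$ does not work.

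On the dual issue you flag at the end: the concern is real, but it is shared with the paper's own write-up of the proof and does not introduce a new error on your part. Absorbing the dual via a reflection produces a pp-interpretation of $\bB$ in $\bU$ (etc.) whose underlying map is $[a,b]\mapsto[-b,-a]$ rather than the identity on $\mathbb{I}$; adding the dual structures would change the theorem statement. This discrepancy affects only the precise phrasing of the first bullet, not the conclusions about tractability and pwnu polymorphisms.
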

\begin{proof}
From Example~\ref{ex:allen-bi} we know
that the structures $({\mathbb I};\ma)$ and $({\mathbb Q};<)$ are primitively positively  bi-interpretable via interpretations $I$ and $J$ of dimension 2 and 1, respectively, where $I$ is the identity map on ${\mathbb I}$.  Theorem~\ref{thm:transfer} 
implies that there exists a first-order expansion $\bC$ of $({\mathbb Q};<)$
such that $I$ is a primitive positive interpretation of $\bB$ in $\bC$ and
$J$ is a primitive positive interpretation of $\bC$ in $\bB$.

If $\bC$ has a primitive positive interpretation in $\bU$, $\bX$, $\bI$, $\bL$, then 
 $\bB$ has a primitive positive interpretation in one of those structures. Since each of $\bU$, $\bX$, $\bI$, and $\bL$ has a pwnu polymorphism (Theorem~\ref{thm:pwnu}) and since the existence of pwnu polymorphisms is preserved by primitive positive interpretations (as was discussed in Section~\ref{sect:cat}),
 the structure $\bC$ has a pwnu polymorphism. 
 Furthermore, if $\bB$ has a finite signature, then $\Csp(\bB)$ 
 has a polynomial-time reduction to 
 the CSP of one of those structures by Proposition~\ref{prop:pp-int-reduce}.
 The polynomial-time tractability then follows from Theorem~\ref{thm:pwnu} and Theorem~\ref{thm:tcsps}. 
 Otherwise, 
Theorem~\ref{thm:tcsps} implies
that $\Pol(\bC)$ has a uniformly continuous minor-preserving map to $\Pol(K_3)$. Since there is also a uniformly continuous minor-preserving map from $\Pol(\bB)$ to $\Pol(\bC)$ by Theorem~\ref{thm:wonderland}, we can compose maps and obtain a uniformly continous minor-preserving map from $\Pol(\bB)$ to $\Pol(K_3)$. 
By Corollary~\ref{cor:hard}, $\bB$ has a finite-signature reduct whose CSP is NP-complete.
By Corollary~\ref{cor:excl}, the two cases in the statement of the theorem are mutually exclusive.
\end{proof}

Nebel \& B\"urckert~\cite{Nebel} proved (by a computer-generated proof) that if a reduct $\bB$ of Allen's Interval Algebra only contains relations that have an Ord-Horn definition when considered as a relation of arity four over ${\mathbb Q}$, then $\Csp(\bB)$ is in P. Otherwise, and if 
it contains the relation $\sf m$, it has an NP-hard CSP. Later on, Ligozat~\cite{Ligozat:constr98} presented a mathematical proof of this result. 
We can derive a stronger variant of the results by Nebel \& B\"urckert and Ligozat as a consequence 
of Theorem~\ref{thm:allen-m}.

\begin{theorem}\label{thm:allen-binary}
Let $\bB$ be a binary first-order expansion of $({\mathbb I};\ma)$. Then exactly one of the following cases applies. 
\begin{itemize}
\item Every relation of $\bB$, viewed as a relation of arity four over ${\mathbb Q}$, has an Ord-Horn definition.
In this case, $\bB$ has a pwnu polymorphism and 
$\Csp(\bB)$ is in P. 
 
\item $\Pol(\bB)$ has a uniformly continuous minor-preserving map to $\Pol(K_3)$ and
$\Csp(\bB)$ is NP-complete. 
\end{itemize}
\end{theorem}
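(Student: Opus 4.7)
My approach is to derive Theorem~\ref{thm:allen-binary} from Theorem~\ref{thm:allen-m} combined with a syntactic analysis specific to the binary case. First I would invoke Theorem~\ref{thm:allen-m}, which already supplies a dichotomy for first-order expansions of $(\mathbb{I};\ma)$: either $\bB$ has a pwnu polymorphism and the identity on $\mathbb{I}$ is a two-dimensional primitive positive interpretation of $\bB$ in one of $\bU,\bX,\bI,\bL$ or a dual thereof, or $\Pol(\bB)$ has a uniformly continuous minor-preserving map to $\Pol(K_3)$. In the latter case, $\Csp(\bB)$ is NP-hard by Corollary~\ref{cor:hard}, and NP-membership follows since $\bB$ is a first-order reduct of $(\mathbb{I};\ma)$, which is homogeneous and finitely bounded via the bi-interpretation with $(\mathbb{Q};<)$ from Example~\ref{ex:allen-bi}.

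In the remaining case, the plan is to show that every binary relation of $\bB$, viewed as a 4-ary relation over $\mathbb{Q}$, has an Ord-Horn definition. By Proposition~\ref{prop:pp} and Theorem~\ref{thm:lele}, the interpretation provided by Theorem~\ref{thm:allen-m} implies that every such 4-ary relation is preserved by a $\pp$-operation or an $\lele$-operation, modulo duality (which is harmless, since the dual of an Ord-Horn formula is Ord-Horn). Applying Theorem~\ref{thm:pp} or Theorem~\ref{thm:lele} then yields a syntactic normal form for the relation as a conjunction of clauses of restricted shape over the four endpoint variables $x_1,x_2,y_1,y_2$.

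The main obstacle is that these normal forms are strictly weaker than Ord-Horn: an Ord-Horn clause allows at most one non-$\neq$ literal, whereas the $\pp$-normal form admits multiple $\leq$ literals and the $\lele$-Horn form admits multiple $<$ literals sharing a common variable. To close this gap, I would exploit that a binary relation involves only four variables together with the built-in constraints $x_1<x_2$ and $y_1<y_2$. These constraints permit a systematic reduction of multiple-positive-literal clauses: for instance, $x_1\leq y_1\vee x_1\leq y_2$ collapses to $x_1\leq y_2$ using $y_1<y_2$, and $y_1\leq x_1\vee y_1\leq x_2$ collapses to $y_1\leq x_2$ using $x_1<x_2$. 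Combined with the standard simplifications used in the proof of Theorem~\ref{thm:cdc-binary}---fusing $u<v\vee v<u$ into $u\neq v$, discarding clauses that contain both $u=v$ and $u\neq v$, and fusing $u<v\vee u=v$ into $u\leq v$---this should reduce every normal-form clause to an Ord-Horn clause.

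Finally, the mutual exclusivity of the two cases follows from Corollary~\ref{cor:excl}: by Example~\ref{ex:allen-bi} together with Theorem~\ref{thm:transfer}, $\bB$ is mutually primitively positively interpretable with a first-order expansion of $(\mathbb{Q};<)=(\mathbb{Q};<)^{(1)}$, so the pwnu polymorphism of $\bB$ established in the tractable case rules out a uniformly continuous minor-preserving map from $\Pol(\bB)$ to $\Pol(K_3)$.
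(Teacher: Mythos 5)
The proposal follows the paper's overall strategy closely: invoke Theorem~\ref{thm:allen-m} to obtain either a minor-preserving map to $\Pol(K_3)$ or a two-dimensional interpretation in one of $\bU,\bX,\bI,\bL$, extract the corresponding syntactic normal form via Theorem~\ref{thm:pp} or Theorem~\ref{thm:lele}, and then exploit the fact that only four variables appear together with the implied constraints $u_1<u_2$ and $v_1<v_2$ to force the clauses into Ord-Horn shape. The disjointness argument via Corollary~\ref{cor:excl} is also correct.

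However, the key syntactic step is not correctly executed, and this is where the real content of the theorem lies. The collapses you propose, such as $x_1\leq y_1\vee x_1\leq y_2 \rightsquigarrow x_1\leq y_2$, place the shared variable on the \emph{left} of $\leq$, but the $\pp$-normal form of Theorem~\ref{thm:pp} has all positive literals of the form $z_i\leq x$ with the shared variable $x$ on the \emph{right}; the ll-Horn form of Theorem~\ref{thm:lele} similarly shares $z_0$ on the right of $<$. Your examples would match the dual form, but the proposal doesn't make that distinction, and more importantly the examples only cover the ``both other variables from the opposite interval'' case. The cases where one of the positive literals is trivially true or trivially false because it relates $u_1$ and $u_2$ (or $v_1$ and $v_2$) directly, and the case where the $z_i$'s span both intervals but $x$ lies in one of them, all need to be handled; the paper does this uniformly by adding the conjuncts $u_1<u_2$, $v_1<v_2$ and invoking reducedness, rather than ad hoc collapse rules. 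The $\lele$-Horn case carries an additional subtlety you do not address: the trailing disjunct $(z_0=z_1=\cdots=z_\ell)$ must first be shown removable whenever both endpoints of an interval occur among $\{z_0,\dots,z_\ell\}$ (since then the disjunct is never satisfied), otherwise the naive collapse changes the truth value of the clause. Finally, the simplifications you import from the proof of Theorem~\ref{thm:cdc-binary} (fusing $u<v\vee v<u$ into $u\neq v$, etc.) do not apply here: those address $<$/$=$/$\neq$ pairs in two-variable, one-coordinate clauses arising from $(\mathbb{Q};<)^{(n)}$, whereas the obstacle in this theorem is multiple $\leq$ (resp.\ $<$) literals sharing a variable in a four-variable clause. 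You should replace the collapse heuristics with the reducedness-based case analysis as in the paper, handling the $\pp$-case and the ll-Horn case separately.
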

\begin{proof}
If $\Pol(\bB)$ has a uniformly continuous minor-preserving map to $\Pol(K_3)$, then
the statement follows from 
Corollary~\ref{cor:hard}.
Otherwise, Theorem~\ref{thm:allen-m} implies that
$\bB$ has a pwnu polymorphism and 
the identity map $I$ on ${\mathbb I}$ is a 2-dimensional primitive positive interpretation in $\bU$, $\bX$, $\bI$, or $\bL$. We claim that every relation $R$ of $\bB$, considered as a relation of arity four over ${\mathbb Q}$, has an Ord-Horn
definition. Let $\phi(u_1,u_2,v_1,v_2)$ 
be the first-order formula that defines $I^{-1}(R)$ over $({\mathbb Q};<)$. 
Note that if $(u_1, u_2)$ is in the domain of $I$, then $u_1<u_2$, so $\phi(u_1, u_2, v_1, v_2)$ implies $u_1 < u_2 \wedge v_1 < v_2$.

We first consider the case that $\bB$ has a 2-dimensional primitive positive interpretation in $\bU$, $\bX$, or $\bI$. In this case, $\phi$ is preserved by a $\pp$-operation (Proposition~\ref{prop:pp}),
and we may assume that $\phi$ has the syntactic form described in Theorem~\ref{thm:pp}. 
Since $\phi$ implies $u_1 < u_2 \wedge v_1 < v_2$, we may add these two conjuncts to $\phi$; note that the resulting formula
is still of the required form. 
We may additionally assume that $\phi$ is reduced, because every formula obtained from $\phi$ by removing literals is again of the required form. 
Each clause in $\phi$ has the form
$$y_1 \neq x \vee \cdots \vee y_k \neq x \vee z_1 \leq x \vee \cdots \vee z_l \leq x,$$
where all the variables are from $\{u_1, u_2, v_1,v_2\}$.
We want to prove that $\phi$ is equivalent to an Ord-Horn formula, which we obtain by showing that $l \leq 1$ in every such clause.

If $u_1 \in \{z_1,\dots,z_l\}$ and $u_2$ equals $x$ 
then the conjunct $u_1 < u_2$ implies that the literal $u_1 \leq u_2$ 
is true in every satisfying assignment to $\phi$, which means that the clause has no other literals by the assumption that $\phi$ is reduced, and we are done. 
If $u_1$ equals $x$ and $u_2$ equals $z_i$, for some $i \in \{1,\dots,l\}$, 
then the conjunct $u_1 < u_2$ implies that
removing 
the literal $z_i \leq x$ would result in an equivalent formula, in contradiction to the assumption that $\phi$ is reduced. 
If $u_1$ equals $z_i$ 
and $u_2$ equals $z_j$ for some $i,j \in \{1,\dots,l\}$, then the clause $u_1 < u_2$ implies that the literal $z_j \leq x$ is redundant, again in contradiction to the assumption that $\phi$ is reduced. 
We see that if one of $u_1,u_2$ is from $\{z_1,\dots,z_l\}$, then the other variable cannot be from $\{x, z_1,\dots,z_l\}$. 
The same argument applies to $v_1$ and $v_2$ and we conclude that 
$l \leq 1$.

Finally we consider the case that $\bB$ has a 2-dimensional primitive positive interpretation in $\bL$. In this case, 
Theorem~\ref{thm:lele} implies that every relation of $\bB$, considered as a relation of arity
four over ${\mathbb Q}$, has a definition $\phi(u_1,u_2,v_1,v_2)$ by a conjunction of ll-Horn clauses
$$x_1 \neq y_1 \vee \cdots \vee x_m \neq y_m \vee z_1 < z_0 \vee \cdots \vee z_{l} < z_0 \vee (z_0 = z_1 = \cdots = z_l),$$
where the final disjunct might be missing and the variables $x_i, y_i, z_i$ are from the set $\{u_1, u_2, v_1, v_2\}$.
Again we may assume that $\phi$ contains the two clauses
$u_1 < u_2$ and $v_1 < v_2$, and we may also assume that $\phi$ is reduced in the sense that whenever we remove a literal $z_i < z_0$ and remove $z_i$ from the final disjunct $z_0=z_1= \cdots = z_l$, 
or if we remove the final disjunct entirely, we obtain a formula which is not equivalent to $\phi$. 
It suffices to show that this implies that  $l \leq 1$. Again, we break into cases.
If both $u_1$ and $u_2$ are in $\{z_0,z_1,\dots,z_l\}$, then the final disjunct is never satisfied, so we may assume that it is not present. 
If $u_1 \in \{z_1,\dots,z_l\}$ and $u_2$ equals $z_0$, then the literal $z_i < z_0$ would be true in every satisfying assignment to $\phi$, which means that the clause has no other literals by the assumption that $\phi$ is reduced, and we are done. If $u_1,u_2 \in \{z_1\dots,z_l\}$ we also obtain a contradiction to the assumption that $\phi$ is reduced, since $u_2 < z_0$ implies $u_1 < z_0$. If $u_1$ equals $z_0$ and $u_2$ equals $z_i$ for some $i \in \{1,\dots,l\}$, then 
the literal $z_i < z_0$ would be false and can be removed, in contradiction to $\phi$ being reduced. Again it follows that at most one of $u_1$ and $u_2$ can appear in $\{z_0, \dots, z_l \}$. 
Analogous reasoning for $v_1$ and $v_2$ implies that $l \leq 1$. 

The disjointness of the two cases follows from the disjointness of the cases in Theorem~\ref{thm:allen-m}.
\end{proof}

\subsubsection{First-order Expansions of $\{{\sf s},{\sf f}\}$}
\label{sect:allen-sf}
Despite the obvious difference between the domains ${\mathbb I}$ and ${\mathbb Q}^2$, there is a way to use our classification of the first-order expansions of $({\mathbb Q}^2;<_1,=_1,<_2,=_2)$ to obtain classification results for 
first-order expansions of $({\mathbb I};\sa,\fa)$.
Our starting point is the following definability result.

\begin{lemma}\label{lem:bi-inter-sf}
 $({\mathbb I};\sa,\fa)$ 
and $({\mathbb Q}^2;<_1,=_1,<_2,=_2)$ are primitively positively bi-definable. 
\end{lemma}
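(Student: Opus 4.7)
The plan is to check pp-interdefinability of the two sets of relations on the common underlying set $\mathbb{I}$, and then to identify the resulting structure on $\mathbb{I}$ with $(\Q^2; <_1, =_1, <_2, =_2)$.

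The direction from $(\Q^2; <_1, =_1, <_2, =_2)$ to $(\mathbb{I}; \sa, \fa)$ is immediate from the defining conditions: $I \sa J$ iff $I =_1 J \wedge I <_2 J$, and $I \fa J$ iff $I =_2 J \wedge J <_1 I$. For the converse, I would first obtain the equality relations by introducing an auxiliary interval that shares the relevant endpoint,
\[I =_1 J \iff \exists K\,(K \sa I \wedge K \sa J), \qquad I =_2 J \iff \exists K\,(I \fa K \wedge J \fa K),\]
noting that the two conjuncts in each formula force the matched coordinate to agree, while a witness $K$ can always be manufactured by choosing its free endpoint below $\min(I_2, J_2)$ in the first case (and above $\max(I_1, J_1)$ in the second).

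The crucial pp-definition is that of $<_1$:
\[I <_1 J \iff \exists C, D\,(I \sa C \wedge J \sa D \wedge D \fa C).\]
Here $I \sa C$ fixes $C_1 = I_1$ and leaves $C_2$ free to be taken arbitrarily large; similarly $J \sa D$ fixes $D_1 = J_1$; and the final conjunct $D \fa C$ identifies $C_2$ with $D_2$ while forcing $D_1 > C_1$, i.e.\ $J_1 > I_1$. Conversely, given $I_1 < J_1$, one takes $C = (I_1, c)$ and $D = (J_1, c)$ for any rational $c > \max(I_2, J_2)$. The analogous formula for $<_2$ follows by exploiting the involution $(a_1, a_2) \mapsto (-a_2, -a_1)$, which exchanges $\sa$ and $\fa$, yielding $I <_2 J$ iff $\exists C, D\,(J \fa C \wedge I \fa D \wedge D \sa C)$.

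The remaining step, which I expect to be the main obstacle, is to upgrade the pp-interdefinability of relations on $\mathbb{I}$ to the bi-definability stated in the lemma, which formally requires an isomorphism with a structure on the domain $\Q^2$; the natural inclusion $\mathbb{I} \hookrightarrow \Q^2$ is not a bijection, so one must either work with a suitably chosen bijection $\mathbb{I} \to \Q^2$ or invoke $\omega$-categoricity together with a comparison of polymorphism clones under an appropriate identification. Either way, the core mathematical content of the lemma is the collection of pp-definitions above.
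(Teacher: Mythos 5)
Your pp-definitions are all correct and in the same spirit as the paper's (though the explicit witnesses differ: for $=_2$ you use an auxiliary $K$ finished by both $I,J$, and for $<_1$ you use two auxiliary intervals sharing a right endpoint, whereas the paper threads through $=_1$ and a mixed $\fa,\sa$ chain — both choices work). So far the two proofs coincide in approach.

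The gap is in the last step, which you flag but do not resolve and partly misdiagnose. Primitive positive bi-definability requires, by definition, an \emph{isomorphism} between $(\mathbb{Q}^2;<_1,=_1,<_2,=_2)$ and the structure $(\mathbb{I};<_1,=_1,<_2,=_2)$ that you obtained on $\mathbb{I}$. Your second suggested route — invoking $\omega$-categoricity and comparing polymorphism clones ``under an appropriate identification'' — would at best give mutual primitive positive interpretability (equivalently, a topological isomorphism of the polymorphism clones via \cite{Topo-Birk}), which is strictly weaker than the bi-definability claimed; it does not produce the needed isomorphism of structures and hence cannot close the argument. The correct route is the first one you name, namely exhibiting a bijection, and this is exactly what the paper does: it runs a back-and-forth argument. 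Given a finite partial isomorphism $i$, one extracts coordinate maps $i_1,i_2$ between finite subsets of $\mathbb{Q}$, extends them to automorphisms $\alpha_1,\alpha_2 \in \Aut(\mathbb{Q};<)$ by homogeneity, and extends $i$ via $(a,b) \mapsto (\alpha_1(a),\alpha_2(b))$ in the forth direction, with the symmetric construction (choosing $\alpha_1,\alpha_2$ so that $\alpha_1^{-1}(a') < \alpha_2^{-1}(b')$) in the back direction to stay inside $\mathbb{I}$. This step is not merely bookkeeping — it is where one actually verifies that the diagonal constraint $a<b$ on $\mathbb{I}$ imposes no restriction on finite configurations of the relations $<_1,=_1,<_2,=_2$ — so characterising it as peripheral to ``the core mathematical content'' is not justified; it needs to be carried out.
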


\begin{proof}
Consider the structure $(\mathbb{I}; <_1, =_1, <_2, =_2)$, where the relations $<_i$ and $=_i$ denote restrictions of $<_i$ and $=_i$ on $\mathbb{I} \subseteq \Q^2$.
Note that the relation $\sa$ can be defined over $(\mathbb{I}; <_1, =_1, <_2, =_2)$ by the formula $a =_1 b \wedge a <_2 b$.
Similarly, the relation $\fa$ can be defined over $(\mathbb{I}; <_1, =_1, <_2, =_2)$ by the formula $a =_2 b \wedge b <_1 a$.
The relations in $({\mathbb I};<_1,=_1,<_2,=_2)$ can be primitively positively defined in $(\mathbb I; \sa, \fa)$ as follows.
\begin{align*}
=_1 & = \big\{(a,b) \in \mathbb{I}^2 \mid \exists c \big (
c \mathrel{\sa} a \wedge c \mathrel{\sa} b \big ) \big\}, \\
=_2 & = \big\{(a,b) \in \mathbb{I}^2 \mid \exists c \big (
c \mathrel{\fa} a \wedge c \mathrel{\fa} b \big ) \big\}, \\
<_1 & = \big\{(a,b) \in \mathbb{I}^2 \mid \exists c,d \big (a =_1 c \wedge d \mathrel{\fa} c \wedge d \mathrel{\sa} b \big ) \big\}, \\
<_2 & = \big\{(a,b) \in \mathbb{I}^2 \mid \exists c,d \big (b =_2 c \wedge d \mathrel{\sa} c \wedge d \mathrel{\fa} a \big ) \big\}. 
\end{align*}
It follows that the structures $(\mathbb I; \sa, \fa)$ and $({\mathbb I};<_1,=_1,<_2,=_2)$ are primitively positively interdefinable.

\medskip 

\noindent
{\bf Claim.} The structures $({\mathbb I};<_1,=_1,<_2,=_2)$ and $({\mathbb Q}^2;<_1,=_1,<_2,=_2)$ are isomorphic. 

\smallskip

\noindent
We prove the statement by a back-and-forth argument. Suppose that $i$ is an isomorphism between a finite substructure $\bA$ of $({\mathbb I};<_1,=_1,<_2,=_2)$ and a finite substructure $\bA'$ of $({\mathbb Q}^2;<_1,=_1,<_2,=_2)$. The sets $A$ and $A'$ denote (as usual) the domains of $\bA$ and $\bA'$, respectively.
Let
\begin{align*}
A_1 & \coloneqq \{ p \in {\mathbb Q} \mid \exists q \: (p,q) \in A\} & 
A'_1 & \coloneqq \{ p \in {\mathbb Q} \mid \exists q \: (p,q) \in A'\} 
\\ 
A_2 & \coloneqq \{ q \in {\mathbb Q} \mid \exists p \: (p,q) \in A\}
& A'_2 & \coloneqq \{ q \in {\mathbb Q} \mid \exists p \: (p,q) \in A'\}.
\end{align*}
Define $i_1 \colon A_1 \to A_1'$ by 
setting $i_1(p) = p'$ if there exist $q,q' \in {\mathbb Q}$ such that $i(p,q) = (p',q')$. 
Similarly, define $i_2 \colon A_2 \to A_2'$ by 
setting $i_2(q) = q'$ if there exist $p,p' \in {\mathbb Q}$ such that $i(p,q) = (p',q')$. 
Since $i$ and $i^{-1}$ are isomorphisms, $i_1$ and $i_2$ and their inverses are well-defined bijections and preserve $<$. By the homogeneity of $({\mathbb Q};<)$, there exist automorphisms $\alpha_1$ and $\alpha_2$ that extend $i_1$ and $i_2$.

For going forth, 
let $(a,b) \in {\mathbb I} \setminus A$.
Then $i$ is extended
by setting  $i(a,b) \coloneqq (\alpha_1(a),\alpha_2(b))$. Since $\alpha_1$ and $\alpha_2$ preserve $<$, the extended map $i$  preserves the relations $<_1, =_1, <_2, =_2$.
The operations $\alpha_1$ and $\alpha_2$ are automorphisms of $(\Q;<)$, hence $i$ is injective and $i^{-1}$ preserves $<_1, =_1, <_2, =_2$. Therefore, the extension of $i$ is an isomorphism.

For going back, let $(a',b') \in {\mathbb Q}^2 \setminus A'$.  Then $i$ is extended
by setting  $i(\alpha_{1}^{-1}(a'),\alpha_{2}^{-1}(b')) \coloneqq (a',b')$; to prove that the extension is an isomorphism, we may argue similarly as in the forth step.
Alternating between going back and going forth, we may thus construct an isomorphism between the two countable structures $({\mathbb I};<_1,=_1,<_2,=_2)$ and $({\mathbb Q}^2;<_1,=_1,<_2,=_2)$.
$\hfill \diamond$

\medskip 

\noindent
The claim implies that $({\mathbb Q}^2;<_1,=_1,<_2,=_2)$ is isomorphic to the structure $({\mathbb I};<_1,=_1,<_2,=_2)$, which is primitively positive interdefinable with $(\mathbb I; \sa, \fa)$, i.e.,
$({\mathbb I};\sa,\fa)$ and $({\mathbb Q}^2;<_1,=_1,<_2,=_2)$
are primitively positively bi-definable. 
\end{proof} 

With the aid of Lemma~\ref{lem:bi-inter-sf}, we can now proceed
in a way that is similar to the proof of Theorem~\ref{thm:allen-m}.

\begin{theorem}\label{thm:binary-sf}
Let $\bD$ be a first-order expansion of $({\mathbb I};\sa,\fa)$.
 Then exactly one of the following cases applies. 

\begin{itemize}
\item
$\bD$ has a pwnu polymorphism. If $\bD$ has a 
finite relational signature, then 
$\Csp(\bD)$ is in P.
\item
$\Pol(\bD)$ has a uniformly continuous minor-preserving map to $\Pol(K_3)$. In this case, $\bD$ has a finite-signature reduct whose CSP is NP-complete.
\end{itemize}
\end{theorem}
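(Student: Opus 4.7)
The plan is to reduce the theorem to Theorem~\ref{thm:main} (the classification for first-order expansions of $(\mathbb Q;<) \boxtimes (\mathbb Q;<)$), using the primitive positive bi-definability established in Lemma~\ref{lem:bi-inter-sf}. This is considerably simpler than the route via Theorem~\ref{thm:transfer} used in the proof of Theorem~\ref{thm:allen-m}, because bi-definability (as opposed to mere bi-interpretability) lets us transport relational structures along an isomorphism, without needing to produce a matching expansion via the transfer machinery.

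First, I would extract from the proof of Lemma~\ref{lem:bi-inter-sf} the isomorphism $\iota$ constructed between $(\mathbb I; <_1, =_1, <_2, =_2)$ and $(\mathbb Q^2; <_1, =_1, <_2, =_2)$. Given a first-order expansion $\bD$ of $(\mathbb I; \sa, \fa)$, the primitive positive interdefinability shown in the lemma means that $\bD$ is also a first-order expansion of $(\mathbb I; <_1, =_1, <_2, =_2)$. Transporting its relations along $\iota$ then produces a first-order expansion $\bD'$ of $(\mathbb Q^2; <_1, =_1, <_2, =_2)$ with the same signature as $\bD$, and $\iota: \bD \to \bD'$ is an isomorphism. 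In particular, $\Csp(\bD) = \Csp(\bD')$, and conjugation by $\iota$ gives a topological isomorphism of polymorphism clones $\Pol(\bD) \cong \Pol(\bD')$.

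Now I would apply Theorem~\ref{thm:main} to $\bD'$. In the tractable case, $\bD'$ has a pwnu polymorphism and, for finite signature, $\Csp(\bD')$ is in P. Transporting the pwnu polymorphism along $\iota$ (identities such as \eqref{eq:pwnu} are preserved by clone isomorphisms, and so are the witnessing endomorphisms) gives a pwnu polymorphism of $\bD$, and the equality $\Csp(\bD) = \Csp(\bD')$ transports the polynomial-time algorithm. In the hard case, $\Pol(\bD')$ admits a uniformly continuous minor-preserving map to $\Pol(K_3)$; composing with the clone isomorphism yields such a map from $\Pol(\bD)$, and by Corollary~\ref{cor:hard} some finite-signature reduct of $\bD$ has an NP-complete CSP.

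For the disjointness of the two cases, I would invoke Corollary~\ref{cor:excl}: $\bD$ is (via the identity, since $\iota$ is a primitive positive bi-definition) mutually primitively positively interpretable with $\bD'$, which is a first-order expansion of $(\mathbb Q;<)^{(2)}$, so a pwnu polymorphism of $\bD$ rules out a uniformly continuous minor-preserving map from $\Pol(\bD)$ to $\Pol(K_3)$. I do not foresee any real obstacle: all the substantive work has already been done in Lemma~\ref{lem:bi-inter-sf} and Theorem~\ref{thm:main}, and the remaining steps are routine transports of structural properties along an isomorphism.
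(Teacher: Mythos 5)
Your proposal takes a genuinely different---and arguably cleaner---route than the paper's. The paper constructs its first-order expansion $\bC$ of $(\mathbb Q^2;<_1,=_1,<_2,=_2)$ by feeding the bi-interpretability (extracted from Lemma~\ref{lem:bi-inter-sf}) into the general classification-transfer result (Theorem~\ref{thm:transfer}), just as in the proof of Theorem~\ref{thm:allen-m}. You instead exploit the stronger fact that Lemma~\ref{lem:bi-inter-sf} establishes pp-\emph{bi-definability}: it hands you an explicit structure isomorphism $\iota$, which lets you transport relations and polymorphisms directly and bypass the transfer machinery altogether. This avoids one layer of abstraction at no real cost; the paper's route, on the other hand, is uniform with its treatment of $({\mathbb I};\ma)$ and the Block Algebra, where only bi-interpretability (not bi-definability) is available.

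There is, however, a small but genuine gap in the execution. After transporting the relations of $\bD$ along $\iota$, the resulting structure $\iota(\bD)$ has domain $\mathbb Q^2$, but its signature is that of $\bD$: its relations are $\iota(\sa)$, $\iota(\fa)$, and the transports of whatever else $\bD$ contains---\emph{not} $<_1,=_1,<_2,=_2$. So $\iota(\bD)$ is a first-order \emph{reduct} of $(\mathbb Q^2;<_1,=_1,<_2,=_2)$, not a first-order \emph{expansion} of it, and Theorem~\ref{thm:main} does not apply to it as written. (Your earlier assertion that ``$\bD$ is also a first-order expansion of $(\mathbb I;<_1,=_1,<_2,=_2)$'' carries the same imprecision.) The fix is routine but needs to be made: let $\bD''$ be $\iota(\bD)$ with the relations $<_1,=_1,<_2,=_2$ adjoined; since these are primitively positively definable from $\iota(\sa),\iota(\fa)$ (the explicit formulas are in the proof of Lemma~\ref{lem:bi-inter-sf}), one has $\Pol(\bD'')=\Pol(\iota(\bD))$, and if $\bD$ has finite signature then $\Csp(\bD'')$ and $\Csp(\iota(\bD))=\Csp(\bD)$ are polynomial-time equivalent by Lemma~\ref{lem:pp-red}. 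Apply Theorem~\ref{thm:main} to $\bD''$. Moreover, $\bD$ and $\bD''$ are mutually primitively positively interpretable via $\iota$ and $\iota^{-1}$ (the preimages of the relations of one are pp-definable in the other), so Corollary~\ref{cor:excl} is indeed available for disjointness. With that correction inserted, the rest of your transport of the pwnu polymorphism, the minor-preserving map, and the tractability/hardness statements along the clone isomorphism is sound.
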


\begin{proof}
There is a first-order expansion $\bC$ of $({\mathbb Q}^2;<_1,=_1,<_2,=_2)$ such that $\bD$ has a primitive positive interpretation in $\bC$ and vice versa,
by Lemma~\ref{lem:bi-inter-sf} together with Theorem~\ref{thm:transfer}. 
If $\Pol(\bD)$ has a uniformly continuous minor-preserving map to $\Pol(K_3)$, then $\bD$ has a finite-signature reduct whose $\Csp$ is NP-complete by Corollary~\ref{cor:hard}. Otherwise, since there is a uniformly continuous minor-preserving map from $\Pol(\bD)$ to $\Pol(\bC)$ by Theorem~\ref{thm:wonderland}, $\Pol(\bC)$ does not have a uniformly continuous minor-preserving map to $\Pol(K_3)$ as well. By Theorem~\ref{thm:main}, $\bC$ has a pwnu polymorphism and $\Csp(\bC)$ is in P if $\bC$ has a finite signature. Since primitive positive interpretations preserve the existence of pwnu polymorphisms, $\bD$ has a pwnu polymorphism as well. Moreover, if $\bD$ has a finite signature, then $\bC$ can be assumed to have a finite signature and $\Csp(\bD)$ is in P by Proposition~\ref{prop:pp-int-reduce}.
By Corollary~\ref{cor:excl}, the two cases in the statement of the theorem are mutually exclusive.
\end{proof}

We note that relations $\sa$ and $\fa$ are primitively positively definable in $\{\ma\}$ but $\ma$ is not primitively positively definable in $\{\sa,\fa\}$, so
Theorem~\ref{thm:binary-sf} is incomparable to Theorem~\ref{thm:allen-m}.

\subsection{Block Algebra}
\label{sect:rect}

We will now study the $n$-{\em dimensional block algebra}
($\mathfrak{BA}_n$) by Balbiani et al.~\cite{BlockAlgebra}. 
This formalism has become widespread since it can capture directional
information in spatial reasoning, something that the 
classical RCC formalisms~\cite{RandellCuiCohn} cannot.
Let $n \geq 1$ be an integer. The
$n$-dimensional block algebra has 
the domain ${\mathbb I}^{n}$. 
For relations $R^1,\dots,R^n$ from the interval algebra, we write
\[\{((x_1,\dots,x_{n}),(y_1,\dots,y_{n})) \in ({\mathbb I}^{n})^2  \; | \; x_i \mathrel{R^i} y_i, \; 1 \leq i \leq n\}.\] 
The structure $\mathfrak{BA}_n$ contains all such relations, and we say that the relation $(R^1|R^2|\dots|R^n)$ is {\em basic} if $R^1,\dots,R^n$ are basic relations in the interval
algebra.
We note that $\mathfrak{BA}_1$ is the interval algebra and that $\mathfrak{BA}_2$ is often referred
to as the
{\em rectangle algebra} (RA)~\cite{Guesgen,MukerjeeJ90}.

We begin by studying first-order expansions of $({\mathbb I};{\sf m}) \boxtimes ({\mathbb I};{\sf m}) = ({\mathbb I}^2;{\sf m}_1,=_1,{\sf m}_2,=_2)$.
It is easy to see that the relation $=_i$ is primitively positively definable by $\ma_i$, hence it is equivalent to study the first-order expansions of the structure $({\mathbb I}^2;{\sf m}_1,{\sf m}_2)$.
Note that the relation ${\sf m}_1$ and ${\sf m}_2$ over ${\mathbb I}^2$ 
can be written as $({\sf m}|\top)$ and $(\top|{\sf m})$ respectively in the terminology of the Block Algebra. 
Also note that ${\sf m}_1$ and ${\sf m}_2$ are primitively positively definable over the basic relations of the rectangle algebra: 
for example, $\exists z(x \mathrel{({\sf m}|{\sf p})} z \wedge y \mathrel{(\equiv|{\sf p})} z)$ is equivalent to $x \mathrel{({\sf m}|\top)} y$.
The fact that every basic relation in the interval algebra has a primitive positive definition over $({\mathbb I}; {\sf m})$~\cite{Allen:Hayes:ijcai85} now immediately implies that
every RA relation has a primitive positive definition over $({\mathbb I}^2;\ma_1,\ma_2)$.
Hence, the results below imply a classification of the Rectangle Algebra containing the basic relations.  

\begin{theorem}\label{thm:rect}
Let $\bD$ be a first-order expansion of the structure 
$({\mathbb I}^2;\ma_1,\ma_2)$.
Then there exists a first-order expansion $\bC$ of $({\mathbb Q}^2;<_1,=_1,<_2,=_2)$ such that 
$\bD$ has a 2-dimensional primitive positive interpretation in $\bC$
and $\bC$ has a 1-dimensional primitive positive interpretation in $\bD$. 

Furthermore, exactly one of the following two cases applies. 
\begin{itemize}
\item $\bD$ has a pwnu polymorphism. If the signature of $\bD$ is finite, then $\Csp(\bD)$ is in P. 
\item There exists a uniformly continuous minor-preserving map from $\Pol(\bD)$ to $\Pol(K_3)$ and $\bD$ has a finite-signature reduct whose CSP is NP-complete.
\end{itemize}
\end{theorem}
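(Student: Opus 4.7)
The plan is to reduce the theorem to Theorem~\ref{thm:main} via the classification transfer machinery of Theorem~\ref{thm:transfer}, and the core technical step is to prove that $({\mathbb I}^2;\ma_1,\ma_2) = ({\mathbb I};\ma)\boxtimes({\mathbb I};\ma)$ and $({\mathbb Q}^2;<_1,=_1,<_2,=_2) = ({\mathbb Q};<) \boxtimes ({\mathbb Q};<)$ are primitively positively bi-interpretable by lifting the bi-interpretation of Example~\ref{ex:allen-bi} to the algebraic product. Concretely, I would take the 2-dimensional interpretation $I'$ of $({\mathbb I}^2;\ma_1,\ma_2)$ in $({\mathbb Q}^2;<_1,=_1,<_2,=_2)$ that sends a pair $(a,b)\in (\Q^2)^2$ with $a<_1 b$ and $a<_2 b$ to the element $((a_1,b_1),(a_2,b_2))\in \mathbb I^2$; and the 1-dimensional interpretation $J'$ of $({\mathbb Q}^2;<_1,=_1,<_2,=_2)$ in $({\mathbb I}^2;\ma_1,\ma_2)$ that sends $(u,v)\in \mathbb I^2$ to $(u_1,v_1)\in \Q^2$ (projecting each interval to its left endpoint). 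Checking primitive positivity of both interpretations is routine and parallels Example~\ref{ex:allen-bi} componentwise: for example, the preimage of $\ma_1$ under $I'$ is defined by $b=_1 a'$ together with the domain conditions, and the preimage of $<_1$ under $J'$ is defined by $\exists p\, \exists q \, (p \ma_1 (u,v) \wedge p \ma_1 q \wedge q \ma_1 (u',v'))$, mirroring the pp definition of $<$ in $(\mathbb I; \ma)$ used in Example~\ref{ex:allen-bi}.

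The key pp-homotopy requirement then is to verify that $I' \circ J'$ is pp-homotopic to the identity interpretation on $({\mathbb I}^2;\ma_1,\ma_2)$. A direct computation gives $I'\circ J'\big((u,v),(u',v')\big) = \big((u_1,u'_1),(v_1,v'_1)\big)$, so the pp-homotopy relation asks that $w,x \in \mathbb I$ satisfy $w_1=u_1$, $w_2=u'_1$, $x_1=v_1$, $x_2=v'_1$. Each of these endpoint equalities can be expressed primitively positively in $({\mathbb I}^2;\ma_1,\ma_2)$ by componentwise application of the pp definitions of endpoint equality used in Example~\ref{ex:allen-bi}: the equality $w_2=u'_1$ becomes simply $(w,x)\ma_1 (u',v')$, while $w_1=u_1$ becomes $\exists r\,(r\ma_1(w,x)\wedge r\ma_1(u,v))$, and similarly for the two remaining equalities using $\ma_2$. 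Theorem~\ref{thm:transfer} with the roles of $\bC$ and $\bD$ exchanged then yields the first statement of the theorem: for every first-order expansion $\bD$ of $({\mathbb I}^2;\ma_1,\ma_2)$ there is a first-order expansion $\bC$ of $({\mathbb Q}^2;<_1,=_1,<_2,=_2)$ mutually pp-interpretable with $\bD$ via $I'$ and $J'$.

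For the dichotomy, I would apply Theorem~\ref{thm:main} to $\bC$. If $\Pol(\bC)$ has a uniformly continuous minor-preserving map to $\Pol(K_3)$, composing with the uniformly continuous clone homomorphism $\Pol(\bD)\to\Pol(\bC)$ from Lemma~\ref{lem:clone-homo} shows that $\Pol(\bD)$ does as well, and Corollary~\ref{cor:hard} provides a finite-signature reduct of $\bD$ with NP-complete CSP. Otherwise, Theorem~\ref{thm:main} gives a pwnu polymorphism of $\bC$; composing with the uniformly continuous clone homomorphism $\Pol(\bC)\to\Pol(\bD)$ and using that clone homomorphisms preserve pwnu identities yields a pwnu polymorphism of $\bD$. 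If $\bD$ has finite signature, one can choose $\bC$ to have finite signature as well, and $\Csp(\bD)$ reduces to $\Csp(\bC)$ in polynomial time by Proposition~\ref{prop:pp-int-reduce}, which is in P by Theorem~\ref{thm:main}. Disjointness of the two cases follows from Corollary~\ref{cor:excl}, whose assumptions are met because $\bD$ is mutually pp-interpretable with a first-order expansion of $({\mathbb Q};<)^{(2)}$.

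The main obstacle is the careful bookkeeping in the pp-homotopy verification for $I'\circ J'$: writing out each endpoint-equality condition on the elements of $\mathbb I^2 \subseteq (\Q^2)^2$ and observing that each is expressible via the same pp-patterns that Example~\ref{ex:allen-bi} exhibits for the single-dimension case, once one applies them along the $\ma_1$ or $\ma_2$ axis as appropriate. Everything downstream is standard application of the transfer results from Section~\ref{sect:transfer} and our classification Theorem~\ref{thm:main}.
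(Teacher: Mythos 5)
Your proof is correct and takes essentially the same approach as the paper: same interpretations $I'$ and $J'$ (the paper's $I$ and $J$), the same reduction to Example~\ref{ex:allen-bi} carried out componentwise, the same pp-homotopy verification for $I'\circ J'$, and the same downstream use of Theorem~\ref{thm:transfer}, Theorem~\ref{thm:main}, Lemma~\ref{lem:clone-homo}, Corollary~\ref{cor:hard}, Proposition~\ref{prop:pp-int-reduce}, and Corollary~\ref{cor:excl} for the dichotomy and its disjointness.
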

\begin{proof}
For the first part of the statement we apply Theorem~\ref{thm:transfer}; 
so it suffices to prove that 
$({\mathbb I}^2;\ma_1,\ma_2)$ and $({\mathbb Q}^2;<_1,=_1,<_2,=_2)$ are primitively positively bi-interpretable via interpretations of dimension 2 and 1, respectively. This is basically the primitive positive bi-interpretation of $({\mathbb I};\ma)$ and $({\mathbb Q};<)$ from Example 3.3.3 in~\cite{Book} performed in each dimension separately. 
\begin{itemize}
\item There is a 2-dimensional interpretation 
$I$ of $({\mathbb I}^2;\ma_1,\ma_2)$ in $({\mathbb Q}^2;<_1,=_1,<_2,=_2)$
whose domain $U \subseteq (\mathbb Q^2)^2$ has the primitive positive definition
$\delta(a,b)$ given by $a <_1 b \wedge a<_2 b$. 
The interpretation $I \colon U \to {\mathbb I}^2$ is given by 
$$((a_1,a_2),(b_1,b_2)) \mapsto ((a_1,b_1),(a_2,b_2)).$$
The relation
$$I^{-1}(\ma_i) = \{((a,b), (c,d)) \mid I(a,b) \mathrel{\ma_i} I(c,d) \} \subseteq U^{2}$$
has the primitive positive definition
$b =_i c$ in $({\mathbb Q}^2;<_1,=_1,<_2,=_2)$.
The relation $$I^{-1}((\equiv|\equiv)) = \{((a,b), (c,d)) \mid I(a,b) \mathrel{(\equiv|\equiv)} I(c,d) \} \subseteq U^2$$
has the primitive positive definition $a =_1 c \wedge a =_2 c \wedge b =_1 d \wedge b =_2 d$ in $({\mathbb Q}^2;<_1,=_1,<_2,=_2)$.

\item There is a one-dimensional 
interpretation $J:\mathbb{I}^2 \rightarrow \Q^2$ of $({\mathbb Q}^2;<_1,=_1,<_2,=_2)$
in $({\mathbb I}^2;\ma_1,\ma_2)$
given by \[((p_1,p_2),(q_1,q_2)) \mapsto (p_1,q_1).\]
The relation 
\[ J^{-1}(<_i) = \{ (x,y) \mid J(x) <_i J(y)\} \subseteq (\mathbb{I}^2)^2 \]
has the primitive positive definition 
$$ \exists u,v (u \mathrel{\ma_i} x \wedge u \mathrel{\ma_i} v \wedge v \mathrel{\ma_i} y)$$
in $({\mathbb I}^2;\ma_1,\ma_2)$.
The relation $J^{-1}(=_i)$ has the primitive positive definition 
\[\phi_i (x,y) \coloneqq \exists u (u \mathrel{\ma_i} x \wedge u \mathrel{\ma_i} y)\]
in $({\mathbb I}^2;\ma_1,\ma_2)$
so $J^{-1}(=)$
has the definition $\phi_1(x,y) \wedge \phi_2(x,y)$.
\item $J \circ I$ is pp-homotopic to the identity interpretation: 
We have $$J(I(a,b)) = c \text{ if and only if } a = c.$$ 
\item $I \circ J$ is pp-homotopic to the identity interpretation:
Note that $\phi_i((x^1, x^2), (y^1, y^2))$ defines the relation
\[\{((x^1,x^2),(y^1,y^2)) \mid x^i =_1 y^i \} \subseteq (\mathbb{I}^2)^2\]
in $({\mathbb I};\ma_i)$. 
If $x=((x^1_1, x^1_2),(x^2_1, x^2_2))$ and $y=((y^1_1, y^1_2),(y^2_1, y^2_2))$ are elements of ${\mathbb I}^2$, then
\[I(J(x),J(y))=I((x^1_1, x^2_1),(y^1_1,y^2_1))= ((x^1_1, y^1_1), (x^2_1,y^2_1)).\]
Therefore we have $I(J(x),J(y)) \mathrel{(\equiv|\equiv)} z$ if and only if
\[\phi_1(x,z) \wedge z \mathrel{\ma_1} y \wedge \phi_2(x,z) \wedge z \mathrel{\ma_2} y.\]
\end{itemize}
This concludes the proof of the first statement. 

To prove the second statement, suppose that 
there is no uniformly continuous minor-preserving map from $\Pol(\bD)$ to $\Pol(K_3)$---otherwise, we are done by Corollary~\ref{cor:hard}. 
Since there is a primitive positive interpretation of $\bC$ in $\bD$, there is a uniformly continuous clone homomorphism from $\Pol(\bD)$ to $\Pol(\bC)$  by Lemma~\ref{lem:clone-homo}, and there cannot exist a uniformly continuous minor-preserving map from $\Pol(\bC)$ to $\Pol(K_3)$. It now follows 
from 
Theorem~\ref{thm:main} that $\Pol(\bC)$ has a pwnu polymorphism and if $\bC$ has a finite relational signature then $\Csp(\bC)$ is in P. 
By the first statement, there is also a primitive positive interpretation of $\bD$ in $\bC$.
Therefore, 
the polynomial-time tractability of $\Csp(\bD)$ in the finite signature case follows from 
Lemma~\ref{lem:pp-construct-reduce}.
Moreover, there is a clone homomorphism from 
$\Pol(\bC)$ to $\Pol(\bD)$, again by Lemma~\ref{lem:clone-homo}. 
Therefore, $\bD$ has a pwnu polymorphism as well. 
By Corollary~\ref{cor:excl}, the two cases in the statement of the theorem are mutually exclusive.
\end{proof}

We now consider binary first-order expansions of $({\mathbb I}^2;\ma_1,\ma_2)$.
The proof combines arguments from Theorem~\ref{thm:allen-binary} and Theorem~\ref{thm:rect}.

\begin{theorem}\label{thm:proof-of-conjecture}
Let $\bB$ be a binary first-order expansion of $({\mathbb I}^2;\ma_1,\ma_2)$. Then exactly one of the following cases applies. 
\begin{itemize}
\item Every relation of $\bB$, viewed as a relation of arity $8$ over ${\mathbb Q}$, has an Ord-Horn definition.
In this case, $\bB$ has a pwnu polymorphism and 
$\Csp(\bB)$ is in P. 
 
\item $\Pol(\bB)$ has a uniformly continuous minor-preserving map to $\Pol(K_3)$ and $\Csp(\bB)$ is NP-complete. 
\end{itemize}
\end{theorem}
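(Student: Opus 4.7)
The plan is to combine the reduction to a first-order expansion of $(\Q^2;<_1,=_1,<_2,=_2)$ provided by Theorem~\ref{thm:rect} with a reducedness analysis in the spirit of the proof of Theorem~\ref{thm:allen-binary}.

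If $\Pol(\bB)$ admits a uniformly continuous minor-preserving map to $\Pol(K_3)$, then NP-completeness of $\Csp(\bB)$ follows from Corollary~\ref{cor:hard}, since $\bB$ has a binary (hence finite) signature. Otherwise, Theorem~\ref{thm:rect} supplies a first-order expansion $\bC$ of $(\Q^2;<_1,=_1,<_2,=_2)$ that is mutually primitively positively interpretable with $\bB$, equips $\bB$ with a pwnu polymorphism, and shows $\Csp(\bB) \in$~P. By Lemma~\ref{lem:clone-homo}, $\Pol(\bC)$ likewise admits no uniformly continuous minor-preserving map to $\Pol(K_3)$, so by Theorem~\ref{thm:main} each $\theta_i(\Pol(\bC))$ contains one of $\min_3,\mx_3,\mi_3,\lele_3$ or a dual. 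Since the dual of an Ord-Horn clause is, after renaming variables, again Ord-Horn, repeated application of Lemma~\ref{lem:flip} to each dimension lets me assume that no duals are needed.

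Now fix any binary relation $R\subseteq(\mathbb{I}^2)^2$ of $\bB$. Via the $2$-dimensional interpretation $I$ from Theorem~\ref{thm:rect}, the preimage $R'\coloneqq I^{-1}(R)$ is a $4$-ary relation of $\bC$ on variables $u,v,w,z\in\Q^2$ subject to the implicit constraints $u<_i v \wedge w<_i z$ for $i\in\{1,2\}$. According to which pair of operations lies in $(\theta_1(\Pol(\bC)),\theta_2(\Pol(\bC)))$---one of the four combinations $(\pp,\pp)$, $(\pp,\lele)$, $(\lele,\pp)$, $(\lele,\lele)$---I apply Proposition~\ref{prop:factor}, Proposition~\ref{prop:syntax-4} (or its symmetric twin with dimensions swapped), or Proposition~\ref{prop:pp-pow} to obtain a defining conjunction for $R'$ whose clauses are each either of the $\pp$-style form $\bigvee_i y_i\neq_q x \vee \bigvee_j z_j\leq_q x$ or of the $\lele$-Horn-style form $\bigvee_i y_i\neq_q x \vee \bigvee_j z_j<_q z_0\vee (z_0{=_q}z_1{=_q}\cdots{=_q}z_\ell)$, possibly augmented by $\neq_{3-q}$-literals from the other dimension. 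Expanding each atom $x\circ_p y$ into $x_p\circ y_p$ yields an $8$-ary definition of $R$ over $\Q$.

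The main step is to show that, after reducing, each such clause contains at most one positive literal and is therefore Ord-Horn. Here the reducedness argument from the proof of Theorem~\ref{thm:allen-binary} applies essentially verbatim: one case-splits on which of the four coordinates $u_p,v_p,w_p,z_p$ plays the role of the pivot $x$ (respectively $z_0$), and uses the constraints $u_p<v_p$ and $w_p<z_p$ to show that any second coordinate occurring among the $z_j$'s would force some literal to be tautological, contradictory, or implied by another literal of the same clause, contradicting reducedness. The same reasoning rules out two distinct coordinates from the same ordered pair appearing in the equality-chain disjunct $z_0{=_p}z_1{=_p}\cdots{=_p}z_\ell$, forcing $\ell\leq 1$. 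Disjointness of the two cases follows from Corollary~\ref{cor:excl} via Theorem~\ref{thm:rect}. The main technical hurdle is verifying this reducedness argument uniformly across all four polymorphism regimes and ensuring that the additional $\neq$-literals contributed by the ``other'' dimension in the weakly determined clauses do not interfere with the counting of positive literals.
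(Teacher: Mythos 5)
Your proposal is correct and follows essentially the same route as the paper: reduce via Theorem~\ref{thm:rect} to a first-order expansion $\bC$ of $(\Q^2;<_1,=_1,<_2,=_2)$, invoke Theorem~\ref{thm:main} and Lemma~\ref{lem:flip} to pin down the per-dimension polymorphism types, apply Propositions~\ref{prop:factor}/\ref{prop:syntax-4}/\ref{prop:pp-pow} according to the case, add the implied $<_i$-conjuncts coming from the domain of the interpretation, and run the reducedness count from Theorem~\ref{thm:allen-binary} to force each clause to have at most one positive literal before applying $\ve(\cdot)$. This is exactly the decomposition and the sequence of lemmas used in the paper's proof.
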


\begin{proof}
Let $\bB'$ be the first-order expansion of $({\mathbb Q}^2;<_1,=_1,<_2,=_2)$ such that the structure
$\bB$ has a 2-dimensional primitive positive interpretation $I$ in $\bB'$
and $\bB'$ has a 1-dimensional primitive positive interpretation in $\bB$ which exists by Theorem~\ref{thm:rect}. By Theorem~\ref{thm:transfer}, $I$ may be taken to be the same interpretation $I \colon U \to {\mathbb I}^2 $ as in the proof of Theorem~\ref{thm:rect}, where $U = \{ (a,b) \in \Q^2 \mid a <_1 b \wedge a<_2 b \}$ and $$I: ((a_1,a_2),(b_1,b_2)) \mapsto ((a_1,b_1),(a_2,b_2)).$$

If $\Pol(\bB)$ has a uniformly continuous minor-preserving map to $\Pol(K_3)$, then $\Csp(\bB)$ is NP-hard by Corollary~\ref{cor:hard}.
Otherwise, it follows from Lemma~\ref{lem:clone-homo} that $\Pol(\bB')$ does not have a uniformly continous minor-preserving map to $\Pol(K_3)$ and Theorem~\ref{thm:main}
implies that
for each $i\in \{1,2\}$ 
there exists $f_i \in \Pol(\bB')$ such that $\theta_i(f_i)$ equals $\min_3$, $\mx_3$, $\mi_3$, $\lele_3$, or one of their duals
and that $\Csp(\bB')$ is in P. By Proposition~\ref{prop:pp-int-reduce}, $\Csp(\bB)$ is in P, too. In this case, $\bB$ has a pwnu polymorphism by Theorem~\ref{thm:rect}; the theorem also implies that the two cases in the statement are mutually exclusive.

It remains to show
that every (binary) relation of $\bB$, considered as a relation of arity $8$ over ${\mathbb Q}$, has an Ord-Horn definition.
Let $R$ be a relation of $\bB$. Observe that it is sufficient to show that the $4$-ary relation $I^{-1}(R)$ has a definition $\phi$ that is a conjunction of clauses of the form
\begin{equation}\label{eq:ord-horn}
    x_1 \neq_{i_1} y_1 \vee \cdots \vee x_m \neq_{i_m} y_m \vee z_1 \; \circ \; z_0,
\end{equation}
where $i_j \in \{1,2\}$, $\circ \in \{<_1,\leq_1,=_1, <_2, \leq_2, =_2\}$, it is permitted that $m=0$ and
the last disjunct may be omitted. With this in mind, $\ve(\phi)$ (as defined just before Proposition~\ref{prop:pp-pow}) is the desired Ord-Horn definition of $R$ viewed as a relation of arity $8$ over $\Q$.
By Lemma~\ref{lem:flip}, we may focus on the situation when $\theta_i(f_i) \in \{\min_3, \mx_3, \mi_3,\lele_3\}$ (since an Ord-Horn definition  with reversed ordering in one of the dimensions results in an Ord-Horn definition again).

Let
$\phi(u_1, u_2, v_1, v_2)$ be the first-order definition of $I^{-1}(R)$ over $(\Q^2; <_1, =_1, <_2, =_2)$. By the definition of $I$, if $I(u_1, u_2)=u$, then $u_1 <_1 u_2 \wedge u_1 <_2 u_2$ and if $I(v_1, v_2)=v$, then $v_1 <_1 v_2 \wedge v_1 <_2 v_2$. Therefore, $\phi$ implies that the four conjuncts above hold.

Suppose first that $\theta_i(f_i)=\lele_3$, $i=1,2$, then, by Proposition~\ref{prop:closed} and Theorem~\ref{thm:lele}, $\theta_i(\Pol(\bB'))$ contains an $\lele$-operation for both $i$.
By Proposition~\ref{prop:pp-pow}, we may assume that $\phi$ is a conjunction of clauses of the form
$$x_1 \neq_{i_1} y_1 \vee \cdots \vee x_m \neq_{i_m} y_m \vee z_1 <_j z_0 \vee \cdots \vee z_{\ell} <_j z_0 \vee (z_0 =_j z_1 =_j \cdots =_j z_{\ell})$$
for $i_1,\dots,i_m,j \in \{1,2\}$ and 
where the last disjunct may be omitted. Since $\phi$ implies $u_1 <_1 u_2$, $u_1 <_2 u_2$, $v_1 <_1 v_2$ and $v_1 <_2 v_2$, we may add these conjuncts to $\phi$ without loss of generality; note that the formula is still of the required form. By an analogous argument as in the proof of Theorem~\ref{thm:allen-binary}, we can assume 
that $\phi$ is of the form
(\ref{eq:ord-horn}).

Next, let $\theta_1(f_1)\in \{\min_3, \mx_3, \mi_3 \}$ and $\theta_2(f_2)=\lele_3$. As in the previous paragraph, $\theta_2(\Pol(\bB'))$ contains an $\lele$-operation. Similarly, by Proposition~\ref{prop:closed} and Proposition~\ref{prop:pp}, $\theta_1(\Pol(\bB'))$ contains a $\pp$-operation.
By Proposition~\ref{prop:syntax-4}, $R$ may be defined by a conjunction of 
weakly $1$-determined clauses of the form 
\begin{align*}
u_1 \neq_2 v_1 \vee \cdots u_m \neq_2 v_m \vee y_1 \neq_1 x \vee \cdots \vee y_k \neq_1 x \vee z_1 \leq_1 x \vee \cdots \vee  z_l \leq_1 x
\end{align*}
together with $2$-determined clauses of the form
\begin{align*}
x_1 \neq_2 y_1 \vee \cdots \vee x_m \neq_2 y_m \vee z_1 <_2 z_0 \vee \cdots \vee z_{\ell} <_2 z_0 \vee (z_0 =_2 z_1 =_2 \cdots =_2 z_{\ell}).
\end{align*}
Again, we may add the implied conjuncts  $u_1 <_1 u_2$, $u_1 <_2 u_2$, $v_1 <_1 v_2$ and $v_1 <_2 v_2$ to the defining formula. Now we may use the same arguments as in the proof of Theorem~\ref{thm:allen-binary} to prove that each of the clauses may be taken to be a clause of the form (\ref{eq:ord-horn}). 
The proof with the two dimensions exchanged is analogous.

Finally, assume that $\theta_i(f_i)\in \{\min_3, \mx_3, \mi_3 \}$ for $i \in \{1,2\}$. We can reason
analogously to the previous paragraph and conclude that $\theta_i(\Pol(\bB'))$, $i \in \{1,2\}$, contains a $\pp$-operation. Furthermore, Theorem~\ref{thm:lele}
implies that we may assume that $\theta_i(\Pol(\bB'))$, $i \in \{1,2\}$, does not contain a $\lex$-operation (otherwise it would contain an $\lele$-operation and this case would be covered by one of the previous cases). By applying Proposition \ref{prop:factor} twice (the second time with the roles of the dimensions exchanged), every relation of $\bB'$ can be defined by a formula
$\psi_1 \wedge \psi_2$ such
that $\psi_i$ is a conjunction of clauses of the form $$ y_1 \neq_i x \vee \cdots \vee y_k \neq_i x \vee z_1 \leq_i x \vee \cdots \vee z_l \leq_i x. $$
As in the previous cases, we may add the conjuncts $u_1 <_1 u_2$, $u_1 <_2 u_2$, $v_1 <_1 v_2$ and $v_1 <_2 v_2$ to the formula $\psi_1 \wedge \psi_2$ without loss of generality. Now we may proceed analogously to the proof of Theorem~\ref{thm:allen-binary} and show that if the formula is reduced, it is of the form (\ref{eq:ord-horn}).
\end{proof}

Balbiani et al.~\cite{Balbiani:etal:ijcai99} have presented a tractable subclass---consisting of the so-called {\em strongly preconvex} relations---of the rectangle algebra. They write the following on page 447.

\begin{quotation}
The subclass generated by the set of the strongly preconvex relations is now the biggest known tractable
set of RA which contains the 169 atomic relations. An
open question is: is this subclass a maximal tractable
subclass which contains the atomic relations? 
\end{quotation}

We answer their
question affirmatively.
We know that every basic RA relation has a primitive positive definition in $({\mathbb I}^2;\ma_1,\ma_2)$, and
we observed in the beginning of this section that the relation
${\sf m}_1$
is primitively positively definable with the aid of the basic relations $({\sf m}|{\sf p})$ and $(\equiv \!|{\sf p})$, and
${\sf m}_2$ is analogously
primitively positively definable from $({\sf p}|{\sf m})$ and $({\pa}| \! \equiv)$.
We have thus proved (via Theorem~\ref{thm:proof-of-conjecture}) that the Rectangle Algebra contains a single maximal subclass
that is polynomial-time solvable and contains all basic relations. The relations in this subclass are
definable via Ord-Horn formulas, and
Balbiani et al.~\cite[Section 6.2]{BlockAlgebra} have proved that strongly preconvex relations coincide with Ord-Horn-definable relations.

We continue by analysing the $n$-dimensional block algebra when $n > 2$.
The approach is similar to the approach used for the rectangle algebra:
we use complexity transfer to deduce a classification for first-order expansions of 
$({\mathbb I}^n;\ma_1,\dots,\ma_n)$
from the classification for 
first-order expansions of
$(\mathbb Q^n;<_1,=_1,\dots,<_n,=_n)$. 
The relations $\ma_1,\dots,\ma_n$ are primitively positively definable from the basic relations of the $n$-dimensional block algebra, so we obtain in particular a classification
of the complexity of the CSP for all first-order expansions of the basic relations of the $n$-dimensional block algebra. 

\begin{theorem}\label{thm:block}
Let $\bD$ be a first-order expansion of the structure 
$({\mathbb I}^n;\ma_1,\dots,\ma_n)$. 
Then 
there exists a first-order expansion $\bC$ of $({\mathbb Q}^n;<_1,=_1,\dots,<_n,=_n)$ such that 
$\bD$ has a 2-dimensional primitive positive interpretation in $\bC$
and $\bC$ has a 1-dimensional primitive positive interpretation in $\bD$. 
Furthermore, exactly one of the following two cases applies. 
\begin{itemize}
  
\item 
$\bD$ has a pwnu polymorphism. 
If the signature of $\bD$ is finite, then $\Csp(\bD)$ is in P.
  \item There exists  a uniformly continuous minor-preserving map from $\Pol(\bD)$ to $\Pol(K_3)$ and $\bD$ has a finite-signature reduct whose $\Csp$ is NP-complete.
\end{itemize}
\end{theorem}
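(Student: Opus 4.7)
The plan is to mirror the proof of Theorem~\ref{thm:rect} almost verbatim, with the bi-interpretability construction applied coordinate-by-coordinate. First I would establish that $({\mathbb I}^n;\ma_1,\dots,\ma_n)$ and $({\mathbb Q}^n;<_1,=_1,\dots,<_n,=_n)$ are primitively positively bi-interpretable via interpretations of dimension $2$ and $1$, respectively, and then apply Theorem~\ref{thm:transfer} to obtain the existence of the first-order expansion $\bC$.

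Concretely, I would define a $2$-dimensional interpretation $I\colon U \to {\mathbb I}^n$ whose domain $U \subseteq ({\mathbb Q}^n)^2$ has the primitive positive definition $\bigwedge_{i=1}^n a_i <_i b_i$ and whose interpretation map sends $((a_1,\dots,a_n),(b_1,\dots,b_n)) \mapsto ((a_1,b_1),\dots,(a_n,b_n))$. The preimage of $\ma_i$ is primitively positively defined by $b_i =_i c_i$, and the preimage of $(\equiv\!|\cdots|\equiv)$ is defined by the conjunction $\bigwedge_i (a_i =_i c_i \wedge b_i =_i d_i)$. Symmetrically, the $1$-dimensional interpretation $J\colon {\mathbb I}^n \to {\mathbb Q}^n$ sends $((p_1,q_1),\dots,(p_n,q_n)) \mapsto (p_1,\dots,p_n)$, with $J^{-1}(<_i)$ defined by $\exists u,v (u \mathrel{\ma_i} x \wedge u \mathrel{\ma_i} v \wedge v \mathrel{\ma_i} y)$ and $J^{-1}(=_i)$ defined by $\phi_i(x,y) := \exists u (u \mathrel{\ma_i} x \wedge u \mathrel{\ma_i} y)$. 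Checking pp-homotopy of $J\circ I$ to the identity reduces to the identity $a_i = c_i$ in each coordinate, and for $I\circ J$ one uses $\bigwedge_i (\phi_i(x,z) \wedge z \mathrel{\ma_i} y)$, just as in the rectangle case.

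Having established the bi-interpretation with the same pair $(I,J)$ being mutually inverse witnesses, Theorem~\ref{thm:transfer} gives the desired $\bC$ and the first statement of the theorem. For the dichotomy, if $\Pol(\bD)$ admits a uniformly continuous minor-preserving map to $\Pol(K_3)$, then Corollary~\ref{cor:hard} yields a finite-signature reduct of $\bD$ whose CSP is NP-complete. Otherwise, Lemma~\ref{lem:clone-homo} applied to the primitive positive interpretation of $\bC$ in $\bD$ propagates this non-collapse to $\Pol(\bC)$, so Theorem~\ref{thm:n} (the $n$-dimensional classification of first-order expansions of $({\mathbb Q};<)^{(n)}$) yields that $\bC$ has a pwnu polymorphism and that $\Csp(\bC)$ is in P whenever the signature of $\bC$ is finite. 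The other direction of Lemma~\ref{lem:clone-homo}, applied to the interpretation of $\bD$ in $\bC$, transfers the pwnu polymorphism back to $\bD$, and Proposition~\ref{prop:pp-int-reduce} gives the polynomial-time reduction from $\Csp(\bD)$ to $\Csp(\bC)$ in the finite signature case. Disjointness of the two cases follows from Corollary~\ref{cor:excl}.

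The only step requiring real care is the bi-interpretation verification, since the relations $\ma_i$ act only within coordinate $i$; but this is precisely the feature that makes the two-dimensional construction in Theorem~\ref{thm:rect} lift diagonally to $n$ dimensions, so I expect no genuine obstacle beyond transcribing the formulas in each coordinate. Once the bi-interpretability is in place, the complexity dichotomy is a direct invocation of Theorem~\ref{thm:n} together with the classification transfer machinery, exactly as in the $n=2$ proof.
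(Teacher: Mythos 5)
Your proposal is correct and follows exactly the approach the paper intends: the paper's proof of Theorem~\ref{thm:block} is the one-line ``Straightforward generalisation of Theorem~\ref{thm:rect},'' and you have simply spelled out the coordinate-by-coordinate details of the bi-interpretation and the transfer machinery that that generalisation entails. (One tiny notational slip: the domain formula should read $\bigwedge_{i=1}^n a <_i b$, with $a,b$ the two $\mathbb{Q}^n$-coordinates of the interpretation, not $a_i <_i b_i$; the intent is clear and the argument is unaffected.)
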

\begin{proof}
Straightforward generalisation of Theorem~\ref{thm:rect}.
\end{proof}

It has been known for a long time that the set of
Ord-Horn-definable relations is a tractable fragment of the $n$-dimensional Block Algebra \cite{BlockAlgebra}. In that article (pp. 907--908), Balbiani et al. note the following.

\begin{quotation}
The problem of the maximality of this tractable subset [Ord-Horn] remains an open problem. Usually to prove the maximality of a fragment of a relational algebra an extensive machine-generated analysis is used. Because of the huge size ... we cannot proceed in the same way.
\end{quotation}

We answer this question in the affirmative: the subset of
relations in $\mathfrak{BA}_n$ that can be viewed as
arity-$4n$ relations with an
Ord-Horn definition,
is a maximal tractable subclass. Furthermore, it is the only maximal subclass that is tractable and contains all basic relations.
To see this, we proceed in the same way as in the analysis of the Rectangle Algebra. First of all, 
every basic relation in $\mathfrak{BA}_n$ has a
primitive positive definition in $({\mathbb I}^n;\ma_1,\dots,\ma_n)$, and
the relations $\ma_1,\dots,\ma_n$ are easily seen to be primitively positive definable in the basic relations of $\mathfrak{BA}_n$.
It follows from the corollary below that the only maximal subclass of $\mathfrak{BA}_n$
that is polynomial-time solvable and contains all basic relations is the Ord-Horn class.

\begin{corollary} \label{cor:block-binary}
Let $\bB$ be a binary first-order expansion of $(\mathbb{I}^n;\ma_1,\dots,\ma_n)$. Then exactly one of the 
following cases applies. 
\begin{itemize}

\item
Each relation in $\bB$, viewed as a relation of arity $4n$ over ${\mathbb Q}$, has an Ord-Horn definition.
In this case, $\bB$ has a pwnu polymorphism and 
$\Csp(\bB)$ is in P. 

\item $\Pol(\bB)$ has a uniformly continuous minor-preserving map to $\Pol(K_3)$ and $\bB$ has a finite-signature reduct whose $\Csp$ is NP-complete.
\end{itemize}
\end{corollary}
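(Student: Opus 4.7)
My plan is to mirror the strategy used for the rectangle algebra in Theorem~\ref{thm:proof-of-conjecture}, combining the complexity-transfer statement of Theorem~\ref{thm:block} with the syntactic descriptions from Section~\ref{sect:decomp} (applied in the $n$-dimensional form given by Propositions~\ref{prop:syntax-n} and~\ref{prop:strong-syntax-n}). By Theorem~\ref{thm:block}, there exists a first-order expansion $\bC$ of $(\mathbb{Q}^n; <_1,=_1,\ldots,<_n,=_n)$ together with a $2$-dimensional primitive positive interpretation $I$ of $\bB$ in $\bC$ and a $1$-dimensional primitive positive interpretation $J$ of $\bC$ in $\bB$; moreover $\bB$ has a pwnu polymorphism and $\Csp(\bB)$ is in P whenever the corresponding statements hold for $\bC$, and the two alternatives of Theorem~\ref{thm:block} are mutually exclusive. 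Thus everything except the Ord-Horn statement is immediate: the NP-complete case follows from Corollary~\ref{cor:hard}, while in the tractable case pwnu, tractability and mutual exclusion are inherited from $\bC$ via Lemma~\ref{lem:pp-construct-reduce} and Corollary~\ref{cor:excl} (applied to the bi-interpretation).

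Assuming the tractable case, Lemma~\ref{lem:clone-homo} and Theorem~\ref{thm:n} imply that for every $p\in\{1,\dots,n\}$ the clone $\theta_p(\Pol(\bC))$ contains one of $\min_3$, $\mx_3$, $\mi_3$ or $\lele_3$, or one of their duals. Using the $n$-dimensional analogue of Lemma~\ref{lem:flip} dimension-by-dimension (and the fact that Ord-Horn definability is preserved under reversing the order in any given coordinate), I may assume no duals appear. Now let $R$ be a binary relation of $\bB$, and let $\phi$ be a first-order definition of $I^{-1}(R)$ over $(\mathbb{Q}^n; <_1,=_1,\ldots,<_n,=_n)$ with four free variables $u_1,u_2,v_1,v_2$, each ranging over $\mathbb{Q}^n$. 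Recall that $I$ sends $((a_1,\dots,a_n),(b_1,\dots,b_n))\mapsto((a_1,b_1),\dots,(a_n,b_n))$, so the domain of $I$ is the set where $a_p<_p b_p$ for every $p$; hence $\phi$ implies $u_1^{(p)} <_p u_2^{(p)}$ and $v_1^{(p)} <_p v_2^{(p)}$ for every $p$, and these conjuncts may be added to $\phi$ without changing the relation defined.

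Applying Proposition~\ref{prop:strong-syntax-n} with $S=\{p : \theta_p(\Pol(\bC))\text{ contains }\lele_3\}$, I obtain a definition of $I^{-1}(R)$ as a conjunction of $S$-weakly $p$-determined clauses of the explicit forms~\eqref{eq:p-det-pp} and~\eqref{eq:ll} (the $2$-determined $\lele$-Horn clauses from Proposition~\ref{prop:strong-syntax-n}(3), with the subscript ranging over $S$). I then argue exactly as in the final part of the proof of Theorem~\ref{thm:allen-binary}, reducing each clause to one containing at most one $\{<_p,\le_p,=_p\}$-literal: in every such clause, the added constraints $u_1^{(p)}<_p u_2^{(p)}$ and $v_1^{(p)}<_p v_2^{(p)}$ force that at most one element of $\{u_1^{(p)},u_2^{(p)}\}$ and at most one of $\{v_1^{(p)},v_2^{(p)}\}$ can appear in the set $\{x,z_1,\dots,z_l\}$ (respectively $\{z_0,z_1,\dots,z_\ell\}$) of a reduced clause, since otherwise one of the literals would be either implied by or in contradiction with the domain constraint. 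The resulting formula, after applying the variable-expansion operator $\ve(\cdot)$, becomes a conjunction of Ord-Horn clauses over $(\mathbb{Q};<)$ in $4n$ free variables, which is the desired Ord-Horn definition of $R$ viewed as an arity-$4n$ relation over $\mathbb{Q}$. The main technical point is this last clause-by-clause reduction: it is essentially the same calculation as in Theorem~\ref{thm:proof-of-conjecture}, but one has to organise the case split uniformly across the $n$ dimensions and across the three syntactic forms of Proposition~\ref{prop:strong-syntax-n}, since different dimensions may fall into different cases simultaneously.
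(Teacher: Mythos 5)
Your proof is correct and follows essentially the same route as the paper: use Theorem~\ref{thm:block} to transfer to a first-order expansion of $(\mathbb{Q}^n;<_1,=_1,\ldots,<_n,=_n)$, derive the pwnu/P and minor-preserving-map/NP-complete alternatives from Theorem~\ref{thm:n} together with Corollary~\ref{cor:hard} and Lemma~\ref{lem:clone-homo}, then obtain the Ord-Horn normal form by applying Proposition~\ref{prop:strong-syntax-n}, adding the implied conjuncts $u_1<_j u_2$, $v_1<_j v_2$, and running the clause-by-clause reduction of Theorem~\ref{thm:allen-binary} before applying $\ve(\cdot)$. Two small remarks: you should explicitly verify the hypothesis of Proposition~\ref{prop:strong-syntax-n} that for $p\notin S$ the clone $\theta_p(\Pol(\bC))$ has a $\pp$-operation but no $\lex$-operation (it follows from Proposition~\ref{prop:pp} and the final statement of Theorem~\ref{thm:lele}, exactly as the paper spells out), and your reference to~\eqref{eq:ll} should really be to the $\lele$-Horn form in item~3 of Proposition~\ref{prop:strong-syntax-n} (the equation labelled \eqref{eq:ll} is its two-dimensional precursor in Proposition~\ref{prop:syntax-4}). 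Also note that the variables $u_1,u_2,v_1,v_2$ in the clauses range over $\mathbb{Q}^n$ and the literals involve $<_p$ etc.\ directly; your superscript notation $u_1^{(p)}$ is unnecessary and slightly misleading, though the intended argument is the right one.
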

\begin{proof}
Recall Theorem~\ref{thm:block} and let $\bB'$ be the first-order expansion of $({\mathbb Q}^n;<_1,=_1, \dots, <_n,=_n)$ such that $\bB$ has a 2-dimensional primitive positive interpretation $I$ in $\bB'$
and $\bB'$ has a 1-dimensional primitive positive interpretation in $\bB$. By combining Corollary~\ref{cor:hard}, Lemma~\ref{lem:clone-homo} and Theorem~\ref{thm:n}, we deduce by the same argumentation as in the proof of Theorem~\ref{thm:proof-of-conjecture} that if the statement in the second item does not hold, then $\bB'$ and $\bB$ have a pwnu polymorphism and $\Csp(\bB')$ and $\Csp(\bB)$ are in P. Moreover, in this case, Theorem~\ref{thm:n} implies that for every $p\in \{1, \dots, n\}$, there is an $f_p\in\Pol(\bB')$ such that $\theta_p(f_p)$ equals $\min_3$, $\mx_3$, $\mi_3$, $\lele_3$, or one of their duals.

It remains to show that if $\Csp(\bB)$ is in P, then every (binary) relation of $\bB$, considered as a relation of arity $4n$ over ${\mathbb Q}$, has an Ord-Horn definition.
Let $R$ be a relation of $\bB$. Observe that, as in the proof of Theorem~\ref{thm:proof-of-conjecture}, it is enough to show that the $4$-ary relation $I^{-1}(R)$ has a definition $\phi(u_1, u_2, v_1, v_2)$ that is a conjunction of clauses of the form
\begin{equation}\label{eq:ord-horn-n}
    x_1 \neq_{i_1} y_1 \vee \cdots \vee x_m \neq_{i_m} y_m \vee z_1 \; \circ \; z_0,
\end{equation}
where $i_j \in \{1,\dots, n\}$, $\circ \in \{<_1,\leq_1,=_1, \dots, <_n, \leq_n, =_n\}$, it is permitted that $m=0$ and
the last disjunct may be omitted; then $\ve(\phi)$ will be the desired Ord-Horn definition of $R$ viewed as a relation of arity $4n$ over $\Q$.

By Lemma~\ref{lem:flip}, we may focus on the situation that $f_p\in\{\min_3, \mx_3, \mi_3, \lele_3\}$ for every $p\in\{1,\dots,n\}$. Therefore, by Proposition~\ref{prop:pp} and Theorem~\ref{thm:lele}, there is a set $S\subseteq \{1,\dots,n\}$ such that, for each $p \in S$, $\theta_p(\Pol(\bB'))$ contains an $\lele$-operation and, for each $p\in \{1,\dots,n\}\setminus S$, $\theta_p(\Pol(\bB'))$ contains a $\pp$-operation but not a $\lex$-operation. We may therefore assume that $\phi$ has the syntactic form described in Proposition~\ref{prop:strong-syntax-n}. Moreover, we may assume that $\phi$ contains the conjuncts $u_1 <_j u_2$ and $v_1 <_j v_2$, $j=1,\dots,n$, since these are implied by $\phi$ (see the proof of Theorem~\ref{thm:proof-of-conjecture} for more details). For each of the two types of clauses that appear in $\phi$, we may use the same case distinction as in Theorems~\ref{thm:allen-binary}
and \ref{thm:proof-of-conjecture} to show that each of the clauses is of the form (\ref{eq:ord-horn-n}). This concludes the proof.
\end{proof}

Balbiani et al. \cite[p. 908]{BlockAlgebra} also raise the following question:

\begin{quotation}
...the question also arises as to how the qualitative constraints [Block Algebra] we have been considering could be integrated
into a more general setting to include metric constraints.
\end{quotation}

If one focuses on tractable subclasses that contain all basic relations, then such an integration is indeed possible.
Since our results imply that the relations in such a tractable subclass must be definable via Ord-Horn formulas, they can immediately be embedded into the metric framework
suggested by Jonsson and Bäckström~\cite{JonssonBaeckstroem} and Koubarakis~\cite{Koubarakis}.
Under the same assumptions, this holds for the cardinal direction calculus and Allen's Interval Algebra, too.

\section{Conclusions and Open Problems}
\label{sect:conc}
We proved that the CSPs for first-order expansions of $({\mathbb Q};<)^{(n)}$ satisfy a complexity dichotomy: they are in P or NP-complete. Using a general complexity transfer method, we prove that first-order expansions of the basic relations of the
cardinal direction calculus, Allen's Interval Algebra, and 
the $n$-dimensional block algebra have a CSP complexity dichotomy. Less obviously, the complexity transfer method can also be applied to show that first-order expansions of the relations ${\sf s}$ and ${\sf f}$ of Allen's Interval Algebra have a complexity dichotomy. 
All of the results can be specialised for binary signatures, in which case we obtain new and conceptually simple proofs of results that have first been shown with the help of a computer (Theorem~\ref{thm:allen-binary}) 
or that answer several questions from the literature (Section~\ref{sect:rect}). 

Our results also imply that the so-called \emph{meta-problem} of complexity classification is decidable: given finitely many first-order formulas that define a first-order expansion $\bD$ of one of the structures for which we obtained a complexity classification, one can effectively decide whether $\Csp(\bD)$ is in P or NP-complete.
This follows from the general fact that for homogeneous finitely bounded structures that are model-complete cores and have an extremely amenable automorphism group (all of these assumptions are satisfied by our structures) the condition of the tractability conjecture (see Corollary~\ref{cor:hard}) can be decided effectively (essentially by checking exhaustively for the existence of a \emph{diagonally canonical pseudo Siggers polymorphism}); since these results are not new we refer to~\cite[Section 11.6]{Book} for details. 

One may wonder about first-order reducts of 
$({\mathbb Q};<)^{(n)}$ 
rather than just first-order expansions.
Classifying the complexity of the CSP for this class of structures will be a challenging project. It is easy to see 
that every CSP for a finite-domain structure can be formulated in this way. Also all first-order reducts of the infinite Johnson graphs $J(\omega,n)$ fall into this class  (for example, the line graph of the countably infinite clique when $n=2$).
Such a project would also
include the following interesting classification problem.

The \emph{age} of a relational structure $\bB$ is the class of all finite structures that embed into $\bB$. It follows from Fra{\"i}ss\'e's theorem (or by a direct back-and-forth argument) that two homogeneous structures with the same age are isomorphic (see, e.g.~\cite[Theorem 6.1.2]{Hodges}).
Let $\bA_1$ and $\bA_2$ be homogeneous structures with disjoint relational signatures $\tau_1$ and $\tau_2$ and without algebraicity (see~\cite{Hodges}; the structure $({\mathbb Q};<)$ is an example of such a structure without algebraicity). 
It is well known that there exists an up to isomorphism unique countable homogeneous $(\tau_1 \cup \tau_2)$-structure whose age consists of all structures whose $\tau_1$-reduct is in the age of $\bA_1$ and whose $\tau_2$-reduct is in the age of $\bA_2$;
this structure is called the \emph{generic combination} of $\bA_1$ and $\bA_2$, and will be denoted by $\bA_1 * \bA_2$. 
It can be shown by a back-and-forth argument that the $\tau_1$-reduct of $\bA_1 * \bA_2$ is isomorphic to $\bA_1$ and the $\tau_2$-reduct is isomorphic to $\bA_2$. 
The notion of generic combinations can be defined also for $\omega$-categorical structures without algebraicity~\cite{BodirskyGreinerCombinations}: 
$\bA_1 * \bA_2$ is then 
defined as the $(\tau_1 \cup \tau_2)$-reduct of the generic combination of a homogeneous expansion of $\bA_1$ and a homogeneous expansion of $\bA_2$ (it can be shown that this is well-defined).
If $\bA_1$ and $\bA_2$ are first-order reducts of $({\mathbb Q};<)$
then the complexity of $\Csp(\bA_1 * \bA_2)$ has been classified recently~\cite{BodirskyGreinerRydval}. 
A complexity classification of $\Csp(\bB)$
for first-order reducts $\bB$ of $({\mathbb Q};<) * ({\mathbb Q};<)$, however, is open. 

\begin{proposition}
For every first-order reduct $\bB$ of $({\mathbb Q};<) * ({\mathbb Q};<)$, there exists a first-order reduct $\bC$ of
$({\mathbb Q};<) \boxtimes ({\mathbb Q};<)$
such that $\bC$ is homomorphically equivalent to 
$\bB$. 
\end{proposition}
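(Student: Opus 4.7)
The plan is to construct a particular first-order reduct $\bC_0$ of $\bP := ({\mathbb Q};<) \boxtimes ({\mathbb Q};<)$ in the signature $\{<_1,<_2\}$ that is bi-embeddable (and hence homomorphically equivalent) with $\bA := ({\mathbb Q};<) * ({\mathbb Q};<)$, and then to transfer this to arbitrary first-order reducts via quantifier elimination. Specifically, I would define $\bC_0$ on ${\mathbb Q}^2$ by
\begin{align*}
x \prec_1 y & \;:\Longleftrightarrow\; x <_1 y \vee (x =_1 y \wedge x <_2 y), \\
x \prec_2 y & \;:\Longleftrightarrow\; x <_2 y \vee (x =_2 y \wedge x <_1 y),
\end{align*}
so that $\prec_1$ is the lexicographic and $\prec_2$ the colexicographic order on ${\mathbb Q}^2$. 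Both are first-order definable in $\bP$, hence $\bC_0$ is a first-order reduct of $\bP$; a short check shows each is a dense linear order without endpoints, so $\bC_0$ is a countable $\{<_1,<_2\}$-structure whose two relations are strict linear orders.

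I would then verify that $\bC_0$ has the same age as $\bA$, namely the class of all finite $\{<_1,<_2\}$-structures whose two relations are linear orders. Finite substructures of $\bC_0$ trivially yield two linear orders; conversely, any such finite $(F,\prec_1|_F,\prec_2|_F)$ embeds into $\bC_0$ by sending its elements to points $(x_i,y_i) \in {\mathbb Q}^2$ with all $x_i$ distinct and ordered by $\prec_1|_F$ and all $y_i$ distinct and ordered by $\prec_2|_F$, which is trivially possible by density of ${\mathbb Q}$. The age equality delivers $\bC_0 \hookrightarrow \bA$ since $\bA$ is the Fra\"iss\'e limit of this class. For the reverse embedding $\bA \hookrightarrow \bC_0$, I would run a one-sided back-and-forth: enumerate $\bA$ and, at each stage, extend the partial embedding by picking an image $(x,y) \in {\mathbb Q}^2$ whose first and second coordinates are distinct from all previously chosen first and second coordinates, and positioned in ${\mathbb Q}$ so as to realise the required $\prec_1$- and $\prec_2$-type over the already-placed images.

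With the pair of embeddings $\bA \leftrightarrow \bC_0$ in hand, I would conclude as follows. Given a first-order reduct $\bB$ of $\bA$ in signature $\sigma$, the fact that $\bA$ is homogeneous in the finite signature $\{<_1,<_2\}$ gives quantifier elimination and hence lets me write each $R \in \sigma$ as a quantifier-free formula $\phi_R$ over $\{<_1,<_2,=\}$. I then let $\bC$ be the structure on ${\mathbb Q}^2$ in signature $\sigma$ in which $R^{\bC}$ is the relation defined by $\phi_R$ interpreted in $\bC_0$, with $=$ read as equality on ${\mathbb Q}^2$. Since $\prec_1$, $\prec_2$, and equality on ${\mathbb Q}^2$ are all first-order definable in $\bP$, so is each $R^{\bC}$, whence $\bC$ is a first-order reduct of $\bP$. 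Because embeddings of $\{<_1,<_2\}$-structures preserve and reflect all quantifier-free formulas over $\{<_1,<_2,=\}$, the two embeddings $\bA \leftrightarrow \bC_0$ automatically lift to embeddings, and in particular homomorphisms, $\bB \leftrightarrow \bC$ in signature $\sigma$, yielding the required homomorphic equivalence.

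The main obstacle lies in the embedding $\bA \hookrightarrow \bC_0$: in $\bC_0$, any two points sharing a first coordinate are forced into agreement in both $\prec_1$- and $\prec_2$-order, and symmetrically for a shared second coordinate, whereas in the generic combination $\bA$ no such coincidences are imposed. The back-and-forth therefore must maintain as an invariant that all chosen first coordinates are distinct and all chosen second coordinates are distinct; density of ${\mathbb Q}$ is precisely what guarantees that this invariant can be preserved at every step while realising any prescribed $\prec_1$- and $\prec_2$-type.
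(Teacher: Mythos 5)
Your proof is correct, and it takes a genuinely different route from the paper's. The paper works with $\bA_{\rm paper}$, the $\{<_1,<_2\}$-reduct of $({\mathbb Q};<)\boxtimes({\mathbb Q};<)$, where $<_1$ and $<_2$ are \emph{strict weak orders} (distinct points with the same $i$-th coordinate are $<_i$-incomparable). It then produces an embedding $e \colon ({\mathbb Q};<)*({\mathbb Q};<)\hookrightarrow \bA_{\rm paper}$ by choosing isomorphisms of the two factor reducts, and an injective map $h \colon \bA_{\rm paper}\to ({\mathbb Q};<)*({\mathbb Q};<)$ arising from a linear extension of each $<_i$, which is only a \emph{homomorphism}, not an embedding. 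Because $h$ fails to reflect negations, the paper must first rewrite the defining formulas of $\bB$ into a negation-free form ($\neg(x<_i y)$ becomes $y <_i x \vee x = y$, and $x\neq y$ becomes $x<_1 y\vee y<_1 x$) so that $h$ still preserves them; this rewriting is sound precisely because the source structure has linear $<_i$. You instead pick a \emph{different} first-order reduct $\bC_0$ of the algebraic product, equipping it with the lexicographic and colexicographic linear orders, and establish \emph{bi-embeddability} between $\bC_0$ and $({\mathbb Q};<)*({\mathbb Q};<)$ via an age argument (plus homogeneity of the generic combination) in one direction and a forth construction maintaining distinct first and second coordinates in the other. Since embeddings preserve and reflect \emph{all} quantifier-free formulas, you need no negation rewriting, and you actually obtain the stronger conclusion that $\bB$ and $\bC$ are bi-embeddable, not merely homomorphically equivalent. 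Both arguments are about the same length, but yours replaces the "rewrite negations and check positivity" step with the "maintain distinct coordinates in the forth construction" step; yours also gives a slightly stronger conclusion.
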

\begin{proof}
Let $\bA$ be 
the $\{<_1,<_2\}$-reduct of 
$({\mathbb Q};<) \boxtimes ({\mathbb Q};<)$. 
For $i \in \{1,2\}$ there exists an isomorphism $\alpha_i$ between the $\{<_i\}$-reduct of $({\mathbb Q};<) * ({\mathbb Q};<)$ and $({\mathbb Q};<)$.
Then $e \colon d \mapsto (\alpha_1(d),\alpha_2(d))$
is an embedding of 
$({\mathbb Q};<) * ({\mathbb Q};<)$ into $\bA$. 
Conversely, if we fix a linear extension of $<_1$ and $<_2$ in
$({\mathbb Q};<) \boxtimes ({\mathbb Q};<)$, then the $\{<_1,<_2\}$-reduct of the resulting structure embeds into $({\mathbb Q};<) * ({\mathbb Q};<)$ (see, e.g., \cite[Lemma 4.1.7]{Book}).
This shows that $\bA$ has an injective homomorphism $h$ to $({\mathbb Q};<) * ({\mathbb Q};<)$.

Let $\bB$ be a first-order reduct of 
$({\mathbb Q};<) * ({\mathbb Q};<)$. 
Every first-order formula
$\phi$ over
$({\mathbb Q};<) * ({\mathbb Q};<)$ is equivalent to a
quantifier-free formula in conjunctive normal form, because $({\mathbb Q};<) * ({\mathbb Q};<)$ is homogeneous and $\omega$-categorical. 
Replace each atomic subformula of $\phi$ of the form  
$\neg (x <_i y)$, for $i \in \{1,2\}$,  by $y <_i x \vee x=y$. Then replace each subformula of the form 
$x \neq y$ by $x <_1 y \vee y <_1 x$.
The resulting formula is equivalent over $({\mathbb Q};<) * ({\mathbb Q};<)$.
Each formula that defines a relation of $\bB$ and is written in this form can be interpreted over $({\mathbb Q};<) \boxtimes ({\mathbb Q};<)$ instead of $({\mathbb Q};<) * ({\mathbb Q};<)$; 
let $\bC$ be the obtained first-order reduct of $({\mathbb Q};<) \boxtimes ({\mathbb Q};<)$.

Since the first-order definitions of the relations of $\bB$ are quantifier-free, the embedding $e$ of 
$({\mathbb Q};<) * ({\mathbb Q};<)$ into $\bA$ 
is also an embedding of $\bB$ into $\bC$. 
We claim that  
$h$ is a homomorphism from $\bC$ to $\bB$. 
This follows from the fact that  $h$ is a  homomorphism from $({\mathbb Q};<) \boxtimes ({\mathbb Q};<)$ to $\bA$
and that the defining formulas for the relations of $\bB$ and $\bC$ do not involve negation.
\end{proof}

Another way forward is to study basic structures other than $({\mathbb Q};<)$. Here, temporal reasoning is a source of examples with applications in, for instance, AI.
An important time model used in temporal reasoning
is {\em branching time}, where for every point in time the past is linearly ordered, but
the future is partially ordered.
This motivates the so-called {\em left-linear point algebra} \cite{Duentsch,HirschAlgebraicLogic}, which is a relation
algebra with four basic relations, denoted by $=$, $<$, $>$, and $|$. Here, $x|y$ signifies that
$x$ and $y$ are incomparable in time, and `$x < y$' signifies that $x$ is earlier in time than
$y$.
The branching-time satisfiability problem can be formulated
as CSP$(\bB)$ for an $\omega$-categorical structure $\bB$~\cite{BodirskyNesetrilJLC}.
One possible concrete description of the structure $\bB$, described by 
Adeleke and Neumann~\cite{MR1388893}, is to let $\bB=(B;<,|,=)$ where $B$ is the set of finite sequences of rational
numbers. For arbitrary $a=(a_1,\dots,a_n)$ and $b=(b_1,\dots,b_m)$ in $B$ with $n \leq m$, $a < b$ holds if
one of the following conditions hold:
\begin{enumerate}
\item 
$n < m$ and $a_i=b_i$ for $1 \leq i \leq n$, or
\item
$a_i=b_i$ for $1 \leq i < n$ and $a_n < b_n$.
\end{enumerate}
The remaining relations are defined in the obvious way.

Branching time has been used, for example, in automated planning~\cite{Dean:Boddy:aij88}, as the basis for temporal logics~\cite{Emerson:Halpern:jacm86}, and as the basis for a generalisation of Allen’s Interval Algebra~\cite{Ragni:Wolfl:sc2004}.  
In particular, the complexity of the branching variant\footnote{Various ways of defining the formalism are possible~\cite{Ragni:Wolfl:sc2004}. We restrict our attention to the most well-known end-point-based formalism that contains 19 basic relations.} of Allen’s Interval Algebra has recently gained
attraction~\cite{Bertagnon:etal:ic2021,Bertagnon:etal:cilc2020,Bertagnon:etal:time2020,Gavanelli:etal:time2018,Durhan:Sciavicco:ic2018}.
Some complexity results for the branching interval algebra $\bB$ are presented in these publications, but the big picture is missing, even for first-order expansions of
the basic relations. Our classification transfer result (Theorem~\ref{thm:transfer}) is applicable to this problem, but since 
there is currently no complexity classification of
the CSPs for first-order reducts of ${\mathfrak B}$, we cannot
present a full classification for CSPs of first-order expansions of the basic branching interval relations.
Naturally, there is no polymorphism-based description of the tractable fragments either, so we cannot
analyse the complexity of CSPs for first-order expansions of the structure  $(B^n;<_1,|_1,=_1,\dots,<_n,|_n,=_n)$ in the style of 
Theorem~\ref{thm:n}.
However, given a complexity classification of the CSPs for first-order expansions of $\bB$
in place, then Theorem~\ref{thm:transfer} is immediately applicable, and we do not
see any fundamental problem that prevents us from generalising Theorem~\ref{thm:n} to expansions of 
$(B^n;<_1,|_1,=_1,\dots,<_n,|_n,=_n)$ as long as the tractable fragments can be described via polymorphisms and nice syntactic normal forms.

A related time model encountered in computer science is {\em partially ordered time} (po-time). This model has
various applications in, for instance, the analysis of concurrent and distributed systems~\cite{Anger:toplas89,Lamport:jacm86}.
In po-time, both the past and the future of a time point are partially ordered. 
This implies that time becomes a partial order
with four basic relations $=$, $<$, $>$ and $|$, 
signifying ``equal'', ``before'', ``after'' and ``unrelated'', respectively.
The satisfiability problem for po-time can convieniently be formulated with the 
{\em random partial order} $(P;<)$, and
Kompatscher and Van Pham~\cite{Kompatscher:VanPham:flap2018} have presented a full complexity 
classification of the CSP for all first-order reducts of the random partial order.
Combined with Theorem~\ref{thm:transfer}, this gives us a full classification
of the CSP for first-order expansions of the basic relations of the po-time analogue of
the interval algebra. This generalisation of the interval algebra has been studied by Zapata et al.~\cite{Zapata:etal:sc2013}.
Kompatscher and Van Pham describe the tractable fragments of the random partial order with the aid
of polymorphisms. 
Hence, it seems conceivable that their result can be generalised to a complexity classification of
the CSP for first-order expansions of $(P^n;<_1,|_1,=_1,\dots,<_n,|_n,=_n)$ by utilising the ideas
behind the proof of Theorem~\ref{thm:n}.

\section*{Acknowledgements}
We thank Johannes Greiner for his comments on an earlier version of this article and Jakub Rydval for letting us use his picture in Figure~\ref{fig:ppll}.

\bibliographystyle{siamplain} 
\bibliography{local}

\end{document}